\documentclass[11pt,reqno,a4paper]{amsart}
\usepackage{fbb}
\usepackage[utf8]{inputenc}
\usepackage[T1]{fontenc}
\usepackage{esint}

\usepackage[top=2.5cm,bottom=2.4cm,left=2.6cm,right=2.6cm,headsep=0.2in]{geometry}
\usepackage{microtype}

\usepackage{amsmath,amsthm,amssymb,amsfonts,mathtools,mathrsfs}
\usepackage[dvipsnames]{xcolor}

\usepackage{enumitem}
\setlist[itemize]{leftmargin=1.6em}
\setlist[enumerate]{leftmargin=1.6em}

\usepackage[noadjust]{cite}

\usepackage{hyperref}
\hypersetup{
 colorlinks=true,
 linkcolor=blue!50!green,
 citecolor=blue!50!green,
 urlcolor=blue!50!green,
 pdfauthor={Mayukh Mukherjee},
 pdftitle={Wavelength-scale optional stopping, critical Feynman-Kac gauges, and capacitary spectral inequalities}
}
\usepackage{aliascnt}

\makeatletter
\renewcommand{\paragraph}{\@startsection{paragraph}{4}%
  \z@\z@{-\fontdimen2\font}%
  {\normalfont\bfseries}}
\makeatother

\numberwithin{equation}{section}
\theoremstyle{plain}
\newtheorem{theorem}{Theorem}[section]

\newaliascnt{lemma}{theorem}
\newtheorem{lemma}[lemma]{Lemma}
\aliascntresetthe{lemma}

\newaliascnt{proposition}{theorem}
\newtheorem{proposition}[proposition]{Proposition}
\aliascntresetthe{proposition}

\newaliascnt{corollary}{theorem}
\newtheorem{corollary}[corollary]{Corollary}
\aliascntresetthe{corollary}

\theoremstyle{definition}
\newaliascnt{definition}{theorem}
\newtheorem{definition}[definition]{Definition}
\aliascntresetthe{definition}

\theoremstyle{remark}
\newaliascnt{remark}{theorem}
\newtheorem{remark}[remark]{Remark}
\aliascntresetthe{remark}

\theoremstyle{plain}
\newtheorem{mainthm}{Theorem}

\usepackage[capitalise,nameinlink]{cleveref}
\crefname{theorem}{Theorem}{Theorems}
\crefname{lemma}{Lemma}{Lemmas}
\crefname{proposition}{Proposition}{Propositions}
\crefname{corollary}{Corollary}{Corollaries}
\crefname{definition}{Definition}{Definitions}
\crefname{section}{Section}{Sections}
\crefname{equation}{Eq.}{Eqs.}
\crefname{remark}{Remark}{Remarks}
\crefname{mainthm}{Theorem}{Theorems}
\Crefname{equation}{Equation}{Equations}
\Crefname{theorem}{Theorem}{Theorems}
\Crefname{lemma}{Lemma}{Lemmas}
\Crefname{proposition}{Proposition}{Propositions}
\Crefname{corollary}{Corollary}{Corollaries}
\Crefname{definition}{Definition}{Definitions}
\Crefname{remark}{Remark}{Remarks}
\Crefname{mainthm}{Theorem}{Theorems}

\usepackage{fancyhdr}
\title[Wavelength-scale optional stopping]{Wavelength-scale optional stopping, critical Feynman--Kac gauges, and capacitary spectral inequalities}
\author{Mayukh Mukherjee}

\address{Department of Mathematics, Indian Institute of Technology Bombay,
Powai, Mumbai 400076, India}
\email{mukherjee@math.iitb.ac.in, mathmukherjee@gmail.com}

\subjclass[2020]{Primary 58J50, 58J65, 35P20, 60J65; Secondary 31C15, 35J05,
93B07}
\keywords{Laplace eigenfunctions, optional stopping, Feynman--Kac gauge,
capacity, equilibrium measure, spectral inequality, heat observability,
frequency function, nodal geometry, Steklov problem}

\begin{document}

\begin{abstract}
We develop two Brownian stopping methods: one uses hitting probabilities and capacity, while the other uses moments of weighted exit distributions. On a closed $m$-dimensional Riemannian manifold, $m\geq 2$, the first method gives low-energy spectral inequalities for equilibrium measures supported on sets of zero volume. Combining scales gives heat observability and null controllability from a full-support measure carried by a dense set of Hausdorff dimension $m-2$, which is minimal under a bounded $H^1$-trace condition. More generally, for every nondecreasing unbounded function $\Phi$ satisfying $\Phi(4E)\leq D\Phi(E)$, the measure can be chosen so that the optimal spectral constant is comparable to $\Phi(E)$ and the small-time control cost to $T^{-1/2}\Phi(T^{-1})^{1/2}$. For weighted exits, squaring the Feynman--Kac martingale introduces the doubled potential $2V$. For radial $V$, the resulting boundary second moment is log-convex in $\log r$ if $r^2V(r)$ is nondecreasing, while the regular solution of $(\tfrac12\Delta+2V)g=0$ remains positive; the factor $2$ is sharp when $V(0)>0$. On small geodesic balls, weighted second moments give an almost-monotone frequency and a weighted three-radius inequality. Bounds for the defining logarithmic derivative give doubling estimates; after a ground-state transform, these supply the doubling-index input for the Remez theorem of Logunov--Malinnikova. The normalised first-moment exit law instead gives an averaged boundary-variance identity for Laplace eigenfunctions. At $r=c\lambda^{-1/2}$, it shows that, outside a set carrying only $O(c^{2/m})$ of the total $\varphi^2$-mass, the component of $\{\varphi(x)\varphi>0\}\cap B(x,r)$ containing $x$ contains a concentric ball of nearly full radius, occupies nearly all of $B(x,r)$, and has nearly the same first Dirichlet eigenvalue as $B(x,r)$. First-moment stopping also controls positive superlevel components. Finally, reflected Brownian motion and boundary local time give $L^\infty$ estimates from the boundary into a collar for Steklov eigenfunctions on bounded Lipschitz domains, together with an $L^2$ estimate for $C^2$ domains.
\end{abstract}

\maketitle

\setcounter{tocdepth}{1}
\tableofcontents

\section{Introduction}\label{sec:intro}

This paper uses Brownian motion in two distinct ways. The first records whether a path hits a compact set before leaving a ball. Relative capacity controls this hitting probability, and killing by the corresponding equilibrium measure turns the hitting estimate into a local spectral gap. Combining these local estimates across cells and scales gives spectral inequalities, heat observability, and null controllability from measures carried by sets of zero volume.

The second method records the value of a solution at the point where Brownian motion leaves a ball. Let $(X,g)$ be an $m$-dimensional Riemannian manifold, $m\ge2$, let $\varphi$ satisfy $\Delta_g\varphi+\lambda\varphi=0$ with $\lambda>0$, and let $(B_t)$ have generator $\tfrac12\Delta_g$. Then $M_t=e^{(\lambda/2)t}\varphi(B_t)$ is a local martingale. For fixed $0<c<j_{(m-2)/2,1}$, put $R=c\lambda^{-1/2}$. When $R$ is sufficiently small relative to the local geometry, the stopped process is uniformly integrable, and
\begin{equation}\label{eq:intro-OST}
\varphi(x)=\mathbb E_x\!\left[
e^{(\lambda/2)\tau_{B(x,R)}}\varphi(B_{\tau_{B(x,R)}})\right].
\end{equation}
The corresponding weighted exit measure has total mass $q_m(c^2/2)(1+O_{\mathrm{bg},c}(R^2))$. After division by its total mass, its density with respect to normalised surface measure is $1+O_{\mathrm{bg},c}(R^2)$ (\cref{eq:q_m_def,prop:pole_isotropy_explicit}).

The first and second moments of this exit law give different information. When a second moment is used, we further require $c<2^{-1/2}j_{(m-2)/2,1}$. Squaring the martingale doubles the Feynman--Kac exponent, and the variance of its terminal value becomes a Green-kernel integral of $|\nabla\varphi|^2$ (\cref{thm:wavelength-stopping-identities}). In the radial Euclidean model this gives exact logarithmic convexity and explains the coefficient $2$; on small geodesic balls it gives an almost-monotone frequency. Normalising the singly weighted exit law instead gives the boundary variance used in the local nodal estimates. Applying the first-moment identity at the maximum of a superlevel component gives unconditional estimates for that component. Finally, in the Steklov problem, reflected Brownian motion and boundary local time provide the corresponding boundary argument.  

\subsection{Main results}\label{subsec:intro-geom}

\paragraph{A.\ Capacitary spectral inequalities and heat control at the critical dimension.}
For $m\ge2$, Brownian hitting at scale $R$ is governed by relative capacity rather than volume. Its natural size is $R^{m-2}$; in dimension $2$ this is scale-independent, and suitable sets of Hausdorff dimension zero have positive capacity. Polar sets cannot carry the finite-energy measures used below.

\begin{mainthm}[Capacitary spectral inequality and heat control]\label{thm:A}
Let $(X,g)$ be a closed Riemannian manifold of dimension $m\ge2$ and let $H=-\tfrac12\Delta_g$. There are $r_*=r_*(m,\mathrm{bg})>0$ and $c=c(m,\mathrm{bg})>0$ such that the following holds. Let $0<R\le r_*$, and suppose every ball $B(x_j,R)$ of a maximal $R$-separated net contains a compact set $K_j$ with
\[
\operatorname{Cap}_{B(x_j,2R)}(K_j)\ge\kappa R^{m-2},
\qquad 0<\kappa\le1.
\]
If $\mu_R$ is the sum of the corresponding equilibrium measures, then
\[
R^2\int_X|f|^2\,d\mu_R\ge c\kappa\|f\|_{L^2(X)}^2
\qquad
\bigl(f\in\operatorname{Ran}P_{[0,c\kappa R^{-2}]}(H)\bigr).
\]

Fix $0<R_0\le r_*$, set $T_0=R_0^2$ and $R_k=2^{-k}R_0$. For every $\theta>0$ there is a finite measure $\mu_\theta$, independent of $T$, with $H^1(X)\hookrightarrow L^2(\mu_\theta)$ and $\operatorname{supp}\mu_\theta=X$, carried by a dense zero-volume $F_\sigma$-set of Hausdorff dimension $m-2$, such that, for $0<T\le T_0$ and $u_0\in L^2(X)$,
\[
\|e^{-TH}u_0\|_{L^2(X)}^2\le
\frac{C_\theta}{T}\bigl(1+\log(T_0/T)\bigr)^\theta
\int_0^T\!\!\int_X|e^{-tH}u_0|^2\,d\mu_\theta\,dt .
\]
\end{mainthm}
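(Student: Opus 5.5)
The plan splits into the single-scale spectral inequality and the multiscale heat-observability construction.

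\emph{Single scale.} Write $\nu_j$ for the equilibrium measure of $K_j$ relative to $B_j^*=B(x_j,2R)$, so that $\nu_j(K_j)=\operatorname{Cap}_{B_j^*}(K_j)\ge\kappa R^{m-2}$ and the potential $G_{B_j^*}\nu_j$ equals $1$ quasi-everywhere on $K_j$. The goal is the local Poincaré-type estimate
\[
\int_{B(x_j,2R)}|f|^2\,dx\le C\Bigl(R^2\int_{B(x_j,2R)}|\nabla f|^2\,dx+\frac{R^m}{\nu_j(K_j)}\int|f|^2\,d\nu_j\Bigr)
\quad\text{for } f\in H^1 .
\]
The route is probabilistic. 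Killing Brownian motion at rate given by the additive functional of $\nu_j$, the hitting probability $\mathbb P_y(T_{K_j}<\tau_{B_j^*})\ge c\kappa$ holds uniformly for $y\in B(x_j,R)$. This comes from comparing $G_{B_j^*}\nu_j(y)$ with $\nu_j(K_j)\,R^{2-m}$ via Green kernel bounds on small geodesic balls, or in dimension $2$ via the logarithmic Green function. The bound gives a lower estimate on the bottom of the spectrum of $-\tfrac12\Delta+\nu_j$ on $B_j^*$ with Dirichlet conditions, and equivalently the displayed inequality after splitting $f$ into its average and its oscillation. I would rather derive the inequality directly: test $f$ against $G\nu_j$ and use the energy identity $\int|\nabla G\nu_j|^2=\nu_j(K_j)$.

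Summing over the net, which has bounded overlap because the net is maximal $R$-separated, gives
\[
\|f\|_2^2\le C\bigl(R^2\|\nabla f\|_2^2+\kappa^{-1}R^2\|f\|^2_{L^2(\mu_R)}\bigr).
\]
For $f\in\operatorname{Ran}P_{[0,c\kappa R^{-2}]}(H)$ we have $\|\nabla f\|_2^2\le 2c\kappa R^{-2}\|f\|_2^2$. Choosing $c$ small absorbs the gradient term and leaves $R^2\int|f|^2\,d\mu_R\ge c\kappa\|f\|_2^2$.

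\emph{Multiscale construction.} At scale $R_k=2^{-k}R_0$, choose $K_j$ to be small generalised Cantor sets, or shrinking spheres of dimension $m-2$. Their capacity relative to the ball is $\asymp R_k^{m-2}$, so $\kappa\asymp1$, while their $\mathcal H^{m-2+\epsilon}$-measure is $0$. Equivalently, in dimension $2$ take sets of vanishing length but capacity bounded below. Set
\[
\mu_\theta=\sum_k w_k\,\mu_{R_k},\qquad w_k=R_k^2(1+k)^{-1-\theta'}.
\]
The weights are chosen so that the trace bound $\int|f|^2\,d\mu_{R_k}\le CR_k^{-2}\|f\|_{H^1}^2$ (equilibrium measures are bounded on $H^1$ uniformly after this scaling) is summable. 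This gives $H^1\hookrightarrow L^2(\mu_\theta)$. The union of the supports is dense $F_\sigma$ of zero volume and dimension $m-2$; full support follows from density.

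\emph{Observability.} Apply the Lebeau--Robbiano / Miller strategy, or the simpler spectral-inequality-to-observability argument of Phung--Wang type, using the single-scale inequality at the scale $R_k\sim T^{1/2}$. That inequality gives observability from $\mu_{R_k}$ with constant $\asymp T^{-1}$ for the low modes at energy $\lesssim T^{-1}$. The high modes decay like $e^{-cT\cdot T^{-1}}$ over $[T/2,T]$, which is only a constant factor, so a bootstrap over the dyadic intervals $[2^{-i-1}T,2^{-i}T]$ is needed to handle them. Converting from $\mu_{R_k}$ to $\mu_\theta$ costs $w_k^{-1}R_k^2=(1+k)^{1+\theta'}$ with $k\approx\log(T_0/T)$; taking $\theta'$ slightly below $\theta$ (or tuning the weights) yields the factor $(1+\log(T_0/T))^\theta$.

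\emph{Main obstacle.} I expect the hardest step to be this high-frequency bootstrap, so that only one scale's loss appears rather than a product over scales. I would first try the telescoping argument: the dissipativity of $e^{-tH}$, combined with the spectral inequality at the scale matching each sub-interval, gives a geometric sum whose losses are dominated by the largest weight ratio.
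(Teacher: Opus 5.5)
Your single-scale step is the paper's variational proof (\cref{lem:local-sampling,cor:zero-volume}): Poincar\'e on $B_j^*$, test the cut-off oscillation against $G_j\nu_j$, sum over the net, absorb. Two ingredients need correcting.

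First, your local inequality cannot hold with $C$ independent of $\kappa$. Take $\operatorname{Cap}_{B_j^*}(K_j)=\kappa R^{m-2}$ and $f=1-e_{K_j,B_j^*}$. Then $\int|f|^2\,d\nu_j=0$ and $R^2\int|\nabla f|^2\asymp\kappa R^m$. On the other hand, $\int_{B_j^*}e_{K_j,B_j^*}\le CR^2\operatorname{Cap}_{B_j^*}(K_j)$, so the left side is $\gtrsim R^m$. Your Cauchy--Schwarz step against $G_j\nu_j$ actually produces $\kappa^{-1}R^2\int|\nabla f|^2$ on the right. This is harmless only because the energy window $c\kappa R^{-2}$ is exactly what absorbs that factor.

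Second, the carriers you propose do not work. Spheres of dimension $m-2$ are polar: for $m\ge3$, every compact set with $\mathcal H^{m-2}<\infty$ has zero capacity, and for $m=2$ points are polar. So their equilibrium measures vanish. Likewise, a planar set of zero length need not have dimension $0$. Positive capacity forces $\mathcal H^{m-2}=\infty$, so at dimension exactly $m-2$ you need a genuinely borderline set. \Cref{lem:critical-prototype} constructs one: $[-1,1]^{m-2}\times C_0$, where $C_0$ is a planar Cantor set with level-$n$ lengths $e^{-n^2}$. Its logarithmic energy is at most $C+\sum_n2^{-n}n^2$, and its box dimension is $0$. One fixed copy is then rescaled into every cell, which also gives $\kappa\asymp1$ uniformly in $k$.

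The genuine gap is in the multiscale argument. Your weight count loses a full power of the logarithm. Using a single scale, passing from $\mu_{R_k}$ to $\mu_\theta$ costs $(1+k)^{1+\theta'}$, so taking $\theta'$ slightly below $\theta$ gives $(1+\log(T_0/T))^{1+\theta'}$, not the $\theta$-th power. Finiteness of $\mu_\theta$ forces $\sum_ka_k<\infty$ for the weights $a_k$ of $\eta_k=R_k^2\mu_{R_k}$. Hence no choice of weights gives $a_k^{-1}\lesssim(1+k)^\theta$ when $\theta\le1$. The missing observation is that an estimate valid at scale $R_k$ is also valid at every finer scale $\ell\ge k$. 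Summing over that tail gives $\sum_{\ell\ge k}(\ell+1)^{-1-\theta}\ge c(k+1)^{-\theta}$, which is where the exponent $\theta$ comes from.

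The observability step you leave open also needs no Lebeau--Robbiano bootstrap. Such a bootstrap would in any case have to cope with $\mu_\theta$ being bounded only on $H^1$, not on $L^2$. Summed with $\kappa$ fixed, your local inequality already gives, for \emph{every} $u\in H^1(X)$, the estimate $\|u\|_2^2\le C_*\bigl(R_\ell^2\|\nabla u\|_2^2+\int|\widetilde u|^2\,d\eta_\ell\bigr)$ (\cref{lem:one-scale}). Apply it to $u(t)=e^{-tH}u_0$ for every $\ell$ with $C_*\gamma R_\ell^2\le t$. With $F(t)=\|u(t)\|_2^2$ one has $F'(t)=-\|\nabla u(t)\|_2^2$, so the gradient term is exactly the dissipation, and
\[
F(t)\le\tfrac t\gamma\bigl(-F'(t)\bigr)+C_*\int|u(t)|^2\,d\eta_\ell .
\]
Weighting and summing over this tail of scales gives
\[
F'+\tfrac\gamma tF\le\tfrac{C_\theta}{t}L(t)^\theta G(t),
\qquad L(t)=1+\log(T_0/t),\quad G(t)=\int|u(t)|^2\,d\mu_\theta .
\]
For $\gamma=2+\theta$ the factor $t^{\gamma-1}L(t)^\theta$ is increasing. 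Multiplying by $t^\gamma$ and integrating over $(0,T)$ then gives the stated inequality directly. Because each time $t$ uses its own scale, there are never any high modes to bootstrap.
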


By duality, $\partial_ty+Hy=v\,\mu_\theta$ is null-controllable at cost $C_\theta T^{-1/2}(1+\log(T_0/T))^{\theta/2}$.

On each fixed manifold and for all sufficiently small $R$, the theorem of Filbir and Mhaskar \cite{FilbirMhaskar2011} implies both inequalities in \cref{thm:capacity-sampling} for $E\le c_XR^{-2}$, independently of $\Lambda$. For equilibrium measures, this gives the fixed-manifold counterpart of the one-scale conclusion of \cref{thm:A}, with an energy window independent of $\kappa$. Relative to that theorem, the additional content here is the local $H^1$ estimate and the equilibrium-measure form inequality with constants controlled by the stated bounded-geometry data, together with the construction of a fixed full-support measure carried by a zero-volume set of Hausdorff dimension $m-2$ and the matching spectral, heat-observability, and null-control profiles; see \cref{rem:concurrent-comparison,rem:heat-control-context}.

Two local consequences locate the endpoint. When $\kappa$ is bounded below, the energy scale $R^{-2}$, the capacity scale $R^{m-2}$, and the normalisation $R^2d\mu_R$ are sharp; no optimality is asserted for the $\kappa$-dependence of the energy window (\cref{rem:sampling-sharp}). Above the endpoint, $s$-Ahlfors regular measures with $R/4$-dense support satisfy the analogous inequality with normalisation $R^{m-s}d\sigma$. At $s=m-2$ the construction instead uses the non-Ahlfors sets $K_*$.

The fixed-measure construction is not tied to a logarithm. Every nondecreasing, unbounded $\Phi:[T_0^{-1},\infty)\to[1,\infty)$ satisfying $\Phi(4s)\le D\Phi(s)$ for $s\ge T_0^{-1}$ and some $D<\infty$, and extended constantly below $T_0^{-1}$, is realised by a measure $\mu_\Phi$. Its spectral loss is $\Phi(E)$ for $E\ge T_0^{-1}$, and its control cost is $T^{-1/2}\Phi(T^{-1})^{1/2}$ for $0<T\le T_0$ (\cref{thm:prescribed-loss}). The same measure satisfies the coupling law
\[
\inf\sigma(H+\gamma\mu_\Phi)\asymp
\inf_{E\ge0}\left(E+\frac{\gamma}{\Phi(E)}\right).
\]
The corresponding upper bounds remain uniform over fixed symmetric uniformly elliptic classes (\cref{thm:uniform-elliptic-control}). To realise the mildly doubling profiles above, the observation measure must have full support; otherwise the spectral loss grows faster than every power. The carrier dimension $m-2$ is minimal among measures with bounded trace $H^1(X)\to L^2(\mu)$ (\cref{prop:divergence-full-support-necessary,prop:control-dim-sharp}).

The variational proof is shorter. The Brownian proof gives different information: it determines the law, up to exit from $\Omega$, of the additive functional whose Revuz measure is the equilibrium measure. For quasi-every $y$, $A^{K,\Omega}_{\tau_\Omega}$ is zero with probability $1-e_{K,\Omega}(y)$ and, conditional on being positive, is exponentially distributed with mean one. Equivalently, \cref{lem:equilibrium-additive-functional-law} gives
\[
\mathbb E_y e^{-\beta A^{K,\Omega}_{\tau_\Omega}}
=
1-\frac{\beta}{1+\beta}e_{K,\Omega}(y)
=
1-\frac{\beta}{1+\beta}
\mathbb P_y(\tau_K<\tau_\Omega)
\quad\text{for quasi-every }y.
\]
Thus the Feynman--Kac survival deficit at coupling $\beta$ is exactly $\beta/(1+\beta)$ times the probability of hitting $K$ before leaving $\Omega$; as $\beta\to\infty$, the survival probability tends to the probability of leaving without hitting $K$. In each cell, the capacity hypothesis makes the hitting probability $\gtrsim\kappa$ throughout the inner ball, while the expected killed occupation time is $O(R^2)$. The strong Markov property, applied after successive cell exits, then gives the global estimate in \cref{prop:equilibrium-killing-spectral-gap}:
\[
\inf\sigma(H+\beta\mu_R)
\ge
c\frac{\beta}{1+\beta}\kappa R^{-2},
\qquad \beta>0.
\]
At $\beta=1$, projection to $E\le c\kappa R^{-2}$ gives the spectral inequality. The exact law also gives the resolvent identity in \cref{prop:equilibrium-resolvent}; the same proposition identifies the exact attractive threshold $\alpha=1$ for $H_\Omega-\alpha\mu$. The additive-functional viewpoint is used again for the multiscale measure in \cref{prop:variational-additive-functional}.

The capacitary results use Brownian motion through the event of hitting a set before leaving a cell. We now turn to the values carried by Brownian motion when it exits a ball. The next two results use the second moment of the stopped Feynman--Kac martingale and hence involve the doubled potential. The radial Euclidean model can be treated exactly, while on a Riemannian manifold curvature produces controlled lower-order errors.

\paragraph{B.\ Exact convexity for radial potentials.}

\begin{mainthm}[Radial convexity and minimality of the coefficient]\label{thm:B}
Let $m\ge2$, let $V$ be continuous on $[0,R)$, and let $u\not\equiv0$ be a real-valued solution of $(\tfrac12\Delta+V(|x|))u=0$ in $B_R\subset\mathbb R^m$. Let $g$ be the regular radial solution of $(\tfrac12\Delta+2V)g=0$, $g(0)=1$, and set $R_*:=\sup\{0<r<R:g>0\text{ on }[0,r]\}$. For $0<r<R_*$ define
\[
H_u(r)=\mathbb E_0\!\left[e^{2\int_0^{\tau_r}V(|B_s|)\,ds}
u(B_{\tau_r})^2\right]
=\frac1{g(r)}\fint_{\partial B_r}u^2\,d\sigma .
\]
If $r\mapsto r^2V(r)$ is nondecreasing, then $t\mapsto\log H_u(e^t)$ is convex on $(-\infty,\log R_*)$ for every such solution; this hypothesis already implies $V\ge0$. In dimension $m=2$, the same conclusion holds under $V\ge0$ alone. The minimality statement needs only $V(0)>0$. Let $f_0$ be the regular radial solution of $(\tfrac12\Delta+V)f_0=0$, $f_0(0)=1$. For every $0\le b<2$, if $g_{bV}$ is the regular radial solution of $(\tfrac12\Delta+bV)g_{bV}=0$, $g_{bV}(0)=1$, then $f_0(r)^2/g_{bV}(r)$ fails to be log-convex in $\log r$ near $r=0$. Finally, if $V\ge0$ and $0<\rho\le R_*$, log-convexity on $(-\infty,\log\rho)$ for every nonzero solution is equivalent to
\[
\frac{rg'(r)}{g(r)}
\]
being nonincreasing on $(0,\rho)$.
\end{mainthm}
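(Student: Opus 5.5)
The plan is to reduce everything to a one-dimensional Riccati comparison, one spherical-harmonic mode at a time. The first step is to check the identity $H_u(r)=g(r)^{-1}\fint_{\partial B_r}u^2\,d\sigma$. The process $e^{2\int_0^t V}u(B_t)^2$ is not a martingale. Instead I would use that $G(x):=g(|x|)$ solves $(\tfrac12\Delta+2V)G=0$, so $e^{2\int_0^tV}G(B_t)$ is a martingale up to $\tau_r$ for $r<R_*$. Rotation invariance of Brownian motion started at $0$, together with the fact that the weight $e^{2\int_0^{\tau_r}V}$ is a radial functional, then gives
\[
\mathbb E_0\!\left[e^{2\int_0^{\tau_r}V}\,u(B_{\tau_r})^2\right]=\mathbb E_0\!\left[e^{2\int_0^{\tau_r}V}\right]\fint_{\partial B_r}u^2\,d\sigma=\frac{1}{g(r)}\fint_{\partial B_r}u^2\,d\sigma .
\]
Next I expand $u=\sum_k f_k(r)Y_k(\theta)$ in spherical harmonics, so that $H_u=g^{-1}\sum_k a_kf_k^2$ with $a_k\ge0$. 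A sum of log-convex functions is log-convex, so it suffices to prove that each $f_k^2/g$ is log-convex in $t=\log r$. Conversely, the case $u=f_k(r)Y_k$ shows that per-mode log-convexity for all $k$ is also necessary. This reduction is what the final equivalence statement rests on.

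In the variable $t$ I remove the first-order term. Write $f_k=e^{-(m-2)t/2}p$ and $g=e^{-(m-2)t/2}q$, and set $W(t)=2e^{2t}V(e^t)$ and $\mu_k=(k+\tfrac{m-2}{2})^2$. Then
\[
p''=(\mu_k-W)p,\qquad q''=(\mu_0-2W)q .
\]
The factor $e^{-(m-2)t/2}$ contributes only a linear term to $\log(f_k^2/g)$. Since $W\ge0$ gives $\mu_k-W\ge\mu_0-2W$, Sturm comparison (both solutions behave like $e^{\sqrt{\mu}\,t}$ as $t\to-\infty$) shows that $p>0$ wherever $q>0$, so the logarithms are defined on $(-\infty,\log R_*)$. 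With $\sigma=p'/p$ and $\rho=q'/q$, the Riccati equations are $\sigma'=\mu_k-W-\sigma^2$ and $\rho'=\mu_0-2W-\rho^2$. The target inequality becomes
\[
(\log(p^2/q))''=2\mu_k-\mu_0-2\sigma^2+\rho^2\ge0 .
\]
Next, $y=\rho'$ satisfies $y'=-2W'-2\rho y$ with $y\to0$ as $t\to-\infty$. Hence $y(t)=-2\int_{-\infty}^t e^{-2\int_s^t\rho}\,dW(s)\le0$ when $W$ is nondecreasing, which is exactly the hypothesis that $r^2V$ is nondecreasing. Since $\rho=rg'/g+\tfrac{m-2}{2}$, this says precisely that $rg'/g$ is nonincreasing. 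The hypothesis also forces $V\ge0$, because $r^2V(r)\ge\lim_{s\to0}s^2V(s)=0$.

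For the per-mode inequality I set $S=\sqrt{\mu_k-\mu_0/2+\rho^2/2}$ and aim to show $|\sigma|\le S$ by a first-touching argument. At a point where $\sigma=S$ one has $\sigma'=\rho'/2$ and $S'=\rho\rho'/(2S)$. Since $\rho'\le0$ and $\rho\le\sqrt{\mu_0}\le S$ (because $\rho$ decreases from its initial value $\sqrt{\mu_0}$ and $\mu_k\ge\mu_0$), the crossing is impossible. The initial data are strict for $k\ge1$; for $k=0$ the two curves agree asymptotically, and I would handle this by a perturbation in $k$ or in $W$. The lower bound $\sigma\ge-S$ is the step I expect to be the main obstacle. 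The naive touching argument needs $-\rho\ge S$, which can fail. Here I would try first to use the additional information that $p>0$ on the whole interval, which prevents $\sigma$ from escaping to $-\infty$. Failing that, I would abandon the pointwise bound and instead show directly that $\Lambda:=2\mu_k-\mu_0-2\sigma^2+\rho^2$ cannot become negative. At a first zero of $\Lambda$, one computes $\Lambda'=-4\sigma\sigma'+2\rho\rho'$ and uses the Riccati equations to get $\Lambda'=\rho'(2\rho-2\sigma)$, whose sign is controlled once $\sigma\le\rho$ there.

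For the equivalence statement, the computation above shows that log-convexity for $k=0$ with $u$ radial and $f_0$ ranging over all solutions is what forces $\rho'\le0$. Sufficiency of $\rho'\le0$ is the comparison argument just described. In dimension $m=2$ we have $\mu_0=0$, so $\rho'=-2W-\rho^2\le0$ automatically from $V\ge0$, and the same argument applies. For minimality, a Taylor expansion gives $f_0=1-\tfrac{V(0)}{m}r^2+o(r^2)$ and $g_{bV}=1-\tfrac{bV(0)}{m}r^2+o(r^2)$. Therefore
\[
\log\bigl(f_0^2/g_{bV}\bigr)=-\frac{(2-b)V(0)}{m}\,e^{2t}+o(e^{2t}),
\]
and the second $t$-derivative is $-\tfrac{4(2-b)V(0)}{m}e^{2t}(1+o(1))<0$ for small $r$ whenever $b<2$ and $V(0)>0$. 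To make the $o$-terms legitimate, I would justify differentiating the expansion twice using the ODE in the form $(r^{m-1}f')'=-2r^{m-1}Vf$ with $V$ continuous.
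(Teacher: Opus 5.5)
\textbf{Gap 1: the per-mode inequality.} Your reduction matches the paper's: your $\sigma,\rho,W,\mu_k$ are exactly $X_k,Y,w,\beta_k^2$ of \cref{gauge:sec:riccati}, and your $\Lambda_k$ is $G_k$. But the decisive step, $\Lambda_k\ge0$, is not proved, and your fallback uses the wrong inequality. The Riccati equations give, identically and not only at zeros,
\[
\Lambda_k'=-2\sigma\Lambda_k+2(\sigma-\rho)(-\rho').
\]
With $\rho'\le0$ the source term is nonnegative exactly when $\sigma\ge\rho$. Under your condition $\sigma\le\rho$ it has the wrong sign. Moreover, $\sigma\le\rho$ is false in general: $\sigma\to\sqrt{\mu_k}$ and $\rho\to\sqrt{\mu_0}$ with $\mu_k>\mu_0$ for $k\ge1$.

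The initial data are also degenerate. Since $\Lambda_k=2(S^2-\sigma^2)$, both $\Lambda_k\to0$ and $\sigma-S\to0$ as $t\to-\infty$ for every $k$, not only $k=0$. So neither of your touching arguments starts from a strict inequality, and non-strict tangency does not exclude crossing.

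The missing idea is the comparison $\sigma\ge\rho$. It follows by variation of constants from
\[
(\sigma-\rho)'=(\mu_k-\mu_0)+W-(\sigma+\rho)(\sigma-\rho).
\]
The source here is nonnegative precisely because the gauge carries $2W$; the same Riccati comparison is already hidden in your Sturm step. Variation of constants from $-\infty$, where $\Lambda_k=O(e^{2t})$, then gives
\[
\Lambda_k(t)=2\int_{-\infty}^te^{-2\int_s^t\sigma}(\sigma-\rho)(-\rho')\,ds\ge0 .
\]
This is \cref{gauge:lem:defect} combined with \cref{gauge:lem:comparison}\textup{(iii)}, and it gives the lower bound $\sigma\ge-S$ you were stuck on, since $\Lambda_k\ge0$ is equivalent to $|\sigma|\le S$.

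\textbf{Gap 2: necessity in the characterisation.} The necessity half fails as proposed. There is only one regular radial solution, so ``$f_0$ ranging over all solutions'' adds nothing. Also, $\Lambda_0(t)$ is a weighted average of $-\rho'$ over $(-\infty,t]$, so it can stay positive just after $\rho'$ turns positive. Hence $k=0$ alone cannot force $\rho'\le0$.

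You need the high modes. Suppose $\rho'(t_*)>0$. The bounds
\[
\beta_k-\int_{-\infty}^{t_*}W\le\sigma_k\le\beta_k
\]
show that $2\sigma_k(s)e^{-2\int_s^{t_*}\sigma_k}$ is a probability density concentrating at $s=t_*$, while $(\sigma_k-\rho)/\sigma_k\to1$. Hence $\Lambda_k(t_*)\to-\rho'(t_*)<0$ as $k\to\infty$, which contradicts convexity for $u=f_kY_k$.

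\textbf{What is fine.} Your gauge identity, mode reduction, $m=2$ case, and minimality expansion are correct. For the minimality expansion, the paper avoids differentiating $o$-terms by writing the second derivative algebraically as $Y_b^2-2X_0^2+\nu^2+(b-2)w$.
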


In logarithmic derivatives, the explicit potential terms cancel; for constant potentials the resulting formulas can be written in terms of Bessel zeros. This exact radial calculation is the model for the local result on a manifold. On a small geodesic ball, curvature introduces lower-order errors, but these errors still lead to an almost-monotone frequency at the wavelength scale.

\paragraph{C.\ Almost-monotonicity and quantitative unique continuation.}

\begin{mainthm}[Almost-monotonicity]
\label{thm:C}
Assume $B(x_0,R)\Subset X\setminus\partial X$ and bounded geometry of order $2$ there. Let $u\not\equiv0$ be a real-valued solution of $(\tfrac12\Delta_g+a)u=0$ there, where $a\ge0$, $R\le r_0(m,\mathrm{bg})$, and $aR^2\le c_0$. Fix $x\in B(x_0,R/2)$ and set
\[
\mathcal N(r):=
\frac{r\int_{\partial B(x,r)}u\,\partial_\nu u\,P_r\,d\sigma}
{\int_{\partial B(x,r)}u^2P_r\,d\sigma},
\]
where $P_r$ is the Poisson kernel at $x$ for $\tfrac12\Delta_g+2a$ on $B(x,r)$. Then $\mathcal N$ is locally absolutely continuous and
\[
\mathcal N'(r)\ge
-C(\kappa+a)r\bigl(1+\mathcal N(r)\bigr)
\]
for almost every $r\in(0,R/2)$. Moreover, for almost every $r\in(0,R/2)$,
\[
\frac r2\frac{d}{dr}\log
\mathbb E_x\!\left[e^{2a\tau_{x,r}}u(B_{\tau_{x,r}})^2\right]
=
\mathcal N(r)+O\bigl((\kappa+a)r^2\bigr).
\]
Here $\kappa$ is the curvature bound from \cref{lem:poisson-comp-small}, taken with $\bar r=R/2$, and all constants depend only on $m$ and the bounded-geometry data.
\end{mainthm}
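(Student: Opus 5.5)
Set $L:=\tfrac12\Delta_g+2a$ on $B(x,r)$. The plan is to write both numerator and denominator of $\mathcal N$ as Green-weighted integrals over the ball, and differentiate those.

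Define
\[
H(r):=\int_{\partial B(x,r)}u^2P_r\,d\sigma=\mathbb E_x\!\left[e^{2a\tau_{x,r}}u(B_{\tau_{x,r}})^2\right].
\]
The second equality follows from the Feynman--Kac representation for $L$. The exit time is uniformly integrable because $aR^2\le c_0$ is below the first Dirichlet eigenvalue of $\tfrac12\Delta_g$ on $B(x,r)$, by the wavelength-scale bound.

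Apply the squared-martingale identity of \cref{thm:wavelength-stopping-identities}. Since $(\tfrac12\Delta+2a)(u^2)=|\nabla u|^2$ when $(\tfrac12\Delta+a)u=0$, this gives
\[
H(r)=u(x)^2+\int_{B(x,r)}G_r(x,y)\,|\nabla u|^2\,dy,
\]
where $G_r$ is the Green function of $L$. Set $D(r):=\int_{B(x,r)}G_r|\nabla u|^2$. By Green's formula, $D$ also equals $\int_{\partial B}u\,\partial_\nu u\,P_r$, up to the normalisation $\partial_\nu G_r=-P_r$ with the factor $\tfrac12$ absorbed. So $\mathcal N=rD/H$.

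\textbf{Step 1: the derivative of $\log H$.} I will differentiate $H$ in $r$ by Hadamard variation of the domain, i.e.\ differentiate $\int_{\partial B}u^2P_r$ in geodesic polar coordinates. In the Euclidean model with $a=0$, $P_r\,d\sigma$ is the normalised measure, and one gets $H'=\tfrac2r D$ exactly. In general I expect
\[
H'(r)=\frac2r\,D(r)+E(r),
\]
with an error $E$ coming from the $r$-dependence of the Poisson kernel. This error is controlled by \cref{lem:poisson-comp-small}, which gives $P_r=|\partial B|^{-1}(1+O((\kappa+a)r^2))$ together with its $r$-derivative. Then $|E|\lesssim(\kappa+a)r(H+D)$, which yields the second display of the theorem: $\tfrac r2(\log H)'=\mathcal N+O((\kappa+a)r^2)$.

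\textbf{Step 2: the derivative of $D$ (Rellich--Pohozaev).} Multiply the equation by $(r\nabla\rho\cdot\nabla u)P$-type weights, where $\rho$ is the distance to $x$, and use the Hessian comparison $\nabla^2\rho^2=2g+O(\kappa\rho^2)$. Since $\partial_r G_r$ is expressible through $P_r$, one gets
\[
D'(r)=\frac{m-2}{r}D+2\int_{\partial B}(\partial_\nu u)^2P_r\,d\sigma+O\bigl((\kappa+a)r\bigr)(D+H).
\]
The precise weighted form will be arranged so that the leading terms have the Euclidean shape. Combining this with Step 1,
\[
\frac{\mathcal N'}{\mathcal N}=\frac1r+\frac{D'}{D}-\frac{H'}{H}.
\]
Cauchy--Schwarz, $\bigl(\int u\,\partial_\nu u\,P\bigr)^2\le\int u^2P\int(\partial_\nu u)^2P$, gives nonnegativity of the main terms. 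The remainder is $\ge -C(\kappa+a)r(1+\mathcal N)$ after multiplying through by $\mathcal N$; the "$1+$" absorbs the $H$-error when $\mathcal N$ is small.

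\textbf{Regularity, sign, and the main obstacle.} Absolute continuity follows since $H$, $D$ are $W^{1,1}_{\rm loc}$ by the above formulas and $H>0$ by unique continuation. The denominator is handled from the same representation: $H\ge u(x)^2+D\ge0$, since $G_r\ge0$ for $aR^2\le c_0$. The main obstacle is Step 2 on a curved ball. The Pohozaev identity must be run with the non-radial weight $P_r$ and Green function $G_r$ rather than with the uniform measure. The error terms involving $\nabla_yG_r$ and $\partial_rP_r$ must be shown to be $O((\kappa+a)r)$ relative to $D+H$, and not merely relative to the unweighted Dirichlet energy. I would first try to reduce to the Euclidean identity by comparing $G_r$ with the Euclidean Green function $G^0_r$ via \cref{lem:poisson-comp-small}, and then bound the weighted energies by $D+H$ using $G_r\asymp G^0_r$.
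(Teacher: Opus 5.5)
Your architecture matches the paper's: a first variation of the height, a Rellich identity for $D'$, and Cauchy--Schwarz. However, the identity your Steps~1--2 rest on is false, and the step that carries the content is missing its key idea.

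\emph{The set-up identity.} Green's formula against $G_r(x,\cdot)$, applied to $u^2$ with $(\tfrac12\Delta_g+2a)(u^2)=|\nabla u|^2$, produces the boundary term $\int_{\partial B}u^2P_r$, not $\int_{\partial B}u\,\partial_\nu u\,P_r$. So your $D=\int_{B(x,r)}G_r|\nabla u|^2$ is just $H(r)-u(x)^2$. For example, take the flat case, $a=0$, and $u$ homogeneous harmonic of degree $k\ge1$. Then $H=cr^{2k}=\int G_r|\nabla u|^2$, but $r\int_{\partial B}u\,\partial_\nu u\,P_r=kcr^{2k}$. Your $rD/H=r(1-u(x)^2/H)$ is therefore not the frequency.

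The numerator is tied instead to the weight $\partial_rG_r$: by Hadamard's formula (cf.~\cref{gauge:prop:energy}), $2\int_{\partial B}u\,\partial_\nu u\,P_r=H'-E_H=\int_{B(x,r)}\partial_rG_r|\nabla u|^2-E_H$. Differentiating this again would need $\partial_r^2G_r$, which \cref{lem:poisson-comp-small} does not control.

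Your displayed formulas are also wrong in the flat model. They mix the normalised height, which has no $\frac{m-1}{r}H$ term, with the classical Rellich identity for the unnormalised energy $\int_{B_r}|\nabla u|^2$. Let $D,I,T$ be the $P_r$-weighted boundary integrals of $u\,\partial_\nu u$, $(\partial_\nu u)^2$ and $|\nabla_Tu|^2$. The correct flat identities are $H'=2D$, $D'=I+T-\frac{m-1}{r}D$ and $T=I+\frac{m-2}{r}D$. Hence $D'=2I-D/r$ and $\mathcal N'=\frac{2r}{H}(I-D^2/H)$.

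\emph{Step 1.} You need $|E_H|\le C(\kappa+a)rH$ with no $D$-term. This holds because $E_H=\int u^2\mathcal R_r$. A $D$-term would only give $N_u=\mathcal N+O((\kappa+a)r(r+|\mathcal N|))$, not the additive $O((\kappa+a)r^2)$.

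\emph{Sign of the numerator.} The true numerator can be negative, already for the radial solution in flat space with $a>0$. So neither $D\ge0$ nor division by $\mathcal N$ is available. The paper instead gets $\mathcal N\ge-\tfrac12$ from $H'\ge0$ (\cref{lem:height-mono}) and the bound on $E_H$. That lower bound is what lets $1+|\mathcal N|$ be replaced by $1+\mathcal N$ at the end.

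\emph{Step 2 on the curved ball.} The errors are not $O((\kappa+a)r)(D+H)$. A non-constant multiplier of $u\,\partial_\nu u$ on $S_r$ destroys the cancellation that can make $D$ small. The error $E_D$ in $D'$ is only bounded by $C(\kappa+a)rH^{1/2}(I^{1/2}+T^{1/2})$, which involves boundary gradient integrals. Comparing $G_r$ with $G^0_r$ and bounding bulk weighted energies does not reach these terms. Two ideas are missing.

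(a) Extend $P_r$ into the ball \emph{constantly along radial geodesics}, giving $\Phi$ with $Z\Phi=0$ for $Z=\rho\nabla\rho$. Then the weighted Rellich identity of \cref{lem:frozen-angular-rellich} reads $T=\frac{m-2}{r}D+I+2aH-\frac{4a}{r}U_\Phi+E_{\mathrm{fr}}$. Here $E_{\mathrm{fr}}$ collects only three terms. $J_{\mathrm{geo}}$ comes from $\nabla Z-g=O(\kappa\rho^2)$. $J_{\mathrm{ang}}$ and $Q_\Phi$ are controlled by the tangential bounds \eqref{eq:frozen-weight-calculus}. When this is substituted into $D'=I+T-2aH-\frac{m-1}{r}D+E_D$, the $2aH$ terms cancel. $E_D$ itself is controlled through \cref{lem:poisson-comp-small}(ii$'$).

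(b) Do not aim for errors bounded by $D+H$. Absorb them into the Cauchy--Schwarz deficit $\mathcal V=\frac{2r}{H}(I-D^2/H)\ge0$, using $I/H=\mathcal N^2/r^2+\mathcal V/(2r)$, Young's inequality and $U_\Phi\le CrH$. Use the Rellich identity once more to bound $T/H$. This yields $\mathcal N'\ge\frac34\mathcal V-C(\kappa+a)r(1+|\mathcal N|)$, and the lower bound on $\mathcal N$ then finishes the proof.
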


Almost-monotonicity gives the weighted three-radius inequality in
\cref{prop:remez-from-ost}\textup{(ii)}. Integrating the defining
logarithmic derivative gives the doubling estimates in
part~\textup{(i)}. After a ground-state transform, these supply the
doubling-index input for the Remez theorem of Logunov--Malinnikova
\cite[Lemma~4.2 and Remark~4.3]{LogunovMalinnikova2018}, yielding
part~\textup{(iii)}. Under the larger-ball hypotheses of
\cref{prop:remez-from-ost}, let $N_\sharp$ be as defined there. If $N_\sharp<\infty$, then
\[
\int_{B(x_0,R/2)}u^2\,d\operatorname{vol}_g
\le
\left(\frac C\gamma\right)^{C(1+N_\sharp)}
\int_Eu^2\,d\operatorname{vol}_g
\]
whenever $0<\gamma\le1$ and $E\subset B(x_0,R/2)$ has relative volume at least $\gamma$.

\paragraph{D.\ Averaged boundary variance and its local consequences.}
Assume that $(X,g)$ is a closed connected Riemannian manifold of dimension $m\ge2$, and let $\varphi$ be a nonzero real-valued eigenfunction satisfying $\Delta_g\varphi+\lambda\varphi=0$, with $\lambda>0$.

\cref{thm:B,thm:C} concern weighted second moments. We now return to the singly weighted exit law. After normalisation, its boundary variance measures how far the boundary values of an eigenfunction are from being constant. Small variance therefore forces the normalised eigenfunction inside the ball to be close to the positive solution of the same equation with constant boundary data. \cref{thm:D} shows that this happens at centres carrying almost all of the $\varphi^2$-mass.

More precisely, if $r=c\lambda^{-1/2}$, then, outside a set carrying only $O(c^{2/m})$ of the total $\varphi^2$-mass, division by $\varphi(x)$ makes $\varphi$ $O(c^{(m-1)/m})$-close, in scale-invariant $C^k$ on every fixed smaller ball, to that positive solution. Consequently, the component through $x$ of $\{\varphi(x)\varphi>0\}\cap B(x,r)$ contains $B(x,(1-O(c^{2/m}))r)$, misses only an $O(c^2)$ fraction of the volume of $B(x,r)$, and has first Dirichlet eigenvalue within relative error $O(c^2)$ of that of $B(x,r)$. The starting point is an averaged identity for the weighted boundary variance: its integral equals $Q_c\|\varphi\|_2^2$, up to an error $O_g(c^2\lambda^{-1})\|\varphi\|_2^2$, where $Q_c=c^2/m+O_m(c^4)$.

\begin{mainthm}[Averaged boundary variance and typical nodal rigidity]
\label{thm:D}
Suppose $\lambda\ge1$. There is $c_{\mathrm{rig}}>0$, depending only on bounded geometry, such that the following holds for $0<c\le c_{\mathrm{rig}}$. Put $r=c\lambda^{-1/2}$ and, for $x\in X$, let $\tau_{x,r}$ be the first exit time from $B(x,r)$ and set
\[
h_{x,r}(y)=\mathbb E_y[e^{(\lambda/2)\tau_{x,r}}],\qquad
\nu_{x,r}(A)=
\frac{\mathbb E_x[e^{(\lambda/2)\tau_{x,r}}
\mathbf 1_{\{B_{\tau_{x,r}}\in A\}}]}{h_{x,r}(x)}.
\]
Set
\[
\delta_{x,r}=
\frac{\operatorname{Var}_{\nu_{x,r}}(\varphi)}
{\int\varphi^2\,d\nu_{x,r}},\qquad
\mathcal V_{\lambda,c}(x)=
h_{x,r}(x)^2\operatorname{Var}_{\nu_{x,r}}(\varphi),
\qquad
Q_c=q_m(c^2/2)^2-1.
\]
Then
\[
\left|\int_X\mathcal V_{\lambda,c}\,d\operatorname{vol}_g
-Q_c\|\varphi\|_2^2\right|
\le C_g\frac{c^2}{\lambda}\|\varphi\|_2^2,
\qquad
Q_c=\frac{c^2}{m}+O_m(c^4)\qquad(c\downarrow0).
\]
Moreover, with
\[
G_{\lambda,c}:=
\left\{x\in X:\delta_{x,r}\le Q_c^{(m-1)/m}\right\},
\]
one has
\[
\int_{X\setminus G_{\lambda,c}}\varphi^2\,d\operatorname{vol}_g
\le C_gQ_c^{1/m}\|\varphi\|_2^2.
\]
Every $x\in G_{\lambda,c}$ satisfies $\varphi(x)\ne0$. Let $\Omega_x$ be the component of $\{\varphi(x)\varphi>0\}\cap B(x,r)$ containing $x$. For every fixed $\theta\in(0,1)$ and $k\in\mathbb N_0$,
\[
\sum_{j=0}^kr^j
\left\|\nabla^j\left(
\frac{\varphi}{\varphi(x)}-
\frac{h_{x,r}}{h_{x,r}(x)}
\right)\right\|_{L^\infty(B(x,\theta r))}
\le C_{g,\theta,k}Q_c^{(m-1)/(2m)},
\]
and
\[
\begin{aligned}
B\bigl(x,(1-C_gQ_c^{1/m})r\bigr)&\subset\Omega_x,\\
\frac{\operatorname{vol}_g(B(x,r)\setminus\Omega_x)}
{\operatorname{vol}_g(B(x,r))}&\le C_gQ_c,\\
1\le\frac{\mu_1(\Omega_x)}{\mu_1(B(x,r))}&\le1+C_gQ_c.
\end{aligned}
\]
\end{mainthm}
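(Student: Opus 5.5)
We will prove \cref{thm:D} in four steps: an averaged variance identity, a Chebyshev bound for the exceptional set, a variance-to-$C^k$ estimate, and nodal consequences. All steps rest on the optional-stopping identity \eqref{eq:intro-OST} and the second-moment identity of \cref{thm:wavelength-stopping-identities}.

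\emph{Step 1: the averaged identity.} First expand the variance. By \eqref{eq:intro-OST}, $h_{x,r}(x)\int\varphi\,d\nu_{x,r}=\varphi(x)$, so
\[
\mathcal V_{\lambda,c}(x)=h_{x,r}(x)\,\mathbb E_x\bigl[e^{(\lambda/2)\tau}\varphi(B_\tau)^2\bigr]-\varphi(x)^2 .
\]
The first term is $h_{x,r}(x)$ times an integral of $\varphi^2$ over $\partial B(x,r)$ against the weighted exit density. By \cref{prop:pole_isotropy_explicit}, this density is $q_m(c^2/2)(1+O(r^2))$ times normalised surface measure. Integrating in $x$ and using Fubini (the symmetry $x\leftrightarrow y$ of geodesic spheres), $\int_X\fint_{\partial B(x,r)}\varphi^2\,d\sigma\,dx$ equals $\|\varphi\|_2^2(1+O(r^2))$; the $O(r^2)$ comes from the volume-density expansion of $\partial B(x,r)$. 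Since $h_{x,r}(x)=q_m(c^2/2)(1+O(r^2))$ as well, we get $\int\mathcal V=(q_m^2-1)\|\varphi\|_2^2+O(r^2)\|\varphi\|_2^2$, and $r^2=c^2/\lambda$. The expansion $Q_c=c^2/m+O(c^4)$ follows from $q_m(s)=1+s/m+O(s^2)$, since $\mathbb E\tau=r^2/m$ for $\tfrac12\Delta$.

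\emph{Step 2: the exceptional set.} Since $\int\varphi^2\,d\nu_{x,r}\,h_{x,r}(x)^2=\mathcal V+\varphi(x)^2$, the inequality $\delta_{x,r}>Q_c^{(m-1)/m}$ forces
\[
\mathcal V(x)>Q_c^{(m-1)/m}\bigl(\mathcal V(x)+\varphi(x)^2\bigr),
\]
hence $\varphi(x)^2\le Q_c^{-(m-1)/m}\mathcal V(x)$. Integrating over $X\setminus G$ and applying Step 1 gives mass at most $CQ_c^{1/m}\|\varphi\|_2^2$, once $c\le c_{\mathrm{rig}}$ ensures $Q_c\gtrsim c^2\ge c^2/\lambda$. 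On $G$ we have $\delta<1$, which forces $\varphi(x)\ne0$.

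\emph{Step 3: $C^k$ closeness.} Let $w=\varphi/\varphi(x)-h_{x,r}/h_{x,r}(x)$. Then $(\tfrac12\Delta+\lambda/2)w=0$ on $B(x,r)$, and its boundary values are $(\varphi-\bar\varphi)/\varphi(x)$ with $\bar\varphi=\varphi(x)/h_{x,r}(x)$ the $\nu$-mean. The $\nu$-variance bound and $\varphi(x)^2\approx(1-\delta)h^2\int\varphi^2\,d\nu$ give $\|w\|_{L^2(\partial B,\nu)}\lesssim\delta^{1/2}\lesssim Q_c^{(m-1)/(2m)}$. Comparability of $\nu$ with surface measure, the Poisson representation for $\tfrac12\Delta+\lambda/2$ (positive, with $\lambda r^2=c^2$ small), and interior elliptic estimates in scale-invariant form then bound $w$ in $C^k(B(x,\theta r))$.

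\emph{Step 4: nodal consequences.} This is the main obstacle, because the $C^k$ control degenerates near $\partial B(x,r)$ while the statement asks for $B(x,(1-CQ_c^{1/m})r)\subset\Omega_x$. Here $h/h(x)\ge1$, and it is smooth up to the boundary. I would first try an $L^\infty$ Poisson-kernel bound at radius $\theta r$ with $1-\theta\sim Q_c^{1/m}$. The kernel blows up like $(1-\theta)^{-(m-1)/2}$ in $L^2$ duality, so the error is of order
\[
Q_c^{(m-1)/(2m)}\,(1-\theta)^{-(m-1)/2}=O(1)\cdot\text{small const},
\]
which exactly matches the exponent choices and keeps $\varphi/\varphi(x)>0$ there. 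The volume bound would come from a Chebyshev estimate on the set where $|w|\ge1/2$, using the boundary $L^2$ bound. Since $\delta\lesssim Q_c^{(m-1)/m}$ alone may not give an $O(Q_c)$ fraction, I may need to refine this using that $\varphi$ is an eigenfunction of wavelength $\gg r$. Finally, the eigenvalue bound follows from domain monotonicity ($\Omega_x\subset B$) together with the continuity of $\mu_1$ under removing a set of small volume contained in a thin shell. Concretely, use the cutoff of the ball's ground state to $B(x,(1-CQ_c^{1/m})r)$ as a test function, refining if needed so that the error matches $Q_c$.
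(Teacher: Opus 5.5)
Your Steps 1--3 and the depth inclusion are sound and follow the paper's route. The averaged identity is the pole-isotropy computation with $A_r^*1=1+O(r^2)$, and the exceptional set is the same Chebyshev argument with $\varepsilon=Q_c^{(m-1)/m}$. Write $D=B(x,r)$, $h=h_{x,r}$, $\bar m=\varphi(x)/h(x)$, and assume $\varphi(x)>0$ after a sign change. Depth then follows from $\varphi(y)\ge h(y)\bar m\bigl(1-C(r/s)^{(m-1)/2}(\delta_{x,r}/(1-\delta_{x,r}))^{1/2}\bigr)$ at distance $s$ from $\partial D$. This duality step needs the off-centre bound $d\nu_y/d\sigma_g\le C\,d(y,\xi)^{1-m}$ for the weighted kernel all the way to $\partial D$, not just comparability at the pole; the paper proves it in \cref{lem:offcentre-kernel}.

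The genuine gap is the volume bound. Chebyshev with the boundary $L^2$ control over the whole ball gives only $\operatorname{vol}_g\{|w|\ge\tfrac12\}\lesssim\delta_{x,r}\operatorname{vol}_g(D)\le Q_c^{(m-1)/m}\operatorname{vol}_g(D)$, which is weaker than $O(Q_c)$. More seriously, $D\setminus\Omega_x$ is not contained in $\{|w|\ge\tfrac12\}$: it also contains the components of $\{\varphi>0\}\cap D$ not containing $x$, where $w$ need not be large. Global eigenfunction structure is not what is missing either. The bound $\operatorname{vol}_g(D\setminus\Omega_x)\lesssim\delta_{x,r}^{m/(m-1)}\operatorname{vol}_g(D)$ holds for arbitrary local solutions and is sharp (\cref{prop:sharpness-capacity}).

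The missing idea is a Whitney--Vitali covering centred on the zero set. Put $F=(\bar m-\varphi)_+/\bar m$ on $\partial D$ and $\mathcal PF(y)=\mathbb E_y[e^{(\lambda/2)\tau_{x,r}}F(B_{\tau_{x,r}})]$. Each $y\in D\setminus\Omega_x$ lies in $B(z,d_z)$, where $z$ is the first zero on the radial geodesic from $x$ to $y$ and $d_z=\operatorname{dist}(z,\partial D)$. Optional stopping gives $\mathcal PF(z)\ge h(z)\ge1$ at every zero, and Harnack gives $\mathcal PF\ge c$ on $B(z,c'd_z)$. Vitali then yields disjoint balls $B(z_i,d_i)$ with $D\setminus\Omega_x\subset\bigcup_iB(z_i,5d_i)$ and $\sum_id_i^m\lesssim\operatorname{vol}_g\{\mathcal PF\ge c\}$. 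That last volume is $O(Q_cr^m)$ by the weak-type bound of \cref{lem:weighted-poisson-distribution-bound} with $p=2$, since $\|F\|_{L^2(\nu)}^2\le\delta_{x,r}/(1-\delta_{x,r})$. Your Chebyshev idea also gives this once localised. The duality estimate behind your depth bound puts $\{\mathcal PF\ge c\}$ in the shell of width $CQ_c^{1/m}r$. Monotonicity of weighted spherical $L^2$ means (\cref{lem:height-mono}) then bounds $\int_{\mathrm{shell}}(\mathcal PF)^2$ by $CQ_c^{1/m}r^m\|F\|_{L^2(\nu)}^2\lesssim Q_cr^m$.

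The eigenvalue step has the corresponding gap. Cutting the ground state $\psi$ of $D$ off to $B\bigl(x,(1-CQ_c^{1/m})r\bigr)$ costs a relative error of order $Q_c^{1/m}$, since it amounts to shrinking the radius. Smallness of the removed volume cannot by itself do better: a finely perforated sphere of radius $(1-\epsilon)r$ has zero volume and a connected complement in $D$, yet raises $\mu_1$ by a relative amount of order $\epsilon$.

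What works is a cutoff adapted to the Whitney balls. Take $\chi_i$ equal to $1$ on $B(z_i,5d_i)$, supported in $B(z_i,10d_i)$, with $|\nabla\chi_i|\lesssim d_i^{-1}$, and set $\eta=\prod_i(1-\chi_i)$. Then $\eta$ vanishes on a neighbourhood of $D\setminus\Omega_x$. Moreover $|\nabla\eta|\lesssim\operatorname{dist}(\cdot,\partial D)^{-1}\mathbf 1_U$ with $U=\bigcup_iB(z_i,10d_i)$: every such ball containing a point $y$ has $d_i\ge\operatorname{dist}(y,\partial D)/11$, and disjointness bounds their number in each dyadic range. Using the boundary decay $\psi\lesssim r^{-m/2}\operatorname{dist}(\cdot,\partial D)/r$, the ground-state identity gives $\int_D\psi^2|\nabla\eta|^2\lesssim r^{-m-2}\operatorname{vol}_g(U)\lesssim Q_cr^{-2}$. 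Hence $\mu_1(\Omega_x)\le(1+CQ_c)\mu_1(B(x,r))$; this is the content of \cref{lem:whitney-packing}.
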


At a prescribed centre $x_0$, the pointwise input is stability of Cauchy--Schwarz for the first-moment stopping formula. Equality forces $\varphi$ to be a constant multiple of $h_{x_0,r}$; if the normalised boundary variance is at most $\varepsilon$, then the profile error, relative radius loss, and volume and spectral errors are respectively $O(\varepsilon^{1/2})$, $O(\varepsilon^{1/(m-1)})$, and $O(\varepsilon^{m/(m-1)})$. The square-root profile loss is necessary for general local solutions, while the remaining powers are attained by genuine eigenfunctions (\cref{cor:cs-stability,prop:sharpness-capacity,cor:egp-sharpness}). The variance odds are exactly a weighted gradient energy of $\varphi/h_{x_0,r}$, and the underlying local theorem allows a normalised one-sided $L^p$ boundary deficit $\varepsilon_p$, with radius loss $O(\varepsilon_p^{p/(m-1)})$ and volume and spectral errors $O(\varepsilon_p^{mp/(m-1)})$ (\cref{prop:boundary-variance-energy,thm:boundary-deficit-rigidity}). The inclusion and volume conclusions pass to the global same-sign nodal domain; the spectral comparison concerns only the component truncated by $B(x,r)$.

\paragraph{Further first-moment consequences: superlevel components.}
Theorem D is a typical-centre result and permits a small exceptional set. At the maximum of any positive superlevel component, the same first-moment identity gives estimates without either a typicality assumption or a small-variance hypothesis. Let $C$ be a component of $\{\varphi>\ell\}$, $\ell\ge0$, let $M_C=\max_C\varphi$, choose $x_C\in C$ with $\varphi(x_C)=M_C$, put $\vartheta_C=\ell/M_C$, and let $\tau_C$ be the first exit time from $C$. Then
\[
\mathbb E_{x_C}\int_0^{\tau_C}\frac{\varphi(B_s)}{M_C}\,ds
=\frac{2(1-\vartheta_C)}{\lambda},
\qquad
\mathbb E_{x_C}e^{(\lambda/2)\tau_C}=\vartheta_C^{-1}
\quad(\ell>0),
\]
and, for $0<\rho\le r_0(m,\mathrm{bg})$,
\[
\frac{\operatorname{vol}_g(B(x_C,\rho)\setminus C)}
{\operatorname{vol}_g(B(x_C,\rho))}
\le C_g\min\left\{1,\frac{\lambda\rho^2}{1-\vartheta_C}\right\},
\qquad
\operatorname{vol}_g(C)\ge
c_g(1-\vartheta_C)^{m/2}\lambda^{-m/2}.
\]
These conclusions are proved in \cref{thm:superlevel-stopping}. For a bounded Euclidean component with $0<\vartheta_C<1$, Brownian symmetrisation gives the sharp form
\[
|C|\ge\omega_m\left(\frac{2\beta_m(\vartheta_C)}{\lambda}\right)^{m/2},
\qquad q_m(\beta_m(\vartheta_C))=\vartheta_C^{-1},
\]
with equality for concentric superlevel components of the regular radial Helmholtz solution (\cref{cor:euclidean-superlevel}). The volume bounds extend to uniformly elliptic equations with bounded nonnegative potential, and the resulting elliptic theorem applies, without interface regularity, to optimal composite membranes (\cref{thm:elliptic-superlevel}).

\paragraph{The reflected setting: Steklov boundary-to-collar estimates.}
The preceding arguments stop Brownian motion when it reaches the boundary of an interior domain. In the Steklov problem, Brownian motion is instead reflected at the boundary, and boundary local time takes the place of elapsed time. If $u$ is a Steklov eigenfunction with eigenvalue $\sigma$, then $e^{\sigma L_t}u(B_t)$ is a local martingale. For $\sigma\ge1$ on a $C^2$ domain, with $\rho_\sigma=\min\{\rho_*,(4\sigma)^{-1}\}$ and $\rho_*>0$ a sufficiently small fixed collar width, this gives
\[
\sup_{\partial X}|u|\le2\sup_{\{d=\rho_\sigma\}}|u|,
\qquad
\|u\|_{L^2(\partial X)}^2
\le6\|u\|_{L^2(\{d=\rho_\sigma\})}^2,
\]
and the scale $\sigma^{-1}$ is optimal (\cref{thm:steklov_obs}). The martingale identity itself uses only the reflecting process and its boundary local time. On a bounded Lipschitz domain, a quantitative collar estimate for that local time gives the boundary-to-collar bound with the universal constant $c/(c-1)$, $c>1$, at boundary distance $(c\sigma C_{\mathrm{Lip}})^{-1}$, where $C_{\mathrm{Lip}}$ depends only on $m$ and the Lipschitz character (\cref{thm:steklov-boundary-collar-supremum}). On smooth compact manifolds with smooth boundary, Wang--Zhang
\cite[Corollary~1 and Lemma~6]{WangZhang2024} already prove the
corresponding qualitative $L^2$ and $L^\infty$ lower comparisons on
parallel hypersurfaces at distances $t\le \sigma^{-1}$. The contribution
here is the explicit boundary-local-time proof and constants under $C^2$
regularity, together with the bounded-Lipschitz sup-norm estimate. For a
general Lipschitz boundary the Dirichlet-to-Neumann operator need not admit
the classical pseudodifferential description, and the smooth microlocal
methods of \cite{HislopLutzer2001,GalkowskiToth2019,WangZhang2024} do not
cover this setting.

\section{Optional stopping at the wavelength scale}
\label{sec:setup}

We assume that $X$ is complete and let $(B_t)$ be Brownian motion with generator $\tfrac12\Delta_g$. All processes below are stopped on leaving relatively compact domains, so no global stochastic-completeness assumption is needed.

Let $(X,g)$ be an $m$-dimensional Riemannian manifold, let $U\subset X$ be open, and fix an integer $k_0\ge0$. We say that $(X,g)$ has \emph{bounded geometry of order $k_0$ on $U$} if there are $i_0>0$ and $K_0<\infty$ such that
\[
\operatorname{inj}_g(x)\ge i_0\quad(x\in U),\qquad
|\nabla^j\mathrm{Rm}_g|_g\le K_0\quad\text{on }U
\quad(0\le j\le k_0).
\]
For $k_0=0$ these are the injectivity-radius and curvature bounds. On a closed manifold one may take $U=X$; on a noncompact manifold one may work in any region, such as a thick part, where these bounds hold uniformly.

\subsection{The wavelength scale and uniform integrability}
\label{subsec:eigenscale-UI}

For open $D\subset X$, let $\mu_1(D)$ be the first Dirichlet eigenvalue of $-\tfrac12\Delta_g$ on $D$, and let
\[
\tau_D:=\inf\{t\ge0:B_t\notin D\}.
\]

\begin{lemma}[Ball eigenvalue and exponential exit-time moments]
\label{lem:survival-gap}
Assume bounded geometry on $U$. There is $r_{\mathrm{geo}}>0$, depending only on $i_0,K_0,m$, such that, whenever $D=B(x_0,r)\Subset U$ and $0<r\le r_{\mathrm{geo}}$, the following hold:
\begin{enumerate}
\item\label{it:gap} There are $0<c_{\mathrm{gap}}\le C_{\mathrm{gap}}<\infty$, depending only on $i_0,K_0,m$, such that
\[
c_{\mathrm{gap}}r^{-2}\le\mu_1(D)\le C_{\mathrm{gap}}r^{-2}.
\]
\item\label{it:mgf} If $0<a<\mu_1(D)$ and $\theta=a/\mu_1(D)$, then
\[
\sup_{x\in D}\mathbb E_x[e^{a\tau_D}]\le C_\theta<\infty,
\]
where $C_\theta$ depends only on $i_0,K_0,m,\theta$. Moreover, for constants $A_1,C_1>0$ depending only on $i_0,K_0,m$,
\[
ar^2\le A_1\quad\Longrightarrow\quad
\sup_{x\in D}\mathbb E_x[e^{a\tau_D}]\le1+C_1ar^2.
\]
\end{enumerate}
\end{lemma}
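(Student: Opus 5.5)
We first compare $\mu_1(D)$ with the Euclidean ball eigenvalue, and then convert the spectral gap into exponential exit-time bounds through the Feynman--Kac gauge.

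\textbf{Step 1: the eigenvalue of a small ball.} Choose $r_{\mathrm{geo}}\le\min\{i_0/2,\,c_m K_0^{-1/2}\}$. For $r\le r_{\mathrm{geo}}$, the exponential map $\exp_{x_0}$ is a diffeomorphism from the Euclidean ball $B^{\mathrm{eucl}}_r$ onto $D$. Rauch comparison, with $|\mathrm{Rm}_g|\le K_0$, gives $(1-CK_0r^2)\delta\le g\le(1+CK_0r^2)\delta$ in normal coordinates. The Rayleigh quotient of $-\tfrac12\Delta_g$ on $D$ is therefore comparable, with constants $(1\pm CK_0r^2)^{\pm(m+2)/2}$, to the Euclidean quotient on $B^{\mathrm{eucl}}_r$. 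Consequently $\mu_1(D)$ lies within a factor of $2$ of $j_{(m-2)/2,1}^2/(2r^2)$, which proves \eqref{it:gap}. Since the metric is only comparable to $\delta$ in $C^0$, this step uses no derivatives of the curvature.

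\textbf{Step 2: the exponential moment at fixed $\theta$.} Let $\phi_1>0$ be the normalised ground state of $D$. The Dirichlet heat kernel gives
\[
\mathbb P_x(\tau_D>t)=\int_D p^D_t(x,y)\,dy .
\]
Write $t=t_0+s$ with $t_0=r^2$, apply the semigroup property, and use the spectral expansion. This yields
\[
\mathbb P_x(\tau_D>t)\le C e^{-\mu_1(D)(t-r^2)}.
\]
To see that $C$ depends only on $(i_0,K_0,m)$, use $\sup p^D_{r^2}\le Cr^{-m}$, which follows from on-diagonal heat-kernel bounds (via Step 1 and Sobolev/Faber--Krahn in normal coordinates), together with $\operatorname{vol}(D)\le Cr^m$. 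This tail bound is the main obstacle: every constant must be uniform in the stated geometric data. If the heat-kernel route becomes awkward, I would instead use a Moser-type $L^\infty$ bound on $\phi_1$ after rescaling $D$ to unit size. With the tail bound in hand,
\[
\mathbb E_x e^{a\tau_D}=1+a\int_0^\infty e^{at}\,\mathbb P_x(\tau_D>t)\,dt
\le 1+ar^2e^{ar^2}+Ca\int_{r^2}^\infty e^{at-\mu_1(t-r^2)}\,dt .
\]
Since $\mu_1 r^2\le C_{\mathrm{gap}}$, the right-hand side is at most $C\bigl(1+\theta/(1-\theta)\bigr)e^{C\theta}=:C_\theta$.

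\textbf{Step 3: the small-$ar^2$ refinement.} Set $w(x)=\mathbb E_x e^{a\tau_D}$, the gauge. Once Step 2 guarantees finiteness, $w$ solves
\[
(\tfrac12\Delta_g+a)w=0\ \text{in }D,\qquad w=1\ \text{on }\partial D,
\]
by the strong Markov property and optional stopping with uniform integrability. Put $v=w-1$, so that $(\tfrac12\Delta_g+a)v=-a$ with $v=0$ on $\partial D$. Equivalently,
\[
v(x)=a\,\mathbb E_x\!\int_0^{\tau_D}e^{as}\,ds=a\int_0^\infty e^{at}\,\mathbb P_x(\tau_D>t)\,dt .
\]
Choose $A_1\le c_{\mathrm{gap}}/2$. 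Then $ar^2\le A_1$ forces $\theta\le1/2$, and the integral from Step 2 gives $v\le Ca\bigl(r^2+1/(\mu_1-a)\bigr)\le C_1ar^2$, using $\mu_1-a\ge\mu_1/2\ge c_{\mathrm{gap}}r^{-2}/2$ from Step 1. Alternatively, a maximum-principle argument gives the same bound directly: compare $v$ with $\sup v\cdot a\,\mathbb E_x\tau_D$ and use $\sup_x\mathbb E_x\tau_D\le Cr^2$, which can be checked with a barrier built from $r^2-d(x_0,\cdot)^2$ via Laplacian comparison. This yields $\sup v\le C ar^2(1+\sup v)$, and absorbing the $\sup v$ term once $A_1$ is small enough gives the claim.
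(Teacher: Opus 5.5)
Your proposal is correct and follows essentially the same route as the paper. Part~(1) comes from comparing Rayleigh quotients in rescaled normal coordinates. Part~(2) uses a uniform Dirichlet heat-kernel bound at time $t_0\asymp r^2$ and spectral decay to get $\mathbb P_x(\tau_D>t)\lesssim e^{-\mu_1(D)(t-t_0)}$, inserts this into $\mathbb E_x[e^{a\tau_D}]=1+a\int_0^\infty e^{at}\mathbb P_x(\tau_D>t)\,dt$ split at $t_0$, and chooses $A_1$ so that $a/\mu_1(D)\le\tfrac12$.

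Your extra details are sound, namely the Nash/Faber--Krahn justification of $\sup p^D_{r^2}\le Cr^{-m}$ and the maximum-principle variant of Step~3. Keep the heat-kernel route rather than the Moser fallback, since an $L^\infty$ bound on $\phi_1$ alone does not control the higher spectral modes in $\mathbb P_x(\tau_D>t)$.
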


\begin{proof}
After rescaling normal coordinates by $r^{-1}$, the metric and volume density are uniformly comparable with their Euclidean counterparts. Comparison of Rayleigh quotients proves \textup{(1)}.

For \textup{(2)}, the Dirichlet heat-kernel bound at $t_0\asymp r^2$, followed by spectral decay, gives $\mathbb P_x(\tau_D>t)\le Ce^{-\mu_1(D)t}$ for $t\ge t_0$. Insert this in
\[
\mathbb E_x[e^{a\tau_D}]
=1+a\int_0^\infty e^{at}\mathbb P_x(\tau_D>t)\,dt
\]
and split at $t_0$. This proves the asserted dependence on $a/\mu_1(D)$; if $ar^2\le A_1$ is small, then $a/\mu_1(D)\le\tfrac12$, and the same calculation gives $1+Car^2$.
\end{proof}

\begin{remark}[Choice of the wavelength-scale radius]
\label{rem:cES-choice}
Fix $\theta_{\mathrm{ES}}\in(0,1)$ and set
\[
c_{\mathrm{ES}}:=
\min\left\{\frac{r_{\mathrm{geo}}}{4},
\frac12\sqrt{\theta_{\mathrm{ES}}c_{\mathrm{gap}}}\right\}.
\]
For $\lambda\ge1$, put $R=c_{\mathrm{ES}}\lambda^{-1/2}$ and assume $B(x_0,2R)\Subset U$. Then
\[
\frac{\lambda}{\mu_1(B(x_0,2R))}
\le\frac{4c_{\mathrm{ES}}^2}{c_{\mathrm{gap}}}
\le\theta_{\mathrm{ES}}<1.
\]
Domain monotonicity therefore gives $\lambda<\mu_1(B(x_0,R))$, while $B(x_0,2R)$ leaves room for local cutoffs. After all small-ball constants $r_0,c_0$ used below have been fixed, we decrease $c_{\mathrm{ES}}$, without changing notation, so that $c_{\mathrm{ES}}\le r_0$ and $c_{\mathrm{ES}}^2/2\le c_0$.
\end{remark}

\subsection{Wavelength-scale stopping identities}
\label{subsec:variance}

\begin{theorem}[Stopping identities at the wavelength scale]
\label{thm:wavelength-stopping-identities}
Let $\lambda\ge1$, let $\varphi\in C^\infty(X)$ satisfy $\Delta_g\varphi+\lambda\varphi=0$, and let $D=B(x_0,R)$ be as in \cref{rem:cES-choice}. Set
\[
M_t:=e^{(\lambda/2)t}\varphi(B_t).
\]
Then $(M_{t\wedge\tau_D})_{t\ge0}$ is a uniformly integrable martingale bounded in $L^2$. For every $x\in D$,
\begin{enumerate}
\item[\textup{(i)}]
\[
\varphi(x)=
\mathbb E_x[e^{(\lambda/2)\tau_D}\varphi(B_{\tau_D})].
\]
\item[\textup{(ii)}]
\begin{equation}\label{eq:variance-identity}
\operatorname{Var}_x(M_{\tau_D})
=\mathbb E_x\!\left[\int_0^{\tau_D}
e^{\lambda s}|\nabla\varphi|_g^2(B_s)\,ds\right]
=\int_DG_D^{(\lambda)}(x,y)|\nabla\varphi(y)|_g^2\,
d\operatorname{vol}_g(y),
\end{equation}
where $G_D^{(\lambda)}$ is the positive kernel of $(-\tfrac12\Delta_D-\lambda)^{-1}$.
\end{enumerate}
More generally, \textup{(i)} holds on any relatively compact ball $D$ for which $\lambda/2<\mu_1(D)$.
\end{theorem}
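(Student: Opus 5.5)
Apply Itô's formula to $M_t=e^{(\lambda/2)t}\varphi(B_t)$. Since the generator is $\tfrac12\Delta_g$ and $\tfrac12\Delta_g\varphi=-\tfrac\lambda2\varphi$, the drift terms $\tfrac\lambda2 e^{(\lambda/2)t}\varphi+e^{(\lambda/2)t}\tfrac12\Delta_g\varphi$ cancel. Hence
\[
dM_t=e^{(\lambda/2)t}\langle\nabla\varphi(B_t),dW_t\rangle,
\qquad
d\langle M\rangle_t=e^{\lambda t}|\nabla\varphi|_g^2(B_t)\,dt .
\]
So $M_{t\wedge\tau_D}$ is a local martingale. Because $\overline D$ is compact and $\varphi$ is smooth, $\varphi$ and $|\nabla\varphi|_g$ are bounded on $\overline D$. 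The only difficulty is the growth of the exponential weight. Everything therefore reduces to controlling exponential moments of $\tau_D$ via \cref{lem:survival-gap}.

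\textbf{$L^2$-boundedness.} We have
\[
\mathbb E_x\langle M\rangle_{t\wedge\tau_D}\le\|\nabla\varphi\|_{L^\infty(D)}^2\,\mathbb E_x\int_0^{\tau_D}e^{\lambda s}\,ds\le \lambda^{-1}\|\nabla\varphi\|_\infty^2\,\mathbb E_x e^{\lambda\tau_D}.
\]
By \cref{rem:cES-choice}, $\lambda\le\theta_{\mathrm{ES}}\mu_1(B(x_0,2R))<\mu_1(D)$, with ratio $\lambda/\mu_1(D)\le\theta_{\mathrm{ES}}$ bounded away from $1$ (use domain monotonicity; in fact $\mu_1(D)\ge c_{\mathrm{gap}}R^{-2}$ gives ratio at most $\theta_{\mathrm{ES}}/4$). \cref{lem:survival-gap}\textup{(2)}, applied with $a=\lambda$, therefore gives $\sup_x\mathbb E_xe^{\lambda\tau_D}<\infty$. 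This is the one place where the doubled exponent $\lambda$, rather than $\lambda/2$, must lie below $\mu_1(D)$. Burkholder--Davis--Gundy (or the Itô isometry together with a bounded starting value) then shows that $M_{t\wedge\tau_D}$ is a true martingale bounded in $L^2$. It is therefore uniformly integrable and converges a.s. and in $L^2$ to $M_{\tau_D}$; note $\tau_D<\infty$ a.s. since $\mathbb E\tau_D<\infty$. Optional stopping yields \textup{(i)}.

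\textbf{Variance identity.} For \textup{(ii)}, the $L^2$ martingale convergence gives
\[
\operatorname{Var}_x(M_{\tau_D})=\mathbb E_x[M_{\tau_D}^2]-\varphi(x)^2=\mathbb E_x\langle M\rangle_{\tau_D},
\]
using monotone convergence in $t$. This is the first equality. The second is the standard occupation-density formula for Brownian motion killed on leaving $D$ with Feynman--Kac weight $e^{\lambda s}$. For nonnegative $f$, the function $w(x)=\mathbb E_x\int_0^{\tau_D}e^{\lambda s}f(B_s)\,ds$ is the minimal nonnegative solution of $(-\tfrac12\Delta_D-\lambda)w=f$. It is finite because $\lambda<\mu_1(D)$, and in that case it equals $\int_D G_D^{(\lambda)}(x,y)f(y)\,dy$, where $G_D^{(\lambda)}=\int_0^\infty e^{\lambda t}p_t^D\,dt$ in terms of the Dirichlet heat kernel. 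The kernel is positive and finite off the diagonal by the spectral gap. Take $f=|\nabla\varphi|_g^2$.

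\textbf{General statement.} For an arbitrary relatively compact ball with $\lambda/2<\mu_1(D)$, only $\sup_x\mathbb E_xe^{(\lambda/2)\tau_D}<\infty$ is available, and $L^2$ bounds may fail. The plan is to use $|M_{t\wedge\tau_D}|\le\|\varphi\|_{L^\infty(D)}e^{(\lambda/2)\tau_D}$, which is an integrable dominating variable. This requires an exponential-moment bound for $a=\lambda/2<\mu_1(D)$, obtained exactly as in the proof of \cref{lem:survival-gap}\textup{(2)}: the bound $\mathbb P_x(\tau_D>t)\le C_De^{-\mu_1(D)t}$ from the Dirichlet heat-kernel expansion on a relatively compact domain, without needing small radius. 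Dominated convergence then gives uniform integrability and \textup{(i)}.

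The main obstacle is precisely this exponential-moment input. It is needed at rate $\lambda$ for the second moment, and the wavelength choice in \cref{rem:cES-choice} is made so that this holds uniformly.
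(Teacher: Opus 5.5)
Your proposal is correct and follows the paper's proof: It\^o's formula, the exponential moment $\sup_x\mathbb E_xe^{\lambda\tau_D}<\infty$ from \cref{lem:survival-gap}\textup{(2)} (available because $\lambda/\mu_1(D)\le\theta_{\mathrm{ES}}$), optional stopping, the It\^o isometry, and Tonelli with $G_D^{(\lambda)}=\int_0^\infty e^{\lambda s}p_D(s,\cdot,\cdot)\,ds$. The final clause is handled, as in the paper, by the domination $|M_{t\wedge\tau_D}|\le\|\varphi\|_\infty e^{(\lambda/2)\tau_D}$ together with spectral decay of the killed semigroup. The only difference is cosmetic: you obtain $L^2$-boundedness from $\mathbb E_x\langle M\rangle_{\tau_D}<\infty$, whereas the paper squares that pointwise domination directly, and both rest on the same rate-$\lambda$ exponential moment.
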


\begin{proof}
It\^o's formula makes the stopped process a local martingale, while \cref{lem:survival-gap}\textup{(2)} and
\[
|M_{t\wedge\tau_D}|
\le\|\varphi\|_{L^\infty(\overline D)}e^{(\lambda/2)\tau_D},
\]
make it $L^2$-bounded and uniformly integrable; optional stopping proves \textup{(i)}. For the final clause, killed-semigroup ultracontractivity and spectral decay give the required $L^1$ exponential moment whenever $\lambda/2<\mu_1(D)$, and the same argument applies. Finally,
\[
d\langle M\rangle_t=e^{\lambda t}|\nabla\varphi|_g^2(B_t)\,dt
\qquad(t<\tau_D).
\]
The It\^o isometry gives the first equality in \eqref{eq:variance-identity}, and Tonelli identifies the occupation kernel with $\int_0^\infty e^{\lambda s}p_D(s,x,y)\,ds=G_D^{(\lambda)}(x,y)$.
\end{proof}

\section{Exponentially weighted exit distributions on small balls}
\label{sec:toolbox}

The proofs of \cref{thm:C,thm:D} use a small-ball comparison for exponentially weighted exit distributions. The Euclidean formula also appears in the constant-potential calculation in \cref{sec:critical-gauge}. 

Put $\nu=(m-2)/2$. The Euclidean exit-time moment below gives the leading term in the small-ball comparison.

\paragraph{The Euclidean exit-time moment.}
Write $j_{\nu,1}$ for the first positive zero of the Bessel function $J_\nu$. For $0\le\beta<\tfrac12j_{\nu,1}^2$, define
\begin{equation}\label{eq:q_m_def}
q_m(\beta):=
\mathbb E^{\mathbb R^m}_0
\left[e^{\beta\tau_{B(0,1)}}\right].
\end{equation}
Equivalently, $q_m(\beta)$ is the value at the origin of the unique solution
\[
\left(\tfrac12\Delta_{\mathbb R^m}+\beta\right)u=0
\quad\text{in }B(0,1),\qquad
u=1\quad\text{on }\partial B(0,1).
\]

For $m\ge2$, $\nu=(m-2)/2$, and $0<\beta<\tfrac12j_{\nu,1}^2$, uniqueness and rotational invariance make the solution radial. The regular solution, normalised to equal $1$ at $\rho=1$, is
\[
u(\rho)=
\frac{\rho^{-\nu}J_\nu(\sqrt{2\beta}\,\rho)}
{J_\nu(\sqrt{2\beta})}.
\]
Taking $\rho\downarrow0$ gives
\begin{equation}\label{eq:q_m_bessel}
q_m(\beta)=
\frac{(\sqrt{2\beta}/2)^\nu}
{\Gamma(\nu+1)J_\nu(\sqrt{2\beta})}.
\end{equation}
This extends continuously to $q_m(0)=1$. The first two terms of the Bessel series, together with $2(\nu+1)=m$, give
\begin{equation}\label{eq:q_m_small_beta}
q_m(\beta)=1+\frac{\beta}{m}+O_m(\beta^2)
\qquad(\beta\downarrow0).
\end{equation}
In particular, $q_2(\beta)=J_0(\sqrt{2\beta})^{-1}$.

\begin{lemma}[Bounds for exponentially weighted Poisson kernels]
\label{lem:poisson-comp-small}
Let $m\ge2$, and assume bounded geometry of order $2$ on $U$. Set
\[
c_0:=\min\bigl\{\tfrac14c_{\mathrm{gap}},\tfrac18j_{\nu,1}^2\bigr\}.
\]
There are constants $r_0\in(0,r_{\mathrm{geo}}]$ and $0<c\le C<\infty$, depending only on the bounded-geometry data and $m$, such that the following holds. Fix $x\in U$, $a\ge0$, and $0<\bar r\le r_0$ such that $B(x,\bar r)\Subset U$ and $a\bar r^2\le c_0$, and set
\[
\kappa=\sup_{B(x,\bar r)}
\bigl(|\mathrm{Rm}|+r_0|\nabla\mathrm{Rm}|
+r_0^2|\nabla^2\mathrm{Rm}|\bigr).
\]
For $0<r\le\bar r$, put $S_r=\partial B(x,r)$ and $\tau_r=\tau_{B(x,r)}$, and define $P_r$ by
\[
\mathbb E_x\!\left[e^{2a\tau_r}F(B_{\tau_r})\right]
=\int_{S_r}F(\xi)P_r(\xi)\,d\sigma_g(\xi)
\]
for bounded Borel $F$.

\emph{(i) Pointwise bounds.} For every $\xi\in S_r$,
\begin{equation}\label{eq:poisson-comp}
cr^{1-m}\le P_r(\xi)\le Cr^{1-m}.
\end{equation}

\emph{(ii) Comparison with the Euclidean kernel.} Let $S_xX=\{\omega\in T_xX:|\omega|=1\}$, $c_m=|S^{m-1}|^{-1}$, and
\[
\Theta(r,\omega)=
\frac{P_r(\exp_x(r\omega))}
{c_mr^{1-m}q_m(2ar^2)}-1.
\]
The map $r\mapsto\Theta(r,\cdot)$ is locally absolutely continuous in $L^\infty(S_xX)$, and
\begin{equation}\label{eq:poisson-pole-expansion}
\|\Theta(r,\cdot)\|_{C^2(S_xX)}\le C\kappa r^2,\qquad
\|\partial_r\Theta(r,\cdot)\|_{L^\infty(S_xX)}
\le C\kappa r,
\end{equation}
where the second estimate holds for almost every $r$. After decreasing $r_0$, if necessary, $1+\Theta\ge\tfrac12$.

\emph{(ii$'$) Radial and tangential derivatives.} Set $\rho=d(x,\cdot)$, and, for $\xi=\exp_x(r\omega)$, define
\[
\mathcal R_r(\xi)=
\frac d{dr}P_r(\exp_x(r\omega))
+(\Delta_g\rho)(\xi)P_r(\xi).
\]
Then, for almost every $r\in(0,\bar r)$,
\begin{equation}\label{eq:poisson-transport-bounds}
\left\|\frac{\mathcal R_r}{P_r}\right\|_{L^\infty(S_r)}
+\left\|\frac{\nabla_{S_r}P_r}{P_r}\right\|_{L^\infty(S_r)}
\le C(\kappa+a)r.
\end{equation}
For fixed $r$, define $\Phi_r(\exp_x(s\omega))=P_r(\exp_x(r\omega))$, $0<s\le r$. Then
\begin{equation}\label{eq:frozen-weight-calculus}
\Phi_r\asymp r^{1-m},\qquad
\frac{s|\nabla_{S_s}\Phi_r|}{\Phi_r}
+\frac{s^2|\Delta_{S_s}\Phi_r|}{\Phi_r}
\le C(\kappa+a)r^2.
\end{equation}

\emph{(iii) Surface averages.} For real-valued $u\in L^2(S_r)$, set $H_u(x,r)=\int_{S_r}u^2P_r\,d\sigma_g$. Then
\begin{equation}\label{eq:H-comp-surfmean}
\frac{c}{\sigma_g(S_r)}\int_{S_r}u^2\,d\sigma_g
\le H_u(x,r)\le
\frac{C}{\sigma_g(S_r)}\int_{S_r}u^2\,d\sigma_g.
\end{equation}
\end{lemma}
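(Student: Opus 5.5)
Everything will be done in rescaled normal coordinates. Write $P_r(\xi)$ as the normal derivative at $\xi$ of the Green function $G_r(x,\cdot)$ of $\tfrac12\Delta_g+2a$ on $B(x,r)$ with pole at $x$, suitably normalised (up to the factor $\tfrac12$). This representation follows from Dynkin's formula once $\mathbb E_x e^{2a\tau_r}<\infty$, and \cref{lem:survival-gap} gives that, because $2ar^2\le2c_0\le\tfrac12c_{\mathrm{gap}}$. Pull back by $y\mapsto\exp_x(ry)$ to the unit ball $B_1\subset\mathbb R^m$. The rescaled metric $g_r(y)=r^{-2}(\exp_x^*g)(ry)$ satisfies $\|g_r-\delta\|_{C^{2,\alpha}(B_1)}\le C\kappa r^2$; this is the standard normal-coordinate expansion $g_{ij}=\delta_{ij}-\tfrac13R_{ikjl}y^ky^l+O(|\nabla\mathrm{Rm}|r^3+\dots)$, and bounded geometry of order $2$ controls the derivatives. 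The operator becomes $\tfrac12\Delta_{g_r}+\beta$ with $\beta=2ar^2\le 2c_0\le\tfrac14 j_{\nu,1}^2$, which stays away from the Euclidean threshold $\tfrac12j_{\nu,1}^2$. The Euclidean model kernel is explicit: the radial solution in \eqref{eq:q_m_bessel} gives constant density $c_m q_m(\beta)$ on $S^{m-1}$.

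For (ii), I will write the rescaled Green function as $G^{g_r}=G^{\delta}_\beta+w$ and treat $w$ perturbatively. Here $G^{\delta}_\beta$ is the Euclidean Helmholtz Green function with pole at $0$, a radial Bessel-type function. The difference solves $(\tfrac12\Delta_{g_r}+\beta)w=-\tfrac12(\Delta_{g_r}-\Delta_\delta)G^\delta_\beta$ with $w=0$ on $\partial B_1$. Near the pole the right side is $O(\kappa r^2)\,|y|^{-m}$, because the coefficient difference vanishes to second order at $0$ in normal coordinates. Hence it lies in weighted spaces, and it is smooth away from the pole. Invertibility of $\tfrac12\Delta_{g_r}+\beta$ with uniform bounds follows from $\beta\le\tfrac12\mu_1$ and a Neumann series in $C\kappa r^2$. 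Schauder estimates up to the boundary, applied in an annulus $\tfrac12\le|y|\le1$, then give $\|\partial_\nu w\|_{C^2(S^{m-1})}\le C\kappa r^2\beta$-uniformly. Converting to $\sigma_g$ and undoing the scaling gives $\|\Theta\|_{C^2}\le C\kappa r^2$; the area density $1+O(\kappa r^2)$ is absorbed here. Taking $r_0$ small gives $1+\Theta\ge\tfrac12$, and then (i) and (iii) follow immediately because $q_m(\beta)\in[1,q_m(2c_0)]$ and $\sigma_g(S_r)\asymp r^{m-1}$.

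The main obstacle is the $r$-derivative, $\|\partial_r\Theta\|\le C\kappa r$, together with (ii$'$). Differentiating in $r$ changes both the metric $g_r$ and $\beta$. The family $g_r$ is $C^1$ in $r$ with $\|\partial_r g_r\|_{C^{2,\alpha}}\le C\kappa r$, since the $r^2$ coefficient is smooth in $r$. Differentiating the perturbative equation for $w$ then gives an $O(\kappa r)$ bound for $\partial_r(\partial_\nu w)$. Differentiating the $\beta$-dependence produces terms $\partial_\beta q_m\cdot 4ar$ in the Euclidean part, which cancel exactly against the normalisation $q_m(2ar^2)$. In the perturbation $w$ these terms are $O(\kappa r^2\cdot ar)$, which is harmless. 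Absolute continuity comes from this $C^1$ dependence, taken away from the countable set where regularity could fail; in fact the dependence is smooth.

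For (ii$'$), $\nabla_{S_r}P_r/P_r=r^{-1}\nabla_\omega\Theta/(1+\Theta)=O(\kappa r)$. Write $\Delta_g\rho=(m-1)/r+O(\kappa r)$. Then
\[
\frac{d}{dr}P_r/P_r=-(m-1)/r+\frac{d}{dr}\log q_m(2ar^2)+O(\kappa r),
\]
and $\frac{d}{dr}\log q_m(2ar^2)=O(ar)$ by \eqref{eq:q_m_small_beta}. So $\mathcal R_r/P_r=O((\kappa+a)r)$. The frozen-weight bounds \eqref{eq:frozen-weight-calculus} follow by the same scaling: $\Phi_r$ is constant along radial geodesics, so its tangential derivatives on $S_s$ are $(r/s)$-rescaled tangential derivatives on $S_r$, i.e. $s|\nabla_{S_s}\Phi_r|\lesssim \|\nabla_\omega\Theta\|$ and $s^2|\Delta_{S_s}\Phi_r|\lesssim\|\Theta\|_{C^2}$ up to $O(\kappa s^2)$ metric corrections. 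This gives $C\kappa r^2\le C(\kappa+a)r^2$.
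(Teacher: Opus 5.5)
Your proposal is correct and follows essentially the paper's route: rescaled normal coordinates, the Green-function difference with cancelling poles, boundary Schauder estimates on a collar plus the Gauss lemma to get $P^{(\beta)}_{g_r}(0,\omega)-c_mq_m(\beta)=-\tfrac12\partial_\rho w$, separate $r$- and $\beta$-derivatives combined through $\beta=2ar^2$, and the logarithmic derivative with $\Delta_g\rho=(m-1)/r+O(\kappa r)$ for (ii$'$); the only difference is that you bound $w$ globally by $W^{2,p}$ theory and a Neumann series around the Euclidean operator, rather than by Gr\"uter--Widman Green bounds and a finite resolvent iteration. One slip to fix: the source $-\tfrac12(\Delta_{g_r}-\Delta_\delta)G^\delta_\beta$ is $O(\kappa r^2|y|^{2-m})$, not $O(\kappa r^2)|y|^{-m}$ (which is not even locally integrable), because the coefficient differences $O(\kappa r^2|y|^2)$ and $O(\kappa r^2|y|)$ multiply $|\nabla^2G^\delta_\beta|\sim|y|^{-m}$ and $|\nabla G^\delta_\beta|\sim|y|^{1-m}$; this exponent puts the source in $L^p$ for $p<m/(m-2)$ (bounded when $m=2$) and is what makes your Neumann-series step work.
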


\begin{proof}
The choice of $c_0$ gives $2a<\mu_1(B(x,r))$ and $2ar^2<\tfrac12j_{\nu,1}^2$ for $0<r\le\bar r$, so $P_r$ and $q_m(2ar^2)$ are well defined.

Let $F_r(z)=\exp_x(rz)$ and write $F_r^*g=r^2g_r$ on $B_1$. Normal-coordinate estimates give
\[
\|g_r-\delta\|_{C^4(B_1)}
+r\|\partial_rg_r\|_{C^3(B_1)}
\le C\kappa r^2,
\]
as well as
\[
|g_r-\delta|\le C\kappa r^2|z|^2,
\qquad
|\partial_zg_r|\le C\kappa r^2|z|.
\]
Moreover,
\[
 |r\partial_rg_r|\le C\kappa r^2|z|^2,
 \qquad
 |\partial_z(r\partial_rg_r)|\le C\kappa r^2|z|.
\]
For $0\le\beta\le2c_0$, let $P^{(\beta)}_{g_r}$ be the Poisson kernel at the origin for $\tfrac12\Delta_{g_r}+\beta$ on $B_1$, with respect to the surface measure of $g_r$, and set
\[
\widetilde\Theta(r,\beta,\omega)
=\frac{P^{(\beta)}_{g_r}(0,\omega)}{c_mq_m(\beta)}-1.
\]
Rescaling space and time gives
\[
r^{m-1}P_r(\exp_x(r\omega))
=P^{(\beta)}_{g_r}(0,\omega),
\qquad \beta=2ar^2.
\]

Let $G_{r,\beta}$ and $G_{0,\beta}$ be the positive Dirichlet Green kernels of $(-\tfrac12\Delta_{g_r}-\beta)^{-1}$ and $(-\tfrac12\Delta-\beta)^{-1}$. Put
\[
L_{r,\beta}=-\tfrac12\Delta_{g_r}-\beta,\qquad
L_{0,\beta}=-\tfrac12\Delta-\beta,
\]
and
\[
w_{r,\beta}
=G_{r,\beta}(\cdot,0)-G_{0,\beta}(\cdot,0).
\]
Since $g_r(0)=\delta$ and its first derivatives vanish at the origin, the delta singularities cancel, and
\[
L_{r,\beta}w_{r,\beta}
=(L_{0,\beta}-L_{r,\beta})G_{0,\beta}(\cdot,0),
\qquad
w_{r,\beta}|_{\partial B_1}=0,
\]
where the right-hand side is bounded by $C\kappa r^2|z|^{2-m}$.  Write $R_{r,\gamma}=L_{r,\gamma}^{-1}$ and set $f_{r,\beta}=(L_{0,\beta}-L_{r,\beta})G_{0,\beta}(\cdot,0)$. Thus $w_{r,\beta}=R_{r,\beta}f_{r,\beta}$ in the distributional, equivalently Green-potential, sense. Suppose first that $m\ge3$. The unshifted Green estimate \cite[Theorem~1.1, estimate~(1.8)]{GruterWidman1982} and repeated convolution show that, for every fixed $k\ge1$,
\[
 |R_{r,0}^k f_{r,\beta}(z)|\le C_k\kappa r^2
 \begin{cases}
  1+|z|^{2(k+1)-m},&2(k+1)<m,\\
  1+\log(2/|z|),&2(k+1)=m,\\
  1,&2(k+1)>m.
 \end{cases}
\]
Choose an integer $N>m/2$.  Let $f_{r,\beta}^{(\epsilon)}=\chi_\epsilon f_{r,\beta}$, where $\chi_\epsilon$ is a smooth radial cutoff, equal to $0$ on $B(0,\epsilon)$ and to $1$ outside $B(0,2\epsilon)$.  Finite iteration of the resolvent identity gives
\[
 R_{r,\beta}f_{r,\beta}^{(\epsilon)}
 =\sum_{j=0}^{N-1}\beta^jR_{r,0}^{j+1}f_{r,\beta}^{(\epsilon)}
 +\beta^N R_{r,\beta}R_{r,0}^Nf_{r,\beta}^{(\epsilon)}.
\]
The preceding convolution majorants are independent of $\epsilon$ and give locally uniform convergence off the pole for every term.  Since $2N>m$, the kernel of $R_{r,0}^N$ is uniformly bounded; as $\|f_{r,\beta}^{(\epsilon)}-f_{r,\beta}\|_{L^1(B_1)} \le C\kappa r^2\epsilon^2$, it follows that
\[
 \|R_{r,0}^N(f_{r,\beta}^{(\epsilon)}-f_{r,\beta})\|_\infty\longrightarrow0.
\]
Positivity of the Green kernels supplies the same majorants when $f_{r,\beta}$ changes sign.  Dominated convergence therefore gives the same identity with $f_{r,\beta}$ in place of $f_{r,\beta}^{(\epsilon)}$. Every term in the finite sum is $O(\kappa r^2)$ on a fixed boundary collar, while $\|R_{r,0}^Nf_{r,\beta}\|_{L^\infty(B_1)}\le C\kappa r^2$. Moreover, if $h\in L^\infty(B_1)$, then
\[
 |R_{r,\beta}h(y)|
 \le \|h\|_\infty
 \mathbb E_y^{g_r}\!\left[\int_0^{\tau_{B_1}}e^{\beta t}\,dt\right]
 \le C\|h\|_\infty,
 \]
uniformly for $0\le\beta\le2c_0$.  Indeed, $\mu_1(B_1,g_r)\ge c_{\mathrm{gap}}$ and $\beta\le2c_0\le c_{\mathrm{gap}}/2$, so the last bound follows from the killed-survival estimate used in \cref{lem:survival-gap}.  Thus the remainder has the same collar bound.  The expansion also records the global pole behaviour
\[
 |w_{r,\beta}(z)|\le C\kappa r^2
 \begin{cases}
  1,&m=3,\\
  1+\log(2/|z|),&m=4,\\
  1+|z|^{4-m},&m\ge5.
 \end{cases}
\]

For $m=2$, the right-hand side is bounded, and the uniform $L^\infty$ resolvent estimate gives a global $O(\kappa r^2)$ bound directly.  Put $\rho=|z|$ and, for $0<\rho\le1/4$, set
\[
 \Psi_m(\rho)=
 \begin{cases}
  1,&m=2,3,\\
  1+\log(2/\rho),&m=4,\\
  1+\rho^{4-m},&m\ge5.
 \end{cases}
\]
The global pole bounds and scaled interior estimates on dyadic annuli give, for $j=1,2$ and $|z|=\rho$,
\[
 \rho^j|\nabla^jw_{r,\beta}(z)|
 \le C\kappa r^2\Psi_m(\rho).
\]
The normal-coordinate estimates and the corresponding Euclidean Green bounds also give, with the $r$-derivative taken at fixed $\beta$,
\[
 |\partial_\beta f_{r,\beta}(z)|+|r\partial_rf_{r,\beta}(z)|
 \le C\kappa r^2
 \begin{cases}
  1,&m=2,\\
  1+\rho^{2-m},&m\ge3.
 \end{cases}
\]
In nondivergence form, the coefficients of the second- and first-order parts of $r\partial_rL_{r,\beta}$ are bounded respectively by $C\kappa r^2\rho^2$ and $C\kappa r^2\rho$.  After decreasing $r_0$ so that $\kappa r_0^2\le1$, the annular estimate shows that $(r\partial_rL_{r,\beta})w_{r,\beta}$ obeys the same displayed source bound.

Choose the truncations above with cutoffs independent of $r$ and $\beta$. For the truncated problems, subtracting the equations at two parameter values gives, with the cutoff superscript suppressed,
\[
 \frac{w_{r,\beta'}-w_{r,\beta}}{\beta'-\beta}
 =R_{r,\beta'}\left(
 \frac{f_{r,\beta'}-f_{r,\beta}}{\beta'-\beta}
 +w_{r,\beta}\right)
\]
and
\[
 \frac{w_{s,\beta}-w_{r,\beta}}{s-r}
 =R_{s,\beta}\left(
 \frac{f_{s,\beta}-f_{r,\beta}}{s-r}
 -\frac{L_{s,\beta}-L_{r,\beta}}{s-r}w_{r,\beta}\right).
\]
The preceding source bounds and the finite resolvent iteration control these difference quotients uniformly in the cutoff.  Dominated convergence in their Green representations identifies the limits and gives
\[
 L_{r,\beta}\partial_\beta w_{r,\beta}
 =w_{r,\beta}+\partial_\beta f_{r,\beta},
 \qquad
 L_{r,\beta}(r\partial_rw_{r,\beta})
 =r\partial_rf_{r,\beta}-(r\partial_rL_{r,\beta})w_{r,\beta}.
\]
The right-hand sides have the same admissible pole bound as $f_{r,\beta}$, so the same bootstrap gives uniform collar bounds for both parameter derivatives.

Fix $1/2<\rho_0<\rho_1<1$.  Boundary Schauder estimates on $\mathcal C=\{\rho_1<|z|\le1\}$, using the larger collar $\mathcal C'=\{\rho_0<|z|\le1\}$, give
\[
 \|w_{r,\beta}\|_{C^3(\mathcal C)}
 +\|\partial_\beta w_{r,\beta}\|_{C^1(\mathcal C)}
 +\|r\partial_rw_{r,\beta}\|_{C^1(\mathcal C)}
 \le C\kappa r^2.
\]
By the Gauss lemma, the outward unit normal on $\partial B_1$ is $\partial_\rho$ for both $g_r$ and the Euclidean metric.  Green's formula therefore gives
\[
 P^{(\beta)}_{g_r}(0,\omega)-c_mq_m(\beta)
 =-\tfrac12\partial_\rho w_{r,\beta}(\omega).
\]
Applying the preceding collar estimates to this identity gives
\[
\|\widetilde\Theta\|_{C^2(S^{m-1})}
+r\|\partial_r\widetilde\Theta\|_{L^\infty(S^{m-1})}
+\|\partial_\beta\widetilde\Theta\|_{L^\infty(S^{m-1})}
\le C\kappa r^2,
\]
uniformly for $0\le\beta\le2c_0$, where the $r$-derivative is taken with $\beta$ fixed.  The difference-quotient argument also gives local $C^1$ dependence on $r$. Since $\Theta(r,\omega)=\widetilde\Theta(r,2ar^2,\omega)$,
\[
\|\partial_r\Theta\|_\infty
\le C\kappa r+4ar\,C\kappa r^2
\le C\kappa r.
\]
This proves \eqref{eq:poisson-pole-expansion}. Since $q_m$ is bounded above and below on $[0,2c_0]$, decreasing $r_0$ proves \eqref{eq:poisson-comp}. The $C^2$ bound above, together with the comparison of the metric on $S_s$ with $s^2$ times the round metric, gives \eqref{eq:frozen-weight-calculus}.

At fixed $\omega$,
\[
\frac d{dr}\log P_r(\exp_x(r\omega))
=\frac{1-m}{r}
+4ar\frac{q_m'(\beta)}{q_m(\beta)}
+\frac{\partial_r\Theta}{1+\Theta}.
\]
Since $q_m'/q_m$ is bounded on $[0,2c_0]$ and the normal-coordinate estimates give
\[
\Delta_g\rho=(m-1)r^{-1}+O(\kappa r),
\]
this proves the first estimate in \eqref{eq:poisson-transport-bounds}. The second follows from \eqref{eq:poisson-pole-expansion} and $|\nabla_{S_r}f|\le Cr^{-1}|\nabla_\omega f|$. Finally, $\sigma_g(S_r)\asymp r^{m-1}$, so integrating \eqref{eq:poisson-comp} against $u^2$ proves \eqref{eq:H-comp-surfmean}.
\end{proof}

\begin{proposition}[Poisson kernel on a small geodesic ball]
\label{prop:pole_isotropy_explicit}
Let $m\ge2$, assume bounded geometry of order $2$ on an open set $U\subset X\setminus\partial X$, and suppose $B(x,2r_{\mathrm{geo}})\Subset U$. Set
\[
\kappa=\sup_{B(x,2r_{\mathrm{geo}})}
\bigl(|\mathrm{Rm}|+r_{\mathrm{geo}}|\nabla\mathrm{Rm}|
+r_{\mathrm{geo}}^2|\nabla^2\mathrm{Rm}|\bigr).
\]
For every $0<\beta_{\max}<\tfrac12j_{\nu,1}^2$, there are $r_*\in(0,r_{\mathrm{geo}}]$ and $C<\infty$, depending only on the bounded-geometry data, $m$, and $\beta_{\max}$, such that if $0<r\le r_*$, $\alpha\ge0$, and $\beta=\alpha r^2\le\beta_{\max}$, then $\alpha<\mu_1(B(x,r))$. Writing
\[
S_r=\partial B(x,r),\qquad \tau_r=\tau_{B(x,r)},
\]
the Poisson kernel $P^{(\alpha)}_{x,r}$ at $x$ for $\tfrac12\Delta_g+\alpha$ on $B(x,r)$ satisfies
\begin{equation}\label{eq:poisson_isotropic}
\left|
\frac{\sigma_g(S_r)P^{(\alpha)}_{x,r}(x,\xi)}{q_m(\beta)}-1
\right|\le C\kappa r^2
\qquad(\xi\in S_r).
\end{equation}
Consequently, for every $F\in L^1(S_r)$, $F\ge0$,
\begin{equation}\label{eq:poisson_two_sided}
\begin{aligned}
(1-C\kappa r^2)\frac{q_m(\beta)}{\sigma_g(S_r)}
\int_{S_r}F\,d\sigma_g
&\le\mathbb E_x[e^{\alpha\tau_r}F(B_{\tau_r})]\\
&\le(1+C\kappa r^2)\frac{q_m(\beta)}{\sigma_g(S_r)}
\int_{S_r}F\,d\sigma_g .
\end{aligned}
\end{equation}
\end{proposition}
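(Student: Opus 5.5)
The plan is to reduce \cref{prop:pole_isotropy_explicit} to \cref{lem:poisson-comp-small}. The main difference between the two statements is parametrisation. The lemma treats the weight $e^{2a\tau_r}$ under the restriction $a\bar r^2\le c_0$. Here the weight is $e^{\alpha\tau_r}$, and $\beta=\alpha r^2$ may be as large as any fixed $\beta_{\max}<\tfrac12 j_{\nu,1}^2$.

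\emph{Step 1: well-posedness.} By \cref{lem:survival-gap}(1) and the rescaled comparison with the Euclidean ball, $r^2\mu_1(B(x,r))\to\tfrac12 j_{\nu,1}^2$ as $r\to0$. The rescaled metric $g_r$ satisfies $\|g_r-\delta\|_{C^0}\le C\kappa r^2$, so comparing Rayleigh quotients gives
\[
r^2\mu_1(B(x,r))\ge(1-C\kappa r^2)\tfrac12 j_{\nu,1}^2 .
\]
Choose $r_*$ so small that the right-hand side exceeds $\tfrac12(\beta_{\max}+\tfrac12 j_{\nu,1}^2)$. Then $\alpha<\mu_1(B(x,r))$. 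Moreover, $\beta$ ranges over a compact subset of $[0,r^2\mu_1(B_1,g_r))$ with a uniform gap, so the resolvent bound
\[
\|R_{r,\beta}h\|_\infty\le C\|h\|_\infty
\]
holds uniformly. The proof of \cref{lem:survival-gap}(2) gives this bound, with constant depending on $\beta_{\max}$.

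\emph{Step 2: the kernel comparison.} Rescale as in the proof of \cref{lem:poisson-comp-small}:
\[
r^{m-1}P^{(\alpha)}_{x,r}(\exp_x(r\omega))=P^{(\beta)}_{g_r}(0,\omega).
\]
Then rerun the Green-function perturbation argument from that proof with $\beta\in[0,\beta_{\max}]$ in place of $[0,2c_0]$. The argument has four pieces:
\begin{itemize}
\item Set $w=G_{r,\beta}(\cdot,0)-G_{0,\beta}(\cdot,0)$. Its source has pole bound $C\kappa r^2|z|^{2-m}$.
\item Use the finite resolvent iteration $R_{r,\beta}=\sum_{j<N}\beta^jR_{r,0}^{j+1}+\beta^NR_{r,\beta}R_{r,0}^N$. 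The coefficients $\beta^j$ are now bounded by powers of $\beta_{\max}$.
\item Apply boundary Schauder estimates on the collar.
\item Use Green's formula, $P^{(\beta)}_{g_r}-c_mq_m(\beta)=-\tfrac12\partial_\rho w$.
\end{itemize}
Together these give $|P^{(\beta)}_{g_r}(0,\omega)/(c_mq_m(\beta))-1|\le C\kappa r^2$.

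The only inputs that depended on $\beta\le 2c_0$ were the uniform $L^\infty$ resolvent bound and the boundedness of $q_m$ and $1/q_m$. The first is supplied by Step 1. For the second, $q_m$ is continuous and positive on $[0,\beta_{\max}]$ by \eqref{eq:q_m_bessel}, since $J_\nu(\sqrt{2\beta})>0$ there. The Euclidean Green kernel $G_{0,\beta}$ has the same pole bounds uniformly in $\beta\le\beta_{\max}$, from its explicit Bessel form.

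\emph{Step 3: normalisation by surface area.} In normal coordinates,
\[
\sigma_g(S_r)=|S^{m-1}|\,r^{m-1}\bigl(1+O(\kappa r^2)\bigr),
\]
so $\sigma_g(S_r)=c_m^{-1}r^{m-1}(1+O(\kappa r^2))$. The surface measure on $S_r$ also has density $r^{m-1}(1+O(\kappa r^2))$ relative to the round measure. Multiplying these factors gives \eqref{eq:poisson_isotropic}, after shrinking $r_*$ so that $C\kappa r_*^2\le\tfrac12$. Then \eqref{eq:poisson_two_sided} follows by integrating the pointwise bound against $F\ge0$ and using
\[
\mathbb E_x[e^{\alpha\tau_r}F(B_{\tau_r})]=\int_{S_r} F\,P^{(\alpha)}_{x,r}\,d\sigma_g .
\]
This identity holds for bounded $F$ by optional stopping, via \cref{thm:wavelength-stopping-identities} and its final clause. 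It extends to $F\in L^1$, $F\ge0$, by monotone convergence.

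\emph{Main obstacle.} The delicate point is the uniformity of the resolvent and Green estimates as $\beta$ approaches $\beta_{\max}$, close to the Dirichlet threshold $\tfrac12 j_{\nu,1}^2$. Every constant blows up as the gap closes. I would handle this with a spectral-gap argument. The exponential survival bound $\mathbb P(\tau>t)\le Ce^{-\mu_1 t}$ gives
\[
\sup_y\mathbb E_y^{g_r}\Bigl[\int_0^{\tau}e^{\beta t}\,dt\Bigr]\le C(\beta_{\max}),
\]
provided the gap $r^2\mu_1-\beta_{\max}$ is bounded below uniformly. Step 1's choice of $r_*$ guarantees that lower bound. Every other estimate is then the same as in \cref{lem:poisson-comp-small}.
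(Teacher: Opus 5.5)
Your proposal is correct and follows essentially the paper's route: a uniform spectral gap, with $r^2\mu_1(B(x,r))$ bounded below by a constant strictly above $\beta_{\max}$, from Rayleigh-quotient comparison in rescaled normal coordinates; then the Green-kernel perturbation argument of \cref{lem:poisson-comp-small} rerun with $[0,2c_0]$ replaced by $[0,\beta_{\max}]$; and finally $\sigma_g(S_r)=r^{m-1}|S^{m-1}|(1+O(\kappa r^2))$ together with two-sided bounds on $q_m$ over $[0,\beta_{\max}]$, followed by integration. One small correction: the representation $\mathbb E_x[e^{\alpha\tau_r}F(B_{\tau_r})]=\int_{S_r}F\,P^{(\alpha)}_{x,r}\,d\sigma_g$ comes from the same optional-stopping argument applied to the weighted Dirichlet solution with boundary data $F$, not literally from the final clause of \cref{thm:wavelength-stopping-identities}, which concerns global eigenfunctions.
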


\begin{proof}
Put $\lambda_0=\tfrac12j_{\nu,1}^2$ and $\eta=\lambda_0-\beta_{\max}>0$. For $T_r(z)=\exp_x(rz)$, write $T_r^*g=r^2g_r$. Normal-coordinate estimates and the minimax principle give
\[
\|g_r-\delta\|_{C^2(B_1)}\le C\kappa r^2,
\qquad
\bigl|r^2\mu_1(B(x,r))-\lambda_0\bigr|\le C\kappa r^2.
\]
After decreasing $r_*$,
\[
r^2\mu_1(B(x,r))\ge\beta_{\max}+\frac{\eta}{2},
\]
so $\alpha<\mu_1(B(x,r))$ and the rescaled operators have a uniform spectral gap for $0\le\beta\le\beta_{\max}$.

The fixed-domain Green-kernel perturbation argument in the proof of \cref{lem:poisson-comp-small}, with $[0,2c_0]$ replaced by $[0,\beta_{\max}]$, therefore gives, for the rescaled Poisson kernel $\Pi_{r,\beta}$,
\[
\left\|
\Pi_{r,\beta}
-\frac{q_m(\beta)}{|S^{m-1}|}
\right\|_{L^\infty(S^{m-1})}
\le C\kappa r^2.
\]
Finally,
\[
P^{(\alpha)}_{x,r}(x,T_r\omega)
=r^{1-m}\Pi_{r,\beta}(\omega),\qquad
\sigma_g(S_r)=r^{m-1}|S^{m-1}|(1+O(\kappa r^2)).
\]
Since $q_m$ is bounded above and below on $[0,\beta_{\max}]$, these formulas prove \eqref{eq:poisson_isotropic}; integration proves \eqref{eq:poisson_two_sided}.
\end{proof}

\section{Spectral inequalities on sets of zero volume}

We work in dimension $m\ge2$. On a ball of radius $R$, the Green kernel has the scale $d(x,y)^{2-m}$ when $m\ge3$ and a logarithmic singularity when $m=2$. In both cases relative capacity at scale $R$ has natural size $R^{m-2}$.

\subsection{The capacitary spectral inequality at scale \texorpdfstring{$R$}{R}}
\label{subsec:capsampling}

Throughout this subsection, $X$ is closed with bounded geometry, $m\ge2$, $H=-\tfrac12\Delta_g$, and $P_{[0,E]}$ denotes its spectral projection. For an open set $\Omega\subset X$, write
$$
\mathcal E_\Omega(v,w)
=\frac12\int_\Omega\langle\nabla v,\nabla w\rangle_g\,d\mathrm{vol}_g,
\qquad \mathcal E_\Omega(v)=\mathcal E_\Omega(v,v),
$$
and omit the subscript when the domain is clear. We prove the estimates for real functions; the complex case follows by applying them to real and imaginary parts.

Choose $0<r_*=r_*(m,\mathrm{bg})\le\tfrac12r_{\mathrm{geo}}$ so that the Poincar\'e, cutoff, Green, covering, and Brownian heat-kernel estimates used below hold uniformly at this scale. Fix $0<R\le r_*$, choose a maximal $R$-separated set $\{x_j\}$, and put
$$
B_j=B(x_j,R),\qquad B_j^*=B(x_j,2R).
$$
Then the $B_j$ cover $X$, and the $B_j^*$ have uniformly bounded overlap.

Let $G_j$ be the positive Dirichlet Green kernel of $-\tfrac12\Delta_g$ on $B_j^*$. If $\nu$ is a finite positive measure of finite Green energy, supported on a compact subset of $B_j^*$, then $G_j\nu\in W^{1,2}_0(B_j^*)$ and
$$
\mathcal E_{B_j^*}(w,G_j\nu)=\int\widetilde w\,d\nu,
\qquad
\mathcal E_{B_j^*}(G_j\nu)
=\iint G_j(y,y')\,d\nu(y)\,d\nu(y'),
$$
for $w\in W^{1,2}_0(B_j^*)$, where $\widetilde w$ is its quasi-continuous representative. Moreover, the local Green estimates give, for $y\ne y'$,
$$
G_j(y,y')\le
\begin{cases}
C d(y,y')^{2-m},&m\ge3,\\
C\bigl(1+\log_+(R/d(y,y'))\bigr),&m=2.
\end{cases}
$$
All integrals below against finite-energy measures use quasi-continuous representatives.

Fix $\Lambda\ge1$. For each $j$, let $K_j\subset B_j$ be nonempty and compact. For a probability measure $\nu$ supported on $K_j$, set
$$
\mathcal I_j(\nu):=\iint G_j(y,y')\,d\nu(y)\,d\nu(y').
$$
In what follows, $\nu_j$ denotes a probability measure supported on $K_j$ and satisfying
$$
\mathcal I_j(\nu_j)\le\Lambda R^{2-m}.
$$

Here $\operatorname{Cap}_{B_j^*}$ is the variational capacity for $\mathcal E_{B_j^*}$. Green-energy duality gives
\[
\operatorname{Cap}_{B_j^*}(K_j)
=\left(\inf_{\nu\in\mathcal P(K_j)}\mathcal I_j(\nu)\right)^{-1},
\]
with the convention $1/\infty=0$. When $\operatorname{Cap}_{B_j^*}(K_j)>0$, the normalised equilibrium measure attains the infimum. Hence such a $\nu_j$ exists exactly when
\[
\operatorname{Cap}_{B_j^*}(K_j)\ge\Lambda^{-1}R^{m-2}.
\]
Every Borel set carrying such a $\nu_j$ is nonpolar. For $m\ge3$, every set of Hausdorff dimension less than $m-2$ is polar; in dimension $2$, nonpolar compact sets of dimension zero exist.

\begin{lemma}[Local spectral inequality]\label{lem:local-sampling}
There is $C=C(m,\mathrm{bg})$ such that every such measure $\nu_j$ and every $u\in W^{1,2}(B_j^*)$ satisfy
$$
\int_{B_j}|u|^2\,d\mathrm{vol}_g
\le C(1+\Lambda)R^2\int_{B_j^*}|\nabla u|^2\,d\mathrm{vol}_g
+CR^m\left|\int\widetilde u\,d\nu_j\right|^2.
$$
\end{lemma}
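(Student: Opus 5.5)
The plan is to prove a Poincaré inequality with the mean replaced by $\int\widetilde u\,d\nu_j$, controlling the difference between the two via the Green energy of $\nu_j$. Let $\bar u$ be the volume average of $u$ over $B_j^*$. The Neumann--Poincaré inequality on the ball $B_j^*$ (available uniformly at scale $2R\le 2r_*$ by bounded geometry) gives
\[
\int_{B_j^*}|u-\bar u|^2\,d\mathrm{vol}_g\le CR^2\int_{B_j^*}|\nabla u|^2\,d\mathrm{vol}_g .
\]
Writing $u=(u-\bar u)+\bar u$ on $B_j$ and using $\mathrm{vol}(B_j)\asymp R^m$, it therefore suffices to bound
\[
R^m|\bar u|^2\le 2R^m\Bigl|\int\widetilde u\,d\nu_j\Bigr|^2+2R^m\Bigl|\int(\widetilde u-\bar u)\,d\nu_j\Bigr|^2 ,
\]
where we have used that $\nu_j$ is a probability measure.

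The central step is to estimate $\int \widetilde w\,d\nu_j$ for $w=u-\bar u$. Since $w$ is not in $W^{1,2}_0(B_j^*)$, we localise. Take a cutoff $\chi$ equal to $1$ on $B(x_j,\tfrac32R)\supset B_j\supset K_j$, supported in $B_j^*$, with $|\nabla\chi|\le C/R$. Then $\chi w\in W^{1,2}_0(B_j^*)$ and its quasi-continuous representative agrees with $\widetilde w$ on $K_j$. The Green-energy duality recorded above, followed by Cauchy--Schwarz for $\mathcal E_{B_j^*}$, gives
\[
\Bigl|\int\widetilde w\,d\nu_j\Bigr|=|\mathcal E_{B_j^*}(\chi w,G_j\nu_j)|\le\mathcal E_{B_j^*}(\chi w)^{1/2}\,\mathcal I_j(\nu_j)^{1/2}.
\]
Next, $\mathcal E(\chi w)\le\int_{B_j^*}|\nabla w|^2+CR^{-2}\int_{B_j^*}|w|^2\le C\int_{B_j^*}|\nabla u|^2$, again by Poincaré. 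Combining this with $\mathcal I_j(\nu_j)\le\Lambda R^{2-m}$ yields
\[
R^m\Bigl|\int\widetilde w\,d\nu_j\Bigr|^2\le C\Lambda R^2\int_{B_j^*}|\nabla u|^2 .
\]
Adding the two pieces gives the lemma with factor $C(1+\Lambda)$.

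The expected obstacle is the justification of the duality identity for $\chi w$ and the choice of representative, namely that the quasi-continuous representative of $\chi w$ restricted to $\operatorname{supp}\nu_j$ equals that of $u$ minus the constant $\bar u$. This holds because $\chi\equiv1$ on a neighbourhood of $K_j$ and quasi-continuous representatives are unique q.e. Since $\nu_j$ has finite energy, it charges no polar sets, so the integrals are well defined. The only other input that needs care is the uniformity of the Poincaré constant on $B_j^*$, which follows from comparison with Euclidean balls in rescaled normal coordinates, as in the proof of \cref{lem:survival-gap}. No property of $\nu_j$ beyond being a probability measure with energy bound $\Lambda R^{2-m}$ is used, which is why the dimension $m=2$ (logarithmic Green kernel) requires no separate treatment.
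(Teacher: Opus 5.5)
Your proposal is correct and follows the paper's proof essentially step for step. You use Poincar\'e on $B_j^*$ to reduce to bounding $R^m|\bar u|^2$, then a cutoff equal to $1$ near $K_j$ so that $\chi(u-\bar u)\in W^{1,2}_0(B_j^*)$, and then Green-energy duality with Cauchy--Schwarz to get $\bigl|\int\widetilde u\,d\nu_j-\bar u\bigr|^2\le C\Lambda R^{2-m}\int_{B_j^*}|\nabla u|^2$. The only difference is that your cutoff equals $1$ on $B(x_j,\tfrac32R)$ rather than on $B_j$, which changes nothing.
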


\begin{proof}
Let $\bar u$ be the average of $u$ on $B_j^*$, put $v=u-\bar u$, and choose $\chi_j\in C_c^\infty(B_j^*)$ equal to $1$ on $B_j$, with $|\nabla\chi_j|\le C/R$. By Poincar\'e's inequality, $\mathcal E_{B_j^*}(\chi_jv)\le C\int_{B_j^*}|\nabla u|^2$. For $U_j=G_j\nu_j$, since $\nu_j$ is supported where $\chi_j=1$,
$$
\left|\int\widetilde u\,d\nu_j-\bar u\right|
\le\mathcal E_{B_j^*}(\chi_jv)^{1/2}\mathcal I_j(\nu_j)^{1/2},
\qquad
\left|\int\widetilde u\,d\nu_j-\bar u\right|^2
\le C\Lambda R^{2-m}\int_{B_j^*}|\nabla u|^2\,d\mathrm{vol}_g.
$$
Together with
$$
\int_{B_j}|u|^2\,d\mathrm{vol}_g
\le CR^2\int_{B_j^*}|\nabla u|^2\,d\mathrm{vol}_g+CR^m|\bar u|^2,
$$
and $|\bar u|\le|\int\widetilde u\,d\nu_j| +|\int\widetilde u\,d\nu_j-\bar u|$, this proves the lemma.
\end{proof}

\begin{theorem}[Capacitary spectral inequality]\label{thm:capacity-sampling}
There are $c_0,c_1>0$, depending only on $m$ and the bounded-geometry constants, with the following property. Let $0<R\le r_*$ and $\Lambda\ge1$, and for every $j$ let $\nu_j$ be a probability measure supported on a compact set $K_j\subset B_j$ and satisfying
$$
\mathcal I_j(\nu_j)\le\Lambda R^{2-m}.
$$
If
$$
0\le E\le\frac{c_1}{1+\Lambda}R^{-2},
$$
then every $f\in\operatorname{Ran}P_{[0,E]}(H)$ satisfies
$$
\sum_jR^m\left|\int f\,d\nu_j\right|^2
\ge c_0\|f\|_{L^2(X)}^2,
\qquad
R^m\int_X|f|^2\,d\nu_R\ge c_0\|f\|_{L^2(X)}^2,
$$
where $\nu_R=\sum_j\nu_j$. The constant $c_0$ is independent of $\Lambda$; only the allowed energy range depends on $\Lambda$.
\end{theorem}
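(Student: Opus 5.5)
The plan is to sum \cref{lem:local-sampling} over the cells and absorb the gradient term using the energy bound for low-energy functions. Fix $f\in\operatorname{Ran}P_{[0,E]}(H)$, real-valued, and note $f\in C^\infty(X)$, so $\widetilde f=f$. Apply \cref{lem:local-sampling} with $u=f|_{B_j^*}$ for each $j$. Since the $B_j$ cover $X$ and the $B_j^*$ have overlap bounded by a constant $N=N(m,\mathrm{bg})$, summing gives
\[
\|f\|_{L^2(X)}^2\le\sum_j\int_{B_j}|f|^2
\le CN(1+\Lambda)R^2\int_X|\nabla f|_g^2\,d\operatorname{vol}_g
+C\sum_jR^m\left|\int f\,d\nu_j\right|^2 .
\]

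Next, use the spectral bound on the gradient. For $f$ in the range of $P_{[0,E]}(H)$,
\[
\tfrac12\int_X|\nabla f|^2=\langle Hf,f\rangle\le E\|f\|_2^2 .
\]
Therefore the first term is at most $2CN(1+\Lambda)R^2E\|f\|_2^2$. Choose $c_1=(4CN)^{-1}$. Then $E\le c_1(1+\Lambda)^{-1}R^{-2}$ makes this term at most $\tfrac12\|f\|_2^2$, and it can be absorbed into the left side. This yields the first inequality with $c_0=(2C)^{-1}$, and $c_0$ does not depend on $\Lambda$. Only the energy window depends on $\Lambda$, which matches the final sentence of the statement. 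For complex $f$, apply the argument to the real and imaginary parts. Each part lies in the same spectral subspace, since $H$ is real.

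The second inequality follows from Cauchy--Schwarz. Each $\nu_j$ is a probability measure, so $|\int f\,d\nu_j|^2\le\int|f|^2\,d\nu_j$. Summing over $j$ gives $\sum_jR^m|\int f\,d\nu_j|^2\le R^m\int|f|^2\,d\nu_R$, and the second inequality follows from the first.

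I do not expect a serious obstacle, since all the analysis is already contained in \cref{lem:local-sampling}. The one point to check is that the constant $C$ in that lemma is uniform in $j$, which is the role of $r_*$ and bounded geometry. The other point is that the bounded-overlap count $N$ depends only on $(m,\mathrm{bg})$; this follows from volume doubling at scales below $r_*$.
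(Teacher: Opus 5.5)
Your proposal is correct and is essentially the paper's proof: sum \cref{lem:local-sampling} over the cells using the covering and bounded-overlap properties, bound $\tfrac12\|\nabla f\|_2^2=\langle Hf,f\rangle\le E\|f\|_2^2$, absorb the gradient term to obtain $c_0=(2C)^{-1}$ independent of $\Lambda$, and deduce the second inequality from $|\int f\,d\nu_j|^2\le\int|f|^2\,d\nu_j$. You track the overlap constant $N$ explicitly and spell out the real/imaginary reduction; the paper absorbs $N$ into $C$ and states that reduction at the start of the subsection.
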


\begin{proof}
Summing \cref{lem:local-sampling} with $u=f$, and using the covering and bounded-overlap properties, we obtain
$$
\|f\|_2^2
\le C(1+\Lambda)R^2\|\nabla f\|_2^2
+C\sum_jR^m\left|\int f\,d\nu_j\right|^2.
$$
Since $X$ is closed and the spectral interval is bounded, $f$ is smooth, so its quasi-continuous representative is $f$. Moreover,
$$
\frac12\|\nabla f\|_2^2=\langle Hf,f\rangle\le E\|f\|_2^2.
$$
We choose $c_1$ so that $2Cc_1\le\tfrac12$. The gradient term can then be absorbed into the left-hand side, proving the first inequality with $c_0=(2C)^{-1}$. The second follows from
$$
\left|\int f\,d\nu_j\right|^2\le\int|f|^2\,d\nu_j,
$$
since every $\nu_j$ is a probability measure.
\end{proof}

\begin{corollary}[Equilibrium measures]\label{cor:zero-volume}
Suppose every cell contains a compact set $K_j\subset B_j$ such that
$$
\operatorname{Cap}_{B_j^*}(K_j)\ge\kappa R^{m-2},
\qquad 0<\kappa\le1,
$$
and let $\mu_j$ be its equilibrium measure in $B_j^*$. Set $\mu_R=\sum_j\mu_j$. There is $c=c(m,\mathrm{bg})>0$ such that, if $0\le E\le c\kappa R^{-2}$ and $f\in\operatorname{Ran}P_{[0,E]}(H)$, then
$$
R^2\int_X|f|^2\,d\mu_R\ge c\kappa\|f\|_{L^2(X)}^2.
$$
This includes the Ahlfors-regular examples of every dimension $s\in(m-2,m]$ described below; those with $s<m$ have zero volume.
\end{corollary}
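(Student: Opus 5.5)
The plan is to apply \cref{thm:capacity-sampling} to the normalised equilibrium measures, with $\Lambda=\kappa^{-1}$. Let $C_j:=\operatorname{Cap}_{B_j^*}(K_j)\ge\kappa R^{m-2}>0$. By Green-energy duality, the equilibrium measure $\mu_j$ has total mass $\mu_j(K_j)=C_j$ and energy $\mathcal I_j(\mu_j)=C_j$. Hence $\nu_j:=\mu_j/C_j$ is a probability measure on $K_j$ with
\[
\mathcal I_j(\nu_j)=C_j^{-1}\le\kappa^{-1}R^{2-m}.
\]
Taking $\Lambda=\kappa^{-1}\ge1$, the hypotheses of \cref{thm:capacity-sampling} hold. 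For $E\le c_1(1+\kappa^{-1})^{-1}R^{-2}$ that theorem gives
\[
R^m\int_X|f|^2\,d\nu_R\ge c_0\|f\|_2^2,\qquad \nu_R=\sum_j\nu_j.
\]
Since $\kappa\le1$, we have $(1+\kappa^{-1})^{-1}\ge\kappa/2$. So the energy window $E\le(c_1/2)\kappa R^{-2}$ is admissible.

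Next we compare $\nu_R$ with $\mu_R$. This needs an upper bound $C_j\le C R^{m-2}$, uniform in $j$. By monotonicity of capacity, $K_j\subset \overline{B_j}$ gives $C_j\le\operatorname{Cap}_{B(x_j,2R)}(\overline{B(x_j,R)})$. The latter is $\lesssim R^{m-2}$: test with a cutoff $\chi_j$ equal to $1$ on $B_j$ with $|\nabla\chi_j|\le C/R$, whose energy is $\lesssim R^{-2}\operatorname{vol}(B_j^*)\lesssim R^{m-2}$ under bounded geometry. In dimension $2$ this bound is also scale-free, which is what is needed. Therefore $\mu_j=C_j\nu_j\ge\kappa R^{m-2}\nu_j$. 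Summing and multiplying by $R^2$ gives
\[
R^2\int|f|^2\,d\mu_R\ge\kappa R^m\int|f|^2\,d\nu_R\ge c_0\kappa\|f\|_2^2.
\]
Actually only the lower bound on $C_j$ is used here; the upper bound is not needed for the stated inequality. We then take $c=\min\{c_0,c_1/2\}$, which depends only on $m$ and the bounded geometry.

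One technical point remains: the pointwise integral $\int|f|^2\,d\mu_R$ must make sense. Since $f$ lies in a finite spectral window on a closed manifold, it is smooth, as already noted in the proof of \cref{thm:capacity-sampling}. Each $\mu_j$ is finite, and only finitely many cells exist, so there is no issue.

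For the final sentence about Ahlfors-regular sets, we would check the capacity hypothesis directly. Let $\sigma$ be $s$-Ahlfors regular with $s>m-2$, restricted to $K_j\subset B_j$ and normalised to be a probability measure. Using the Green bound $G_j\lesssim d^{2-m}$ (or the logarithmic bound when $m=2$) and integrating over dyadic shells, we get $\mathcal I_j\lesssim R^{2-m}$, since $s>m-2$ makes the dyadic sum converge. Green-energy duality then gives $\operatorname{Cap}\gtrsim R^{m-2}$. When $s<m$ the set has zero volume. The only mildly delicate step is verifying the mass and energy identity for equilibrium measures, and this is standard potential theory for $\mathcal E_{B_j^*}$.
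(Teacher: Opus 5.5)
Your proposal is correct and follows the paper's proof essentially verbatim. Like the paper, you normalise $\nu_j=\mu_j/\operatorname{Cap}_{B_j^*}(K_j)$ so that $\mathcal I_j(\nu_j)\le\kappa^{-1}R^{2-m}$, apply \cref{thm:capacity-sampling} with $\Lambda=\kappa^{-1}$ on the window $E\le\frac12c_1\kappa R^{-2}$, and use $\mu_j\ge\kappa R^{m-2}\nu_j$ to conclude with $c=\min\{c_0,c_1/2\}$.

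One small point in your sketch of the Ahlfors-regular remark: the shell estimate needs the lower mass bound $\sigma(K_j)\gtrsim R^s$, and the paper obtains this from $\delta$-density of $S$ with $R\ge4\delta$ and the choice $K_j=S\cap\overline{B(y_j,R/2)}$.
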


\begin{proof}
Put
$$
\nu_j=\frac{\mu_j}{\operatorname{Cap}_{B_j^*}(K_j)}.
$$
Then $\nu_j$ is a probability measure and
$$
\mathcal I_j(\nu_j)
=\operatorname{Cap}_{B_j^*}(K_j)^{-1}
\le\kappa^{-1}R^{2-m}.
$$
Thus \cref{thm:capacity-sampling} applies with $\Lambda=\kappa^{-1}$ whenever $E\le\tfrac12c_1\kappa R^{-2}$. Its second inequality gives
$$
\begin{aligned}
c_0\|f\|_2^2
&\le R^m\sum_j\int|f|^2\,d\nu_j\\
&\le\kappa^{-1}R^2\int_X|f|^2\,d\mu_R.
\end{aligned}
$$
Taking $c\le\min\{c_0,c_1/2\}$ proves the result.
\end{proof}

\paragraph{A small-values consequence.}
There is $c=c(m,\mathrm{bg})>0$ such that, if $0<\kappa\le1$, $\delta\ge0$, $0\ne f\in\operatorname{Ran}P_{[0,E]}(H)$ with $E\le c\kappa R^{-2}$, and every $B_j$ contains a compact set $K_j\subset\{|f|\le\delta\,\operatorname{vol}_g(X)^{-1/2}\|f\|_2\}$ with $\operatorname{Cap}_{B_j^*}(K_j)\ge\kappa R^{m-2}$, then $\delta\ge c\sqrt\kappa$.
Indeed, if $\mu_R$ is the sum of the corresponding equilibrium measures, capacity monotonicity and the cardinality of the net give $R^2\mu_R(X)\le C\operatorname{vol}_g(X)$.
Thus $R^2\int_X|f|^2\,d\mu_R\le C\delta^2\|f\|_2^2$, whereas \cref{cor:zero-volume} gives the lower bound $c\kappa\|f\|_2^2$.

\paragraph{A Brownian proof by equilibrium killing.}
We next give a Brownian proof. It has three steps: identify the law of the equilibrium additive functional, obtain a one-cell survival estimate, and iterate that estimate to produce a global spectral gap. Projection to the low-energy space then gives the original spectral inequality. For a closed set $K$, let $\tau_K:=\inf\{t\ge0:B_t\in K\}$ be its hitting time; $\tau_\Omega$ keeps its meaning as the exit time from $\Omega$. Let $K\Subset\Omega\subset X$, with $\overline\Omega\subsetneq X$, and assume that the part form $(\mathcal E_\Omega,W^{1,2}_0(\Omega))$ is transient. Write $e_{K,\Omega}$ for the equilibrium potential and $\mu_{K,\Omega}$ for its equilibrium measure. Recall that $e_{K,\Omega}\in W^{1,2}_0(\Omega)$, equals $1$ quasi-everywhere on $K$, is harmonic off $K$, and
$$
e_{K,\Omega}(y)=\mathbb P_y(\tau_K<\tau_\Omega)
$$
quasi-everywhere. Moreover,
$$
\mathcal E_\Omega(e_{K,\Omega},\psi)
=\int\widetilde\psi\,d\mu_{K,\Omega},
\qquad \psi\in W^{1,2}_0(\Omega),
$$
and
$$
\mu_{K,\Omega}(K)=\mathcal E_\Omega(e_{K,\Omega})
=\operatorname{Cap}_\Omega(K).
$$
Let $A_t^{K,\Omega}$ be the positive continuous additive functional of Brownian motion killed on leaving $\Omega$ whose Revuz measure is $\mu_{K,\Omega}$ \cite[Theorem~5.1.4]{FOT11}.

\begin{lemma}[Law of the equilibrium additive functional]
\label{lem:equilibrium-additive-functional-law}
For quasi-every $y\in\Omega$,
$$
\operatorname{Law}_y\left(A^{K,\Omega}_{\tau_\Omega}\right)
=(1-e_{K,\Omega}(y))\delta_0
+e_{K,\Omega}(y)e^{-s}\mathbf 1_{\{s>0\}}\,ds.
$$
Consequently, for every $\beta\ge0$,
$$
\mathbb E_y\left[e^{-\beta A^{K,\Omega}_{\tau_\Omega}}\right]
=1-\frac{\beta}{1+\beta}e_{K,\Omega}(y).
$$
Thus $\beta=1$ removes one half of the probability of hitting $K$ before leaving $\Omega$, while $\beta\to\infty$ gives the probability of leaving without hitting $K$.
\end{lemma}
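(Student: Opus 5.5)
The plan is to prove the Laplace-transform identity first and then read off the law, since the transform for all $\beta\ge0$ determines the distribution on $[0,\infty)$. The key structural fact is that the equilibrium measure $\mu_{K,\Omega}$ is carried by $K$, hence by the fine closure of $K$. Consequently the additive functional $A=A^{K,\Omega}$ is constant (zero) before the hitting time $\tau_K$. On $\{\tau_\Omega<\tau_K\}$ we get $A_{\tau_\Omega}=0$, which has probability $1-e_{K,\Omega}(y)$. On the complement, the strong Markov property at $\tau_K$ gives
\[
\mathbb E_y\bigl[e^{-\beta A_{\tau_\Omega}}\bigr]=1-e_{K,\Omega}(y)+\mathbb E_y\bigl[\mathbf 1_{\{\tau_K<\tau_\Omega\}}\,\phi_\beta(B_{\tau_K})\bigr],\qquad \phi_\beta(z):=\mathbb E_z\bigl[e^{-\beta A_{\tau_\Omega}}\bigr].
\]
The problem is thereby reduced to showing $\phi_\beta=(1+\beta)^{-1}$ quasi-everywhere on $K$, or more precisely at the regular points $B_{\tau_K}$.

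To identify $\phi_\beta$, I will use the Revuz correspondence: the potential of $A$ is the Green potential of $\mu_{K,\Omega}$. That is,
\[
\mathbb E_y\bigl[A_{\tau_\Omega}\bigr]=G_\Omega\mu_{K,\Omega}(y)=e_{K,\Omega}(y)
\]
quasi-everywhere (\cite[Theorem~5.1.4]{FOT11} together with the Green-energy identity $\mathcal E_\Omega(e_{K,\Omega},\psi)=\int\widetilde\psi\,d\mu_{K,\Omega}$). Next I apply the elementary identity
\[
1-e^{-\beta A_{\tau_\Omega}}=\beta\int_0^{\tau_\Omega}e^{-\beta(A_{\tau_\Omega}-A_t)}\,dA_t .
\]
Taking expectations and using the Markov property at time $t$ under $dA_t$ gives $1-\phi_\beta=\beta\,G_\Omega(\phi_\beta\,\mu_{K,\Omega})$. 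On the support of $\mu_{K,\Omega}$, the guess $\phi_\beta\equiv c$ turns this into $1-c=\beta c\,e_{K,\Omega}$. Since $e_{K,\Omega}=1$ quasi-everywhere on $K$, this yields $c=(1+\beta)^{-1}$.

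To make the guess rigorous, set $w:=1-\tfrac{\beta}{1+\beta}e_{K,\Omega}$. This $w$ solves the same integral equation $1-w=\beta\,G_\Omega(w\mu_{K,\Omega})$, because $w=(1+\beta)^{-1}$ $\mu$-a.e. and $G_\Omega\mu_{K,\Omega}=e_{K,\Omega}$. Uniqueness follows from the Feynman--Kac perturbation argument. The difference $d=\phi_\beta-w$ is bounded and satisfies $d=-\beta\,G_\Omega(d\,\mu)$. Iterating this gives $|d|\le\beta^n\|d\|_\infty\,\mathbb E\bigl[A_{\tau_\Omega}^n/n!\bigr]$, which does not obviously decay. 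It is cleaner to avoid iteration and compute directly at a point $z\in K$ regular for $K$. There $A$ starts charging immediately, and the process returns to $K$ while $A$ accumulates.

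The cleanest route, and the one I would try first, is the exponential law. I will use the resolvent equation for the time-changed process $\check B_s=B_{A^{-1}_s}$, which lives on the fine support of $\mu$. Its lifetime is $\zeta=A_{\tau_\Omega}$, and
\[
\mathbb P_z\bigl(\zeta>s\bigr)=\check P_s1(z).
\]
For $z$ quasi-everywhere in $K$, the potential $\int_0^\infty\check P_s1\,ds=G_\Omega\mu$ equals $1$ on $K$. The identity $\check G_\alpha 1+\alpha\check G\check G_\alpha1=\check G1\equiv1$, combined with the constant-function ansatz, gives $\check G_\alpha1=(1+\alpha)^{-1}$. Indeed, the constant solves the resolvent equation, and the resolvent equation has a unique bounded solution. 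This is exactly the Laplace transform of a mean-one exponential variable; the memoryless property also follows from the Markov property of $\check B$ combined with $\mathbb E_z\zeta=1$ for all $z$. Substituting $(1+\beta)^{-1}$ into the displayed decomposition gives the transform formula, and the law follows by Laplace uniqueness.

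The main obstacle is the quasi-everywhere bookkeeping. One needs $B_{\tau_K}$ to lie almost surely in the set where $e_{K,\Omega}=1$ (the regular points of $K$), and the time-change identity $\check G1=G_\Omega\mu$ to hold off an exceptional set that Brownian motion started at quasi-every $y$ does not hit. Both facts follow from the standard results on fine supports and time changes in \cite{FOT11}, but the details need care.
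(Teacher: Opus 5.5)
Your final route (time change, then the resolvent equation on the fine support, then the strong Markov property at $\tau_K$) is sound, but it differs from the paper's. The paper stays in original time and computes moments. The Revuz potential formula $\mathbb E_y\int_0^{\tau_\Omega}f(B_t)\,dA_t=G_\Omega(f\mu)(y)$ and the Markov property for ordered-time integrals give $\mathbb E_y[(A_{\tau_\Omega})^n]=n!\,h_n(y)$, where $h_1=G_\Omega\mu=e$ and $h_{n+1}=G_\Omega(\widetilde h_n\mu)$. Since $\widetilde e=1$ $\mu$-a.e., every $h_n$ equals $e$, so $\mathbb E_y[e^{\gamma A_{\tau_\Omega}}]=1+e(y)\gamma/(1-\gamma)$ for $0\le\gamma<1$. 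This moment generating function identifies the mixture law at once for quasi-every $y\in\Omega$, with no decomposition at $\tau_K$ and no fine-support bookkeeping. The same computation rescues the iteration you abandoned: $\mathbb E_y[(A_{\tau_\Omega})^n/n!]=e(y)\le1$, so your bound on $d$ decays for $\beta<1$, and analyticity of the Laplace transform handles the remaining $\beta$.

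Your time-changed argument rests on the same single fact, $\check G1=G_\Omega\mu=1$ quasi-everywhere on the fine support, and it makes the unit-rate killing transparent. Two points should be tightened. First, you assert uniqueness of bounded solutions of $v+\alpha\check Gv=1$ without proof; a Neumann series gives it only for $\alpha<1$. Uniqueness is in fact unnecessary. The commuted identity $\check G-\check G_\alpha=\alpha\check G_\alpha\check G$, together with $\check G1(\check B_s)=1$ almost surely for $0<s<\zeta$, gives $\check G_\alpha1=\check G1/(1+\alpha)=e/(1+\alpha)$ at quasi-every starting point of $\Omega$. This also makes the decomposition at $\tau_K$ superfluous. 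Second, your aside that memorylessness follows from the Markov property plus $\mathbb E_z\zeta=1$ is not an independent argument; it needs this same computation.

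In original time your identity reads $\mathbb E_y\int_0^{\tau_\Omega}e^{-\alpha A_t}\,dA_t=e(y)/(1+\alpha)$, which is the resolvent formula of \cref{prop:equilibrium-resolvent}. So your route derives the lemma from that formula, whereas the paper derives the formula from the lemma. The moment method is shorter and avoids time-change theory. Your route yields the Laplace transform for every $\beta\ge0$ directly, without appealing to determinacy of a law by its moment generating function.
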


\begin{proof}
Let $G_\Omega$ be the Dirichlet Green operator and put $\mu=\mu_{K,\Omega}$ and $e=e_{K,\Omega}$. The Revuz potential formula \cite[Lemma~5.1.3]{FOT11}, followed by monotone convergence as $\alpha\downarrow0$ for the killed process, gives
\[
\mathbb E_y\int_0^{\tau_\Omega}f(B_t)\,dA_t^{K,\Omega}
=G_\Omega(f\mu)(y)
\]
for bounded nonnegative Borel $f$ and quasi-every $y\in\Omega$.

Set
\[
h_1=G_\Omega\mu=e,
\qquad
h_{n+1}=G_\Omega(\widetilde h_n\mu).
\]
Since $\widetilde e=1$ $\mu$-almost everywhere, induction gives $h_n=e$ for every $n\ge1$. Applying the Markov property to the ordered-time integral therefore gives
\[
\mathbb E_y\left[
\left(A^{K,\Omega}_{\tau_\Omega}\right)^n\right]
=n!h_n(y)=n!e(y),
\qquad n\ge1.
\]
Hence, for $0\le\gamma<1$, monotone convergence yields
\[
\mathbb E_y\left[e^{\gamma A^{K,\Omega}_{\tau_\Omega}}\right]
=1+\sum_{n=1}^\infty\gamma^ne(y)
=1+e(y)\frac{\gamma}{1-\gamma}.
\]
This is the moment generating function of
\[
(1-e(y))\delta_0+e(y)e^{-s}\mathbf1_{\{s>0\}}\,ds
\]
and, being finite in a neighbourhood of zero, determines the law. Taking its Laplace transform gives
\[
\mathbb E_y\left[e^{-\beta A^{K,\Omega}_{\tau_\Omega}}\right]
=1-\frac{\beta}{1+\beta}e(y),
\qquad \beta\ge0.
\]
\end{proof}

\begin{proposition}[Resolvent and critical coupling for the equilibrium measure]
\label{prop:equilibrium-resolvent}
Let $H_\Omega$ be the Dirichlet operator on $\Omega$, and write $\mu=\mu_{K,\Omega}$, $e=e_{K,\Omega}$, and $A=A^{K,\Omega}$. For $\beta\ge0$ and quasi-every $y\in\Omega$,
$$
\mathbb E_y\left[\int_0^{\tau_\Omega}e^{-\beta A_t}\,dA_t\right]
=\frac{e(y)}{1+\beta}.
$$
Equivalently, in the form sense,
$$
(H_\Omega+\beta\mu)^{-1}\mu=\frac{e}{1+\beta}.
$$
Moreover, the Green trace
$$
Pf=\left.\widetilde{G_\Omega(f\mu)}\right|_K
\quad\mu\text{-almost everywhere}
$$
extends from bounded functions to a positive self-adjoint contraction on $L^2(\mu)$ and satisfies $P1=1$; in particular, $P$ is Markov. If $\operatorname{Cap}_\Omega(K)>0$, the coupling constant $1$ is also exact:
$$
\text{for }\alpha\ge0,\qquad
\mathcal E_\Omega(u)-\alpha\int|\widetilde u|^2\,d\mu\ge0
\quad\text{for every }u\in W^{1,2}_0(\Omega)
\quad\Longleftrightarrow\quad \alpha\le1.
$$
At $\alpha=1$, equality is attained by $e$, which satisfies $(H_\Omega-\mu)e=0$ weakly; for $\alpha>1$, the same function gives a negative Rayleigh quotient.
\end{proposition}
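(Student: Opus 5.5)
The resolvent identity will come from the law in \cref{lem:equilibrium-additive-functional-law}. Writing $A=A^{K,\Omega}$ and $A_\infty=A_{\tau_\Omega}$, the change of variables $s=A_t$ (valid since $A$ is continuous and nondecreasing) gives
\[
\int_0^{\tau_\Omega}e^{-\beta A_t}\,dA_t=\frac{1-e^{-\beta A_\infty}}{\beta}
\qquad(\beta>0),
\]
and $=A_\infty$ when $\beta=0$. Taking expectations and inserting the Laplace transform from the lemma yields
\[
\frac1\beta\cdot\frac{\beta}{1+\beta}\,e(y)=\frac{e(y)}{1+\beta}.
\]
For $\beta=0$ we use $\mathbb E_yA_\infty=e(y)$, the $n=1$ moment computed in the lemma.

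To translate this into the form statement, I will use the Feynman--Kac resolvent of the perturbed form $\mathcal E_\Omega+\beta\int|\widetilde u|^2d\mu$. Its resolvent applied to $f\mu$ is
\[
\mathbb E_y\int_0^{\tau_\Omega}e^{-\beta A_t}f(B_t)\,dA_t,
\]
which is the Revuz correspondence for the time-changed or killed process (cf.\ \cite{FOT11}). Setting $f=1$ gives $(H_\Omega+\beta\mu)^{-1}\mu=e/(1+\beta)$. An alternative route is direct verification. Since $\mathcal E_\Omega(e,\psi)=\int\widetilde\psi\,d\mu$ and $\widetilde e=1$ $\mu$-a.e., we get
\[
\mathcal E_\Omega\Bigl(\tfrac{e}{1+\beta},\psi\Bigr)+\beta\int\tfrac{\widetilde e}{1+\beta}\widetilde\psi\,d\mu=\int\widetilde\psi\,d\mu,
\]
which is the weak equation. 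I will present both, with the direct check as the rigorous one.

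For the Green trace, set $Pf=\widetilde{G_\Omega(f\mu)}|_K$. Symmetry comes from $\int g\,G_\Omega(f\mu)\,d\mu=\iint G_\Omega(x,y)f(y)g(x)\,d\mu\,d\mu$, which is symmetric in $f,g$. Positivity as a quadratic form comes from $\int f\,Pf\,d\mu=\mathcal E_\Omega(G_\Omega(f\mu))\ge0$. Also $P1=\widetilde e|_K=1$ $\mu$-a.e. Since $P$ is positivity preserving, we have $|Pf|\le P|f|$. The Schur test with $P1=1$ and symmetry then gives $\|Pf\|_{L^2(\mu)}\le\|f\|_{L^2(\mu)}$ for bounded $f$ (via $|Pf|^2\le P(f^2)\cdot P1$ and $\int P(f^2)\,d\mu=\int f^2P1\,d\mu$). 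This bound extends $P$ by density to a self-adjoint positive contraction, and $P1=1$ makes it Markov.

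The critical coupling reduces to the trace inequality $\int|\widetilde u|^2d\mu\le\mathcal E_\Omega(u)$, which I expect to be the main step. I will prove it by duality. For bounded $f$,
\[
\Bigl|\int\widetilde u f\,d\mu\Bigr|^2=|\mathcal E_\Omega(u,G_\Omega(f\mu))|^2\le\mathcal E_\Omega(u)\int fPf\,d\mu\le\mathcal E_\Omega(u)\|f\|^2_{L^2(\mu)},
\]
using the contraction property. Taking the supremum over $f$ gives the inequality, so $\alpha\le1$ implies nonnegativity. For the converse, $u=e$ gives
\[
\mathcal E_\Omega(e)-\alpha\int\widetilde e^{\,2}d\mu=(1-\alpha)\operatorname{Cap}_\Omega(K),
\]
which is zero at $\alpha=1$ and negative for $\alpha>1$ when the capacity is positive. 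Finally, $(H_\Omega-\mu)e=0$ weakly is exactly $\mathcal E_\Omega(e,\psi)=\int\widetilde e\widetilde\psi\,d\mu$, again using $\widetilde e=1$ $\mu$-a.e.
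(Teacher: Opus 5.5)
Your argument is correct. For the first identity, the direct verification of the resolvent identity using $\widetilde e=1$ $\mu$-a.e., and the criticality computation with $u=e$, it coincides with the paper. The genuine difference is how you obtain the contraction of $P$ and the trace inequality.

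The paper proves $\int|\widetilde\psi|^2\,d\mu\le\mathcal E_\Omega(\psi)$ first. It applies Picone's identity to $e+\varepsilon$, lets $\varepsilon\downarrow0$, and then truncates. This makes the form of $H_\Omega+\beta\mu$ closed on $W^{1,2}_0(\Omega)$, which gives uniqueness for the weak equation. The contraction property of $P$ then follows from $P=TT^*$, where $T$ is the trace and $T^*f=G_\Omega(f\mu)$ in the energy inner product.

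You reverse the order. Positivity, symmetry $\int g\,Pf\,d\mu=\mathcal E_\Omega(G_\Omega(g\mu),G_\Omega(f\mu))$, and $P1=1$ give $\|P\|_{L^2(\mu)\to L^2(\mu)}\le1$ by the Schur test. The trace inequality then follows by duality, in effect $\|T\|^2=\|TT^*\|$.

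Both are ground-state arguments: the paper uses Picone with the positive supersolution $e$ for the energy form, and you use Schur with the positive fixed vector $1=\widetilde e|_K$ for $P$. Your route stays entirely in $L^2(\mu)$ with bounded test functions. It avoids the $\varepsilon$-regularisation and truncation that Picone needs because $e$ is not bounded away from zero. The paper's route produces the form inequality directly, which it uses at once for closedness.

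Two small points to add. First, you present the resolvent verification before the trace inequality. So say explicitly that closedness of the perturbed form comes from the inequality you prove later, and that uniqueness follows by testing the difference of two solutions against itself. Second, in the final duality step, take $f$ to be the truncations of $\widetilde u$. This is what lets you conclude $\widetilde u\in L^2(\mu)$ with $\int|\widetilde u|^2\,d\mu\le\mathcal E_\Omega(u)$.
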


\begin{proof}
For $\beta>0$, integration with respect to the continuous increasing function $A_t$ and \cref{lem:equilibrium-additive-functional-law} give
$$
\mathbb E_y\left[\int_0^{\tau_\Omega}e^{-\beta A_t}\,dA_t\right]
=\frac{1-\mathbb E_y[e^{-\beta A_{\tau_\Omega}}]}{\beta}
=\frac{e(y)}{1+\beta}.
$$
The case $\beta=0$ is the first moment in the proof of that lemma.

For bounded real $\psi$, Picone's identity applied to $e+\varepsilon$, followed by $\varepsilon\downarrow0$, gives
$$
\int|\widetilde\psi|^2\,d\mu\le\mathcal E_\Omega(\psi),
\qquad \psi\in W^{1,2}_0(\Omega)\cap L^\infty(\Omega).
$$
Truncation and real and imaginary parts give the inequality for every $\psi\in W^{1,2}_0(\Omega)$. Thus the trace into $L^2(\mu)$ is a contraction for the energy norm, and the form of $H_\Omega+\beta\mu$ is closed on $W^{1,2}_0(\Omega)$. Denote the trace by $T$. Since $\widetilde e=1$ $\mu$-almost everywhere, $e/(1+\beta)$ satisfies
$$
\mathcal E_\Omega(v,\psi)
+\beta\int\widetilde v\,\widetilde\psi\,d\mu
=\int\widetilde\psi\,d\mu,
\qquad \psi\in W^{1,2}_0(\Omega),
$$
and uniqueness proves the resolvent identity.

With the energy inner product, the Green identity gives $G_\Omega(f\mu)=T^*f$, so $P=TT^*$ is a self-adjoint contraction on $L^2(\mu)$. If $f\ge0$, positivity of the Dirichlet Green operator gives $G_\Omega(f\mu)\ge0$ quasi-everywhere, so $P$ is positivity preserving. Finally, $G_\Omega\mu=e$ and $\widetilde e=1$ $\mu$-almost everywhere give $P1=1$. Hence $0\le f\le1$ implies $0\le Pf\le1$, and $P$ is Markov. The trace inequality proves the coupling assertion for $0\le\alpha\le1$. Conversely,
$$
\mathcal E_\Omega(e)-\alpha\int|\widetilde e|^2\,d\mu
=(1-\alpha)\operatorname{Cap}_\Omega(K),
$$
and the equilibrium identity gives $(H_\Omega-\mu)e=0$ weakly.
\end{proof}

\begin{remark}[Relation to earlier work]
Measure-valued limits occur at the endpoint of eigenvalue optimisation over Schr\"odinger potentials \cite[Chapter~8]{Henrot2006}. With the normalisation above, the attractive coupling at that endpoint is explicit and has the equilibrium potential as a null state. For a closed smooth surface $\Sigma\subset\mathbb R^3$ with electrostatic capacity $C_{\rm el}(\Sigma)$, normalised so that a sphere of radius $R$ has capacity $R$, and unit equilibrium charge density $\varrho$, Exner--Fraas show that, in the $-\Delta$ normalisation, the coupling $4\pi C_{\rm el}(\Sigma)\varrho$ is critical \cite[Theorem~4.1\textup{(a)}]{ExnerFraas2009}. Trace inequalities for regular Dirichlet forms are studied in \cite{BenAmor2004}; large-coupling resolvent estimates, including equilibrium measures, in \cite{BrascheDemuth2005,BenAmorBrasche2008}. The resolvent identity used here comes from the explicit law of the equilibrium additive functional in \cref{lem:equilibrium-additive-functional-law}. It requires neither compactness of the trace operator nor discreteness of the coupling spectrum.
\end{remark}

\begin{lemma}[A capacity estimate on one ball]
\label{lem:one-step-capacity-killing}
There are $c,C>0$, depending only on $m$ and the bounded-geometry constants, such that if $0<R\le r_*$, $B=B(x,R)$, $B^*=B(x,2R)$, and $K\subset B$ is compact with
$$
\operatorname{Cap}_{B^*}(K)\ge\kappa R^{m-2},
\qquad \kappa>0,
$$
then, writing $A_t^K=A_t^{K,B^*}$, for every $\beta>0$ and quasi-every $y\in B$,
$$
\mathbb E_y\left[e^{-\beta A^K_{\tau_{B^*}}}\right]
\le1-c\frac{\beta}{1+\beta}\kappa,
\qquad
\mathbb E_y\left[\int_0^{\tau_{B^*}}e^{-\beta A^K_t}\,dt\right]\le CR^2.
$$
\end{lemma}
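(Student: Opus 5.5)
The first bound reduces, via \cref{lem:equilibrium-additive-functional-law} with $\Omega=B^*$, to a lower bound on the equilibrium potential. That lemma gives, for quasi-every $y\in B$,
\[
\mathbb E_y\bigl[e^{-\beta A^K_{\tau_{B^*}}}\bigr]=1-\frac{\beta}{1+\beta}e_{K,B^*}(y).
\]
It therefore suffices to show that $e_{K,B^*}(y)=\mathbb P_y(\tau_K<\tau_{B^*})\ge c\kappa$ for quasi-every $y\in B$. We may also assume $\kappa\le \kappa_{\max}$, since capacity monotonicity gives $\operatorname{Cap}_{B^*}(K)\le\operatorname{Cap}_{B^*}(\overline B)\le CR^{m-2}$, so $c$ can absorb the constant.

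To bound $e=e_{K,B^*}$ from below, I would use the potential representation $e=G_{B^*}\mu_{K,B^*}$, valid quasi-everywhere. The measure $\mu_{K,B^*}$ has total mass $\operatorname{Cap}_{B^*}(K)\ge\kappa R^{m-2}$ and is supported in $K\subset B=B(x,R)$. For $y\in B$ and $y'\in K$ we have $d(y,y')<2R$, but both points may lie near $\partial B$ and hence close to $\partial B^*$. The point to check is therefore a Green-kernel lower bound
\[
G_{B^*}(y,y')\ge cR^{2-m}\qquad\text{for }y,y'\in \overline{B(x,R)}.
\]
Both points are at distance at least $R$ from $\partial B^*$. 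In rescaled normal coordinates (bounded geometry, $R\le r_*$), Harnack chains inside $B(x,3R/2)$ together with the standard lower Green estimate (Grüter--Widman for $m\ge3$; for $m=2$, comparison with the logarithmic kernel, which is positive and of order one at distances comparable to $R$ from the boundary) give this bound. Integrating then yields $e(y)\ge cR^{2-m}\mu_{K,B^*}(K)\ge c\kappa$. Quasi-everywhere equality of $e$ with the potential is what restricts the conclusion to quasi-every $y$.

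For the second bound I would simply drop the killing factor, since $e^{-\beta A_t}\le1$. Then
\[
\mathbb E_y\int_0^{\tau_{B^*}}e^{-\beta A^K_t}\,dt\le\mathbb E_y[\tau_{B^*}].
\]
By \cref{lem:survival-gap}, $\mathbb E_y[\tau_{B^*}]\le\sup_z\mathbb E_z\tau_{B^*}\le C\mu_1(B^*)^{-1}\le CR^2$; alternatively use $\mathbb E\tau\le a^{-1}(\mathbb E e^{a\tau}-1)$ with $a\asymp R^{-2}$. This bound holds for every $y$.

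The main obstacle is the uniform Green lower bound up to $\partial B$ with constants depending only on bounded geometry, especially the $m=2$ case, where the kernel is logarithmic and positivity at distance about $R$ from the boundary needs a separate check. I would handle it by rescaling to the unit ball and noting that the rescaled metric is $C^2$-close to Euclidean. The Dirichlet Green kernel of $B(0,2)$ is then bounded below on $\overline{B(0,1)}\times\overline{B(0,1)}$ by a compactness and Harnack argument, uniformly in the metric. Scaling back gives $R^{2-m}$ for $m\ge3$ and an order-one constant for $m=2$.
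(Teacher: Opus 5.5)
Your proposal is correct and matches the paper's proof essentially step for step. You reduce via \cref{lem:equilibrium-additive-functional-law} to $e_{K,B^*}\ge c\kappa$, integrate a Harnack-chain lower bound $G_{B^*}(y,z)\ge cR^{2-m}$ on $B\times B$ (also valid for $m=2$) against the equilibrium measure of mass $\operatorname{Cap}_{B^*}(K)$, use the cutoff bound $\operatorname{Cap}_{B^*}(\overline B)\le CR^{m-2}$ (which the paper uses to arrange $c\kappa\le\tfrac12$), and, for the occupation bound, drop the killing factor and apply \cref{lem:survival-gap} through $e^{b\tau}\ge1+b\tau$ with $b\asymp R^{-2}$.
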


\begin{proof}
Taking $b=c_{\mathrm{mgf}}R^{-2}$ with $c_{\mathrm{mgf}}>0$ sufficiently small in \cref{lem:survival-gap} for $B^*$, and using $e^{b\tau_{B^*}}\ge1+b\tau_{B^*}$, gives
$$
\mathbb E_y\left[\int_0^{\tau_{B^*}}e^{-\beta A^K_t}\,dt\right]
\le\mathbb E_y[\tau_{B^*}]\le CR^2.
$$
For the first inequality, \cref{lem:equilibrium-additive-functional-law} gives
$$
1-\mathbb E_y\left[e^{-\beta A^K_{\tau_{B^*}}}\right]
=\frac{\beta}{1+\beta}e_{K,B^*}(y).
$$
The local Green lower bound and a Harnack chain of bounded length inside $B^*$ yield
$$
G_{B^*}(y,z)\ge cR^{2-m},
\qquad y,z\in B,\quad y\ne z,
$$
also when $m=2$, where the right-hand side is a positive constant. The equilibrium measure charges no polar sets, and points are polar for $m\ge2$; hence $\mu_{K,B^*}$ has no atoms. Therefore, for quasi-every $y\in B$,
$$
e_{K,B^*}(y)
=\int_KG_{B^*}(y,z)\,d\mu_{K,B^*}(z)
\ge cR^{2-m}\operatorname{Cap}_{B^*}(K)
\ge c\kappa.
$$
Finally, a cutoff equal to $1$ on $B$ shows that $\operatorname{Cap}_{B^*}(\overline B)\le CR^{m-2}$. Thus $\kappa\le C$, and $c$ may be decreased so that $c\kappa\le\tfrac12$.
\end{proof}

\begin{proposition}[Spectral gap from Brownian killing by equilibrium measures]
\label{prop:equilibrium-killing-spectral-gap}
In the setting of \cref{cor:zero-volume}, let $\mu_R=\sum_j\mu_j$. For every $\beta>0$, let $H+\beta\mu_R$ denote the operator associated with the closed form
$$
\mathfrak h_{\beta\mu_R}[u]=\mathcal E(u)+\beta\int_X|\widetilde u|^2\,d\mu_R,
\qquad
\mathcal D(\mathfrak h_{\beta\mu_R})
=\{u\in W^{1,2}(X):\widetilde u\in L^2(\mu_R)\}.
$$
Then
$$
\inf\sigma(H+\beta\mu_R)
\ge c\frac{\beta}{1+\beta}\kappa R^{-2},
$$
where $c=c(m,\mathrm{bg})>0$ is independent of $\beta$.
\end{proposition}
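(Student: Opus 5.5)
The plan is to bound the Feynman--Kac semigroup of $H+\beta\mu_R$ by Brownian killing, show it decays exponentially at rate $\gtrsim\frac{\beta}{1+\beta}\kappa R^{-2}$, and read off the bottom of the spectrum. Let $A_t$ be the positive continuous additive functional with Revuz measure $\mu_R$. On each cell this is $A_t=\sum_jA^{(j)}_t$, where $A^{(j)}$ has Revuz measure $\mu_j=\mu_{K_j,B_j^*}$ and, up to exit from $B_j^*$, agrees with $A^{K_j,B_j^*}$. By \cite[Theorem~6.1.1]{FOT11} the operator $H+\beta\mu_R$ generates
\[
T_tf(y)=\mathbb E_y\bigl[e^{-\beta A_t}f(B_t)\bigr].
\]
It therefore suffices to prove that $\sup_y\mathbb E_y\int_0^\infty e^{-\beta A_t}\,dt\le C R^2\,\frac{1+\beta}{\beta\kappa}$ quasi-everywhere. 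Indeed, $T_t$ is symmetric and sub-Markov, so $\|\int_0^\infty T_t\,dt\|_{L^\infty\to L^\infty}\le M$ implies by duality the same bound $L^1\to L^1$, and by Riesz--Thorin on $L^2$. Hence $(H+\beta\mu_R)^{-1}\le M$, which gives $\inf\sigma\ge M^{-1}$.

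To bound this occupation integral, I would define stopping times iteratively. Start with $\sigma_0=0$. Given $\sigma_k$, choose a cell $j_k$ with $B_{\sigma_k}\in B_{j_k}$; the $B_j$ cover $X$, and the choice can be made measurably by taking the smallest index. Then set $\sigma_{k+1}$ to be the exit time from $B^*_{j_k}$ after $\sigma_k$. On $[\sigma_k,\sigma_{k+1}]$ one has $A_t-A_{\sigma_k}\ge A^{(j_k)}_t-A^{(j_k)}_{\sigma_k}$ because all the functionals are nonnegative. Apply \cref{lem:one-step-capacity-killing} together with the strong Markov property at $\sigma_k$. This gives
\[
\mathbb E_y\bigl[e^{-\beta A_{\sigma_{k+1}}}\bigr]\le q^{k+1},\qquad q=1-c\tfrac{\beta}{1+\beta}\kappa,
\]
and the contribution of $[\sigma_k,\sigma_{k+1}]$ to $\mathbb E\int e^{-\beta A_t}dt$ is at most $CR^2q^k$. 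Summing the geometric series gives $M=CR^2/(1-q)$, as required. Since $\sigma_k\to\infty$ almost surely, the intervals exhaust time. This follows because each cell exit takes a time that is not summably small: $\mathbb P(\sigma_{k+1}-\sigma_k>cR^2)$ is bounded below uniformly, by the distance $R$ from $B_{j_k}$ to $\partial B^*_{j_k}$.

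I expect the main obstacle to be the quasi-everywhere issue. \cref{lem:one-step-capacity-killing} holds only for quasi-every starting point, while the strong Markov property restarts the process at $B_{\sigma_k}$, which lies on a boundary sphere $\partial B^*_{j_{k-1}}$. I would handle this in two steps. First, the exceptional sets are polar; after discarding a properly exceptional set in the sense of \cite{FOT11}, the process started off it never hits it, so the bound holds almost surely at every $B_{\sigma_k}$. Second, to avoid fine-topology subtleties one can alternatively upgrade the lemma to every $y\in B$. The function $y\mapsto\mathbb E_y e^{-\beta A^K_{\tau}}=1-\frac{\beta}{1+\beta}e_{K,B^*}(y)$ is defined pointwise through the process, and $e_{K,B^*}(y)=\mathbb P_y(\tau_K<\tau_{B^*})$ is finely continuous. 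The Green lower bound then holds everywhere for the balayage.

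A further point is identifying the additive functionals globally. The sum $\sum_j\mu_j$ is a smooth measure of finite energy on each compact set, by bounded overlap, so $A$ exists and is additive in $j$. Restricting to $B^*_j$ before exit gives the killed-process functional by uniqueness of the Revuz correspondence.
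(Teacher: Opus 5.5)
Your proof is correct and follows essentially the same route as the paper. Both use the one-cell bound of \cref{lem:one-step-capacity-killing}, the strong Markov property at cell exits, and the $L^\infty\to L^1\to L^2$ interpolation followed by spectral calculus; the difference is only bookkeeping. The paper works at a finite horizon $T$, takes a single strong Markov step to get $M_T\le CR^2+qM_T$, and rearranges using the a priori bound $M_T\le T$. This avoids unrolling into a geometric series and avoids needing $\sigma_k\to\infty$, although in your version that also follows deterministically from path continuity, since each step moves the path a distance at least $R$.
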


\begin{proof}
Fix $\beta>0$.
The net is finite, and each $\mu_j$ is a finite smooth measure. Regard $\mu_j$ as a measure on $X$ by extension by zero, and let $A^{(j)}$ be its positive continuous additive functional. Outside a common properly exceptional set,
$$
A=\sum_jA^{(j)}
$$
is the additive functional with Revuz measure $\mu_R$. The form above is closed, and, for bounded Borel $f$,
$$
S_t^{(\beta)}f(x):=\mathbb E_x\left[e^{-\beta A_t}f(B_t)\right]
=e^{-t(H+\beta\mu_R)}f(x)
\quad\text{for quasi-every }x
$$
\cite[Lemma~6.1.1 and Theorem~6.1.1]{FOT11}.

For $T>0$, define
$$
u_T(x)=\int_0^T S_t^{(\beta)}1(x)\,dt
=\mathbb E_x\left[\int_0^T e^{-\beta A_t}\,dt\right].
$$
Let $M_T\le T$ be the quasi-essential supremum of $u_T$. For quasi-every $x$, choose $j$ with $x\in B_j$ and put $\sigma=\tau_{B_j^*}$. By the strong Markov property,
$$
\begin{aligned}
u_T(x)
&=\mathbb E_x\left[\int_0^{\sigma\wedge T}e^{-\beta A_t}\,dt\right]
+\mathbb E_x\left[e^{-\beta A_\sigma}u_{T-\sigma}(B_\sigma);\,\sigma<T\right]\\
&\le CR^2+\left(1-c\frac{\beta}{1+\beta}\kappa\right)M_T.
\end{aligned}
$$
Indeed, $A_t\ge A_t^{(j)}$ for $t\le\sigma$, and $A^{(j)}_{t\wedge\sigma}$ agrees with $A^{K_j,B_j^*}_{t\wedge\sigma}$ for the killed process. Hence both bounds follow from \cref{lem:one-step-capacity-killing}. Taking the quasi-essential supremum and rearranging gives
$$
M_T\le C\frac{1+\beta}{\beta\kappa}R^2.
$$
Put $M=C(1+\beta)(\beta\kappa)^{-1}R^2$ and $Q_T=\int_0^T S_t^{(\beta)}\,dt$. Positivity gives $\|Q_T\|_{L^\infty\to L^\infty}\le M$, and self-adjointness gives $\|Q_T\|_{L^1\to L^1}\le M$. Hence $\|Q_T\|_{L^2\to L^2}\le M$. By spectral calculus,
$$
\sup_{\lambda\in\sigma(H+\beta\mu_R)}
\int_0^Te^{-t\lambda}\,dt\le M.
$$
Letting $T\to\infty$ gives $\inf\sigma(H+\beta\mu_R)\ge M^{-1}\ge c\frac{\beta}{1+\beta}\kappa R^{-2}$.
\end{proof}

\paragraph{Projection to low energies.}
Let $\lambda_R:=\inf\sigma(H+\mu_R)\ge c_*\kappa R^{-2}$ by \cref{prop:equilibrium-killing-spectral-gap}, and suppose $E\le\tfrac12c_*\kappa R^{-2}$. Every $f\in\operatorname{Ran}P_{[0,E]}(H)$ is smooth and therefore belongs to the form domain of $H+\mu_R$. Hence
$$
\int_X|f|^2\,d\mu_R
\ge(\lambda_R-E)\|f\|_2^2
\ge\frac{c_*}{2}\kappa R^{-2}\|f\|_2^2,
$$
which recovers \cref{cor:zero-volume}. This is the same variational projection step as in \cite[Theorem~1.1]{BLS2011}, here applied directly at form level to the measure perturbation.

\paragraph{Ahlfors-regular examples.}
Let $m\ge2$ and $m-2<s\le m$. Let $S\subset X$ be closed and carry a positive Borel measure $\sigma$ such that
$$
c_s\rho^s\le\sigma\bigl(S\cap B(y,\rho)\bigr)\le C_s\rho^s,
\qquad y\in S,\quad 0<\rho\le r_0.
$$
Suppose $S$ is $\delta$-dense, where $0<\delta\le(r_*\wedge r_0)/4$. For $R\in[4\delta,r_*\wedge r_0]$, choose $y_j\in S$ with $d(x_j,y_j)\le\delta$ and set
$$
K_j=S\cap\overline{B(y_j,R/2)},
\qquad
\nu_j=\frac{\sigma|_{K_j}}{\sigma(K_j)}.
$$
Then
$$
\mathcal I_j(\nu_j)\le\Lambda_sR^{2-m},
\qquad
\Lambda_s=\frac{C(m,\mathrm{bg})C_s}
{c_s\bigl(s-(m-2)\bigr)}.
$$
Consequently, if
$$
E\le c(m,\mathrm{bg})\frac{c_s}{C_s}
\bigl(s-(m-2)\bigr)R^{-2},
$$
then every $f\in\operatorname{Ran}P_{[0,E]}(H)$ satisfies
$$
R^{m-s}\int_S|f|^2\,d\sigma
\ge c(m,\mathrm{bg})c_s\|f\|_{L^2(X)}^2.
$$
In particular, if
$$
E\le c(m,\mathrm{bg})\frac{c_s}{C_s}
\bigl(s-(m-2)\bigr)\delta^{-2},
$$
then every $f\in\operatorname{Ran}P_{[0,E]}(H)$ satisfies
$$
\|f\|_{L^2(X)}^2
\le\frac{C(m,\mathrm{bg})}{c_s}\delta^{m-s}
\int_S|f|^2\,d\sigma.
$$
Whenever $R\asymp E^{-1/2}$ lies in the stated range, it gives $R^{m-s}\asymp E^{-(m-s)/2}$. As $s\downarrow m-2$, $\Lambda_s$ diverges and the energy window obtained from the Green-energy estimate shrinks.
Indeed, by $\delta$-density, $K_j\Subset B_j$ and
$$
\sigma(K_j)\ge c_s(R/2)^s.
$$
Shell integration of the Green bound, logarithmic when $m=2$, gives in both cases
$$
\int_{K_j}G_j(y,y')\,d\sigma(y')
\le\frac{C(m,\mathrm{bg})C_s}{s-(m-2)}R^{s+2-m}.
$$
Consequently,
$$
\mathcal I_j(\nu_j)
\le\frac{C(m,\mathrm{bg})C_s}
{c_s\bigl(s-(m-2)\bigr)}R^{2-m}.
$$
The energy window and, by bounded overlap,
$$
c_0\|f\|_2^2
\le R^m\sum_j\int|f|^2\,d\nu_j
\le\frac{C(m,\mathrm{bg})}{c_s}R^{m-s}
\int_S|f|^2\,d\sigma,
$$
follow from \cref{thm:capacity-sampling}.

\begin{remark}[Sharpness]\label{rem:sampling-sharp}
The powers of $R$ in the energy window, the capacity scale, and the normalisations in \cref{thm:capacity-sampling} are sharp when the relative capacity is bounded below. \emph{Energy:} on $\mathbb T^m=(\mathbb R/2\pi\mathbb Z)^m$, the function $f_N(x)=\sin(Nx_1)$ has eigenvalue $N^2/2$. For large $N$, set $R=\pi/N$. Every $B_j$ contains a nodal hypersurface disk of radius comparable to $R$, whose capacity relative to $B_j^*$ is at least $cR^{m-2}$. The sum of their equilibrium measures is supported where $f_N=0$, so $\int_Xf_N^2\,d\mu_R=0$. Thus the scale $R^{-2}$ is optimal when the relative capacity is bounded below. \emph{Capacity:} the observation measures have finite Green energy, so every Borel set carrying one of them is nonpolar. For $m\ge3$, every compact set of dimension less than $m-2$ is polar. In dimension $2$, \cref{lem:critical-prototype} attains the endpoint on a nonpolar compact set of dimension zero. \emph{Normalisation:} taking $f=1$ forces the factor $R^m$ for probability measures, since $\sum_jR^m\asymp\mathrm{vol}_g(X)$. If $\mu_j(K_j)\simeq\kappa R^{m-2}$, then $R^2\mu_j(K_j)\simeq\kappa R^m$.

The dimension restriction comes from finite Green energy and the bounded trace $H^1(X)\to L^2(\mu)$; it does not apply to general sampling measures without these requirements \cite{OrtegaCerdaPridhnani2012,FilbirMhaskar2011}. Likewise, no optimality is claimed for the displayed $\Lambda$-dependence: on each fixed manifold, the theorem of Filbir and Mhaskar gives an energy window $E\le c_XR^{-2}$ independently of $\Lambda$; see \cref{rem:concurrent-comparison}.
\end{remark}

\subsection{Heat observability and null control at the critical dimension}
\label{subsec:heat-control}

Repeating at every dyadic scale the construction obtained by killing Brownian motion with equilibrium measures gives a finite observation measure carried by a dense zero-volume set of Hausdorff dimension $m-2$ and with support $X$. Here $X$ is closed, $m\ge2$, $H=-\tfrac12\Delta_g$, $R_0\le r_*$, $T_0=R_0^2$, and $R_k=2^{-k}R_0$.

\begin{lemma}[A compact set of dimension $m-2$]
\label{lem:critical-prototype}
For every $m\ge2$ there is a compact $K_*\subset B_{\mathbb R^m}(0,1/4)$ such that
$$
\dim_{\mathrm H}K_*=m-2,
\qquad
\operatorname{Cap}_{B(0,1)}(K_*)>0.
$$
Moreover, $\mathcal H^{m-2}(K_*)=\infty$.
\end{lemma}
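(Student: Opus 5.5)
We construct $K_*$ as a Cantor-type set whose generations are adjusted by a slowly growing logarithmic factor. The construction sits at the critical dimension: the Riesz $(m-2)$-energy, which is logarithmic when $m=2$, stays finite, and yet $\mathcal H^{m-2}$ is infinite.

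\emph{Construction.} Work in $\overline{B(0,1/8)}\subset B(0,1/4)$. Fix an integer branching number $N\ge 2^m$, and choose scales $\ell_k$ by
\[
\ell_k^{\,m-2}=N^{-k}\,a_k,\qquad a_k:=k^{2}.
\]
When $m=2$ this prescription is replaced by $\ell_k=\exp(-N^{k}/a_k)$, which is the logarithmic analogue. Generation $k$ consists of $N^k$ closed cubes of side $\ell_k$. Each cube of generation $k$ contains $N$ subcubes of generation $k+1$, placed at mutual distances comparable to $\ell_k$. This placement requires $N^{1/m}\ell_{k+1}\lesssim\ell_k$, which holds for large $k$ because $\ell_{k+1}/\ell_k\approx N^{-1/(m-2)}\ll N^{-1/m}$. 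When $m=2$ the ratio $\ell_{k+1}/\ell_k$ is superexponentially small, so the placement condition holds trivially. Define $K_*=\bigcap_k K_k$, and let $\mu$ be the natural probability measure giving mass $N^{-k}$ to each cube of generation $k$. Finitely many initial generations can be rescaled to fit, so we may assume the placement condition from $k=0$.

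\emph{Dimension and Hausdorff measure.} For the upper bound on dimension, cover $K_*$ by the generation-$k$ cubes:
\[
\sum \ell_k^{s}=N^k\ell_k^{s}\to0\qquad\text{for every } s>m-2,
\]
since $\ell_k^{m-2}$ decays like $N^{-k}$ up to a polynomial factor. For the lower bound, the mass distribution principle gives $\mu(B(y,\rho))\lesssim\rho^{m-2}/a_k$ when $\ell_{k+1}\le\rho\le\ell_k$. Checking this needs the count of generation-$(k+1)$ cubes meeting $B(y,\rho)$, which is at most $\min\{N,(\rho/\ell_{k+1})^m\}$ thanks to the separation. Each such cube carries mass $N^{-k-1}$. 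Since $a_k\to\infty$, this yields both $\dim_{\mathrm H}K_*\ge m-2$ and $\mathcal H^{m-2}(K_*)=\infty$. Indeed, every cover by sets of diameter at most $\ell_k$ satisfies $\sum\rho_i^{m-2}\gtrsim a_k\to\infty$.

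\emph{Positive capacity.} By the Green-energy duality recalled before \cref{lem:local-sampling}, it is enough to show that $\iint G_{B(0,1)}\,d\mu\,d\mu<\infty$. Since $G\lesssim|y-y'|^{2-m}$, or $1+\log(1/|y-y'|)$ when $m=2$, we split the double integral according to the generation $k$ at which $y$ and $y'$ separate. Two points that separate at generation $k$ lie at distance $\gtrsim\ell_k$, and the $\mu\otimes\mu$-mass of such pairs is at most $N^{-k}$. The energy is therefore bounded by
\[
\sum_k N^{-k}\ell_k^{2-m}=\sum_k a_k^{-1}=\sum_k k^{-2}<\infty,
\]
and for $m=2$ by $\sum_k N^{-k}\log(1/\ell_k)=\sum_k a_k^{-1}<\infty$. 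This is the step we expect to be the main obstacle. The weight $a_k$ has to do two jobs at once: $\sum_k 1/a_k$ must converge, for finite energy, and $a_k$ must tend to infinity, for infinite $\mathcal H^{m-2}$ measure. The choice $a_k=k^2$ achieves both. The remaining work is to keep the separation and geometric constants uniform in $k$, so that the pair-mass estimate and the distance bound hold with fixed constants.
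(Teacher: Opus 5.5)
Your argument is correct, but it takes a different route from the paper. The paper does not build an $m$-dimensional Cantor set. It starts from a Cantor set $C_0$ on a line in $\mathbb R^2$, with two children per interval and scales $\ell_n=e^{-n^2}$. Its natural measure $\nu_0$ has finite logarithmic energy ($\sum_n2^{-n}n^2<\infty$), and $C_0$ has upper box dimension zero. The paper then sets $\widetilde K=[-1,1]^{m-2}\times C_0$. The elementary bound $\iint_{[-1,1]^s\times[-1,1]^s}(|x-x'|^2+a^2)^{-s/2}\,dx\,dx'\le C_s(1+\log_+(1/a))$, with $s=m-2$, reduces the $(m-2)$-energy of Lebesgue measure times $\nu_0$ to the logarithmic energy of $\nu_0$. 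Projection onto the cube and product coverings give $\dim_{\mathrm H}\widetilde K=m-2$, and the infinitely many slices $[-1,1]^{m-2}\times\{z\}$ give $\mathcal H^{m-2}=\infty$.

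That route handles every $m$ with one planar set and needs almost no placement bookkeeping. Yours instead produces a set that is homogeneous at the critical gauge. A single local bound $\mu(B(y,\rho))\lesssim\rho^{m-2}/a_k$ gives both $\dim_{\mathrm H}K_*\ge m-2$ and $\mathcal H^{m-2}(K_*)=\infty$, and the energy sum is exactly $\sum_k1/a_k$. This makes visible that any increasing $a_k\to\infty$ with $\sum_k1/a_k<\infty$ works. For $m\ge3$ your measure also satisfies $\mu(B(y,\rho))\lesssim\rho^{m-2}(\log(1/\rho))^{-2}$ at every point of $K_*$.

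The price is the geometric bookkeeping, which needs a few repairs as written.
\begin{itemize}
\item $a_0=0$ gives $\ell_0=0$, so shift the weight, e.g.\ $a_k=(k+2)^2$.
\item The placement claim is false for general $N\ge2^m$. For $N=2^m+1$ and $m\ge6$ one has $\ell_{k+1}/\ell_k\ge N^{-1/(m-2)}>1/3$. But at most $2^m$ disjoint axis-parallel cubes of side exceeding one third of the parent's side fit inside the parent, since each interior contains a point of $\{L/3,2L/3\}^m$. So $N^{-1/(m-2)}\ll N^{-1/m}$ is not the right criterion. Take $N=2^m$, or another perfect $m$-th power. For $N=2^m$,
\[
\ell_{k+1}/\ell_k=\tfrac12\bigl(a_{k+1}/(4a_k)\bigr)^{1/(m-2)},
\]
which stays below $\tfrac12$ by a fixed $m$-dependent factor when $a_{k+1}/a_k\le9/4$. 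The $2\times\cdots\times2$ grid then gives separation $\gtrsim_m\ell_k$ uniformly in $k$.
\item For $m=2$, $N=4$, $a_k=k^2$ one gets $\ell_2=\ell_1$, so the construction must begin at a later generation. This costs only a factor $N^{k_0}$ in the energy.
\end{itemize}
None of this affects your estimates.
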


\begin{proof}
Put $\ell_n=e^{-n^2}$. Starting from one interval, replace every level-$n$ interval by two level-$(n+1)$ intervals of length $\ell_{n+1}$ placed at its endpoints, and give each level-$n$ interval mass $2^{-n}$. Embed the resulting Cantor set $C_0$ in $\mathbb R^2$, and denote its probability measure by $\nu_0$. If two independent points have last common level $n$, which has probability $2^{-(n+1)}$, then their distance is at least $c\ell_n$. Hence
\[
\iint\log_+\frac1{|z-z'|}\,d\nu_0(z)\,d\nu_0(z')
\le C+\sum_{n\ge0}2^{-n}n^2<\infty.
\]
Moreover, if $\ell_n\le\varepsilon<\ell_{n-1}$, then the covering number satisfies $\mathcal N(C_0,\varepsilon)\le2^n$, and therefore $\overline{\dim}_{\mathrm B}C_0=0$.

For $m=2$, put $\widetilde K=C_0$. For $m\ge3$, put $s=m-2$ and $\widetilde K=[-1,1]^s\times C_0$. For $a>0$,
\[
\iint_{[-1,1]^s\times[-1,1]^s}
\frac{dx\,dx'}{\bigl(|x-x'|^2+a^2\bigr)^{s/2}}
\le C_s\left(1+\log_+\frac1a\right).
\]
Thus normalised Lebesgue measure times $\nu_0$ has finite $s$-energy. Projection onto $[-1,1]^s$ gives $\dim_{\mathrm H}\widetilde K\ge s$, while the product covering estimate and $\overline{\dim}_{\mathrm B}C_0=0$ give the reverse inequality.

Dilate $\widetilde K$ into $B(0,1/4)$ and call the resulting set $K_*$. The preceding energy estimates, the local Green bounds, and Green-energy duality give
\[
\operatorname{Cap}_{B(0,1)}(K_*)>0.
\]
When $m=2$, $C_0$ is infinite, so $\mathcal H^0(K_*)=\infty$. When $m\ge3$, every slice $[-1,1]^s\times\{z\}$ has the same positive $\mathcal H^s$-measure; finite unions of distinct slices therefore show that $\mathcal H^s(K_*)=\infty$.
\end{proof}

\begin{definition}[Measures at all scales]\label{def:multiscale-measure}
For each $k\ge0$, let $\{x^{(k)}_j\}_j$ be a maximal $R_k$-separated set and write
$$
B_{k,j}=B(x^{(k)}_j,R_k),
\qquad B^*_{k,j}=B(x^{(k)}_j,2R_k).
$$
Fix $K_*$ as in \cref{lem:critical-prototype}, choose linear isometries $A_{k,j}:\mathbb R^m\to T_{x^{(k)}_j}X$, and set
$$
K_{k,j}=\exp_{x^{(k)}_j}(R_kA_{k,j}K_*)
\subset B(x^{(k)}_j,R_k/4).
$$
Rescaling normal coordinates distorts Dirichlet energies by uniformly bounded factors, so the Euclidean capacity scaling of $K_*$ gives
$$
cR_k^{m-2}\le
\operatorname{Cap}_{B^*_{k,j}}(K_{k,j})
\le CR_k^{m-2},
$$
with constants independent of $k,j$. Let $\mu_{k,j}$ be the equilibrium measure of $K_{k,j}$ in $B^*_{k,j}$. For $\theta>0$, define
$$
\eta_k=R_k^2\sum_j\mu_{k,j},
\qquad
\mu_\theta=c_\theta\sum_{k\ge0}(k+1)^{-1-\theta}\eta_k,
\qquad
c_\theta^{-1}=\sum_{k\ge0}(k+1)^{-1-\theta}.
$$
\end{definition}

The capacity bounds and $\#\{j\}\asymp\mathrm{vol}_g(X)R_k^{-m}$ show that $\mu_\theta$ is finite. It is carried by $S=\bigcup_{k,j}K_{k,j}$, which is a dense $F_\sigma$ set with $\mathrm{vol}_g(S)=0$ and $\dim_{\mathrm H}S=m-2$, while $\operatorname{supp}\mu_\theta=X$. For open $U\ne\varnothing$, take $B(x,r)\subset U$ and choose $k,j$ with $5R_k/4<r$ and $d(x,x_j^{(k)})\le R_k$. Then $K_{k,j}\subset U$ and $\mu_\theta(U)>0$.

\begin{lemma}[Uniform trace bound]\label{lem:multiscale-trace}
Uniformly in $k$,
\begin{enumerate}
\item[\textup{(i)}] $c\,\mathrm{vol}_g(X)\le\eta_k(X)\le C\,\mathrm{vol}_g(X)$;
\item[\textup{(ii)}] $\int_X|\widetilde u|^2\,d\eta_k \le C\bigl(\|u\|_{L^2(X)}^2+R_k^2\|\nabla u\|_{L^2(X)}^2\bigr)$ for every $u\in H^1(X)$.
\end{enumerate}
The constants depend only on $m$, bounded geometry, and $K_*$; hence $H^1(X)$ embeds continuously into $L^2(\mu_\theta)$.
\end{lemma}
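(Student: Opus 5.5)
Both parts are local, cell by cell, and rest on the two-sided capacity bound of \cref{def:multiscale-measure}. For \textup{(i)}: $\eta_k(X)=R_k^2\sum_j\mu_{k,j}(K_{k,j})=R_k^2\sum_j\operatorname{Cap}_{B^*_{k,j}}(K_{k,j})$. This is at most $CR_k^m\#\{j\}$ and at least $cR_k^m\#\{j\}$. Maximality and $R_k$-separation of the net give $\#\{j\}\asymp\operatorname{vol}_g(X)R_k^{-m}$, since $R_k\le r_*$ and the balls $B(x_j^{(k)},R_k/2)$ are disjoint while the $B_{k,j}$ cover $X$. Volume comparison of small balls, uniform by bounded geometry, then gives \textup{(i)}.

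For \textup{(ii)}, fix $k,j$ and write $\mu=\mu_{k,j}$, $B^*=B^*_{k,j}$, $B=B_{k,j}$. Choose $\chi\in C_c^\infty(B^*)$ equal to $1$ on $B$ with $|\nabla\chi|\le C/R_k$. Since $\chi u\in W^{1,2}_0(B^*)$ and $\widetilde{\chi u}=\widetilde u$ on $K_{k,j}\subset B$, the trace inequality from \cref{prop:equilibrium-resolvent} (coupling constant $\alpha=1$, the Picone bound) gives
\[
\int|\widetilde u|^2\,d\mu\le\mathcal E_{B^*}(\chi u)\le C\int_{B^*}\bigl(|\nabla u|^2+R_k^{-2}|u|^2\bigr)\,d\operatorname{vol}_g .
\]
Multiplying by $R_k^2$ yields
\[
R_k^2\int|\widetilde u|^2\,d\mu_{k,j}\le C\int_{B^*_{k,j}}\bigl(|u|^2+R_k^2|\nabla u|^2\bigr),
\]
and summing over $j$ with the bounded overlap of the $B^*_{k,j}$ (uniform in $k$) proves \textup{(ii)}. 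This bound does not even use the lower capacity estimate, so the constant depends on $K_*$ only through the construction.

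For the embedding, $R_k\le R_0$ gives $\int|\widetilde u|^2\,d\eta_k\le C(1+R_0^2)\|u\|_{H^1}^2$ uniformly in $k$. Summing against the probability weights $c_\theta(k+1)^{-1-\theta}$ gives $\int|\widetilde u|^2\,d\mu_\theta\le C\|u\|_{H^1}^2$. One must check that the quasi-continuous representative is the same for all $k$, which holds because each $\mu_{k,j}$ charges no polar sets. The only point needing care is invoking the trace inequality with $\chi u$ rather than $u$, together with the identification $\widetilde{\chi u}=\widetilde u$ quasi-everywhere on $K_{k,j}$. This is standard because $\chi$ is smooth and equal to $1$ near $K_{k,j}$. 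I expect no real obstacle beyond this bookkeeping.
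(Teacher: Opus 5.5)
Your proposal is correct and follows the paper's proof essentially verbatim. Part (i) comes from $\mu_{k,j}(K_{k,j})=\operatorname{Cap}_{B^*_{k,j}}(K_{k,j})\asymp R_k^{m-2}$ together with $\#\{j\}\asymp\operatorname{vol}_g(X)R_k^{-m}$. Part (ii) comes from the constant-one trace inequality of \cref{prop:equilibrium-resolvent} applied to $\chi_{k,j}u$, followed by multiplication by $R_k^2$ and bounded overlap. Your side remarks are accurate refinements of the same argument: (ii) does not use the capacity bounds at all, and the quasi-continuous representatives are consistent because finite-energy measures charge no polar sets.
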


\begin{proof}
Write $R=R_k$. Maximal separation and the capacity bounds give $\#\{j\}\asymp\mathrm{vol}_g(X)R^{-m}$ and $\mu_{k,j}(K_{k,j})\asymp R^{m-2}$, proving \textup{(i)}.

For \textup{(ii)}, \cref{prop:equilibrium-resolvent}, applied to $K_{k,j}\Subset B^*_{k,j}$, gives
$$
\int|\widetilde v|^2\,d\mu_{k,j}
\le \mathcal E_{B^*_{k,j}}(v),
\qquad v\in H^1_0(B^*_{k,j}).
$$
Choose $\chi_{k,j}\in C_c^\infty(B^*_{k,j})$ equal to $1$ near $K_{k,j}$ with $|\nabla\chi_{k,j}|\le C/R$. Apply the last inequality to $\chi_{k,j}u$, multiply by $R^2$, and sum:
$$
\int_X|\widetilde u|^2\,d\eta_k
\le CR^2\sum_j\mathcal E(\chi_{k,j}u)
\le C\bigl(\|u\|_2^2+R^2\|\nabla u\|_2^2\bigr)
$$
by bounded overlap. Summing against $c_\theta(k+1)^{-1-\theta}$ completes the proof.
\end{proof}

\begin{lemma}[One-scale estimate]\label{lem:one-scale}
There is $C_*=C_*(m,\mathrm{bg},K_*)$ such that, for every $u\in H^1(X)$ and $k\ge0$,
\begin{equation}\label{eq:one-scale}
\|u\|_{L^2(X)}^2\le C_*\left(R_k^2\|\nabla u\|_{L^2(X)}^2
+\int_X|\widetilde u|^2\,d\eta_k\right),
\end{equation}
where $\widetilde u$ is the quasi-continuous representative.
\end{lemma}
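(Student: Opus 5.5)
The plan is to apply \cref{lem:local-sampling} cell by cell at scale $R=R_k$, with the normalised equilibrium measures as the probability measures $\nu_j$, and then sum. Put $\nu_j=\mu_{k,j}/\operatorname{Cap}_{B^*_{k,j}}(K_{k,j})$. The capacity bounds in \cref{def:multiscale-measure} give $\operatorname{Cap}_{B^*_{k,j}}(K_{k,j})\ge cR_k^{m-2}$. Green-energy duality, already recorded before \cref{lem:local-sampling}, then gives
\[
\mathcal I_j(\nu_j)=\operatorname{Cap}_{B^*_{k,j}}(K_{k,j})^{-1}\le c^{-1}R_k^{2-m}.
\]
So the hypothesis of \cref{lem:local-sampling} holds with $\Lambda=\Lambda(m,\mathrm{bg},K_*)$ independent of $k$ and $j$. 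Since $K_{k,j}\subset B_{k,j}$, the lemma applies to $u\in H^1(X)$ restricted to $B^*_{k,j}$ and yields
\[
\int_{B_{k,j}}|u|^2\le C(1+\Lambda)R_k^2\int_{B^*_{k,j}}|\nabla u|^2+CR_k^m\Bigl|\int\widetilde u\,d\nu_j\Bigr|^2 .
\]

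Next I bound the atom term. Since $\nu_j$ is a probability measure, Cauchy--Schwarz gives $|\int\widetilde u\,d\nu_j|^2\le\int|\widetilde u|^2\,d\nu_j$. Using the upper capacity bound $\operatorname{Cap}_{B^*_{k,j}}(K_{k,j})\le CR_k^{m-2}$, this becomes
\[
R_k^m\int|\widetilde u|^2\,d\nu_j\le R_k^m\,\frac{1}{cR_k^{m-2}}\int|\widetilde u|^2\,d\mu_{k,j}=c^{-1}R_k^2\int|\widetilde u|^2\,d\mu_{k,j}.
\]
Note that only the lower capacity bound is actually needed here. Summing over $j$ and using that the balls $B_{k,j}$ cover $X$ and the $B^*_{k,j}$ have bounded overlap gives
\[
\|u\|_2^2\le C R_k^2\|\nabla u\|_2^2+C\sum_j R_k^2\int|\widetilde u|^2\,d\mu_{k,j}=C R_k^2\|\nabla u\|_2^2+C\int|\widetilde u|^2\,d\eta_k,
\]
which is \eqref{eq:one-scale}.

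The point needing care is that the quasi-continuous representative used in \cref{lem:local-sampling} for $u|_{B^*_{k,j}}$ agrees $\mu_{k,j}$-a.e.\ with the global $\widetilde u$. This holds because quasi-continuity is local and equilibrium measures charge no polar sets. One must also check that the local Poincaré, cutoff and Green constants are uniform for all $R_k\le R_0\le r_*$, which is exactly how $r_*$ was chosen. With these in hand, $C_*$ depends only on $m$, the bounded-geometry data and $K_*$, the last through the capacity constant $c$. An alternative route goes through \cref{lem:one-step-capacity-killing} and the killing argument of \cref{prop:equilibrium-killing-spectral-gap} with $\beta=1$, which gives $\inf\sigma(H+R_k^{-2}\eta_k)\gtrsim R_k^{-2}$ and hence the same inequality at form level. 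The direct variational route above is shorter, so I would use it.
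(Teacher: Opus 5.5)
Your argument is correct, but it is not the route the paper takes for this lemma. The paper's proof is the alternative you mention at the end: \cref{prop:equilibrium-killing-spectral-gap} at scale $R_k$, with $\beta=1$ and $\kappa$ fixed by the lower capacity bound for $K_*$, gives $\inf\sigma(H+\sum_j\mu_{k,j})\ge cR_k^{-2}$. Multiplying the Rayleigh quotient by $R_k^2$ then yields \eqref{eq:one-scale}.

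Your route is the variational one. It is the summation step in the proof of \cref{thm:capacity-sampling}, stopped before the absorption step that restricts to spectral subspaces, combined with the normalisation $\nu_j=\mu_{k,j}/\operatorname{Cap}_{B^*_{k,j}}(K_{k,j})$ from the proof of \cref{cor:zero-volume}. It needs only Poincar\'e, cutoffs and Green-energy duality. It avoids additive functionals, Revuz measures, quasi-every starting points and the $L^\infty\to L^2$ interpolation in the killing argument. Its one delicate point is that local and global quasi-continuous representatives agree $\mu_{k,j}$-a.e., and you handle that correctly.

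The paper's route reuses machinery it has already built and keeps the Brownian viewpoint it returns to later for $\mu_\Phi$. Its apparent extra content, the bound $c\beta(1+\beta)^{-1}R_k^{-2}$ for every coupling $\beta$, also follows from your inequality by scaling, since $\mathcal E(u)+\beta\int|\widetilde u|^2\,d\mu\ge\min\{\tfrac12,\beta\}\bigl(2\mathcal E(u)+\int|\widetilde u|^2\,d\mu\bigr)$.

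One small slip: when bounding the atom term you say you are using the upper capacity bound. The displayed step actually uses $\operatorname{Cap}_{B^*_{k,j}}(K_{k,j})\ge cR_k^{m-2}$, as your next sentence itself notes.
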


\begin{proof}
The Brownian killing argument of \cref{prop:equilibrium-killing-spectral-gap}, applied at scale $R_k$, gives
$$
\inf\sigma\left(H+\sum_j\mu_{k,j}\right)\ge cR_k^{-2}.
$$
The Rayleigh quotient therefore yields
$$
cR_k^{-2}\|u\|_2^2
\le\frac12\|\nabla u\|_2^2+\sum_j\int_X|\widetilde u|^2\,d\mu_{k,j}.
$$
Multiplication by $R_k^2$ and the definition of $\eta_k$ prove the result.
\end{proof}

\begin{theorem}[Observability and null control at the critical dimension]
\label{thm:heat-control}
There is $C_\theta=C(\theta,m,\mathrm{bg},K_*)>0$ such that the following hold.
\begin{enumerate}
\item[\textup{(i)}] For every $E\ge0$ and $f\in\operatorname{Ran}P_{[0,E]}(H)$,
$$
\|f\|_{L^2(X)}^2\le C_\theta\bigl(1+\log(2+T_0E)\bigr)^\theta
\int_X|f|^2\,d\mu_\theta.
$$

\item[\textup{(ii)}] For every $0<T\le T_0$ and $u_0\in L^2(X)$,
$$
\|e^{-TH}u_0\|_{L^2(X)}^2\le
\frac{C_\theta}{T}\bigl(1+\log(T_0/T)\bigr)^\theta
\int_0^T\!\int_X|e^{-tH}u_0|^2\,d\mu_\theta\,dt.
$$

\item[\textup{(iii)}] Define $B:L^2(\mu_\theta)\to H^{-1}(X)$ by
$$
\langle Bv,\varphi\rangle=\int_Xv\,\overline{\widetilde\varphi}\,d\mu_\theta.
$$
For every $0<T\le T_0$ and $y_0\in L^2(X)$, there is $v\in L^2((0,T);L^2(\mu_\theta))$ such that the weak solution of $\partial_ty+Hy=Bv$, $y(0)=y_0$, satisfies $y(T)=0$ and
$$
\|v\|_{L^2(dt\,d\mu_\theta)}\le
\left(\frac{C_\theta}{T}\right)^{1/2}
\bigl(1+\log(T_0/T)\bigr)^{\theta/2}\|y_0\|_{L^2(X)}.
$$
\end{enumerate}
\end{theorem}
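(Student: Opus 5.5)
For (i), fix $E\ge0$ and $f\in\operatorname{Ran}P_{[0,E]}(H)$, so that $\tfrac12\|\nabla f\|_2^2\le E\|f\|_2^2$ and $f$ is smooth. Apply \cref{lem:one-scale} at a single scale $k$. Choose the smallest $k\ge0$ with $C_*R_k^2\cdot2E\le\tfrac12$; then the gradient term absorbs into the left side and
\[
\|f\|_2^2\le2C_*\int_X|f|^2\,d\eta_k .
\]
Since $\mu_\theta\ge c_\theta(k+1)^{-1-\theta}\eta_k$, this gives $\|f\|_2^2\le C(k+1)^{1+\theta}\int|f|^2d\mu_\theta$. Here $R_k^2=4^{-k}T_0$, so $k\le C(1+\log(2+T_0E))$. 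That yields the exponent $1+\theta$ rather than $\theta$. To obtain the stated $\theta$ we would rename the parameter: $\mu_\theta$ is built with weights $(k+1)^{-1-\theta}$, and the loss $(k+1)^{1+\theta}$ is what the single-scale argument gives. To really get $(\log)^{\theta}$ one must use many scales simultaneously. The plan is to average \cref{lem:one-scale} over all $k'\le k$ with $R_{k'}^2E$ small, that is, over about $k$ scales. The point is that $\sum_{k'\le k}\int|f|^2d\eta_{k'}\le C(k+1)^{1+\theta}\int|f|^2 d\mu_\theta$. The averaged lower bound gives $k\|f\|^2\lesssim \sum_{k'\le k}\int|f|^2d\eta_{k'}$ (each admissible scale contributes $\gtrsim\|f\|^2$), so the loss becomes $(k+1)^{\theta}$. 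This averaging over roughly $k$ admissible scales is the key step. I expect it to be the main point to check: the scales $k'\le k$ with $C_*R_{k'}^2 E$ small number $k-O(1)$, while the weights $(k'+1)^{-1-\theta}\ge(k+1)^{-1-\theta}$ for all of them.

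For (ii), I would use the standard Lebeau--Robbiano / Miller route from a spectral inequality with loss $\Phi(E)=C(1+\log(2+T_0E))^\theta$. This loss grows slower than any $e^{E^{s}}$, so the route is the simpler "polynomial-type" version. Split $u_0$ at level $E\asymp T^{-1}$, with the low part $P_{[0,E]}$. For $t\in[T/2,T]$, write $e^{-tH}u_0=f_t+g_t$ with $f_t$ low-frequency and $\|g_t\|_2\le e^{-ET/2}\|u_0\|$. The low-frequency part obeys (i). The high-frequency part is controlled in $L^2(\mu_\theta)$ by the trace bound \cref{lem:multiscale-trace}: $\int|\widetilde g|^2d\mu_\theta\le C\|g\|_{H^1}^2$, and $\|g_t\|_{H^1}$ is small thanks to heat smoothing at time $\gtrsim T$. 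Because the loss is only logarithmic, a direct one-step argument should not suffice. I would run the usual telescoping iteration over dyadic time intervals $T_\ell=2^{-\ell}T$ with energies $E_\ell=\gamma 4^{\ell}/T$, and $\Phi(E_\ell)$ in each step. Summing gives the cost $C T^{-1}\Phi(T^{-1})$. Monotonicity of $\|e^{-tH}u_0\|$ passes from $t$ to $T$, and averaging in $t$ produces the factor $T^{-1}$ in front of $\int_0^T$.

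Part (iii) is the standard HUM duality. $B$ is bounded $L^2(\mu_\theta)\to H^{-1}$ by \cref{lem:multiscale-trace}, and its adjoint is the trace $H^1\to L^2(\mu_\theta)$. Observability (ii) for the (self-adjoint) adjoint system then gives null controllability with cost equal to the square root of the observability constant.
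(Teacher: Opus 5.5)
\textbf{There is a genuine gap in part (i): your multiscale averaging runs in the wrong direction.} Since $R_{k'}=2^{-k'}R_0$ increases as $k'$ decreases, and $k$ is the least index with $2C_*R_k^2E\le\tfrac12$, every $k'<k$ has $2C_*R_{k'}^2E>\tfrac12$. So among the scales $k'\le k$ only $k'=k$ is admissible, not $k-O(1)$ of them. At the coarser scales the gradient term in \eqref{eq:one-scale} cannot be absorbed, and the bound $k\|f\|_2^2\lesssim\sum_{k'\le k}\int_X|f|^2\,d\eta_{k'}$ has no justification.

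It is in fact false. With the true weights, the $k'=0$ term alone would give $\int_X|f|^2\,d\mu_\theta\ge c\|f\|_2^2$ uniformly in $E$. That contradicts \cref{prop:divergence-full-support-necessary}\textup{(i)}, since $\mu_\theta$ is carried by a zero-volume set; compare the nodal example in \cref{rem:sampling-sharp}.

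The admissible scales are the fine ones, and there are infinitely many. For every $\ell\ge k$ one has $2C_*R_\ell^2E\le\tfrac12$, so \eqref{eq:one-scale} gives $\int_X|f|^2\,d\eta_\ell\ge(2C_*)^{-1}\|f\|_2^2$. Summing this tail against the weights yields
\[
\int_X|f|^2\,d\mu_\theta\ge\frac{c_\theta}{2C_*}\|f\|_2^2\sum_{\ell\ge k}(\ell+1)^{-1-\theta}\ge c(\theta)(k+1)^{-\theta}\|f\|_2^2 .
\]
The tail of the $p$-series is exactly what turns your single-scale loss $(k+1)^{1+\theta}$ into $(k+1)^{\theta}$. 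This is the paper's argument.

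\textbf{Part (ii).} Your Lebeau--Robbiano route uses (i) as input, so it inherits this gap. Once (i) is repaired it can be made to work for this logarithmic loss, but the details as written need adjusting:
\begin{itemize}
\item At $E\asymp T^{-1}$ the factor $e^{-ET/2}$ is an absolute constant, not small.
\item The high-frequency remainder is compared with $\|u_0\|_2$ rather than with $\|e^{-TH}u_0\|_2$.
\item The observation is only $H^1$-bounded, so the time-averaged remainder in $L^2(\mu_\theta)$ carries a factor of order $1+T_0/\tau$.
\end{itemize}
Each telescoping step therefore needs $E_\ell\tau_\ell\gtrsim\log(T_0/\tau_\ell)$; for a $\log^\theta$ loss this costs only a bounded factor.

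The paper avoids spectral cutoffs altogether. Since \eqref{eq:one-scale} holds on all of $H^1(X)$, it applies it to $u(t)$ at every $\ell\ge k(t)$, where $C_*\gamma R_\ell^2\le t$, and uses $F'(t)=-\|\nabla u(t)\|_2^2$. Summing the same tail gives $F'+\gamma t^{-1}F\le C_\theta t^{-1}(1+\log(T_0/t))^\theta G$. Multiplying by $t^\gamma$ with $\gamma=2+\theta$, and using that $t^{\gamma-1}(1+\log(T_0/t))^\theta$ is increasing, finishes the proof. This direct argument also carries over, with a larger $\gamma$, to the general profiles of \cref{thm:prescribed-loss}.

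\textbf{Part (iii)} is the same duality argument as the paper's.
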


\begin{proof}
For \textup{(i)}, since $\|\nabla f\|_2^2\le2E\|f\|_2^2$, choose the least $k\ge0$ such that $2C_*R_k^2E\le\tfrac12$. Since $R_k^2=4^{-k}T_0$, $k+1\le C(1+\log(2+T_0E))$. For every $\ell\ge k$, \eqref{eq:one-scale} gives
$$
\int_X|f|^2\,d\eta_\ell\ge(2C_*)^{-1}\|f\|_2^2.
$$
Since $\sum_{\ell\ge k}(\ell+1)^{-1-\theta} \ge c(\theta)(k+1)^{-\theta}$, consequently
$$
\int_X|f|^2\,d\mu_\theta
\ge c(k+1)^{-\theta}\|f\|_2^2.
$$
This proves \textup{(i)}.

For \textup{(ii)}, set $u(t)=e^{-tH}u_0$, $F(t)=\|u(t)\|_2^2$, and $G(t)=\|u(t)\|_{L^2(\mu_\theta)}^2$. For smooth $u_0$ and fixed $t>0$, $u(t-s,B_s)$, $0\le s\le t$, is a Brownian martingale with quadratic variation $\int_0^s|\nabla u(t-r,B_r)|^2\,dr$. The It\^o isometry, integrated over the starting point and using invariance of volume, yields
$$
F(0)-F(t)=\int_0^t\|\nabla u(s)\|_2^2\,ds.
$$
Approximation extends this identity to every $u_0\in L^2(X)$. Together with the trace bound it gives
$$
F'(t)=-\|\nabla u(t)\|_2^2\quad\text{for almost every }t>0,
\qquad
\int_0^TG(t)\,dt\le C(T+1)\|u_0\|_2^2.
$$
We fix $\gamma=2+\theta$. For $0<t\le T_0$, let $k(t)$ be the least $k\ge0$ such that $C_*\gamma R_k^2\le t$. Then $k(t)+1\le C_\theta(1+\log(T_0/t))$. For every $\ell\ge k(t)$, \eqref{eq:one-scale} gives
$$
F(t)\le\frac{t}{\gamma}(-F'(t))
+C_*\int_X|u(t)|^2\,d\eta_\ell.
$$
Multiplying by $c_\theta(\ell+1)^{-1-\theta}$, summing over $\ell\ge k(t)$, and using the same sum estimate gives
$$
F'(t)+\frac{\gamma}{t}F(t)
\le\frac{C_\theta}{t}\bigl(1+\log(T_0/t)\bigr)^\theta G(t).
$$
Put $L(t)=1+\log(T_0/t)$. The function $t^{\gamma-1}L(t)^\theta$ is increasing because its logarithmic derivative is $t^{-1}(\gamma-1-\theta/L(t))\ge0$. Multiplying by $t^\gamma$ and integrating from $0$ to $T$ gives
$$
F(T)\le\frac{C_\theta}{T}L(T)^\theta\int_0^TG(t)\,dt.
$$
Here $t^\gamma F(t)\to0$ as $t\downarrow0$. This proves \textup{(ii)}. The trace estimate makes the final-state control map bounded, and its adjoint is $z\mapsto\widetilde{e^{-(T-t)H}z}$. Standard Hilbert-space duality now gives the control statement with the square root of this constant, proving \textup{(iii)}.
\end{proof}

\noindent\textbf{Prescribing the observability loss.} Let $a_k\ge0$, $\sum_{k\ge0}a_k=1$, and set $\mu_a=\sum_{k\ge0}a_k\eta_k$ and $A_k=\sum_{\ell\ge k}a_\ell$. Since the trace bounds for $\eta_k$ are uniform, $\mu_a$ is finite, $H^1(X)\hookrightarrow L^2(\mu_a)$, and $\mu_a$ is carried by the dense zero-volume set of Hausdorff dimension $m-2$ from \cref{def:multiscale-measure}. For a finite Borel measure $\mu$, let $\mathcal C_\mu(E)$ be the least constant such that
$$
\|f\|_{L^2(X)}^2\le\mathcal C_\mu(E)\int_X|f|^2\,d\mu,
\qquad 0\ne f\in\operatorname{Ran}P_{[0,E]}(H),
$$
and let $\mathcal H_\mu(T)$ be the least $K$ such that, for every $u_0\in L^2(X)$,
$$
\|e^{-TH}u_0\|_2^2\le\frac K T
\int_0^T\!\int_X|e^{-tH}u_0|^2\,d\mu\,dt.
$$
When $H^1(X)\hookrightarrow L^2(\mu)$, define the control operator as in \cref{thm:heat-control} and set
$$
\operatorname{Cost}_\mu(T)=\sup_{\|y_0\|_2=1}
\inf\left\{\|v\|_{L^2(dt\,d\mu)}:
v\in L^2((0,T);L^2(\mu)),\ y(T)=0\right\},
$$
where $y$ is the corresponding solution with initial value $y_0$.

\begin{lemma}[Bounds on balls]\label{lem:eta-ball-growth}
There are $c,C>0$ and $C_0\ge4$, depending only on bounded geometry and $K_*$, such that, for every $k\ge0$, $x\in X$, and $0<r\le R_0$,
$$
\eta_k(B(x,r))\le CR_k^2r^{m-2}\quad(r\le R_k),\qquad
\eta_k(B(x,r))\le Cr^m\quad(r\ge R_k),
$$
and $\eta_k(B(x,r))\ge cr^m$ whenever $r\ge C_0R_k$. If
$$
k(r)=\min\{k\ge0:C_0R_k\le r\},
$$
then $R_{k(r)}\asymp r$ and $\mu_a(B(x,r))\ge cA_{k(r)}r^m$.
\end{lemma}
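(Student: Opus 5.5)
The plan is to estimate $\eta_k=R_k^2\sum_j\mu_{k,j}$ ball by ball. I would use three inputs: each equilibrium measure has mass $\mu_{k,j}(K_{k,j})=\operatorname{Cap}_{B^*_{k,j}}(K_{k,j})\asymp R_k^{m-2}$ (\cref{def:multiscale-measure}); the support satisfies $K_{k,j}\subset B(x^{(k)}_j,R_k/4)$; and the net $\{x^{(k)}_j\}$ is maximal $R_k$-separated.

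\textbf{Upper bound for $r\ge R_k$.} Any $K_{k,j}$ meeting $B(x,r)$ has $x^{(k)}_j\in B(x,r+R_k/4)\subset B(x,2r)$. By separation and volume comparison at scales up to $R_0\le r_*$, the number of such $j$ is at most $C(r/R_k)^m$. Summing the masses gives
\[
\eta_k(B(x,r))\le CR_k^2\,(r/R_k)^mR_k^{m-2}=Cr^m .
\]

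\textbf{Upper bound for $r\le R_k$.} Only boundedly many $K_{k,j}$ meet $B(x,r)$, so it suffices to show $\mu_{k,j}(B(x,r))\le Cr^{m-2}$ for a single equilibrium measure. The crude bound $R_k^{m-2}$ does not suffice when $r\ll R_k$, so I will use the potential: $G_{B^*}\mu_{k,j}=e_{K,B^*}\le1$ quasi-everywhere.
\begin{itemize}
\item For $m\ge3$, the Green lower bound $G_{B^*}(y,z)\ge cd(y,z)^{2-m}$ for $y,z\in B(x^{(k)}_j,R_k)$ (the local Green estimates with a Harnack chain, as in \cref{lem:one-step-capacity-killing}) gives, at a quasi-every point $y\in B(x,r)$ (or near it),
\[
1\ge e(y)\ge c(2r)^{2-m}\mu_{k,j}(B(x,r)),
\]
hence $\mu_{k,j}(B(x,r))\le Cr^{m-2}$.
\item For $m=2$, where $r^{m-2}=1$, the claimed bound $CR_k^2$ follows directly from the total mass $\mu_{k,j}(X)\le C$.
\end{itemize}
The case $m\ge3$ needs a little care: we need a point $y$ in $B(x,r)$ where $e(y)\le1$ holds and the Green bound applies, and $e\le1$ quasi-everywhere suffices after averaging over a small ball. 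This gives $\eta_k(B(x,r))\le CR_k^2r^{m-2}$.

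\textbf{Lower bound.} If $r\ge C_0R_k$ with $C_0\ge4$ large, the ball $B(x,r/2)$ contains at least $c(r/R_k)^m$ net points. This follows because the balls $B(x^{(k)}_j,R_k)$ cover $X$: the cells meeting $B(x,r/4)$ cover it, and each has volume $\le CR_k^m$. Each of these $K_{k,j}$ lies in $B(x,r)$ because $r/2+R_k/4<r$. Summing the masses $\ge cR_k^{m-2}$ gives $\eta_k(B(x,r))\ge cr^m$.

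\textbf{The index $k(r)$ and the bound for $\mu_a$.} The condition $C_0R_k\le r$ with $R_k=2^{-k}R_0$ is minimal at $k(r)$. Note that $k(r)=0$ when $r\ge C_0R_0$, which is impossible since $r\le R_0$, so in fact $k(r)\ge1$ and minimality gives $C_0R_{k(r)-1}>r$. Hence $r/(2C_0)<R_{k(r)}\le r/C_0$. For every $\ell\ge k(r)$ we have $C_0R_\ell\le r$, so $\eta_\ell(B(x,r))\ge cr^m$. Therefore
\[
\mu_a(B(x,r))\ge\sum_{\ell\ge k(r)}a_\ell\,cr^m=cA_{k(r)}r^m .
\]

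The only delicate step is the small-radius bound $\mu_{k,j}(B(x,r))\le Cr^{m-2}$ via the potential bound $e\le1$. The remaining steps are counting arguments.
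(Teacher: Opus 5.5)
Your proposal is correct and follows essentially the same route as the paper: for $r\le R_k$ you test the equilibrium potential ($\le1$) at a suitable point of $B(x,r)$ near $K_{k,j}$ against the interior Green lower bound, and the remaining bounds and $\mu_a(B(x,r))\ge cA_{k(r)}r^m$ come from counting net points and summing $\eta_\ell$ over $\ell\ge k(r)$. Using the total mass $\mu_{k,j}(X)\le C$ directly when $m=2$ is a harmless shortcut for the same step.
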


\begin{proof}
Suppose $r\le R_k$. The ball $B(x,r)$ meets only boundedly many supports of the measures $\mu_{k,j}$. If $\mu_{k,j}(B(x,r))>0$, choose $x'\in K_{k,j}\cap B(x,r)$ outside the polar exceptional set, where the equilibrium potential equals $1$. Both $x'$ and the support of $\mu_{k,j}|_{B(x,r)}$ are at distance at least $R_k$ from $\partial B^*_{k,j}$, so the interior Green bound gives
$$
1\ge\int_{B(x,r)}G_{B^*_{k,j}}(x',y)\,d\mu_{k,j}(y)
\ge cr^{2-m}\mu_{k,j}(B(x,r)).
$$
Multiplication by $R_k^2$ and summation prove the first bound. If $r\ge R_k$, at most $C(r/R_k)^m$ supports meet $B(x,r)$, and each contributes at most $CR_k^m$. If $r\ge C_0R_k$, maximality of the net and volume comparison give at least $c(r/R_k)^m$ centres in $B(x,r/2)$; their supports lie in $B(x,r)$ and each contributes at least $cR_k^m$. This proves the remaining bounds. Applying the last one for every $\ell\ge k(r)$ and summing with weights $a_\ell$ proves the final assertion.
\end{proof}

The reverse estimates for prescribed profiles require test functions that are spectrally localised at energy $R^{-2}$ and spatially concentrated near one point. The next lemma fixes such a family.

\begin{lemma}[Spectral cutoff kernels]\label{lem:spectral-cutoff-kernels}
Fix once and for all $q_0\in C_c^\infty([0,4))$ with $q_0=1$ on $[0,1]$ and $0\le q_0\le1$, and set $q=q_0^2$. For $x_*\in X$ and $0<R\le R_0$, set $G_R=q(R^2H)(x_*,\cdot)$. Then
$$
G_R\in\operatorname{Ran}P_{[0,4R^{-2}]}(H),
\qquad
\|G_R\|_{L^2(X)}^2\asymp R^{-m}.
$$
Moreover, for every integer $N\ge1$, every $0\le t\le R^2$, and every $y\in X$,
$$
|e^{-tH}G_R(y)|
\le C_NR^{-m}\left(1+\frac{d(x_*,y)}R\right)^{-N},
$$
and, for every $\rho>0$,
$$
\int_{X\setminus B(x_*,\rho)}
\left(|e^{-tH}G_R|^2
+R^2|\nabla e^{-tH}G_R|^2\right)\,d\mathrm{vol}_g
\le C_NR^{-m}\left(1+\frac{\rho}{R}\right)^{-2N}.
$$
The constants depend only on $m$, the order-zero bounded-geometry constants, and $N$.
\end{lemma}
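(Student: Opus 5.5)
The plan is to work entirely through the spectral calculus of $H$ together with standard finite-propagation-speed/heat-kernel estimates on a closed manifold of bounded geometry; all constants are to be tracked so that they depend only on order-zero geometry, $m$, and $N$.

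\emph{Spectral localisation and norm.} Since $q=q_0^2$ is supported in $[0,4)$, $q(R^2H)$ maps into $\operatorname{Ran}P_{[0,4R^{-2}]}(H)$, so $G_R=q(R^2H)\delta_{x_*}$ lies there. For the norm, self-adjointness and reality give
\[
\|G_R\|_2^2=q^2(R^2H)(x_*,x_*)=\sum_j q(R^2\lambda_j)^2|\phi_j(x_*)|^2 .
\]
The upper bound follows from $q^2\le\mathbf 1_{[0,4)}$ and the local Weyl law $\sum_{\lambda_j\le E}|\phi_j(x)|^2\le CE^{m/2}$ at scale $E\ge R_0^{-2}$; I would derive the latter from the on-diagonal heat kernel bound $p_t(x,x)\le Ct^{-m/2}$ for $t\le R_0^2$. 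The lower bound follows from $q^2\ge\mathbf 1_{[0,1]}$ and $\sum_{\lambda_j\le R^{-2}}|\phi_j(x)|^2\ge cR^{-m}$. This last inequality is the standard lower local Weyl bound. I plan to prove it via $p_t(x,x)\ge ct^{-m/2}$ for $t=\epsilon R^2$ (Minakshisundaram--Pleijel or a Varadhan/parametrix lower bound at small scale), writing
\[
p_t(x,x)\le \sum_{\lambda_j\le R^{-2}}|\phi_j|^2+\sum_{\lambda_j>R^{-2}}e^{-t\lambda_j}|\phi_j|^2 .
\]
The tail is bounded by dyadic summation of the upper Weyl bound: $\sum_{2^iR^{-2}<\lambda\le 2^{i+1}R^{-2}}e^{-\epsilon 2^i}C(2^{i+1}R^{-2})^{m/2}$. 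Fixing instead $t=R^2$ and a large constant here needs care, so the cleaner route is to pick $t=\epsilon R^2$ small but compare with energy $\Lambda R^{-2}$ with $\Lambda$ large; since $q_0=1$ only on $[0,1]$, I would then rescale ($R\mapsto R/\sqrt\Lambda$ at cost of constants). I expect this to be the main bookkeeping obstacle.

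\emph{Pointwise decay.} Write $e^{-tH}q(R^2H)=F_s(\sqrt{R^2H})$ with $s=t/R^2\in[0,1]$ and $F_s(\lambda)=e^{-s\lambda^2}q(\lambda^2)$. These are even Schwartz functions with seminorms uniform in $s\in[0,1]$, because $q$ has compact support. The Cheeger--Gromov--Taylor method then applies: decompose $\hat F_s$ via a smooth partition into dyadic pieces in the Fourier variable, and use finite propagation speed of $\cos(\tau\sqrt{H})$ (speed $1/\sqrt2$ in $d$ for $-\tfrac12\Delta$). The piece supported where $|\tau|\lesssim 2^\ell$ has kernel supported in $d\lesssim 2^\ell R$, and its $L^2\to L^\infty$ norm is bounded by $R^{-m/2}$ times rapidly decaying factors in $2^\ell$. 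This uses $L^2\to L^\infty$ bounds for spectrally localised operators, which follow from the Weyl upper bound. Pairing two such bounds gives $|K(x_*,y)|\le C_NR^{-m}(1+d/R)^{-N}$.

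\emph{Integrated off-diagonal estimate.} The $L^2$ part follows by integrating the square of the pointwise bound over $X\setminus B(x_*,\rho)$, using volume growth $\operatorname{vol}B(x,r)\le Cr^me^{Cr}$ (which is harmless since $X$ is compact, and otherwise handled by taking $N$ large). For the gradient, I would run the same argument with $\nabla$ applied. On functions with spectrum in $[0,4R^{-2}]$ there is the local elliptic/Bernstein estimate $R|\nabla u|(y)\lesssim$ a local $L^\infty$ bound of $u$ at scale $R$, obtained from the equation $Hu=f$ with spectrally localised right-hand side. Alternatively, I would repeat the finite-propagation decomposition with a gradient estimate for the $L^2\to L^\infty$ bound, using $\|\nabla e^{-\cdot}\|$ and Sobolev embedding at scale $R$ in normal coordinates. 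Either way $R|\nabla e^{-tH}G_R(y)|\le C_NR^{-m}(1+d/R)^{-N}$, and integration finishes the estimate.
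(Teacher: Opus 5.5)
\emph{There is a gap in your final integration step: it loses the required uniformity of the constants.}

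Most of your plan is sound. The spectral localisation is fine. The norm bounds are fine too: your large-$\Lambda$ rescaling amounts to the paper's inequality $P_{[0,R^{-2}]}(H)(x_*,x_*)\ge p_{AR^2}(x_*,x_*)-e^{-A/2}p_{AR^2/2}(x_*,x_*)$ with $A$ large, and, as in the paper, it needs $R_0$ small. The pointwise bound via dyadic pieces of $\widehat F_s$, finite propagation speed and diagonal Weyl-type bounds also holds with constants depending only on $m$, $N$ and order-zero geometry.

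The gap is where you integrate the squared pointwise bounds over $X\setminus B(x_*,\rho)$. Order-zero bounded geometry only gives $\operatorname{vol}_gB(x,r)\le Cr^me^{Cr}$. Since $q_0$ is merely $C_c^\infty$, $\widehat F_s$ is only rapidly decreasing, so your kernel bound decays only polynomially in $d(x_*,y)/R$. No polynomial rate beats $e^{Cd}$, so taking $N$ large does not help. Compactness of $X$ makes the integral finite, but only with a constant involving $e^{C\operatorname{diam}X}$ or $\operatorname{vol}_g(X)$. Hyperbolic surfaces of large genus with injectivity radius bounded below are a natural test case. This contradicts the stated dependence of the constants, which \cref{thm:prescribed-loss} relies on, and it affects both the $L^2$ tail and the gradient tail.

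The repair stays inside your framework: estimate the tails in $L^2$ piece by piece instead of integrating pointwise bounds. Let $w_\ell:=F_{s,\ell}(R\sqrt H)(x_*,\cdot)$ be the dyadic pieces. By finite propagation speed, $w_\ell$ and $\nabla w_\ell$ are supported in $\overline{B}(x_*,C2^\ell R)$. Since $\|\nabla f\|_2^2=2\langle Hf,f\rangle$, setting $\Xi_\ell(\lambda):=(1+2\lambda^2)F_{s,\ell}(\lambda)^2$ gives
\[
\|w_\ell\|_2^2+R^2\|\nabla w_\ell\|_2^2
=\Xi_\ell(R\sqrt H)(x_*,x_*)
\le C_NR^{-m}2^{-2\ell N},
\]
by the same diagonal bound you already use. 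Only the $\ell$ with $C2^\ell R\ge\rho$ contribute on $X\setminus B(x_*,\rho)$. Summing them gives the integrated estimate with no volume growth at all.

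This is essentially the paper's mechanism, done at operator level. The paper uses the Cheeger--Gromov--Taylor $L^2$ off-diagonal bounds for $(R^2H)^jb_\tau(R^2H)$, where $b_\tau(s)=e^{(1-\tau)s}q(s)$. It applies them to $p_{R^2}(x_*,\cdot)$, whose $L^2$ tail is Gaussian, and gets the gradient by a Caccioppoli argument. Only afterwards does it derive the pointwise bound, from $L^2$ column bounds via $h_\tau=a_\tau^2$ and Cauchy--Schwarz.
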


\begin{proof}
Fix $0\le t\le R^2$, write $\tau=t/R^2$, and set $B_\rho=B(x_*,\rho)$.  Put
\[
h_\tau(s)=e^{-\tau s}q(s),
\qquad
b_\tau(s)=e^sh_\tau(s).
\]
Since $-\Delta_g=2H$, define, for $j=0,1$,
\[
F_{\tau,j}(\xi)
=
\left(\frac{\xi^2}{2}\right)^j
b_\tau\left(\frac{\xi^2}{2}\right).
\]
Then
\[
F_{\tau,j}(R\sqrt{-\Delta_g})
=(R^2H)^j b_\tau(R^2H).
\]
The families $F_{\tau,j}$ are bounded in $C_c^\infty(\mathbb R)$, uniformly for $0\le\tau\le1$.  Their Fourier transforms are therefore uniformly rapidly decreasing: for every $N$,
\[
\sup_{0\le\tau\le1}
\int_a^\infty
\left(
 |\widehat F_{\tau,0}(s)|
 +|\widehat F_{\tau,1}(s)|
\right)\,ds
\le C_N(1+a)^{-N},
\qquad a\ge0.
\]
After the spectral rescaling $\xi\mapsto R\xi$, \cite[Proposition~1.1]{CGT1982} and finite propagation give, for every $x\in X$ and $\rho>0$,
\[
\sum_{j=0}^1
\left\|
\mathbf 1_{X\setminus B(x,\rho)}
(R^2H)^j b_\tau(R^2H)
\mathbf 1_{B(x,\rho/2)}
\right\|_{2\to2}
\le
C_N\left(1+\frac{\rho}{R}\right)^{-N}.
\]
Indeed, the Fourier-tail term in that proposition begins at $\rho/(2R)$.

Let
\[
p=p_{R^2}(x_*,\cdot).
\]
The semigroup identity and the Gaussian heat-kernel bounds give
\[
\|p\|_2^2=p_{2R^2}(x_*,x_*)\le CR^{-m},
\qquad
\|\mathbf 1_{B_\rho^c}p\|_2
\le CR^{-m/2}e^{-c\rho^2/R^2}.
\]
Since $h_\tau(s)=e^{-s}b_\tau(s)$,
\[
w:=h_\tau(R^2H)(x_*,\cdot)
=b_\tau(R^2H)p
=e^{-tH}G_R.
\]
Split $p$ into its parts on $B_{\rho/2}$ and $B_{\rho/2}^c$.  The preceding Cheeger-Gromov-Taylor estimate controls the first part after applying $b_\tau(R^2H)$, while the Gaussian tail and the uniform $L^2$ operator bound control the second.  Repeating the argument with $j=1$ gives
\[
\|\mathbf 1_{B_\rho^c}w\|_2
+
\|\mathbf 1_{B_\rho^c}R^2Hw\|_2
\le
C_NR^{-m/2}
\left(1+\frac{\rho}{R}\right)^{-N}.
\]

Suppose first that $\rho>2R$.  Choose $\chi$ equal to $0$ on $B_{\rho/2}$ and to $1$ on $B_\rho^c$, with $|\nabla\chi|\le C/\rho$.  Taking real parts after testing $Hw$ against $\chi^2w$ gives
\[
\begin{aligned}
\int_{B_\rho^c}|\nabla w|^2
&\le
C\|\mathbf 1_{B_{\rho/2}^c}Hw\|_2
 \|\mathbf 1_{B_{\rho/2}^c}w\|_2\\
&\quad
+C\rho^{-2}
 \|\mathbf 1_{B_{\rho/2}^c}w\|_2^2.
\end{aligned}
\]
Multiplying by $R^2$ and using the last off-diagonal estimates proves
\[
\int_{B_\rho^c}
\left(|w|^2+R^2|\nabla w|^2\right)\,d\mathrm{vol}_g
\le
C_NR^{-m}
\left(1+\frac{\rho}{R}\right)^{-2N}.
\]
For $\rho\le2R$, the same conclusion follows from the global $L^2$ bound and
\[
R^2\|\nabla w\|_2^2
=
2R^2\langle Hw,w\rangle
\le8\|w\|_2^2,
\]
because $w$ has spectral support in $[0,4R^{-2}]$.

For the pointwise estimate, put
\[
a_\tau(s)=e^{-\tau s/2}q_0(s).
\]
Repeating the preceding argument with $e^sa_\tau(s)$ shows that the kernel $K_{\tau,R}$ of $a_\tau(R^2H)$ satisfies
\[
\|\mathbf 1_{X\setminus B(x,\rho)}
 K_{\tau,R}(x,\cdot)\|_2
\le
C_NR^{-m/2}
\left(1+\frac{\rho}{R}\right)^{-N}
\]
uniformly in $x$, $\tau$, and $\rho\ge0$.  Since $h_\tau=a_\tau^2$ and $K_{\tau,R}$ is symmetric,
\[
w(y)
=
\int_X
K_{\tau,R}(x_*,z)K_{\tau,R}(z,y)\,d\mathrm{vol}_g(z).
\]
Let $d=d(x_*,y)$.  Split the integral into the region where $d(x_*,z)\ge d/2$ and its complement; on the complement, $d(z,y)>d/2$.  Cauchy--Schwarz and the last $L^2$ estimate give
\[
|w(y)|
\le
C_NR^{-m}
\left(1+\frac{d(x_*,y)}{R}\right)^{-N}.
\]

It remains to estimate the norm of $G_R$.  The support of $q$ gives
\[
G_R\in\operatorname{Ran}P_{[0,4R^{-2}]}(H),
\]
and
\[
\|G_R\|_2^2
=(q^2)(R^2H)(x_*,x_*).
\]
Since $q^2(s)\le e^4e^{-s}$ for $s\ge0$,
\[
\|G_R\|_2^2
\le e^4p_{R^2}(x_*,x_*)
\le CR^{-m}.
\]
For the reverse inequality, $q^2\ge\mathbf 1_{[0,1]}$, and the spectral theorem gives, for every $A>0$,
\[
\begin{aligned}
\|G_R\|_2^2
&\ge P_{[0,R^{-2}]}(H)(x_*,x_*)\\
&\ge
p_{AR^2}(x_*,x_*)
-e^{-A/2}p_{AR^2/2}(x_*,x_*).
\end{aligned}
\]
Choose $A$ large enough that the second term is at most half the first, and then decrease the fixed upper scale $R_0$ if necessary so that $AR_0^2$ lies in the range of the uniform two-sided diagonal heat-kernel bounds.  It follows that
\[
\|G_R\|_2^2\ge cR^{-m},
\]
which completes the proof.
\end{proof}

\begin{theorem}[Prescribed observability profiles]
\label{thm:prescribed-loss}
Let $\Phi:[T_0^{-1},\infty)\to[1,\infty)$ be nondecreasing and unbounded, with $\Phi(4s)\le D\Phi(s)$ for $s\ge T_0^{-1}$. There is a finite measure $\mu_\Phi$ such that:
\begin{enumerate}
\item[\textup{(i)}] $\mu_\Phi$ has full support, is carried by a zero-volume set of Hausdorff dimension $m-2$, and satisfies $H^1(X)\hookrightarrow L^2(\mu_\Phi)$. Moreover,
$$
\inf_{x\in X}\frac{\mu_\Phi(B(x,r))}{r^m}
\asymp\Phi(r^{-2})^{-1},\qquad 0<r\le R_0.
$$
\item[\textup{(ii)}] $\mathcal C_{\mu_\Phi}(E)\asymp\Phi(E)$ for $E\ge T_0^{-1}$.
\item[\textup{(iii)}] $\mathcal H_{\mu_\Phi}(T)\asymp\Phi(T^{-1})$ and $\operatorname{Cost}_{\mu_\Phi}(T) \asymp T^{-1/2}\Phi(T^{-1})^{1/2}$ for $0<T\le T_0$.
\end{enumerate}
The constants depend only on $m$, the order-zero bounded-geometry constants, $K_*$, $D$, and $\Phi(T_0^{-1})$.
\end{theorem}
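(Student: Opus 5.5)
We take $\mu_\Phi=\mu_a=\sum_k a_k\eta_k$ from the paragraph ``Prescribing the observability loss''. The weights are chosen so that the tails $A_k=\sum_{\ell\ge k}a_\ell$ satisfy $A_k\asymp\Phi(4^kT_0^{-1})^{-1}=\Phi(R_k^{-2})^{-1}$. Concretely, put $\Phi_k=\Phi(4^kT_0^{-1})$, which is nondecreasing, unbounded, and has $\Phi_{k+1}\le D\Phi_k$. Define $A_k=\Phi_0/\Phi_k$, so that $A_0=1$, $A_k\downarrow0$ and $a_k=A_k-A_{k+1}\ge0$. Finiteness, the trace embedding, and the carrier properties then follow from \cref{lem:multiscale-trace} and \cref{def:multiscale-measure}. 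Full support holds as soon as infinitely many $a_k>0$, which is automatic because $A_k\to0$ strictly often. If the vanishing gaps cause trouble, we replace $A_k$ by a strictly decreasing comparable sequence.

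\textbf{Ball growth (i).} We apply \cref{lem:eta-ball-growth}. For the lower bound, $\mu_a(B(x,r))\ge cA_{k(r)}r^m$ with $R_{k(r)}\asymp r$, and doubling of $\Phi$ gives $A_{k(r)}\asymp\Phi(r^{-2})^{-1}$. For the upper bound at the worst centre, split the sum over $k$. Scales with $R_k\ge r$ contribute at most $\sum_{R_k\ge r}a_kR_k^2r^{m-2}$. For the infimum over $x$, we pick $x$ far (distance $\gtrsim R_k$) from every coarse-scale support $K_{k,j}$. This is possible for finitely many scales above $r$ by a nested choice: each $K_{k,j}\subset B(x_j,R_k/4)$ leaves a definite room, and the required separation decreases geometrically. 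Such an $x$ gets no coarse contribution, while the fine scales give $\le Cr^m\sum_{R_k<r}a_k\asymp r^mA_{k(r)}$. I expect this nested-avoidance construction to be the most delicate point; it may be easier to note only that $\inf_x$ is bounded above by testing (ii) via $G_R$.

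\textbf{Spectral loss (ii).} For the upper bound, we argue as in \cref{thm:heat-control}(i). Take the least $k$ with $2C_*R_k^2E\le\tfrac12$. Then \eqref{eq:one-scale} gives $\int|f|^2d\eta_\ell\gtrsim\|f\|^2$ for all $\ell\ge k$, so $\int|f|^2d\mu_a\gtrsim A_k\|f\|^2$ and $\mathcal C\lesssim\Phi(E)$ by doubling. For the lower bound, test with $f=G_R$ from \cref{lem:spectral-cutoff-kernels} at $R\asymp E^{-1/2}$, centred at the point $x_*$ chosen above. We bound $\int|G_R|^2d\eta_k$ by splitting into dyadic annuli around $x_*$, using the pointwise decay $R^{-2m}(1+d/R)^{-2N}$ together with the ball bounds of \cref{lem:eta-ball-growth}. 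Scales $k$ with $R_k\ll R$ give $\lesssim R^{-m}$ each, contributing $\lesssim A_{k}R^{-m}$ in total. Coarse scales are killed by the avoidance of $x_*$: near $x_*$ there is no mass, and far away the decay gains. Alternatively, we use the trace bound \cref{lem:multiscale-trace}(ii) applied to $(1-\chi)G_R$ with $\|\cdot\|_{H^1}$-tails. Since $\|G_R\|^2\asymp R^{-m}$, this yields $\mathcal C\gtrsim\Phi(E)$.

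\textbf{Heat observability and cost (iii).} The upper bound on $\mathcal H$ repeats the ODE proof of \cref{thm:heat-control}(ii) with $(k+1)^{-\theta}$ replaced by $A_{k(t)}$, i.e.\ $\Phi(t^{-1})$. The integrating factor $t^{\gamma}\Phi(t^{-1})^{-1}$ is increasing once $\gamma>\log_4D+1$, by doubling. For the lower bound, take $u_0=G_R$ with $R^2=T$. Then $e^{-TH}G_R$ has norm $\asymp R^{-m/2}$, since its spectrum lies in $[0,4R^{-2}]$ and $q\ge1$ on $[0,1]$. Meanwhile $\int_0^T\int|e^{-tH}G_R|^2d\mu\lesssim T\,R^{-m}\Phi(T^{-1})^{-1}$ by the same annular estimate, which is uniform in $t\le R^2$ by \cref{lem:spectral-cutoff-kernels}. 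Finally, the cost identity $\operatorname{Cost}^2=\mathcal H/T$ follows by standard duality, as in \cref{thm:heat-control}(iii).
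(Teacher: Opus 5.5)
Your weights $A_k=\Phi(T_0^{-1})/\Phi(R_k^{-2})$ match the paper's proof, as do the lower ball bound and the three upper bounds (spectral loss from \eqref{eq:one-scale}, heat observability from the differential inequality, cost by duality). Your test-function estimates with $G_R$ are also fine once a separated centre is available; you use pointwise annular bounds where the paper uses the trace bound with cutoffs.

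The gap is the point you flag yourself. Every reverse estimate needs a centre $x_*$ with $\operatorname{dist}(x_*,\operatorname{supp}\eta_\ell)\gtrsim R_\ell$ at every coarse scale $\ell$. This covers the upper bound for $\inf_x\mu_\Phi(B(x,r))/r^m$ and the lower bounds $\mathcal C_{\mu_\Phi}(E)\gtrsim\Phi(E)$ and $\mathcal H_{\mu_\Phi}(T)\gtrsim\Phi(T^{-1})$. Your nested choice does not produce such a centre for the measure of \cref{def:multiscale-measure} with arbitrary nets.

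The only information you use is $K_{k,j}\subset B(x^{(k)}_j,R_k/4)$. But the next-scale balls $B(x^{(k+1)}_j,R_{k+1}/4)$ have radius $R_k/8$, the same order as the room left at scale $k$. Suppose the finer net has a point at the centre of your current good ball of radius $\alpha R_k$. Then the largest good ball left has radius at most $(\alpha-\tfrac18)R_{k+1}$. Since $\alpha<\tfrac34$ from the start (every point is within $R_k$ of the net), the normalised radius can run out after a bounded number of scales. Nothing in the definition excludes this: the maximal nets and the isometries $A_{k,j}$ are arbitrary. Already in a one-dimensional model, the nets $\mathbb Z$, $\tfrac12\mathbb Z$ and $\{n,n+\tfrac5{16},n+\tfrac{11}{16}\}_{n\in\mathbb Z}$ have closed $R_k/4$-balls that cover the whole line after three scales. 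Your fallback for (i), testing with $G_R$, relies on the same separated centre, so it does not avoid the problem.

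The paper removes the difficulty by building the separation into the measure, which the existential statement allows. Every net is chosen to contain one fixed point $x_*$. In the $x_*$-cell, the copy of $K_*$ is shrunk by $\tfrac14$ and placed in $\overline B(x_*,7R_k/16)\setminus B(x_*,R_k/4)$. The capacity, trace, one-scale and ball bounds persist with changed constants, and $\operatorname{dist}(x_*,\operatorname{supp}\eta_k)\ge R_k/4$ holds for all $k$ simultaneously. This is exactly what the cutoffs $\chi_\ell$ (equal to $1$ on $\operatorname{supp}\eta_\ell$ and $0$ on $B(x_*,R_\ell/8)$) and the ball upper bound at $x_*$ require. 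If you want to keep arbitrary nets, you need a genuinely different argument. One option is a volume/union bound over the coarse scales, using that neighbourhoods of $K_*$ have small volume and allowing separations $\epsilon_{k-\ell}R_\ell$ that shrink slowly in $k-\ell$; the tails in \cref{lem:spectral-cutoff-kernels} can absorb this.

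A minor slip: in the heat argument, the function that must be quasi-increasing is $t^{\gamma-1}\Phi(t^{-1})$, not $t^{\gamma}\Phi(t^{-1})^{-1}$. Your condition $4^{\gamma-1}>D$ is the right one for it.
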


\begin{proof}
Set
$$
A_k=\frac{\Phi(T_0^{-1})}{\Phi(R_k^{-2})},
\qquad a_k=A_k-A_{k+1}.
$$
Then $A_0=1$, $A_k\downarrow0$, $a_k\ge0$, and $\sum_{k\ge0}a_k=1$. In \cref{def:multiscale-measure}, choose every net to contain a fixed point $x_*$. In its $x_*$-cell, shrink $K_*$ by $1/4$ and translate it into
$$
\overline{B}(x_*,7R_k/16)\setminus B(x_*,R_k/4).
$$
The capacity, trace, one-scale, and ball estimates remain valid with modified constants, and
$$
\operatorname{dist}(x_*,\operatorname{supp}\eta_k)\ge R_k/4.
$$
Define $\mu_\Phi=\sum_{k\ge0}a_k\eta_k$. The trace estimate and the assertion that $\mu_\Phi$ is carried by a zero-volume set of Hausdorff dimension $m-2$ follow as before. Since $a_k>0$ for arbitrarily large $k$, the measure has full support. The preceding lemma gives the required lower bound on balls. For $0<r<R_0/8$, choose $k\ge1$ so that $R_k/8\le r<R_{k-1}/8$. The support separation removes the terms with $\ell\le k$, while the upper ball bounds give $\mu_\Phi(B(x_*,r))\le CA_{k+1}r^m$. The range $R_0/8\le r\le R_0$ is absorbed into the constants, and the condition on $\Phi$ proves \textup{(i)}.

For the upper spectral bound, choose the least $k\ge0$ such that $2C_*R_k^2E\le\tfrac12$. Applying \eqref{eq:one-scale} at every $\ell\ge k$ and summing with weights $a_\ell$ gives $\mathcal C_{\mu_\Phi}(E)\le CA_k^{-1}\le C\Phi(E)$. For heat observability, choose $\gamma$ with $4^{\gamma-1}\ge2D$ and let $k(t)$ be the least index with $C_*\gamma R_{k(t)}^2\le t$. Then
$$
\sup_{0<t\le T}t^{\gamma-1}A_{k(t)}^{-1}
\le CT^{\gamma-1}A_{k(T)}^{-1}.
$$
Repeating the proof of \cref{thm:heat-control} and using $\sum_{\ell\ge k(t)}a_\ell=A_{k(t)}$ gives $\mathcal H_{\mu_\Phi}(T)\le C\Phi(T^{-1})$.

For the reverse bounds, set $T_k=R_k^2$, $g_k=G_{R_k}/\|G_{R_k}\|_2$, and $u_k(t)=e^{-tH}g_k$. For $\ell<k$, $\operatorname{dist}(x_*,\operatorname{supp}\eta_\ell)\ge R_\ell/4$. Choose $\chi_\ell$ equal to $1$ on $\operatorname{supp}\eta_\ell$, zero on $B(x_*,R_\ell/8)$, and with $|\nabla\chi_\ell|\le C/R_\ell$. \Cref{lem:multiscale-trace}\textup{(ii)}, \cref{lem:spectral-cutoff-kernels}, and, for $\ell\ge k$, the spectral theorem give uniformly for $0\le t\le T_k$,
$$
\int_X|u_k(t)|^2\,d\eta_\ell
\le\begin{cases}
C,&\ell\ge k,\\
C_N(R_k/R_\ell)^{2N-2},&\ell<k.
\end{cases}
$$
Choose $N$ so that $2^{2N-2}>2D$. Since $A_\ell\le D^{k-\ell}A_k$ and $a_\ell\le A_\ell$ for $\ell<k$, while $\sum_{\ell\ge k}a_\ell=A_k$, summation gives
$$
\int_X|e^{-tH}g_k|^2\,d\mu_\Phi\le CA_k,
\qquad 0\le t\le T_k.
$$
At $t=0$, this and $g_k\in\operatorname{Ran}P_{[0,E_k]}(H)$, $E_k=4R_k^{-2}$, give $\mathcal C_{\mu_\Phi}(E_k)\ge cA_k^{-1}\ge c\Phi(E_k)$. Since $E_{k+1}=4E_k$, monotonicity and $\Phi(4s)\le D\Phi(s)$ give the lower bound for $E\ge4T_0^{-1}$; the constant eigenfunction covers $T_0^{-1}\le E<4T_0^{-1}$.

For $0<T\le T_0$, choose $k$ with $T_{k+1}<T\le T_k$. Then $\|e^{-TH}g_k\|_2^2\ge e^{-8}$, while the last display bounds the observation integral by $CTA_k$. Hence $\mathcal H_{\mu_\Phi}(T)\ge cA_k^{-1}\ge c\Phi(T^{-1})$. Finally, let $K_T$ be the final-state map from the control to $L^2(X)$. Its adjoint is
\[
(K_T^*z)(s)=\widetilde{e^{-(T-s)H}z}
\quad\text{in }L^2(\mu_\Phi).
\]
Douglas' range-inclusion lemma identifies the squared worst minimal-control norm with the least constant in the corresponding adjoint observability inequality. By the definition of $\mathcal H_{\mu_\Phi}(T)$, this gives
$$
\operatorname{Cost}_{\mu_\Phi}(T)^2
=\frac{\mathcal H_{\mu_\Phi}(T)}{T},
$$
which proves the remaining assertion.
\end{proof}

\begin{theorem}[Observation and control for uniformly elliptic operators]\label{thm:uniform-elliptic-control}
Fix $0<a_-\le a_+<\infty$ and $0<\rho_-\le\rho_+<\infty$. Let $\mathbf A$ be a measurable $g$-symmetric endomorphism of $TX$, let $\rho$ be measurable, and let $W\in L^\infty(X)$ be real-valued, with
$$
a_-|\xi|_g^2\le\langle\mathbf A\xi,\xi\rangle_g\le a_+|\xi|_g^2,
\qquad \rho_-\le\rho\le\rho_+,
\qquad W\ge0.
$$
On $Y_\rho:=L^2(X,\rho\,d\mathrm{vol}_g)$ let $H_{\mathbf A,W,\rho}$ be the self-adjoint operator associated with
$$
\mathfrak h_{\mathbf A,W}[u]
=\frac12\int_X\langle\mathbf A\nabla u,\nabla u\rangle_g\,d\mathrm{vol}_g
+\int_XW|u|^2\,d\mathrm{vol}_g,
\qquad u\in H^1(X).
$$
Write $P^{\mathbf A,W,\rho}$ for its spectral projections. The measures above may be chosen independently of $\mathbf A,\rho,W$.
\begin{enumerate}
\item[\textup{(i)}] The conclusions of \cref{thm:capacity-sampling} hold for $f\in\operatorname{Ran}P^{\mathbf A,W,\rho}_{[0,E]}$ when $E\le ca_-\rho_+^{-1}(1+\Lambda)^{-1}R^{-2}$, with the lower bounds $c\rho_+^{-1}\|f\|_{Y_\rho}^2$. In particular, \cref{cor:zero-volume} becomes
$$
R^2\int_X|\widetilde f|^2\,d\mu_R
\ge\frac{c\kappa}{\rho_+}\|f\|_{Y_\rho}^2
$$
when $E\le ca_-\kappa\rho_+^{-1}R^{-2}$.

\item[\textup{(ii)}] The spectral, heat-observability, and null-control conclusions of \cref{thm:heat-control} hold with $H,L^2(X)$ replaced by $H_{\mathbf A,W,\rho},Y_\rho$, and with the same $\mu_\theta$. The control equation is understood variationally, with $Y_\rho\hookrightarrow H^1(X)^*$ through its weighted inner product.

\item[\textup{(iii)}] For every $\Phi$ in \cref{thm:prescribed-loss}, the same $\mu_\Phi$ satisfies its three upper bounds after these replacements; their orders are optimal uniformly over the coefficient class.
\end{enumerate}
The constants depend only on the displayed class bounds and the geometric data, not on the individual coefficients or on $\|W\|_\infty$. The support, trace, volume, and dimension properties of the measures are unchanged.
\end{theorem}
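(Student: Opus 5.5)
The plan is to observe that every earlier argument used the Laplacian only through two facts: the pointwise one-scale inequality \eqref{eq:one-scale} together with the local spectral inequality of \cref{lem:local-sampling}, and the identification of the energy of spectrally localised functions with the form. Both are purely variational statements about $u\in H^1(X)$, and they do not depend on the operator. The measures $\mu_R$, $\mu_\theta$ and $\mu_\Phi$ are therefore kept exactly as constructed, and only the operator-dependent steps are redone. The key comparison is
$$
\frac{a_-}2\|\nabla f\|_2^2\le\mathfrak h_{\mathbf A,W}[f]=\langle H_{\mathbf A,W,\rho}f,f\rangle_{Y_\rho}\le E\|f\|_{Y_\rho}^2\le E\rho_+\|f\|_2^2
$$
for $f\in\operatorname{Ran}P^{\mathbf A,W,\rho}_{[0,E]}$, where $W\ge0$ is simply dropped. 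We also use $\rho_-\|f\|_2^2\le\|f\|_{Y_\rho}^2\le\rho_+\|f\|_2^2$. Such $f$ lie in $H^1(X)$, so quasi-continuous representatives are used throughout in place of smoothness.

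For (i), we sum \cref{lem:local-sampling} exactly as in \cref{thm:capacity-sampling}. The gradient term is bounded by $C(1+\Lambda)R^2\cdot2E\rho_+a_-^{-1}\|f\|_2^2$, and it is absorbed once $E\le ca_-\rho_+^{-1}(1+\Lambda)^{-1}R^{-2}$. This leaves $\sum_jR^m|\int\widetilde f\,d\nu_j|^2\ge c_0\|f\|_2^2\ge c_0\rho_+^{-1}\|f\|_{Y_\rho}^2$. The equilibrium version follows as in \cref{cor:zero-volume}.

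For (ii), part (i) of \cref{thm:heat-control} follows verbatim from \eqref{eq:one-scale} and the gradient bound above, with $k$ chosen so that $C R_k^2E\rho_+/a_-\le\tfrac12$. This changes $C_\theta$ only by class constants. For the heat estimate, set $F(t)=\|u(t)\|_{Y_\rho}^2$ with $u(t)=e^{-tH_{\mathbf A,W,\rho}}u_0$. The Brownian identity is replaced by the abstract energy identity $F'(t)=-2\mathfrak h_{\mathbf A,W}[u(t)]\le -a_-\|\nabla u(t)\|_2^2$, which is valid for semigroups of nonnegative self-adjoint operators. Combined with \eqref{eq:one-scale} and $F\le\rho_+\|u\|_2^2$, this again gives $F\le (t/\gamma)(-F')+C\int|\widetilde u|^2d\eta_\ell$ for $\ell\ge k(t)$, where $k(t)$ is defined with a class-dependent constant. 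The ODE integration is then unchanged. The remaining ingredient is the bound $\int_0^TG\,dt\le C(T+1)\|u_0\|^2$, which comes from the trace bound of \cref{lem:multiscale-trace} and $\int_0^T\|\nabla u\|^2\le a_-^{-1}F(0)$. Null control then follows by the same Douglas/Hilbert duality, using the $Y_\rho$ inner product to define the adjoint.

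For (iii), the upper bounds repeat the upper-bound parts of the proof of \cref{thm:prescribed-loss} with the same substitutions. The main obstacle is the claim that the orders are optimal uniformly over the class, since the reverse bounds used the heat/spectral kernels $G_R$ of \cref{lem:spectral-cutoff-kernels}, which are special to $H$. Since optimality is asserted for the class, I would read it as saying that the class contains $H$ itself ($\mathbf A=I$, $\rho=1$, $W=0$), so the lower bounds of \cref{thm:prescribed-loss} show that no better order can hold uniformly over the class. This needs no off-diagonal estimates for rough coefficients. Constants never involve $\|W\|_\infty$ because $W$ enters only through the sign condition $W\ge0$.
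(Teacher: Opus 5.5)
Your arguments for (i), (ii), and the three upper bounds in (iii) are the paper's. They rest on the comparisons $\|u\|_{Y_\rho}^2\le\rho_+\|u\|_2^2$ and $\mathfrak h_{\mathbf A,W}[u]\ge\tfrac{a_-}{2}\|\nabla u\|_2^2$ and on the spectral bound $\mathfrak h_{\mathbf A,W}[f]\le E\|f\|_{Y_\rho}^2$. They then use absorption in the summed local estimate and in \eqref{eq:one-scale}, the form-theoretic identity $F'=-2\mathfrak h_{\mathbf A,W}[u(t)]$ with $k(t)$ rescaled by $\rho_+/a_-$, and Douglas duality in the weighted inner product.

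The step that fails as written is the optimality claim in (iii). The operator $H$ itself ($\mathbf A=I$, $\rho=1$, $W=0$) lies in the coefficient class only when $a_-\le1\le a_+$ and $\rho_-\le1\le\rho_+$, and this is not assumed. For example, $a_-=a_+=2$ excludes it. So you cannot quote the lower bounds of \cref{thm:prescribed-loss} directly.

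The class always contains the constant-coefficient member $\mathbf A=a_-I$, $\rho=\rho_-$, $W=0$. For it, $H_{\mathbf A,0,\rho}=(a_-/\rho_-)H$ and $\|f\|_{Y_\rho}^2=\rho_-\|f\|_2^2$. Hence $P^{\mathbf A,0,\rho}_{[0,E]}=P_{[0,\rho_-E/a_-]}(H)$ and $e^{-TH_{\mathbf A,0,\rho}}=e^{-(a_-T/\rho_-)H}$. A change of time variable then shows that the optimal constants for this member are $\rho_-\,\mathcal C_{\mu_\Phi}(\rho_-E/a_-)$ and $\rho_-\,\mathcal H_{\mu_\Phi}(a_-T/\rho_-)$.

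The lower bounds of \cref{thm:prescribed-loss} therefore give the orders $\Phi(\rho_-E/a_-)$ and $\Phi(\rho_-/(a_-T))$, not $\Phi(E)$ and $\Phi(T^{-1})$. You need that $\Phi$ is nondecreasing with $\Phi(4s)\le D\Phi(s)$ to show that a fixed dilation of the argument changes $\Phi$ only by a factor depending on $D$ and $a_-/\rho_-$. There remain bounded ranges of $E$ and $T$ where the rescaled argument leaves $[T_0^{-1},\infty)$ or $(0,T_0]$. Those are handled by the constant function, which gives $\mathcal C,\mathcal H\gtrsim1$ since $\mu_\Phi(X)\le C\operatorname{vol}_g(X)$, and are absorbed into the constants. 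The cost lower bound then follows from $\operatorname{Cost}^2=\mathcal H/T$ for this member.

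This energy and time rescaling, which is exactly where the doubling hypothesis on $\Phi$ enters the optimality statement, is what the paper does and is missing from your proposal.
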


\begin{proof}
The proof rests on two comparisons. First,
$$
\|u\|_{Y_\rho}^2\le\rho_+\|u\|_2^2,
\qquad
\mathfrak h_{\mathbf A,W}[u]\ge\frac{a_-}{2}\|\nabla u\|_2^2.
$$
Thus \cref{lem:one-scale} gives
\begin{equation}\label{eq:uniform-elliptic-one-scale}
\|u\|_{Y_\rho}^2
\le\frac{2C_*\rho_+}{a_-}R_k^2\mathfrak h_{\mathbf A,W}[u]
+C_*\rho_+\int_X|\widetilde u|^2\,d\eta_k.
\end{equation}
Second, summing \cref{lem:local-sampling} gives
$$
\|u\|_{Y_\rho}^2
\le\frac{C\rho_+(1+\Lambda)}{a_-}R^2
\mathfrak h_{\mathbf A,W}[u]
+C\rho_+\sum_jR^m\left|\int_X\widetilde u\,d\nu_j\right|^2.
$$
If $f\in\operatorname{Ran}P^{\mathbf A,W,\rho}_{[0,E]}$, the spectral theorem gives
\[
\mathfrak h_{\mathbf A,W}[f]\le E\|f\|_{Y_\rho}^2.
\]
Substitution in the preceding local estimate, followed by absorption under the stated restriction on $E$, proves \textup{(i)}. Substitution in \eqref{eq:uniform-elliptic-one-scale} and summation over the scales $\ell\ge k$ proves the spectral assertion in \textup{(ii)}, as in the spectral part of the proof of \cref{thm:heat-control}. This step uses only the form inequalities above; no regularity of $\mathbf A$ or $\rho$ is required.

For $u(t)=e^{-tH_{\mathbf A,W,\rho}}u_0$, semigroup form theory gives, for almost every $t>0$,
\[
F'(t)=-2\mathfrak h_{\mathbf A,W}[u(t)],
\qquad F(t):=\|u(t)\|_{Y_\rho}^2.
\]
Together with \eqref{eq:uniform-elliptic-one-scale}, this is the differential identity used in the proof of \cref{thm:heat-control}. With $\gamma=2+\theta$, choose $k(t)$ so that $C_*\gamma(\rho_+/a_-)R_{k(t)}^2\le t$. Multiplying \eqref{eq:uniform-elliptic-one-scale} by the tail weights and summing gives the same differential inequality as in that proof, with constants depending only on $a_-$ and $\rho_+$, and hence heat observability. The uniform trace estimate makes the control operator $B:L^2(\mu_\theta)\to H^1(X)^*$ bounded. After time reversal, the observability estimate and the Douglas range-inclusion argument used in the proof of \cref{thm:prescribed-loss} give the null control with the asserted cost.

For $\mu_\Phi$, summing over the same tail of scales uses $\sum_{\ell\ge k}a_\ell=A_k$, and $\Phi(4s)\le D\Phi(s)$ controls the fixed shift of scale. This proves the three upper bounds in \textup{(iii)}. The nonnegative term $\int W|u|^2$ only strengthens the form bounds, which explains why no constant depends on $\|W\|_\infty$. Finally, the coefficient class contains $\mathbf A=a_-I$, $\rho=\rho_-$, $W=0$. For this subclass, $H_{\mathbf A,0,\rho}=(a_-/\rho_-)H$, so after this fixed energy and time rescaling \cref{thm:prescribed-loss} supplies the matching lower bounds.
\end{proof}

\paragraph{The associated diffusion.}
For fixed $\mathbf A$ and $\rho$, let $X_t^{\mathbf A,\rho}$ be the diffusion associated with the form $\tfrac12\int\langle\mathbf A\nabla u,\nabla u\rangle_g$ on $L^2(X,\rho\,d\mathrm{vol}_g)$. Here $\mathbf A$ determines local covariance, $\rho$ changes the time parametrisation, and $W$ appears as killing. If $\mu^{\mathbf A}_{K,\Omega}$ is the equilibrium measure and $A_t^{K,\mathbf A,\rho}$ is the positive continuous additive functional with this Revuz measure relative to $\rho\,d\mathrm{vol}_g$, then
$$
\mathbb E_y^{\mathbf A,\rho}
e^{-\beta A^{K,\mathbf A,\rho}_{\tau_\Omega}}
=1-\frac{\beta}{1+\beta}
\mathbb P_y^{\mathbf A,\rho}(\tau_K<\tau_\Omega)
$$
for quasi-every $y$, exactly as in \cref{lem:equilibrium-additive-functional-law}. The corresponding capacities satisfy
\[
a_-\operatorname{Cap}_g\le\operatorname{Cap}_{\mathbf A}
\le a_+\operatorname{Cap}_g.
\]
The theorem instead uses the fixed geometric measures, so its observation measure is independent of $\mathbf A$, $\rho$, and $W$.

\begin{corollary}[Perforated domains and instantaneous hitting]
\label{cor:full-volume-perforations}
For each $k\ge0$, set
$$
F_k=\bigcup_jK_{k,j},\qquad \Omega_k=X\setminus F_k,
\qquad S=\bigcup_{k\ge0}F_k,
$$
with $K_{k,j}$ as in \cref{def:multiscale-measure}. Then:
\begin{enumerate}
\item[\textup{(i)}] The set $\Omega_k$ is open,
$$
\operatorname{vol}_g(\Omega_k)=\operatorname{vol}_g(X),\qquad
\dim_{\mathrm H}\partial\Omega_k=m-2,
$$
and
$$
cR_k^{-2}\le\mu_1(\Omega_k)\le CR_k^{-2}.
$$
The lower bound is uniform over the coefficient class of \cref{thm:uniform-elliptic-control}: if $H_{\mathbf A,W,\rho}^{\Omega_k}$ is the Dirichlet realisation on $\Omega_k$, then
$$
\inf\sigma(H_{\mathbf A,W,\rho}^{\Omega_k})
\ge \frac{ca_-}{\rho_+}R_k^{-2}.
$$
\item[\textup{(ii)}] One has
$$
\{u\in H^1(X):\widetilde u=0\text{ quasi-everywhere on }S\}=\{0\}.
$$
\item[\textup{(iii)}] Although $X\setminus S$ has full volume, Brownian motion started at every $x\in X\setminus S$ satisfies
$$
\mathbb P_x(\tau_S^+=0)=1,
\qquad
\tau_S^+:=\inf\{t>0:B_t\in S\}.
$$
In fact, almost surely $\{t>0:B_t\in S\}$ is dense in $(0,\infty)$ but has Lebesgue measure zero.
\end{enumerate}
\end{corollary}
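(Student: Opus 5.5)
For \textup{(i)}, openness of $\Omega_k$ is immediate: the net at scale $R_k$ is finite, so $F_k$ is a finite union of compact sets. Each $K_{k,j}$ is a bi-Lipschitz image of $K_*$ under a rescaled exponential chart. Hence $\operatorname{vol}_g(F_k)=0$, since the Lebesgue measure of $K_*$ vanishes because $\dim_{\mathrm H}K_*=m-2<m$. This gives $\operatorname{vol}_g(\Omega_k)=\operatorname{vol}_g(X)$.

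For the boundary, I first note that $F_k$ has empty interior, so $\partial\Omega_k=F_k$. Bi-Lipschitz invariance of Hausdorff dimension and \cref{lem:critical-prototype} then give $\dim_{\mathrm H}\partial\Omega_k=m-2$.

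The upper eigenvalue bound follows from domain monotonicity. The ball $B(x,R_k/8)$ lies in $\Omega_k$ for suitable $x$: for instance, near $x^{(k)}_j$ with $K_{k,j}$ shifted off the centre as in the proof of \cref{thm:prescribed-loss}. Alternatively, I would take $x$ between two net points at distance $\ge R_k/2$ from all sets $K_{k,j}\subset B(x_j^{(k)},R_k/4)$; such an $x$ exists because the $R_k$-separation forces gaps. \cref{lem:survival-gap}(1) then bounds the eigenvalue of that ball by $CR_k^{-2}$.

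For the lower bound, I take $u\in W^{1,2}_0(\Omega_k)$, so that $\widetilde u=0$ quasi-everywhere on $F_k\supset\operatorname{supp}\eta_k$. \cref{lem:one-scale} then gives $\|u\|_2^2\le C_*R_k^2\|\nabla u\|_2^2$. The uniform version follows by substituting into \eqref{eq:uniform-elliptic-one-scale} with $\|u\|_{Y_\rho}^2\le\rho_+\|u\|_2^2$.

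For \textup{(ii)}, I take $u\in H^1(X)$ with $\widetilde u=0$ quasi-everywhere on $S$. Since the equilibrium measures charge no polar sets, $\int|\widetilde u|^2\,d\eta_k=0$ for every $k$. \cref{lem:one-scale} then yields $\|u\|_2^2\le C_*R_k^2\|\nabla u\|_2^2$, and letting $k\to\infty$ gives $u=0$.

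For \textup{(iii)}, the key is a uniform hitting estimate at every scale. Fix $x\in X\setminus S$ and $t>0$. I would pick $k$ with $R_k^2\ll t$ and $x\in B_{k,j}$. \cref{lem:one-step-capacity-killing} and the equilibrium potential give $\mathbb P_y(\tau_{K_{k,j}}<\tau_{B^*_{k,j}})\ge c$, with $c$ independent of $k$, for quasi-every $y\in B_{k,j}$. To pass from quasi-every $y$ to every $x$, I would wait a short time $s\ll R_k^2$ and apply the Markov property: the law of $B_s$ is absolutely continuous, so it does not charge polar sets. The remaining requirement is that the exit time of $B^*_{k,j}$ is $\lesssim t$ with high probability, and the Gaussian tail gives $\mathbb P_x(\tau_{B^*_{k,j}}>t)\to0$ as $k\to\infty$.

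Hence $\mathbb P_x(\tau_S^+\le t)\ge c'$ uniformly in $t$. Since $\{\tau_S^+=0\}$ is $\mathcal F_{0+}$-measurable, Blumenthal's $0$-$1$ law yields $\mathbb P_x(\tau_S^+=0)=1$.

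Density of the hitting times in $(0,\infty)$ then follows from the Markov property at rational times $q$, applied to $B_q$, which lies in $X\setminus S$ almost surely since $\operatorname{vol}(S)=0$. The Lebesgue measure zero statement follows from Fubini and $\operatorname{vol}(S)=0$.

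The main obstacle is this passage from quasi-every starting point to every $x\notin S$, handled by the short-time smoothing above together with lower semicontinuity of the hitting probability.
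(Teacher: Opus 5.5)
Your proof is correct. For (i), (ii), and the Blumenthal, density, and Lebesgue-null steps of (iii), it is the paper's argument. The genuine difference is how you pass from the quasi-everywhere hitting bound to the given starting point.

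The paper uses that the equilibrium potential of $K_{k,j(k)}$ in $B^*_{k,j(k)}$ is harmonic, hence continuous, off $K_{k,j(k)}$. So the bound $\mathbb P_x(\tau_{K_{k,j(k)}}<\tau_{B^*_{k,j(k)}})\ge c_0$ holds at the point $x\notin S$ itself. It then lets the exit times $\sigma_k$ of the nested balls $B(x,3R_k)$ decrease to $0$, which avoids any time horizon or exit-time tail.

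Your Markov smoothing at a small time $s>0$ needs only two inputs: that $B_s$ has a density, so it misses the polar exceptional set, and the exponential exit-time tail of \cref{lem:survival-gap}. It uses no regularity of the equilibrium potential, and in fact gives $\mathbb P_x(\tau_S^+=0)=1$ for every $x\in X$, not only $x\notin S$. It is cleanest to use whichever cell contains $B_s$ rather than the cell of $x$. Since $B^*_{k,j}\subset B(y,3R_k)$ for $y\in B_{k,j}$, one has $\mathbb P_y(\tau_{F_k}<\tau_{B(y,3R_k)})\ge c$ for quasi-every $y\in X$. With this, the appeal to lower semicontinuity of the hitting probability is unnecessary.

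One small correction in (i). The first option for the upper eigenvalue bound, shifting $K_{k,j}$ off the centre as in the proof of \cref{thm:prescribed-loss}, changes the sets of \cref{def:multiscale-measure}; here $K_{k,j}$ may contain the centre. Your alternative asserts a point at distance $\ge R_k/2$ from every $K_{k,j}$, which separation alone does not guarantee. Neither claim is needed. For $y=\exp_{x^{(k)}_j}(R_kv/2)$ with $|v|=1$, separation gives $d(y,x^{(k)}_i)\ge R_k/2$ for all $i$. Hence $B(y,R_k/8)$ misses every $B(x^{(k)}_i,R_k/4)\supset K_{k,i}$, and domain monotonicity applies; this is the paper's choice.
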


\begin{proof}
Each $F_k$ is a finite union of compact sets of dimension $m-2$, hence is compact, has zero volume and empty interior, and $\partial\Omega_k=F_k$. If $u\in H^1_0(\Omega_k)$ is extended by zero to $X$, then $\widetilde u=0$ quasi-everywhere on $F_k$. Since $\eta_k$ is carried by $F_k$, \eqref{eq:one-scale} gives
$$
\|u\|_2^2\le C_*R_k^2\|\nabla u\|_2^2,
$$
which proves the lower bound. For the reverse bound, fix a centre $x_j^{(k)}$, choose a unit vector $v$, and put $y=\exp_{x_j^{(k)}}(R_kv/2)$. The separation of the centres and $K_{k,i}\subset B(x_i^{(k)},R_k/4)$ give $B(y,R_k/8)\subset\Omega_k$. Domain monotonicity and the ball eigenvalue bound prove $\mu_1(\Omega_k)\le CR_k^{-2}$.

For the operators $H_{\mathbf A,W,\rho}^{\Omega_k}$, the same zero extension and \eqref{eq:one-scale}, together with $\|u\|_{Y_\rho}^2\le\rho_+\|u\|_2^2$ and $\mathfrak h_{\mathbf A,W}[u]\ge(a_-/2)\|\nabla u\|_2^2$, give the stated uniform lower bound. If $\widetilde u=0$ quasi-everywhere on $S$, the observation term in \eqref{eq:one-scale} vanishes for every $k$; letting $k\to\infty$ gives $u=0$.

Finally fix $x\notin S$. For each $k$, choose $j(k)$ with $x\in B_{k,j(k)}$. The capacity and Green-function estimate used in \cref{lem:one-step-capacity-killing} gives the following lower bound quasi-everywhere; continuity of the equilibrium potential off $K_{k,j(k)}$ extends it to this $x$:
$$
\mathbb P_x\!\left(\tau_{K_{k,j(k)}}<\tau_{B^*_{k,j(k)}}\right)\ge c_0,
$$
independently of $k$. Since $B^*_{k,j(k)}\subset B(x,3R_k)$, writing $\sigma_k$ for the exit time from the latter ball gives $\mathbb P_x(\tau_S^+<\sigma_k)\ge c_0$. The times $\sigma_k$ decrease to zero almost surely, and therefore $\mathbb P_x(\tau_S^+=0)\ge c_0$. This event belongs to the germ $\sigma$-field at time zero, so Blumenthal's zero-one law makes its probability one. For every rational $q>0$, $B_q\notin S$ almost surely because the heat kernel has a density and $S$ has zero volume. The Markov property at $q$ then gives visits to $S$ arbitrarily soon after $q$; intersecting over rational $q$ proves density. Finally,
$$
\mathbb E_x\int_0^T\mathbf 1_S(B_t)\,dt
=\int_0^T\int_Sp_t(x,y)\,d\operatorname{vol}_g(y)\,dt=0,
$$
which proves the last assertion.
\end{proof}

\begin{proposition}[Spectral coupling and its additive functional]
\label{prop:variational-additive-functional}
Let $\mu$ be a finite positive measure such that $H^1(X)\hookrightarrow L^2(\mu)$, and define
$$
\mathcal J_\mu(E)
=
\inf\left\{
\int_X|\widetilde u|^2\,d\mu:
u\in H^1(X),\ \|u\|_2=1,\ \mathcal E(u)\le E
\right\}.
$$
If
$$
\lambda_\mu(\gamma)=\inf\sigma(H+\gamma\mu),
\qquad \gamma>0,
$$
then
\begin{equation}\label{eq:variational-coupling}
\lambda_\mu(\gamma)
=
\inf_{E\ge0}\bigl(E+\gamma\mathcal J_\mu(E)\bigr).
\end{equation}

Extend $\Phi$ in \cref{thm:prescribed-loss} to $[0,T_0^{-1})$ by the constant value $\Phi(T_0^{-1})$. For the measure $\mu_\Phi$ constructed there,
\begin{equation}\label{eq:rayleigh-profile}
\mathcal J_{\mu_\Phi}(E)\asymp\Phi(E)^{-1},
\qquad E\ge0,
\end{equation}
and therefore
\begin{equation}\label{eq:coupling-profile}
\lambda_{\mu_\Phi}(\gamma)
\asymp
\inf_{E\ge0}\left(E+\frac{\gamma}{\Phi(E)}\right),
\qquad \gamma>0.
\end{equation}

The form perturbation $H+\gamma\mu_\Phi$ also has a pathwise interpretation. Let $\mathcal A_t$ be the positive continuous additive functional of Brownian motion with Revuz measure $\mu_\Phi$, which is smooth by the trace bound, and let $d\pi=\mathrm{vol}_g(X)^{-1}d\mathrm{vol}_g$. Then, for $t>0$, $a\ge0$, and $\gamma>0$,
\begin{equation}\label{eq:additive-lower-tail}
\mathbb P_\pi(\mathcal A_t\le a)
\le
\exp\bigl(\gamma a-t\lambda_{\mu_\Phi}(\gamma)\bigr).
\end{equation}
Moreover, for quasi-every starting point $x$, under $\mathbb P_x$ almost every path $t\mapsto\mathcal A_t$ is continuous and strictly increasing, while $d\mathcal A_t$ is singular with respect to $dt$ on every finite time interval.
\end{proposition}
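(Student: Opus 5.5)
The plan is to prove the three parts in order: the variational identity \eqref{eq:variational-coupling} is pure form calculus, the profile \eqref{eq:rayleigh-profile} comes from \cref{thm:prescribed-loss}, and the pathwise statements come from Feynman--Kac together with \cref{cor:full-volume-perforations}.

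\emph{Step 1: the variational identity.} For \eqref{eq:variational-coupling}, write $\lambda_\mu(\gamma)=\inf\{\mathcal E(u)+\gamma\int|\widetilde u|^2\,d\mu:\|u\|_2=1\}$. The form is closed by the trace bound.
\begin{itemize}
\item \emph{Upper bound.} Given $u$ with $\|u\|_2=1$, set $E=\mathcal E(u)$. Then $\mathcal E(u)+\gamma\int|\widetilde u|^2\,d\mu\ge E+\gamma\mathcal J_\mu(E)$, so $\lambda_\mu(\gamma)\ge\inf_E(E+\gamma\mathcal J_\mu(E))$.
\item \emph{Lower bound.} For fixed $E$ and any admissible $u$ with $\mathcal E(u)\le E$, the Rayleigh quotient is at most $E+\gamma\int|\widetilde u|^2\,d\mu$. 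Taking the infimum over such $u$ gives $\lambda_\mu(\gamma)\le E+\gamma\mathcal J_\mu(E)$.
\end{itemize}
Here $\mathcal E=\tfrac12\int|\nabla u|^2=\langle Hu,u\rangle$.

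\emph{Step 2: the profile \eqref{eq:rayleigh-profile}.}
\begin{itemize}
\item \emph{Upper bound on $\mathcal J$.} Take $u=g_k/\|g_k\|$ from the proof of \cref{thm:prescribed-loss}, normalised with $E_k=4R_k^{-2}\asymp E$. Then $\mathcal E(u)\le E_k$ and $\int|u|^2\,d\mu_\Phi\le CA_k\asymp\Phi(E)^{-1}$, using $\Phi(4s)\le D\Phi(s)$ to pass between adjacent dyadic energies. For $E<4T_0^{-1}$, use the constant function, which has $\mathcal J\le\mu_\Phi(X)/\mathrm{vol}\lesssim1$.
\item \emph{Lower bound on $\mathcal J$.} This is not restricted to spectral subspaces, so I would use the one-scale estimate \eqref{eq:one-scale} directly. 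If $\mathcal E(u)\le E$, choose the least $k$ with $2C_*R_k^2E\le\tfrac12$. Then, for every $\ell\ge k$, $\int|\widetilde u|^2\,d\eta_\ell\ge(2C_*)^{-1}$. Summing against $a_\ell$ gives $\int|\widetilde u|^2\,d\mu_\Phi\ge cA_k\ge c\Phi(E)^{-1}$.
\end{itemize}
Inserting \eqref{eq:rayleigh-profile} into \eqref{eq:variational-coupling} gives \eqref{eq:coupling-profile}.

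\emph{Step 3: the tail bound \eqref{eq:additive-lower-tail}.} Apply the Chebyshev bound $\mathbb P_\pi(\mathcal A_t\le a)\le e^{\gamma a}\mathbb E_\pi e^{-\gamma\mathcal A_t}$. By the Feynman--Kac representation \cite[Theorem~6.1.1]{FOT11}, as in the proof of \cref{prop:equilibrium-killing-spectral-gap},
\[
\mathbb E_\pi e^{-\gamma\mathcal A_t}=\mathrm{vol}^{-1}\langle e^{-t(H+\gamma\mu_\Phi)}1,1\rangle\le e^{-t\lambda_{\mu_\Phi}(\gamma)}.
\]

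\emph{Step 4: path properties of $\mathcal A$.}
\begin{itemize}
\item \emph{Continuity} is part of being a PCAF.
\item \emph{Singularity.} $\mu_\Phi$ is carried by the zero-volume set $S$. Hence $\mathbb E_x\int_0^T\mathbf 1_{X\setminus S}(B_t)\,d\mathcal A_t=0$ via Revuz. Also $\int_0^T\mathbf 1_S(B_t)\,dt=0$ a.s. by \cref{cor:full-volume-perforations}(iii). So $d\mathcal A_t$ and $dt$ live on disjoint time sets.
\item \emph{Strict increase.} This is the main obstacle. I would show $\mathbb P_x(\mathcal A_s>0\ \forall s>0)=1$ for quasi-every $x$ and then use the Markov property at rational times. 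For the first claim I would argue as in \cref{cor:full-volume-perforations}(iii). At each scale $k$, the restriction of $\mathcal A$ dominates $a_k$ times the equilibrium functional for $K_{k,j(k)}$ in $B^*_{k,j(k)}$, which lies inside $B(x,3R_k)$. By \cref{lem:equilibrium-additive-functional-law}, this functional is positive before exit with probability $e_{K,B^*}(x)\ge c_0$. Hence $\mathbb P_x(\mathcal A_{\sigma_k}>0)\ge c_0$ uniformly in $k$, and Blumenthal's zero-one law gives $\mathcal A_{0+}$-positivity almost surely.
\item \emph{Exceptional sets.} The delicate point is the exceptional set. I would restrict to $x$ outside a properly exceptional set on which the lemma holds at all $(k,j)$, a countable union. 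For the Markov step, I would use that $B_q$ has a density, so it avoids polar sets a.s.
\end{itemize}
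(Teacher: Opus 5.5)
Your proposal is correct. Steps 1--3 and the continuity and singularity parts of Step 4 follow the paper's proof. For the upper bound on $\mathcal J_{\mu_\Phi}$, the paper quotes the lower bound on $\mathcal C_{\mu_\Phi}$ from \cref{thm:prescribed-loss}, which is proved with exactly your test functions $g_k$. For admissibility, choose $k$ with $E_k\le E<4E_k$ rather than merely $E_k\asymp E$. (In Step 1 your labels ``upper'' and ``lower'' are swapped relative to $\lambda_\mu(\gamma)$, but the inequalities are right.)

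The genuine difference is the proof of strict increase. The paper derives it from the coupling law just established. By \eqref{eq:coupling-profile}, $\lambda_{\mu_\Phi}(\gamma)\to\infty$ as $\gamma\to\infty$, so \eqref{eq:additive-lower-tail} with $a=0$ forces $\mathbb P_\pi(\mathcal A_t=0)=0$. Hence $\mathbb P_y(\mathcal A_t=0)=0$ for almost every $y$. The Markov property at $t/2$ and the heat-kernel density transfer this to quasi-every starting point, and rational increments finish. That route needs no Blumenthal argument and no local structure of $\mu_\Phi$ beyond the profile, so the pathwise statement really is a corollary of the spectral one.

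Your route is local and modelled on \cref{cor:full-volume-perforations}\textup{(iii)}. The law of the equilibrium functional gives positivity before leaving $B(x,3R_k)$ with probability at least $c_0$ at every fine scale, and Blumenthal's law upgrades this to probability one. This is valid and gives a statement localised in space at each starting point rather than averaged over $\pi$. It does rely on the multiscale construction, on identifying the global functional stopped at $\tau_{B^*_{k,j}}$ with the killed equilibrium functional (as asserted in the proof of \cref{prop:equilibrium-killing-spectral-gap}), and on taking the union of exceptional sets over the countable family $(k,j)$.

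Two small corrections are needed in your version. First, the dominating multiple is $a_kR_k^2$, not $a_k$. Second, $a_k=A_k-A_{k+1}$ vanishes whenever $\Phi$ is constant on a dyadic block, so you must run the argument only along the infinitely many $k$ with $a_k>0$; these exist because $A_k\downarrow0$, and the uniform constant $c_0$ is unaffected.
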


\begin{proof}
For $\|u\|_2=1$, put $q=\mathcal E(u)$. Then
$$
\mathcal E(u)+\gamma\int|\widetilde u|^2\,d\mu
\ge q+\gamma\mathcal J_\mu(q),
$$
which proves one inequality in \eqref{eq:variational-coupling}. Conversely, for each $E$, take normalised $u_n$ with $\mathcal E(u_n)\le E$ and $\int|\widetilde u_n|^2\,d\mu\to\mathcal J_\mu(E)$. Their Rayleigh quotients prove the reverse inequality.

For \eqref{eq:rayleigh-profile}, use the notation
$$
A_k=\sum_{\ell\ge k}a_\ell
=\frac{\Phi(T_0^{-1})}{\Phi(R_k^{-2})}
$$
from the construction of $\mu_\Phi$. If $\mathcal E(u)\le E$ and $\|u\|_2=1$, choose the least $k$ such that $2C_*R_k^2E\le\tfrac12$. For every $\ell\ge k$, \eqref{eq:one-scale} gives
$$
\int_X|\widetilde u|^2\,d\eta_\ell\ge(2C_*)^{-1}.
$$
Hence
$$
\int_X|\widetilde u|^2\,d\mu_\Phi
\ge\frac{A_k}{2C_*}
\ge c\Phi(E)^{-1},
$$
by the doubling condition on $\Phi$. For $E\ge T_0^{-1}$, the definition of $\mathcal C_{\mu_\Phi}(E)$ and its lower bound in \cref{thm:prescribed-loss} give normalised $f_n\in\operatorname{Ran}P_{[0,E]}(H)$ with $\int|f_n|^2\,d\mu_\Phi\le C\Phi(E)^{-1}+o(1)$. Since $\mathcal E(f_n)\le E$, this proves the reverse bound. For $E<T_0^{-1}$, use the normalised constant function. This proves \eqref{eq:rayleigh-profile}, and \eqref{eq:variational-coupling} gives \eqref{eq:coupling-profile}.

It remains to translate the coupling estimate into a statement about Brownian paths. The Feynman--Kac formula and the $L^2$ semigroup norm give
$$
\begin{aligned}
\mathbb E_\pi[e^{-\gamma\mathcal A_t}]
&=\frac1{\mathrm{vol}_g(X)}
\left\langle1,e^{-t(H+\gamma\mu_\Phi)}1\right\rangle\\
&\le e^{-t\lambda_{\mu_\Phi}(\gamma)}.
\end{aligned}
$$
Markov's inequality proves \eqref{eq:additive-lower-tail}. For fixed $L$, \eqref{eq:coupling-profile} gives
$$
\lambda_{\mu_\Phi}(\gamma)
\ge c\min\left\{L,\frac{\gamma}{\Phi(L)}\right\},
$$
so $\lambda_{\mu_\Phi}(\gamma)\to\infty$ as $\gamma\to\infty$. Taking $a=0$ in \eqref{eq:additive-lower-tail} shows that $\mathbb P_\pi(\mathcal A_t=0)=0$ for every $t>0$. Thus $h_t(y):=\mathbb P_y(\mathcal A_t=0)$ vanishes for $\operatorname{vol}_g$-almost every $y$. For quasi-every $x$, the Markov property at time $t/2$ and the heat-kernel density give
$$
\mathbb P_x(\mathcal A_t=0)
\le\mathbb E_x[h_{t/2}(B_{t/2})]=0.
$$
Additivity and the Markov property give the same conclusion for every rational increment. Continuity and monotonicity then prove strict increase.

Finally, choose a zero-volume Borel set $S$ carrying $\mu_\Phi$. The additive functional
$$
\int_0^t\mathbf 1_{X\setminus S}(B_s)\,d\mathcal A_s
$$
has zero Revuz measure and therefore vanishes for quasi-every starting point. On the other hand, the heat-kernel density gives
$$
\mathbb E_x\int_0^T\mathbf 1_S(B_t)\,dt=0
$$
for quasi-every $x$. Thus $d\mathcal A_t$ is carried by a set of times of Lebesgue measure zero. Taking rational intervals and integer horizons places both conclusions on one event of full probability.
\end{proof}

\paragraph{Power and logarithmic examples.}
If $\Phi(E)\asymp(1+T_0E)^\delta$, then, for $\gamma T_0\ge1$,
$$
\lambda_{\mu_\Phi}(\gamma)
\asymp T_0^{-1}(T_0\gamma)^{1/(1+\delta)}.
$$
If $\Phi(E)\asymp\bigl(1+\log(2+T_0E)\bigr)^\theta$, then
$$
\lambda_{\mu_\Phi}(\gamma)
\asymp
\frac{\gamma}
{\bigl(1+\log(2+T_0\gamma)\bigr)^\theta}.
$$

\begin{corollary}[Observation by measures moving at the diffusion scale]
\label{cor:time-dependent-observation}
After increasing $C_*$ if necessary, assume $C_*\ge1$. For $0<t\le T_0$, let $k(t)$ be the least integer such that $2C_*R_{k(t)}^2\le t$.
\begin{enumerate}
\item[\textup{(i)}] For every $u_0\in L^2(X)$ and $0<T\le T_0$,
$$
\|e^{-TH}u_0\|_{L^2(X)}^2\le\frac CT
\int_0^T\!\int_X|e^{-tH}u_0|^2\,d\eta_{k(t)}\,dt.
$$
\item[\textup{(ii)}] For fixed $T$, define a measure on $(0,T)\times X$ by
$$
d\boldsymbol\eta_T(s,x)=ds\,d\eta_{k(T-s)}(x).
$$
For every $y_0\in L^2(X)$ there is $v\in L^2(\boldsymbol\eta_T)$ such that the weak solution of $\partial_sy+Hy=v(s)\eta_{k(T-s)}$, $y(0)=y_0$, satisfies $y(T)=0$ and
$$
\int_0^T\!\int_X|v(s,x)|^2\,d\eta_{k(T-s)}(x)\,ds
\le\frac CT\|y_0\|_{L^2(X)}^2.
$$
Here $\langle v(s)\eta_{k(T-s)},\varphi\rangle =\int_Xv(s)\overline{\widetilde\varphi}\,d\eta_{k(T-s)}$. The control cost is therefore at most $CT^{-1/2}$.
\item[\textup{(iii)}] This order is optimal among moving measures of uniformly bounded mass. More precisely, suppose a measurable family $(\mu_s)$ defines a bounded map
\[
L^2(ds\,d\mu_s)\longrightarrow L^2((0,T);H^{-1}(X)).
\]
If $0<M:=\sup_s\mu_s(X)<\infty$, its control cost is at least $(\mathrm{vol}_g(X)/M)^{1/2}T^{-1/2}$.
\end{enumerate}
Each $\eta_{k(t)}$ is carried by a zero-volume set of Hausdorff dimension $m-2$, and $R_{k(t)}\asymp\sqrt t$.
\end{corollary}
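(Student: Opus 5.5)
For \textup{(i)}, I will repeat the differential-inequality argument of \cref{thm:heat-control}\textup{(ii)}, applied at the single scale $k(t)$ instead of summing over a tail of scales. Set $F(t)=\|e^{-tH}u_0\|_2^2$, so that $F'(t)=-\|\nabla u(t)\|_2^2$ almost everywhere. Apply \eqref{eq:one-scale} at $k=k(t)$. Since $2C_*R_{k(t)}^2\le t$, the gradient term is at most $\tfrac t2(-F'(t))$, and therefore
\[
F(t)\le \tfrac t2(-F'(t))+C_*\int_X|u(t)|^2\,d\eta_{k(t)} .
\]
Equivalently, $F'+\tfrac2tF\le \tfrac{2C_*}{t}G_{k}(t)$, where $G_k(t)=\int|u(t)|^2\,d\eta_{k(t)}$. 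Multiplying by $t^2$ gives $(t^2F)'\le 2C_*\,t\,G_k(t)\le 2C_*T\,G_k(t)$ on $(0,T)$. Integrating, and using $t^2F(t)\to0$, yields $T^2F(T)\le 2C_*T\int_0^TG_k$, which is \textup{(i)}.

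I expect two technical points to need care. The first is measurability of $t\mapsto G_k(t)$: $k(t)$ is a nonincreasing step function, so $(0,T]$ splits into countably many intervals on each of which the measure is fixed, and measurability reduces to the fixed-measure case, which is covered by the trace bound of \cref{lem:multiscale-trace}. The second is finiteness of the right-hand side, which follows from \cref{lem:multiscale-trace}\textup{(ii)}. With $R_{k(t)}^2\le t$, that lemma gives $G_k(t)\le C(F(t)+t\|\nabla u(t)\|^2)$, and $\int_0^T t\|\nabla u\|^2\,dt\le C\|u_0\|^2$ by the energy identity. The same computation also shows that the boundary term $t^2F(t)\to0$ is harmless. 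For the last claim in the statement, minimality of $k(t)$ gives $2C_*R_{k(t)-1}^2>t$ when $k(t)\ge1$, so $R_{k(t)}\asymp\sqrt t$ because $R_{k-1}=2R_k$.

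For \textup{(ii)}, I will use the duality argument via Douglas' lemma, as in \cref{thm:prescribed-loss}. The control operator is $(\mathcal Bv)(s)=v(s)\eta_{k(T-s)}$. It is bounded $L^2(\boldsymbol\eta_T)\to L^2(0,T;H^{-1})$ because \cref{lem:multiscale-trace} holds uniformly in $k$. The adjoint of the final-state map sends $z$ to $s\mapsto \widetilde{e^{-(T-s)H}z}$ in $L^2(\eta_{k(T-s)})$. The required observability inequality is $\|e^{-TH}y_0\|^2\le \tfrac CT\int_0^T\int|e^{-(T-s)H}z|^2\,d\eta_{k(T-s)}\,ds$, and after the substitution $t=T-s$ this is exactly \textup{(i)}. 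The cost bound $CT^{-1/2}$ then follows.

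For \textup{(iii)}, I will test with the constant initial datum $y_0=\mathrm{vol}_g(X)^{-1/2}$. Pairing the equation with the constant function $1$, which is fixed by $e^{-tH}$, gives
\[
0=\langle y(T),1\rangle=\langle y_0,1\rangle+\int_0^T\!\!\int v\,d\mu_s\,ds .
\]
Hence $\mathrm{vol}_g(X)^{1/2}\le\int_0^T\int|v|\,d\mu_s\,ds\le (TM)^{1/2}\|v\|_{L^2(ds\,d\mu_s)}$ by Cauchy--Schwarz, which yields the stated lower bound. I expect the main (mild) obstacle to be justifying this pairing identity for weak solutions with measure-valued controls. That follows from the variational formulation with test function $1\in H^1$ and the boundedness assumption on the control map.
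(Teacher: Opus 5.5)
Your proposal is correct and follows essentially the same route as the paper: the single-scale differential inequality $(t^2F)'\le C\,t\,I(t)$ for (i), duality between null control and the time-reversed observability estimate for (ii) (Douglas' lemma in your version, Riesz representation in the paper's), and testing the controlled equation against the constant function for (iii). The only slip is cosmetic: in the observability inequality for (ii), the left-hand side should read $\|e^{-TH}z\|^2$, not $\|e^{-TH}y_0\|^2$.
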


\begin{proof}
Set $u(t)=e^{-tH}u_0$, $F(t)=\|u(t)\|_2^2$, and $I(t)=\int_X|u(t)|^2\,d\eta_{k(t)}$. The Brownian quadratic-variation calculation in the proof of \cref{thm:heat-control} gives $F'(t)=-\|\nabla u(t)\|_2^2$. Thus, for almost every $t>0$, \eqref{eq:one-scale} gives
$$
F(t)\le\frac t2(-F'(t))+CI(t),
$$
so $(t^2F(t))'\le CtI(t)$. Integrating from $\varepsilon$ to $T$ and letting $\varepsilon\downarrow0$ gives
$$
T^2F(T)\le C\int_0^TtI(t)\,dt
\le CT\int_0^TI(t)\,dt,
$$
which proves \textup{(i)}. The function $k(T-s)$ is piecewise constant, and \cref{lem:multiscale-trace} makes the displayed control map bounded from $L^2(\boldsymbol\eta_T)$ to $L^2((0,T);H^{-1}(X))$. After the change of variables $t=T-s$, the estimate in \textup{(i)} is the coercivity estimate for the adjoint of the final-state map. The Riesz representation theorem proves \textup{(ii)}.

For the lower bound, let $(\mu_s)$ satisfy $\sup_s\mu_s(X)\le M$, take $y_0=1$, and test the controlled equation against $1$. Then
$$
\operatorname{vol}_g(X)
=\left|\int_0^T\!\int_Xv\,d\mu_s\,ds\right|
\le (TM)^{1/2}\|v\|_{L^2(ds\,d\mu_s)}.
$$
Division by $\|y_0\|_2=\operatorname{vol}_g(X)^{1/2}$ proves the lower bound. Finally, minimality and $R_{k-1}=2R_k$ give $t/(8C_*)<R_{k(t)}^2\le t/(2C_*)$.
\end{proof}

For these moving measures, the factor $\Phi(T^{-1})$ present for the fixed measures $\mu_\Phi$ disappears. Thus the control cost improves by $\Phi(T^{-1})^{1/2}$ while the observation at every time remains supported on a set of Hausdorff dimension $m-2$.

\begin{proposition}[Necessity of divergence and full support]
\label{prop:divergence-full-support-necessary}
Let $\mu$ be a finite Borel measure on $X$.
\begin{enumerate}
\item[\textup{(i)}] $\sup_{E\ge0}\mathcal C_\mu(E)<\infty$ if and only if $d\operatorname{vol}_g\le C\,d\mu$ for some $C<\infty$. Consequently, if $\mu$ is carried by a zero-volume set, then $\mathcal C_\mu(E)\nearrow\infty$ as $E\to\infty$.
\item[\textup{(ii)}] If $\operatorname{supp}\mu\ne X$, then, for every $N>0$, $\mathcal C_\mu(E)\ge c_NE^N$ for all sufficiently large $E$, and $\mathcal H_\mu(T)\ge c e^{c/T}$ for all sufficiently small $T>0$.
\end{enumerate}
\end{proposition}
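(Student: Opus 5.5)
For \textup{(i)}, the ``if'' direction is immediate: if $d\operatorname{vol}_g\le C\,d\mu$, then $\|f\|_2^2\le C\int|f|^2\,d\mu$ for every continuous $f$, and in particular for every $f$ in any spectral range. So $\mathcal C_\mu(E)\le C$ for all $E$. For ``only if'', suppose $\mathcal C_\mu(E)\le C$ for all $E$. Low-energy spectral projections are dense in $C(X)$; concretely, $q(H/E)\psi\to\psi$ uniformly for smooth $\psi$. Apply the inequality to $f=P_{[0,E]}\psi$ with $\psi$ smooth and let $E\to\infty$. On a closed manifold, $f\to\psi$ in $C^0$ because the spectral tail decays faster than any power of $E$ in every Sobolev norm, and Sobolev embedding converts this to uniform convergence. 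Hence $\int|\psi|^2\,d\operatorname{vol}_g\le C\int|\psi|^2\,d\mu$ for all smooth $\psi$. Taking $|\psi|^2$ to approximate indicators of open sets, and using regularity of finite Borel measures, gives $\operatorname{vol}_g(U)\le C\mu(U)$ for all open $U$, hence for all Borel sets. If $\mu$ is carried by a zero-volume set $S$, then $\operatorname{vol}_g(S^c)>0=\mu(S^c)$ contradicts this. Since $\mathcal C_\mu$ is nondecreasing in $E$ (the spectral ranges are nested), it follows that $\mathcal C_\mu(E)\nearrow\infty$.

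For \textup{(ii)}, pick a ball $B(x_*,2r)\subset X\setminus\operatorname{supp}\mu$. For the spectral part, I will use the test functions $G_R=q(R^2H)(x_*,\cdot)$ of \cref{lem:spectral-cutoff-kernels} with $R=2E^{-1/2}$, so that $G_R\in\operatorname{Ran}P_{[0,E]}(H)$ and $\|G_R\|_2^2\asymp R^{-m}$. By the pointwise bound with $N$ large, $|G_R|\le C_NR^{-m}(r/R)^{-N}$ on $\operatorname{supp}\mu$. Thus $\int|G_R|^2\,d\mu\le C_N\mu(X)R^{-2m}(R/r)^{2N}$, and the ratio gives $\mathcal C_\mu(E)\ge c_N r^{2N}R^{m-2N}\gtrsim E^{N-m/2}$. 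Since $N$ is arbitrary, this gives every power.

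For the heat part, I want an initial datum concentrated near $x_*$ whose heat flow stays exponentially small on $\operatorname{supp}\mu$ for $t\le T$, while its final state keeps a definite norm. I would take $u_0=e^{TH}w$ for a smooth $w$ supported in $B(x_*,r/2)$; this is not admissible, so instead I will use a Gaussian-type choice. Let $u_0=p_{T}(x_*,\cdot)$, the heat kernel. Then $e^{-tH}u_0=p_{T+t}(x_*,\cdot)$. Gaussian upper bounds give $|p_{T+t}(x_*,y)|\le CT^{-m/2}e^{-cr^2/T}$ for $d(x_*,y)\ge r$ and $t\le T$. The diagonal lower bound gives $\|p_{2T}(x_*,\cdot)\|_2^2=p_{4T}(x_*,x_*)\ge cT^{-m/2}$. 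Hence $\mathcal H_\mu(T)\ge T\cdot cT^{-m/2}\big/\bigl(T\,\mu(X)\,CT^{-m}e^{-2cr^2/T}\bigr)\ge c\,e^{c'/T}$ for small $T$, after absorbing the polynomial factors into the exponential.

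The main obstacle is the uniform convergence step in \textup{(i)}. It is handled by the standard bound $\|(1-P_{[0,E]})\psi\|_{H^s}\le E^{-1}\|H\psi\|_{H^s}$ together with $H^s\hookrightarrow C^0$ for $s>m/2$. The remaining steps use only the kernel and heat-kernel estimates already available.
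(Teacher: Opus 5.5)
Your proof is correct. Part (i) is the paper's argument: uniform convergence of $P_{[0,E]}\psi$ from the Sobolev decay of the spectral tail, then approximation of indicators of open sets, then monotonicity of $\mathcal C_\mu$.

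In (ii) you take a different route, through different test functions.

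\textbf{The paper's route.} It uses a single fixed bump $\psi$ supported in $B(z,\rho)$, with $B(z,2\rho)\cap\operatorname{supp}\mu=\varnothing$. The lower norm bounds are then free, since $\|P_{[0,E]}\psi\|_2\to1$ and $\|e^{-TH}\psi\|_2\to1$. Smallness on $\operatorname{supp}\mu$ comes from $\psi$ vanishing there. In the spectral part, $P_{[0,E]}\psi=-(I-P_{[0,E]})\psi$ on $\operatorname{supp}\mu$, which is $O(E^{-M})$ in $L^\infty$ by the same Sobolev tail bound you use in (i). In the heat part, $|e^{-tH}\psi(x)|\le\|\psi\|_\infty\mathbb P_x\{d(B_t,x)\ge\rho\}$, which is controlled by the Brownian Gaussian tail.

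\textbf{Your route.} You use data concentrated at $x_*$ and adapted to the scale: $G_R$ with $R=2E^{-1/2}$, and $p_T(x_*,\cdot)$. This requires both their off-diagonal decay and genuine lower norm bounds, namely $\|G_R\|_2^2\gtrsim R^{-m}$ from \cref{lem:spectral-cutoff-kernels} and the on-diagonal heat-kernel lower bound $p_{4T}(x_*,x_*)\gtrsim T^{-m/2}$.

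\textbf{What each buys.} The paper's version is more elementary. It avoids the Cheeger--Gromov--Taylor and finite-propagation machinery behind \cref{lem:spectral-cutoff-kernels}, and it needs no heat-kernel lower bound. Yours gives explicit constants in terms of $r=\operatorname{dist}(x_*,\operatorname{supp}\mu)$ and $\mu(X)$:
\begin{itemize}
\item $\mathcal C_\mu(E)\gtrsim_N\mu(X)^{-1}r^{2N}E^{N-m/2}$ for every $E\ge4R_0^{-2}$;
\item $\mathcal H_\mu(T)\gtrsim\mu(X)^{-1}T^{m/2}e^{cr^2/T}$.
\end{itemize}
It also reuses the test functions from the reverse bounds in \cref{thm:prescribed-loss}.

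One small omission: you should dispose of the trivial case $\mu=0$ separately, as the paper does. There your ratios have zero denominators and both constants are infinite.
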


\begin{proof}
\textup{(i)} The measure inequality immediately gives the uniform bound. Conversely, suppose $\mathcal C_\mu(E)\le C$ for all $E$. If $u$ is smooth, then $P_{[0,E]}u\to u$ uniformly, and the finiteness of $\mu$ gives
$$
\|u\|_{L^2(X)}^2\le C\int_X|u|^2\,d\mu.
$$
Uniform approximation extends this to continuous $u$. Approximating indicators of open sets by continuous functions proves $d\operatorname{vol}_g\le C\,d\mu$. A measure carried by a zero-volume set cannot satisfy this inequality; monotonicity of $\mathcal C_\mu$ proves the last assertion.

\textup{(ii)} The result is immediate if $\mu=0$. Otherwise, choose $z\in X$, $\rho>0$, and $\psi\in C_c^\infty(B(z,\rho))$ such that $\|\psi\|_2=1$ and $B(z,2\rho)\cap\operatorname{supp}\mu=\varnothing$. Put $f_E=P_{[0,E]}\psi$. Then $\|f_E\|_2\to1$, while, for every $M>0$ and fixed $s>m/2$,
$$
\|f_E\|_{L^\infty(\operatorname{supp}\mu)}
\le C\|(I-P_{[0,E]})\psi\|_{H^s(X)}
\le C_M(1+E)^{-M}.
$$
Thus $\int_X|f_E|^2\,d\mu\le C_M(1+E)^{-2M}$, proving the spectral claim.

For $u_0=\psi$, the Brownian formula gives, on $\operatorname{supp}\mu$,
$$
|e^{-tH}\psi(x)|=|\mathbb E_x\psi(B_t)|
\le\|\psi\|_\infty\mathbb P_x\{d(B_t,x)\ge\rho\}
\le Ce^{-c\rho^2/t},
$$
by the Gaussian Brownian tail. Therefore
$$
\int_0^T\!\int_X|e^{-tH}\psi|^2\,d\mu\,dt
\le Ce^{-c\rho^2/T}
$$
for small $T$. Since $\|e^{-TH}\psi\|_2\to1$, the definition of $\mathcal H_\mu(T)$ gives $\mathcal H_\mu(T)\ge cT e^{c\rho^2/T}\ge c'e^{c'/T}$ after decreasing $T$.
\end{proof}

\begin{proposition}[Necessary conditions on sets carrying the measure]
\label{prop:control-dim-sharp}
Let $m\ge2$, and let $\mu$ be a nonzero finite measure on $X$ such that
$$
\int_X|\widetilde u|^2\,d\mu\le N\|u\|_{H^1(X)}^2,
\qquad u\in H^1(X).
$$
There are $C,r_0>0$ such that
$$
\mu(B(x,\rho))\le CN
\begin{cases}
\rho^{m-2},&m\ge3,\\
\bigl(1+\log(r_0/\rho)\bigr)^{-1},&m=2,
\end{cases}
\qquad 0<\rho\le r_0/4.
$$
Every Borel set carrying $\mu$ is nonpolar. If $m\ge3$, every such set has Hausdorff dimension at least $m-2$. In dimension $2$, $\mu$ has no atoms, and the construction in \cref{def:multiscale-measure,lem:multiscale-trace} shows that the lower bound $\dim_{\mathrm H}S\ge0$ for a Borel set $S$ carrying $\mu$ is sharp.
\end{proposition}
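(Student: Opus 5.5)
The plan is to test the trace inequality against a capacitary potential of a small ball. Fix $x\in X$ and $0<\rho\le r_0/4$, where $r_0\le r_*$ is small enough for the local Green estimates to hold on $B(x,r_0)$. Set $\Omega=B(x,r_0)$ and $K=\overline{B(x,\rho)}$, and let $e=e_{K,\Omega}\in W^{1,2}_0(\Omega)$ be the equilibrium potential, extended by zero to an element of $H^1(X)$. Quasi-everywhere on $K$ we have $\widetilde e=1$, so
\[
\mu(B(x,\rho))\le\int_X|\widetilde e|^2\,d\mu\le N\|e\|_{H^1(X)}^2 .
\]
The Poincaré inequality on $\Omega$ gives $\|e\|_{L^2}^2\le Cr_0^2\|\nabla e\|_2^2$, hence $\|e\|_{H^1}^2\le C\operatorname{Cap}_\Omega(K)$. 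It remains to bound $\operatorname{Cap}_{B(x,r_0)}(\overline{B(x,\rho)})$.

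For $m\ge3$, a radial cutoff equal to $1$ on $B(x,\rho)$ and vanishing outside $B(x,2\rho)$ has energy at most $C\rho^{m-2}$. For $m=2$, the logarithmic test function $\log(r_0/d(x,\cdot))/\log(r_0/\rho)$, truncated at $1$, has energy at most $C(1+\log(r_0/\rho))^{-1}$; here the normal-coordinate comparison with the Euclidean metric controls the distortion. Together these give the displayed ball bound, with $C$ depending on bounded geometry and $r_0$.

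The qualitative consequences follow from this. If a Borel set $S$ carries $\mu$ and is polar, then $\operatorname{Cap}(S)=0$. The trace inequality extends by monotone approximation to all quasi-continuous $u$ with $\widetilde u\ge1$ on a set, so $\mu(S)\le N\operatorname{Cap}_1(S)=0$ for polar $S$, which contradicts $\mu\ne0$ (use outer capacity: $S$ is contained in open sets of arbitrarily small capacity). For $m\ge3$, apply the mass distribution principle to $\mu|_S$. The bound $\mu(B(y,\rho))\le CN\rho^{m-2}$ gives $\mathcal H^{m-2}_\infty(S')\ge c\,\mu(S')/N$ for every $S'\subset S$, and hence $\mathcal H^{m-2}(S)>0$ and $\dim_{\mathrm H}S\ge m-2$.

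In dimension $2$, the logarithmic bound tends to $0$ as $\rho\to0$, so $\mu(\{y\})=0$ and $\mu$ has no atoms. For sharpness, the measure $\mu_\theta$ from \cref{def:multiscale-measure} is carried by $S=\bigcup K_{k,j}$. Here $\dim_{\mathrm H}S=0$ by \cref{lem:critical-prototype}, and the trace bound holds by \cref{lem:multiscale-trace}.

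The main obstacle I expect is the justification that the trace inequality applies to quasi-continuous representatives equal to $1$ on $K$, and passes to outer capacity for polar sets. I would handle this through the standard fact that finite measures satisfying an $H^1$ trace inequality are smooth, so they charge no set of zero capacity.
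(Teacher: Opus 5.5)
Your proposal is correct and follows essentially the paper's argument: test the trace inequality with the radial cutoff ($m\ge3$) or the logarithmic cutoff ($m=2$), deduce nonpolarity from functions of small $H^1$ norm that equal $1$ near a polar carrier, obtain the dimension bound by covering (mass distribution), and get absence of atoms from the logarithmic decay. The detour through the equilibrium potential is unnecessary. If you insert the continuous cutoffs directly into the trace inequality, as the paper does, the quasi-everywhere issue you flag disappears. For nonpolarity, inner regularity gives a compact $F\subset S$ with $\mu(F)>0$, and smooth $u_n\ge1$ near $F$ with $\|u_n\|_{H^1(X)}\to0$ then close the argument without invoking the smooth-measure fact.
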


\begin{proof}
For $m\ge3$, use a cutoff equal to $1$ on $B(x,\rho)$, supported in $B(x,2\rho)$, and with gradient bounded by $C/\rho$. Its squared $H^1$ norm is at most $C\rho^{m-2}$.

For $m=2$, take $r_0$ below the uniform injectivity radius and use the logarithmic cutoff
$$
u_\rho(y)=
\begin{cases}
1,&d(x,y)\le\rho,\\
\dfrac{\log(r_0/d(x,y))}{\log(r_0/\rho)},
&\rho<d(x,y)<r_0,\\
0,&d(x,y)\ge r_0.
\end{cases}
$$
Bounded geometry and direct integration give
$$
\|u_\rho\|_{H^1(X)}^2
\le\frac{C}{1+\log(r_0/\rho)}.
$$
Approximation by smooth cutoffs and the trace inequality prove the ball bounds and, in dimension $2$, the absence of atoms.

If a Borel set $S$ carrying $\mu$ were polar, there would be smooth functions $u_n\ge1$ near $S$ with $\|u_n\|_{H^1}\to0$. Since $\mu$ is carried by $S$,
$$
0<\mu(X)\le\int_X|u_n|^2\,d\mu
\le N\|u_n\|_{H^1(X)}^2\longrightarrow0,
$$
a contradiction. For $m\ge3$, covering such a set by balls and using the first ball bound gives its Hausdorff-dimension lower bound.
\end{proof}

The trace condition has an equivalent control-theoretic formulation. By taking adjoints, it holds exactly when
\[
\langle Bv,u\rangle=\int_Xv\,\overline u\,d\mu,
\qquad u\in C^\infty(X),
\]
extends to a bounded map $B:L^2(\mu)\to H^{-1}(X)$.

\begin{remark}[Comparison with related work]\label{rem:concurrent-comparison}
Filbir and Mhaskar \cite[Definition~5.4\textup{(b)} and Theorem~5.5\textup{(b)}]{FilbirMhaskar2011} prove lower sampling inequalities for diffusion-polynomial spaces with respect to finite Borel measures satisfying a uniform lower bound on balls. Applied to the Laplace spectrum of a fixed closed manifold, their theorem gives, for all sufficiently small $R$, the fixed-manifold counterparts of both inequalities in \cref{thm:capacity-sampling}, with constants depending on $X$, for $E\le c_XR^{-2}$ independently of $\Lambda$. For the second inequality, apply their theorem to $R^m\nu_R$. For the first, when $f$ is real-valued, the intermediate value theorem gives $y_j\in B_j$ such that $f(y_j)=\int f\,d\nu_j$; apply their theorem to $R^m\sum_j\delta_{y_j}$. For complex $f$, apply the argument separately to its real and imaginary parts. Maximality of the net makes each of these measures uniformly $2R$-dominant. For equilibrium measures, $\tau_R=R^2\mu_R$ satisfies $\tau_R(B(x,2R))\ge\kappa R^m$, so their result also gives the fixed-manifold $L^2$ conclusion of \cref{cor:zero-volume} through a window $E\le c_XR^{-2}$ independent of $\kappa$.

What is used additionally here is the local estimate for arbitrary $H^1$ functions, the equilibrium-measure form inequality and its Brownian killing proof, and constants stated in terms of the displayed bounded-geometry data. The lower ball bounds for $\mu_\theta$ and $\mu_\Phi$ also allow the theorem of Filbir and Mhaskar to recover the upper spectral estimates on each fixed manifold. It does not provide the construction or bounded $H^1$ trace of these measures, the matching lower spectral estimates, or the heat-observability, null-control, coupling, and additive-functional conclusions.

Point sampling of low-frequency spaces by atomic measures indexed by the spectral cutoff was established in \cite{OrtegaCerdaPridhnani2012}. For $m\ge3$, Stollmann--Stolz \cite{StollmannStolz2021} treat positive-volume observation in convex Euclidean domains when every $R$-ball centred in the domain contains a $\delta$-ball in the observation set; their proof uses a mixed Neumann--Dirichlet Laplacian whose spectral bottom is bounded below by $c_m\delta^{m-2}R^{-m}$. Here the observation sets may have zero volume, and observation is against their relative equilibrium measures. Burq--Germain--Sorella--Zhu
\cite[Theorems~1.1 and~1.2]{BGSZ26} prove that, for a Borel probability
measure on the torus, a uniform trace inequality for Laplace eigenfunctions
forces the measure to be upper $(m-2)$-regular, and hence its support to
have Hausdorff dimension at least $m-2$, while uniform observability forces
the upper Minkowski dimension of the support to be at least $m-2$. Their inequalities concern
individual toral eigenspaces rather than growing spectral subspaces and
do not address the bounded $H^1$-trace or heat-observability framework
used here; their methods use the cluster structure of lattice points on
spheres and decoupling. Foster--Gallegos \cite{FG25} prove, in Euclidean balls, propagation of smallness for gradients of harmonic functions from sets of positive $(m-2+\delta)$-dimensional Hausdorff content.
\end{remark}

\begin{remark}[Comparison with set observation]
\label{rem:heat-control-context}
Green--Le Balc'h--Martin--Orsoni
\cite[Theorem~1.4 and Corollary~3.9]{GLBMO24} prove heat observability
from sets of positive $s$-dimensional Hausdorff content for $s>m-1$
and, on cubes, construct countable observable product sets of Hausdorff
dimension zero. Huang--Wang--Wang \cite{HuangWangWang2025} treat logarithmic contents at dimension $m-1$. Both use spatial suprema and the Lebeau--Robbiano method \cite{LebeauRobbiano1995}; here the norm is $L^2(\mu)$ and the endpoint comes from the bounded trace $H^1(X)\to L^2(\mu)$. It remains open whether the analogous construction for the Dirichlet-to-Neumann operator, or equivalently for the boundary process of reflecting Brownian motion, controls the boundary heat equation from a measure carried by a set of dimension $m-2$.
\end{remark}

\section{The critical Feynman--Kac gauge in the radial model}
\label{sec:critical-gauge}

All solutions are real-valued; for complex solutions, replace $u^2$ by $|u|^2$. Fix $m\ge2$, let $V$ be continuous on $[0,R)$, and suppose
\begin{equation}\label{gauge:eq:pde}
\left(\tfrac12\Delta+V(|x|)\right)u=0
\quad\text{in }B_R\subset\mathbb R^m.
\end{equation}
Let $\tau_r$ be the first exit time of Brownian motion from $B_r$. Let $g$ be the regular radial solution of $(\tfrac12\Delta+2V(|x|))g=0$, $g(0)=1$, and set $R_*:=\sup\{0<r<R:g>0\text{ on }[0,r]\}$. For $0<r<R_*$,
\begin{equation}\label{gauge:eq:height}
H_u(r):=\mathbb E_0\!\left[e^{2\int_0^{\tau_r}V(|B_s|)\,ds}
u(B_{\tau_r})^2\right]
=\frac1{g(r)}\fint_{\partial B_r}u^2\,d\sigma
\end{equation}
by \cref{gauge:lem:gauge}.

The doubled potential in $g$ is forced by squaring the Feynman--Kac martingale, so the quotient in \eqref{gauge:eq:height} is the natural height rather than an auxiliary normalisation. We state the convexity and equality results first and then derive them from the spherical-harmonic decomposition and a radial Riccati identity.

\begin{theorem}[Convexity of the doubled-gauge height]\label{gauge:thm:main}
If $r\mapsto r^2V(r)$ is nondecreasing, then $V\ge0$ and $t\mapsto\log H_u(e^t)$ is convex on $(-\infty,\log R_*)$ for every nonzero solution. Equivalently, $\tfrac r2\partial_r\log H_u(r)$ is nondecreasing. Thus, for $0<r_1<r_2<r_3<R_*$,
\begin{equation}\label{gauge:eq:threesphere}
H_u(r_2)\le H_u(r_1)^\theta H_u(r_3)^{1-\theta},
\qquad
\theta=\frac{\log(r_3/r_2)}{\log(r_3/r_1)}.
\end{equation}
\end{theorem}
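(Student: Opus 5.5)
The plan is to decompose $u$ into spherical harmonics and reduce the statement to a one-dimensional radial problem. Write $u(r\omega)=\sum_{k,j}f_{k}(r)Y_{k,j}(\omega)$, where $f_k$ is the regular solution of
\[
\tfrac12\bigl(f''+\tfrac{m-1}{r}f'-\tfrac{k(k+m-2)}{r^2}f\bigr)+Vf=0 .
\]
It behaves like $r^k$ at the origin. By orthogonality,
\[
\fint_{\partial B_r}u^2\,d\sigma=\sum_k c_kf_k(r)^2,\qquad c_k\ge0,
\]
so that $H_u(r)=\sum_k c_kf_k(r)^2/g(r)$.

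A sum of log-convex functions of $t=\log r$ is log-convex. It therefore suffices to prove that each $h_k:=f_k^2/g$ is log-convex in $t$ on $(-\infty,\log R_*)$. Equivalently, $rh_k'/h_k=2\,rf_k'/f_k-rg'/g$ should be nondecreasing wherever $f_k\neq0$. At zeros of $f_k$ the function $\log h_k$ equals $-\infty$, which is harmless for convexity; log-convexity of the whole sum is preserved across such zeros. First I must check that $V\ge0$ follows from monotonicity of $r^2V$: the function $r^2V(r)$ tends to $0$ as $r\to0$ by continuity of $V$, and it is nondecreasing, so it is $\ge0$.

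The core step is a Riccati computation. Set $\phi_k=rf_k'/f_k$ and $\psi=rg'/g$, and work in $t=\log r$. The radial equations become
\[
\dot\phi_k=k(k+m-2)-(m-2)\phi_k-\phi_k^2-2r^2V,
\qquad
\dot\psi=-(m-2)\psi-\psi^2-4r^2V .
\]
Then
\[
\frac{d}{dt}(2\phi_k-\psi)=2k(k+m-2)-(m-2)(2\phi_k-\psi)-2\phi_k^2+\psi^2 .
\]
The explicit potential terms cancel here, which is exactly where the doubled coefficient in $g$ enters.

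So I need $\Psi_k:=2\phi_k-\psi$ to be nondecreasing. I plan to do this with a comparison/maximum-principle argument in $t$. Near $t=-\infty$ we have $\phi_k\to k$ and $\psi\to0$. The expansions in powers of $r^2V$ give
\[
\Psi_k=2k+O\Bigl(\int_0^r sV\,ds\Bigr),
\]
and the sign of the leading correction has to be checked to be $\ge0$. After that, I show that the set where $\dot\Psi_k<0$ cannot be entered. Differentiating once more and using $\frac{d}{dt}(r^2V)\ge0$ should give an inequality of the form $\ddot\Psi_k\ge-A(t)\dot\Psi_k$ on the relevant region. Gronwall then propagates $\dot\Psi_k\ge0$ from $-\infty$.

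I expect this step to be the main obstacle. The term $-2\phi_k^2+\psi^2$ is not obviously controlled, and $\phi_k$ blows up at zeros of $f_k$ (where $\phi_k\to-\infty$). The interval must therefore be split at those zeros, and one must show that the monotonicity restarts correctly on each subinterval. An alternative I would try first for the case $k=0$ is to write $f_0^2/g$ directly and use $(\tfrac12\Delta+2V)(f_0^2)=|\nabla f_0|^2\ge0$. That makes $f_0^2$ a subsolution for the $2V$ operator, so $f_0^2/g$ is $g$-subharmonic; that gives monotonicity of $h_0$ but not full log-convexity.

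Once each $h_k$ is log-convex, $\frac r2\partial_r\log H_u$ is nondecreasing. The three-sphere inequality \eqref{gauge:eq:threesphere} then follows by writing $\log r_2$ as the convex combination $\theta\log r_1+(1-\theta)\log r_3$ and applying convexity of $t\mapsto\log H_u(e^t)$.
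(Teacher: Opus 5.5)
Your set-up matches the paper's: the spherical-harmonic reduction to $h_k=f_k^2/g$, the Riccati variables, and the cancellation of the potential in $\tfrac{d}{dt}(2\phi_k-\psi)$ are exactly how its proof begins. Two steps, however, fail as written.

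The first is your treatment of zeros of $f_k$, which are not harmless. If $f_k(r_1)=0$ with $r_1<R_*$, then $\phi_k\to-\infty$ as $r\uparrow r_1$ while $\psi$ stays finite, so $\Psi_k$ cannot be nondecreasing up to $r_1$. Moreover, $\log h_k$ behaves like $2\log|t-\log r_1|$ there, which is concave. Since $f_k(|x|)Y_{k,j}(x/|x|)$ is itself a solution whose height is a constant multiple of $h_k$, the theorem would then simply be false, so no splitting and restarting can rescue the argument. What you need is that $f_k>0$ on $(0,R_*)$ whenever $V\ge0$. The paper proves this by optional stopping. A zero at $r_1$ gives a solution $w$ on $B_{r_1}$ vanishing on $\partial B_{r_1}$. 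Because $r_1<R_*$ and $V\ge0$, the gauge identity makes $e^{\int_0^{t\wedge\tau_{r_1}}V(|B_s|)\,ds}\,w(B_{t\wedge\tau_{r_1}})$ bounded in $L^2$, and optional stopping then forces $w\equiv0$. A Sturm--Picone comparison with $g$, using $4V\ge 2V-k(k+m-2)r^{-2}$, also works.

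The second and more serious problem is that the decisive step is not supplied. Put $\nu=(m-2)/2$, $X_k=\phi_k+\nu$, $Y=\psi+\nu$ and $G_k=\dot\Psi_k$. One further differentiation gives exactly $\dot G_k=-2X_kG_k-2(X_k-Y)\dot Y$; this is also how the quadratic terms you worried about get absorbed. The hypothesis on $r^2V$ does not appear in this identity, so differentiating once more does not by itself give $\ddot\Psi_k\ge-A\dot\Psi_k$. You need the source $(X_k-Y)(-\dot Y)$ to be nonnegative, and that rests on two lemmas missing from your plan.

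\emph{(a)} $X_k\ge Y$. This follows from $\tfrac{d}{dt}(X_0-Y)=-(X_0+Y)(X_0-Y)+w$ with $w=2r^2V\ge0$, together with $\tfrac{d}{dt}(X_k-X_0)=\beta_k^2-\beta_0^2-(X_k+X_0)(X_k-X_0)$, where $\beta_k=k+\nu$.

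\emph{(b)} $\dot Y\le0$, i.e.\ $\log g(e^t)$ is concave. This is the only place the monotonicity of $w$ enters. In Stieltjes form, $d\dot Y=-2Y\dot Y\,dt-2\,dw$, which gives $\dot Y(t)=-2\int_{(-\infty,t]}e^{-2\int_s^t Y}\,dw(s)\le0$.

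With (a) and (b), variation of constants from $-\infty$, where $G_k=O(e^{2t})$, gives $G_k\ge0$; no sign check on the first correction to $\Psi_k$ is needed. Point (b) is the real crux. The paper shows that, for $V\ge0$, convexity for all solutions is equivalent to $\dot Y\le0$. It also shows that for suitable compactly supported $V\ge0$ in $m\ge3$, where $\dot Y>0$ outside the support, $G_k<0$ for large $k$. Finally, for the infinite mode sum you must pass the three-point inequalities of the increasing partial sums to the limit, not just quote the finite-sum statement.
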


If $m=2$, the conclusion of \cref{gauge:thm:main} holds under $V\ge0$ alone.

\begin{theorem}[Failure below the doubled coefficient]\label{gauge:thm:threshold}
Let $0\le b<2$, suppose $V(0)>0$, and let $g_{bV}$ be the regular radial solution of $(\tfrac12\Delta+bV(|x|))g_{bV}=0$, $g_{bV}(0)=1$. If $f_0$ is the regular radial solution of \eqref{gauge:eq:pde}, $f_0(0)=1$, then
\[
\frac{d^2}{dt^2}\log\frac{f_0(e^t)^2}{g_{bV}(e^t)}
=\frac{4(b-2)V(0)}m e^{2t}+o(e^{2t})
\quad\text{as }t\to-\infty.
\]
Thus the radial solution is not log-convex near $0$ for any such $V$ and $b$.
\end{theorem}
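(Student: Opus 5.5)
The plan is a direct second-order Taylor expansion at the origin of both regular radial solutions, done in the logarithmic variable $t=\log r$. Write $L=\log(f_0^2/g_{bV})$, viewed as a function of $t$. Then
\[
L(t)=2\psi_{V}(t)-\psi_{bV}(t),
\]
where, for a continuous radial potential $W$ with regular radial solution $f_W$ of $(\tfrac12\Delta+W)f_W=0$, $f_W(0)=1$, we set $\psi_W(t)=\log f_W(e^t)$. Both $f_0$ and $g_{bV}$ equal $1$ at the origin and are continuous, so they are positive near $0$ and $L$ is well defined for $t$ very negative. It therefore suffices to prove one universal expansion:
\[
\psi_W''(t)=-\frac{4W(0)}{m}e^{2t}+o(e^{2t})\qquad(t\to-\infty),
\]
using only continuity of $W$ at $0$. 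Applying it with $W=V$ and with $W=bV$ gives
\[
L''=\Bigl(-\frac{8}{m}+\frac{4b}{m}\Bigr)V(0)e^{2t}+o(e^{2t})=\frac{4(b-2)V(0)}{m}e^{2t}+o(e^{2t}),
\]
which is the claimed formula.

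To prove the expansion, I will avoid assuming any second derivative of $W$ and work from the integrated radial ODE. The equation $f''+\tfrac{m-1}{r}f'=-2Wf$ gives, for the regular solution,
\[
r^{m-1}f'(r)=-2\int_0^r s^{m-1}W(s)f(s)\,ds.
\]
Since $W(s)f(s)\to W(0)$ as $s\to0$, this yields $f'(r)=-\tfrac{2W(0)}{m}r+o(r)$, hence $f(r)=1+O(r^2)$ and $rf'(r)=-\tfrac{2W(0)}{m}r^2+o(r^2)$.

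Next I compute the second derivative. Since $\tfrac{d}{dt}=r\tfrac{d}{dr}$,
\[
\psi_W'=\frac{rf'}{f},\qquad
\psi_W''=\frac{r(rf')'}{f}-\Bigl(\frac{rf'}{f}\Bigr)^2 .
\]
The ODE gives $(rf')'=f'+rf''=(2-m)f'-2rWf$, so
\[
r(rf')'=(2-m)rf'-2r^2Wf=\Bigl(\frac{(2m-4)}{m}-2\Bigr)W(0)r^2+o(r^2)=-\frac{4W(0)}{m}r^2+o(r^2).
\]
The squared term is $O(r^4)$ and $f=1+O(r^2)$, so $\psi_W''=-\tfrac{4W(0)}{m}r^2+o(r^2)$, as required.

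Finally, since $b<2$ and $V(0)>0$, the leading coefficient $4(b-2)V(0)/m$ is strictly negative. Hence $L''<0$ for all sufficiently negative $t$, so $L$ is strictly concave there, and $f_0^2/g_{bV}$ is not log-convex in $\log r$ near $0$. The case $b=0$ needs no special treatment: then $g_{bV}\equiv1$, consistent with the expansion.

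The only delicate point is justifying the $o(r^2)$ remainders under mere continuity of $V$, and the integrated form of the ODE handles this. In particular, the term $rf''$ is never expanded directly; it is always replaced using the equation.
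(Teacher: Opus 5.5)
Your argument is correct and is essentially the paper's proof. The paper writes the second derivative as $2\dot X_0-\dot Y_b$, with $X_0=rf_0'/f_0+\nu$ and $Y_b=rg_{bV}'/g_{bV}+\nu$, and obtains $X_0=\nu-\frac{V(0)}{\nu+1}r^2+o(r^2)$ and $Y_b=\nu-\frac{bV(0)}{\nu+1}r^2+o(r^2)$ from one integration of the radial equation (or equivalently from $\mathbb E_0\tau_r=r^2/m$); this is exactly your computation, since $\psi_V''=\dot X_0$ and $\psi_{bV}''=\dot Y_b$, except that you bypass the Riccati notation and make the $o(r^2)$ bookkeeping under mere continuity of $V$ explicit.
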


\begin{proposition}[Failure of convexity for a compactly supported potential in \texorpdfstring{$m\ge3$}{m >= 3}]
\label{gauge:prop:counterexample}
Let $m\ge3$ and let $V\ge0$ be continuous on $[0,\infty)$, nonzero, supported in $[0,r_1]$, and satisfy
\begin{equation}\label{gauge:eq:smallness}
\frac4{m-2}\int_0^\infty sV(s)\,ds<1.
\end{equation}
Then $R_*=\infty$. For every $r_2>r_1$, some solution $u(r,\omega)=f_\ell(r)Y_\ell(\omega)$, where $\ell\ge0$ and $Y_\ell$ is a spherical harmonic, solves \eqref{gauge:eq:pde} on $\mathbb R^m$ and has $t\mapsto\log H_u(e^t)$ strictly concave near $\log r_2$.
\end{proposition}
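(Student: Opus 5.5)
The plan is to compute everything explicitly outside $[0,r_1]$, where $V=0$. Because $u=f_\ell Y_\ell$, \eqref{gauge:eq:height} gives $H_u(r)=\bigl(\fint_{S^{m-1}}Y_\ell^2\bigr)f_\ell(r)^2/g(r)$. So $\log H_u(e^t)$ equals a constant plus $2\log f_\ell(e^t)-\log g(e^t)$. The idea is that $-\log g$ contributes a fixed strictly negative second $t$-derivative near $\log r_2$, while the convex contribution of $2\log f_\ell$ becomes negligible as $\ell\to\infty$ because $r_1<r_2$.

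\emph{Step 1: $R_*=\infty$ and the shape of $g$.} Write the radial equation as $(r^{m-1}g')'=-2r^{m-1}Vg$. While $g>0$ we have $g'\le0$, so $g\le1$. Integrating twice gives
\[
g(r)=1-\frac{2}{m-2}\int_0^r sV(s)g(s)\bigl(1-(s/r)^{m-2}\bigr)\,ds\ge1-\frac2{m-2}\int_0^\infty sV\,ds>\tfrac12 ,
\]
where the last inequality is \eqref{gauge:eq:smallness}. A continuity argument then shows $g>\tfrac12$ on $[0,\infty)$, so $R_*=\infty$. For $r>r_1$ we have $g=\alpha+\beta r^{2-m}$ with $\alpha\ge\tfrac12$ and
\[
\beta=\frac2{m-2}\int_0^{r_1}s^{m-1}Vg\,ds>0,
\]
since $V\ge0$ is nonzero and $g>0$. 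A direct computation then gives
\[
\frac{d^2}{dt^2}\log g(e^t)=\frac{\alpha\beta(m-2)^2e^{(2-m)t}}{g(e^t)^2}=:\gamma(t)>0,
\]
and $\gamma$ is independent of $\ell$.

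\emph{Step 2: the $f_\ell$ factor.} Let $f_\ell$ be the regular solution of $f''+\tfrac{m-1}rf'-\tfrac{\ell(\ell+m-2)}{r^2}f=-2Vf$ with $f_\ell\sim r^\ell$. The Volterra integral equation for $f_\ell/r^\ell$ has kernel bounded by $C\,sV(s)/(2\ell+m-2)$. Hence $f_\ell(r)=r^\ell(1+O(\ell^{-1}))$ uniformly on $[0,r_2]$, and in particular $f_\ell>0$ there for large $\ell$. For $r>r_1$, variation of parameters gives $f_\ell=Ar^\ell+Br^{2-m-\ell}$, where
\[
A=1+O(\ell^{-1}),\qquad 0\le B=\frac{2}{2\ell+m-2}\int_0^{r_1}s^{\ell+m-1}Vf_\ell\,ds\le \frac{C}{\ell}\,r_1^{2\ell+m}.
\]
Then
\[
\frac{d^2}{dt^2}2\log f_\ell(e^t)=\frac{2AB(2\ell+m-2)^2e^{(2-m)t}}{f_\ell(e^t)^2}\le C\ell\,r^{2-m}(r_1/r)^{2\ell},
\]
and this tends to $0$ uniformly for $t$ near $\log r_2$ as $\ell\to\infty$.

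\emph{Step 3: conclusion.} Choose a compact $t$-interval around $\log r_2$ lying inside $(\log r_1,\infty)$. Take $\ell$ so large that the bound from Step 2 is less than $\tfrac12\min\gamma$ on that interval. Then $\frac{d^2}{dt^2}\log H_u(e^t)<0$ there, so $\log H_u(e^t)$ is strictly concave near $\log r_2$. The function $u=f_\ell Y_\ell$ solves \eqref{gauge:eq:pde} on all of $\mathbb R^m$ by separation of variables.

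The main obstacle is Step 2. We need uniform control of $A$, $B$ and the positivity of $f_\ell$ as $\ell\to\infty$, and in particular the sign and decay of $B$, which comes from the Wronskian of $r^\ell$ and $r^{2-m-\ell}$. The key quantitative input is the factor $(r_1/r_2)^{2\ell}$, which beats the polynomial growth $(2\ell+m-2)^2$.
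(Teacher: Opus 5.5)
Your proof is correct in outline, with one slip in Step~1. The gauge solves $(\tfrac12\Delta+2V)g=0$, so $(r^{m-1}g')'=-4r^{m-1}Vg$; the coefficient $-2$ you wrote is the one for $f_0$. With the correct coefficient the integral identity is $g(r)=1-\frac4{m-2}\int_0^r sV(s)g(s)\bigl(1-(s/r)^{m-2}\bigr)\,ds$. This yields $g\ge1-I>0$ with $I=\frac4{m-2}\int_0^\infty sV\,ds$, and $\beta=\frac4{m-2}\int_0^{r_1}s^{m-1}Vg\,ds$. Your claim $g>\tfrac12$ does not follow and can fail when $I$ is close to $1$. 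This is exactly why the hypothesis carries the constant $4/(m-2)$. Afterwards you only use $\alpha>0$ and $\beta>0$, so nothing downstream is affected.

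With that fix, Step~1 is the paper's argument. Your $\gamma(t)$ is its $\dot Y=\nu^2-Y^2>0$ on $(\log r_1,\infty)$, since $\dot Y=\frac{d^2}{dt^2}\log g(e^t)$. The routes diverge in showing that the $f_\ell$-contribution $2\dot X_\ell=\frac{d^2}{dt^2}2\log f_\ell(e^t)$ vanishes near $t_*=\log r_2$ as $\ell\to\infty$.

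The paper never writes $f_\ell$ down. It reuses the approximate-identity estimate from the proof of \cref{gauge:thm:characterisation}: \eqref{gauge:eq:defectrep} is read as the average of $-\dot Y\,(X_\ell-Y)/X_\ell$ against the probability density $\varphi_\ell(s)=2X_\ell(s)\exp\bigl(-2\int_s^{t_*}X_\ell\bigr)$. This density concentrates at $t_*$ because $X_\ell\ge\beta_\ell-W\to\infty$, where $W=\int_{-\infty}^{t_*}w$. Hence $G_\ell(t_*)\to-\dot Y(t_*)$.

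You instead use $V=0$ on $(r_1,\infty)$ to write $f_\ell$ and $g$ as explicit two-term combinations. You then compute $2\dot X_\ell$ exactly and get $O\bigl(\ell(r_1/r)^{2\ell}\bigr)$.

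Your route is self-contained, needing only the Volterra equation and no Riccati system or integral representation. It is also quantitative, giving an exponential rate and an explicit admissible $\ell$. The paper's route costs two lines given its machinery and does not need $V$ to vanish near $t_*$. It works at any $t_*$ with $\dot Y(t_*)>0$, which is exactly what the converse half of \cref{gauge:thm:characterisation} requires.
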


\begin{theorem}[Characterisation]\label{gauge:thm:characterisation}
Let $V\ge0$ and $0<\rho\le R_*$. The following are equivalent.
\begin{enumerate}
\item[(a)] For every nonzero solution, $t\mapsto\log H_u(e^t)$ is convex on $(-\infty,\log\rho)$.
\item[(b)] The function $r\mapsto rg'(r)/g(r)$ is nonincreasing on $(0,\rho)$, equivalently $t\mapsto\log g(e^t)$ is concave there.
\end{enumerate}
The hypothesis of \cref{gauge:thm:main} implies \textup{(b)}. The converse already fails in dimension $3$, even on the whole interval $(0,R_*)$.
\end{theorem}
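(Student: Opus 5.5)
The plan is to reduce everything to scalar Riccati identities along spherical harmonics, in the variable $t=\log r$.

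\emph{Reduction to radial modes.} Expand $u=\sum_\ell f_\ell(r)Y_\ell(\omega)$, where $f_\ell$ is the regular radial solution of $(\tfrac12\Delta+V)$ with angular momentum $\ell$. By orthogonality and \eqref{gauge:eq:height},
\[
H_u(r)=\sum_\ell c_\ell\, f_\ell(r)^2/g(r),\qquad c_\ell\ge0 .
\]
A sum of log-convex functions is log-convex, and each single mode is itself a solution. Hence \textup{(a)} is equivalent to log-convexity of $f_\ell^2/g$ for every $\ell$.

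\emph{Positivity of the modes.} Put $w_\ell=rf_\ell'/f_\ell$ and $G=rg'/g$. With $\dot{}=d/dt$ and $\phi(x)=x^2+(m-2)x$, the Riccati equations read
\[
\dot w_\ell=-\phi(w_\ell)+\ell(\ell+m-2)-2r^2V,\qquad \dot G=-\phi(G)-4r^2V .
\]
Subtracting gives
\[
(w_\ell-G)^{\cdot}=-(w_\ell-G)(w_\ell+G+m-2)+\ell(\ell+m-2)+2r^2V .
\]
When $V\ge0$ the forcing term is nonnegative, and $w_\ell-G\to\ell\ge0$ as $t\to-\infty$. An integrating factor then gives $w_\ell\ge G$ on $(0,\rho)$. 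The same comparison, which is Sturm comparison with $g$, shows $f_\ell>0$ on $(0,R_*)$, so all logarithms are defined.

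\emph{The key identity.} Set $N_\ell=2w_\ell-G$, the $t$-derivative of $\log(f_\ell^2/g)$, and $D_\ell=\dot N_\ell$. Differentiate the two Riccati equations once more. The $(r^2V)^{\cdot}$ terms cancel exactly, and this is where the doubled coefficient enters. Using $2\dot w_\ell=D_\ell+\dot G$, one obtains the linear equation
\[
\dot D_\ell=-(2w_\ell+m-2)D_\ell+2(G-w_\ell)\dot G .
\]

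\emph{Proof of \textup{(b)}$\Rightarrow$\textup{(a)}.} If $\dot G\le0$, then the forcing term $2(G-w_\ell)\dot G$ is nonnegative by the previous step. Small-$r$ asymptotics give $D_\ell\to0$ as $t\to-\infty$, with the leading term nonnegative (compare the expansion in \cref{gauge:thm:threshold}). The integrating factor then gives $D_\ell\ge0$ for every $\ell$, which is \textup{(a)}.

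\emph{Proof of \textup{(a)}$\Rightarrow$\textup{(b)}.} Let $\ell\to\infty$ at a fixed $t$. A WKB-type Riccati expansion gives $w_\ell=\ell+O(1/\ell)$ locally uniformly, and $\dot w_\ell\to0$. Hence $D_\ell=2\dot w_\ell-\dot G\to-\dot G$. Convexity for every $\ell$ then forces $\dot G\le0$. The uniform control of $\dot w_\ell$ as $\ell\to\infty$ is the step I expect to need the most care. I would derive it from the Riccati equation for $w_\ell-\ell$, which is stiff with damping rate of order $\ell$.

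\emph{The monotonicity hypothesis implies \textup{(b)}.} Differentiating the Riccati equation for $G$ gives
\[
\ddot G=-(2G+m-2)\dot G-4(r^2V)^{\cdot}.
\]
As $t\to-\infty$, $\dot G\approx-4r^2V\cdot(\text{positive})\le0$. If $r^2V$ is nondecreasing, the integrating-factor formula
\[
\dot G(t)=-4\int_{-\infty}^t e^{-\int_s^t(2G+m-2)}\,(r^2V)^{\cdot}(s)\,ds\le0
\]
gives \textup{(b)}. For $V\ge0$, monotonicity of $r^2V$ also forces $V\ge0$ near the origin, as stated in \cref{gauge:thm:main}.

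\emph{The converse fails in dimension $3$.} For $m=3$ I would take a small-amplitude $V\ge0$ for which $r^2V$ increases and then dips slightly. Linearising the formula for $\dot G$ in the amplitude, $\dot G$ is $-4$ times a positively weighted average of past increments of $r^2V$. A dip that is small relative to the accumulated earlier increase keeps $\dot G<0$ on all of $(0,R_*)$, while the hypothesis of \cref{gauge:thm:main} fails. To make this rigorous I would choose $V$ piecewise constant, so that $g$ is explicit via Bessel functions, or control the quadratic error in the amplitude explicitly.
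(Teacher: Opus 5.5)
Your proof of \textup{(a)}$\Leftrightarrow$\textup{(b)}, and of the implication from the hypothesis of \cref{gauge:thm:main}, is correct. It is the paper's argument written in the variables $w_\ell=N_\ell$, $G=N_g$. Since $2w_\ell+m-2=2X_\ell$ and $w_\ell-G=X_\ell-Y$:
\begin{itemize}
\item your equation for $D_\ell$ is \eqref{gauge:eq:defectODE};
\item your comparison $w_\ell\ge G$ is \cref{gauge:lem:comparison}\textup{(iii)};
\item your formula for $\dot G$ is \cref{gauge:lem:classmonotone}, which you should write as a Stieltjes integral against $d(r^2V)$, since $V$ is only continuous.
\end{itemize}
For \textup{(a)}$\Rightarrow$\textup{(b)} you show $\dot w_\ell(t_*)\to0$, whereas the paper rewrites \eqref{gauge:eq:defectrep} against the concentrating density $\varphi_\ell$. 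This is the same approximate-identity mechanism, and it needs no WKB expansion. With $z_\ell=\ell-w_\ell$ one has $\dot z_\ell=-k_\ell z_\ell+2r^2V$, where $k_\ell=2\ell+m-2-z_\ell$. On $(-\infty,t_*]$ the bound $w_\ell\ge G\ge-C$ gives $k_\ell\ge0$ for large $\ell$. Hence $0\le z_\ell\le\int_{-\infty}^{t_*}2e^{2s}V(e^s)\,ds$, so $k_\ell=2\ell+O(1)$. Then $\dot w_\ell(t_*)=k_\ell z_\ell-2r^2V\to0$ using only the continuity of $V$.

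The genuine gap is the final assertion. You never exhibit a $V$ for which \textup{(b)} holds on all of $(0,R_*)$ while $r^2V$ fails to be monotone, and the sketch leaves the decisive points open.
\begin{itemize}
\item Piecewise-constant $V$ is not admissible, since the theorem requires $V$ continuous.
\item A dip that is \emph{small relative to the accumulated earlier increase} does not control $\dot G$ up to $R_*$. The kernel $e^{-\int_s^t(2G+m-2)}$ discounts earlier increments exponentially, so the sign of $\dot G$ after the dip depends on how $r^2V$ behaves all the way to $R_*$.
\item For example, if your perturbation has compact support, then beyond the support $\dot G=\nu^2-(G+\nu)^2>0$. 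This is exactly the transience effect of \cref{gauge:prop:counterexample}, and \textup{(b)} fails.
\item Linearisation is not the real issue. Your integrating-factor formula is exact, with kernel at least $e^{-(m-2)(t-s)}$ because $G\le0$. What is missing is a concrete $V$, a specification of its tail, and explicit kernel bounds across the dip.
\end{itemize}

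The idea that makes the example immediate is already in your Riccati equation: $\dot G=\nu^2-(G+\nu)^2-4r^2V$. So $\dot G\le0$ wherever $4r^2V\ge\nu^2$, with no monotonicity needed. It therefore suffices that:
\begin{itemize}
\item $r^2V$ is nondecreasing on an initial interval $(0,r_0]$, where the formula of \cref{gauge:lem:classmonotone} applies because it uses $w$ only on $(-\infty,t]$;
\item $4r^2V\ge\nu^2$ on $[r_0,R_*)$.
\end{itemize}
The paper takes $m=3$, $V(r)=1+2\sin^2(3r)$ and $r_0=1/4$. Then $(r^2V)'>0$ on $(0,1/4]$, and $4r^2V\ge1/4=\nu^2$ for $r\ge1/4$. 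Moreover $(r^2V)'(\pi/4)=\pi-3\pi^2/8<0$, and Sturm comparison using $V\le3$ gives $R_*\ge\pi/\sqrt{12}>\pi/4$. In a small-amplitude construction, $4r^2V$ stays below $\nu^2$ on any fixed range of $r$, so this shortcut is unavailable there. That is why your route would still require the quantitative kernel estimates you left undone.
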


For the equality statement, use the normalised spherical-harmonic expansion
\[
u(r,\omega)=\sum_{\ell,j}c_{\ell j}f_\ell(r)Y_{\ell j}(\omega),
\]
where $f_\ell$ is the regular $\ell$th radial mode of \eqref{gauge:eq:pde}, normalised by $r^{-\ell}f_\ell(r)\to1$ as $r\downarrow0$. Set $A_\ell:=\sum_j|c_{\ell j}|^2$ and put
\[
G_\ell(t):=\frac{d^2}{dt^2}\log\frac{f_\ell(e^t)^2}{g(e^t)},\qquad
\mu_{\ge1}(t):=
\frac{\sum_{\ell\ge1}A_\ell f_\ell(e^t)^2/g(e^t)}{H_u(e^t)},
\]
and let $r_V:=\inf\{r>0:V\not\equiv0\text{ on }(0,r)\}$, with $r_V=\infty$ if $V\equiv0$.

\begin{theorem}[Convexity modulus and equality]\label{gauge:thm:modulus}
Under the hypothesis of \cref{gauge:thm:main}, every nonzero solution and every $t<\log R_*$ satisfy
\begin{equation}\label{gauge:eq:modulus}
\frac{d^2}{dt^2}\log H_u(e^t)\ge G_0(t).
\end{equation}
The integral formula for $G_0$ is \eqref{gauge:eq:defectrep}. Moreover, $G_0(t)>0$ for $e^t>r_V$, and equality in \eqref{gauge:eq:modulus} at one such $t$ holds if and only if $u$ is a constant multiple of $f_0$. Finally,
\begin{equation}\label{gauge:eq:stability}
\frac{d^2}{dt^2}\log H_u(e^t)-G_0(t)
\ge\mu_{\ge1}(t)\bigl(G_1(t)-G_0(t)\bigr).
\end{equation}
\end{theorem}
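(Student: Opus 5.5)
The natural route is to write $H_u$ as a positive mixture of single-mode heights and use the identity $(\log\sum w_\ell)''=\sum p_\ell(\log w_\ell)''+\operatorname{Var}_p((\log w_\ell)')\ge\sum p_\ell(\log w_\ell)''$ for positive weights $w_\ell$ with normalised weights $p_\ell$. Orthonormality of the $Y_{\ell j}$ gives
\[
H_u(e^t)=c_m\sum_\ell A_\ell\,w_\ell(t),\qquad w_\ell(t)=\frac{f_\ell(e^t)^2}{g(e^t)}.
\]
Here $p_\ell=A_\ell w_\ell/\sum_k A_kw_k$. In particular $p_{\ge1}=\mu_{\ge1}(t)$ and $(\log w_\ell)''=G_\ell(t)$. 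Hence
\[
\frac{d^2}{dt^2}\log H_u(e^t)\ge\sum_\ell p_\ell G_\ell=G_0+\sum_{\ell\ge1}p_\ell(G_\ell-G_0).
\]
Thus both \eqref{gauge:eq:modulus} and \eqref{gauge:eq:stability} reduce to two monotonicity facts about the radial modes. The first is $G_\ell\ge G_1$ for $\ell\ge1$, and more generally that $G_\ell$ is nondecreasing in $\ell$. The second is $G_1\ge G_0$, with strict inequality $G_1>G_0$ wherever $e^t>r_V$. One caveat: where $f_\ell$ vanishes, $w_\ell$ has zeros, so the mixture identity should be applied either to the open set where all relevant $w_\ell>0$ or, more robustly, directly through the Cauchy–Schwarz form $HH''\ge (H')^2+\dots$. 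I would first check that the paper's proof of \cref{gauge:thm:main} already yields mode-wise inequalities of this type and reuse them.

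For the mode comparison I use the Riccati variables $\psi_\ell(t)=\frac{d}{dt}\log f_\ell(e^t)$ and $\gamma(t)=\frac{d}{dt}\log g(e^t)$. The radial ODE $\tfrac12(f''+\tfrac{m-1}rf'-\tfrac{\ell(\ell+m-2)}{r^2}f)+Vf=0$ becomes
\[
\psi_\ell'=\ell(\ell+m-2)-(m-2)\psi_\ell-\psi_\ell^2-2e^{2t}V(e^t),
\]
and similarly $\gamma'=-(m-2)\gamma-\gamma^2-4e^{2t}V$. Then $G_\ell=2\psi_\ell'-\gamma'$. The difference $G_\ell-G_0=2(\psi_\ell'-\psi_0')=2\bigl(\ell(\ell+m-2)-(\psi_\ell-\psi_0)(m-2+\psi_\ell+\psi_0)\bigr)$ contains no explicit $V$; this is the cancellation the text mentions. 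Writing $D_\ell=\psi_\ell-\psi_0$ and $S_\ell=m-2+\psi_\ell+\psi_0$, we have $D_\ell\to\ell$ and $S_\ell\to m-2+\ell$ as $t\to-\infty$, so $D_\ell S_\ell\to\ell(\ell+m-2)$ and $G_\ell-G_0\to0$ there.

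The task is therefore to show that $Q_\ell:=\ell(\ell+m-2)-D_\ell S_\ell$ is $\ge0$, and $>0$ after $V$ switches on. I would differentiate $Q_\ell$ along the Riccati flow to get a linear ODE of the form $Q_\ell'=-a(t)Q_\ell+(\text{source})$ with an integrating factor. The source should be $2e^{2t}V$ times a quantity such as $2D_\ell$, which is nonnegative because $\psi_\ell\ge\psi_0$ by Riccati comparison. Integrating from $-\infty$ then gives the integral representation \eqref{gauge:eq:defectrep} for $G_0$, and the analogous one for $G_\ell-G_0$. These show positivity precisely once $V\not\equiv0$ on $(0,e^t)$, that is, for $e^t>r_V$. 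The same computation comparing $\ell$ with $\ell+1$ should give monotonicity in $\ell$. I expect this step to be the main obstacle: pinning down the exact source term and sign requires the hypothesis that $r^2V$ is nondecreasing, probably through $G_0\ge0$ as in \cref{gauge:thm:main}, while the $\ell$-comparison needs only $V\ge0$.

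For equality, suppose equality holds in \eqref{gauge:eq:modulus} at some $t$ with $e^t>r_V$. Then every term dropped in the mixture bound must vanish. In particular $\mu_{\ge1}(t)(G_1-G_0)=0$, and since $G_1>G_0$ at such $t$, this forces $\mu_{\ge1}(t)=0$, so $A_\ell=0$ for all $\ell\ge1$ and $u=c f_0$. Conversely, for $u=cf_0$ equality is immediate.
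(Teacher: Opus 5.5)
Your reduction matches the paper's. The mixture identity is \eqref{gauge:eq:variance}, since $(\log\Psi_\ell)'=2X_\ell-Y-\nu$. Both \eqref{gauge:eq:modulus} and \eqref{gauge:eq:stability} do reduce to $\ell\mapsto G_\ell$ being nondecreasing (strictly once $w$ switches on), together with $G_0>0$.

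The gap is the step you flag as the main obstacle, and your guess there is wrong in a way that matters. Since $D_\ell'=Q_\ell$ and $S_\ell'=\psi_\ell'+\psi_0'=Q_\ell+2\psi_0'$,
\[
Q_\ell'=-(S_\ell+D_\ell)Q_\ell-2D_\ell\psi_0'
=-2X_\ell Q_\ell+2D_\ell\,(-\psi_0'),\qquad X_\ell:=\psi_\ell+\nu .
\]
So the source is $2D_\ell$ times $-\psi_0'=-\dot X_0=X_0^2-\nu^2+w$, not a multiple of $w$. Its sign is that of $-\dot X_0$, and this is exactly where the hypothesis enters. From $d\dot X_0=-2X_0\dot X_0\,dt-dw$ you get $\dot X_0(t)=-\int_{(-\infty,t]}e^{-2\int_s^tX_0}\,dw(s)$, which is $\le0$ when $w$ is nondecreasing.

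In the paper's notation, $E_{\ell'}=-\dot Y-G_{\ell'}=-2\dot X_{\ell'}$. The comparison then reads $\tfrac{d}{dt}(G_\ell-G_{\ell'})=-2X_\ell(G_\ell-G_{\ell'})+2(X_\ell-X_{\ell'})E_{\ell'}$, with $E_{\ell'}\ge0$ by \eqref{gauge:eq:Eell}. The hypothesis does not act through $G_0\ge0$. Your claim that the $\ell$-comparison needs only $V\ge0$ is false. Take the potentials of \cref{gauge:prop:counterexample}: beyond the support, $w=0$ and $X_0\in(-\nu,\nu)$, because $f_0=a+br^{2-m}$ with $a,b>0$. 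Hence $\dot X_0=\nu^2-X_0^2>0$, and $G_0=2\dot X_0-\dot Y>-\dot Y=\lim_{\ell\to\infty}G_\ell$, so $G_\ell<G_0$ for large $\ell$.

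The strict statements also need more than you give. $G_1(t)>G_0(t)$ does follow: $X_1-X_0>0$ at every finite time, and $-\dot X_0>0$ on $[s_0,t]$ once $w(s_0)>0$ for some $s_0<t$, which $e^t>r_V$ and monotonicity of $w$ provide. But $G_0(t)>0$ cannot come from the $Q_\ell$ equation, which only controls differences; integrating it does not produce \eqref{gauge:eq:defectrep}. You need the separate equation $\dot G_0=-2X_0G_0-2(X_0-Y)\dot Y$ of \cref{gauge:lem:defect}. Combine it with $X_0-Y>0$ from \cref{gauge:lem:comparison}\textup{(iii)} and $-\dot Y>0$ on $[s_0,t]$ from \cref{gauge:lem:classmonotone}.

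Two smaller points. Your caveat about zeros is moot, since $f_\ell>0$ on $(0,R_*)$ when $V\ge0$. You must still invoke that positivity to pass from $\mu_{\ge1}(t)=0$ to $A_\ell=0$. The mixture identity should be applied to finite partial sums and passed to the limit, distributionally and then pointwise since $H_u(e^t)$ is $C^2$. Differentiating the infinite series termwise is not justified as it stands.
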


\subsection{Radial equations and spherical harmonics}
\label{gauge:sec:setup}

Put $\nu=(m-2)/2$. For $\ell\ge0$, set $L_\ell=\ell(\ell+m-2)$ and $\beta_\ell=\ell+\nu$, so that $L_\ell+\nu^2=\beta_\ell^2$. Let $f_\ell$ be the regular solution
\begin{equation}\label{gauge:eq:modeODE}
f_\ell''+\frac{m-1}{r}f_\ell'
+\left(2V(r)-\frac{L_\ell}{r^2}\right)f_\ell=0,
\qquad
\lim_{r\downarrow0}r^{-\ell}f_\ell(r)=1.
\end{equation}
Writing $f_\ell=r^\ell h_\ell$ gives $(r^{2\ell+m-1}h_\ell')'=-2r^{2\ell+m-1}Vh_\ell$; its Volterra form gives this solution uniquely and
\begin{equation}\label{gauge:eq:frobenius}
\frac{rf_\ell'(r)}{f_\ell(r)}=\ell+O(r^2)
\qquad(r\downarrow0).
\end{equation}
The function $g$ satisfies
\begin{equation}\label{gauge:eq:gaugeODE}
g''+\frac{m-1}{r}g'+4V(r)g=0,
\qquad g(0)=1,\quad g'(0)=0,
\end{equation}
and $R_*:=\sup\{0<r<R:g>0\text{ on }[0,r]\}$.

\begin{lemma}[Gauge identity]\label{gauge:lem:gauge}
For $0<r<R_*$, the operator $\tfrac12\Delta+2V(|x|)$ is subcritical on $B_r$, and
\begin{align*}
\mathbb E_x\!\left[e^{2\int_0^{\tau_r}V(|B_s|)\,ds}\right]
&=\frac{g(|x|)}{g(r)},\\
\mathbb E_0\!\left[e^{2\int_0^{\tau_r}V(|B_s|)\,ds}
F(B_{\tau_r})\right]
&=\frac1{g(r)}\fint_{\partial B_r}F\,d\sigma
\end{align*}
for $x\in B_r$ and every $F\in C(\partial B_r)$.
\end{lemma}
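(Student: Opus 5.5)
The plan is to identify the Feynman--Kac expectation with the solution of a Dirichlet problem for $\tfrac12\Delta+2V$ on $B_r$, and to use $g$ as a positive supersolution (in fact a solution) to justify the gauge. First, on $B_r$ with $r<R_*$, the function $g(|x|)$ is a smooth, strictly positive solution of $(\tfrac12\Delta+2V)g=0$ on $\overline{B_r}$. By the Allegretto--Piepenbrink criterion, this gives $\mu_1(B_r)$ for $-\tfrac12\Delta-2V$ nonnegative. To get strict positivity (subcriticality), I will use domain monotonicity. Pick $r<r'<R_*$; then $g>0$ on $\overline{B_{r'}}$, so the bottom of the Dirichlet spectrum on $B_{r'}$ is $\ge0$, and strict domain monotonicity yields a positive bottom on $B_r$. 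Equivalently, the ground-state transform $u=gw$ conjugates the operator to $\tfrac12\Delta+\nabla\log g\cdot\nabla$, which has no zeroth-order term and a strictly positive Dirichlet gap.

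Second, I will prove the gauge identity by optional stopping applied to $M_t=\exp\bigl(2\int_0^tV(|B_s|)\,ds\bigr)g(|B_t|)$. It\^o's formula shows $M_{t\wedge\tau_r}$ is a local martingale. Since $V$ is bounded on $\overline{B_r}$, $M$ is bounded on each $[0,T]$. Letting $T\to\infty$ requires integrability of $\mathbb E_x[e^{2\int_0^{\tau_r}V}]$. This is the \textbf{main obstacle}. I would handle it with the supersolution argument. Fatou's lemma gives
\[
\mathbb E_x\bigl[e^{2\int_0^{\tau_r}V}\bigr]g(r)\le\liminf_T\mathbb E_x[M_{T\wedge\tau_r}]=g(|x|),
\]
so the gauge is finite. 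For equality, apply the same argument on $B_{r'}$, $r<r'<R_*$. That gives a uniform bound $e^{2\int_0^{t\wedge\tau_r}V}\le Cg(|B_{t\wedge\tau_{r}}|)^{-1}\times(\text{martingale})$. Alternatively, finiteness of the $B_{r'}$ gauge together with the strong Markov property gives uniform integrability of $\{e^{2\int_0^{t\wedge\tau_r}V}\}_t$ (the standard Chung--Zhao gauge theorem). Optional stopping then gives $\mathbb E_x[e^{2\int_0^{\tau_r}V}]\,g(r)=g(|x|)$, using that $g(|B_{\tau_r}|)=g(r)$.

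Third, for the boundary-data formula, let $F\in C(\partial B_r)$ and solve $(\tfrac12\Delta+2V)w=0$ in $B_r$ with $w=F$ on $\partial B_r$. Solvability follows from subcriticality, for instance by writing $w=g\,\tilde w$, where $\tilde w$ solves the transformed drift problem, whose classical solution is continuous up to the boundary. The same uniform integrability gives $w(0)=\mathbb E_0[e^{2\int_0^{\tau_r}V}F(B_{\tau_r})]$. Finally, averaging $w$ over rotations $\mathcal O\in SO(m)$ produces a radial solution $\bar w$ with boundary value $\fint_{\partial B_r}F$, and $\bar w(0)=w(0)$ since $0$ is fixed by rotations. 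By uniqueness, $\bar w=(\fint F)\,g(|x|)/g(r)$, and evaluating at $0$ gives the second identity, since $g(0)=1$. The same argument applied at general $x$ with $F\equiv1$ recovers the first identity.
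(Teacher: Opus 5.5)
Your proof is correct and follows the paper's strategy for subcriticality and the first identity. In both, positivity of $g$ on a slightly larger ball $B_{r'}$ and strict domain monotonicity give a positive first eigenvalue, and optional stopping for $e^{2\int_0^{t}V(|B_s|)\,ds}g(|B_t|)$ gives $\mathbb E_x[e^{2\int_0^{\tau_r}V(|B_s|)\,ds}]=g(|x|)/g(r)$.

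The difference in the first identity is only in how the gauge is controlled. The paper quotes Chung--Zhao, using gaugeability derived from the positive solution and then their Feynman--Kac representation. Your Fatou bound $\mathbb E_x[e^{2\int_0^{\tau_r}V}]\le g(|x|)/g(r)$ proves finiteness directly. Uniform integrability then follows from the strong Markov property, because $y\mapsto\mathbb E_y[e^{2\int_0^{\tau_r}V}]$ is bounded below on $B_r$ by Jensen's inequality. You leave this lower bound implicit.

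The second identity is where you take a genuinely longer route. The paper notes that rotations fix $0$ and preserve the law of Brownian motion, the exit time $\tau_r$, and $\int_0^{\tau_r}V(|B_s|)\,ds$. Hence the weighted exit measure from the origin is rotation invariant, so it is a multiple of normalised surface measure, and the first identity gives its total mass $1/g(r)$.

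Your argument instead solves the Dirichlet problem, averages over $SO(m)$, and invokes uniqueness. This is valid. The ground-state transform gives a classical solution even though $V$ is only continuous, because $g(|x|)\in C^2$ makes the drift $\nabla\log g$ Lipschitz, so Schauder theory applies. However, your route needs solvability, uniqueness, and an It\^o/uniform-integrability argument for $w$. The direct symmetry argument avoids all three.
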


\begin{proof}
Choose $r<r'<R_*$. The positive solution $h(x)=g(|x|)$ on $B_{r'}$ and strict domain monotonicity give $\lambda_1(-\tfrac12\Delta-2V;B_r)>0$. Since $h$ is bounded above and away from zero on $\overline{B_r}$, the pair $(B_r,q)$, where $q(x)=2V(|x|)$, is gaugeable by \cite[Theorem~4.17]{CZ95}. The Feynman--Kac representation \cite[Theorem~4.7\textup{(v)}]{CZ95} therefore gives
\[
h(x)=\mathbb E_x\!\left[
e^{2\int_0^{\tau_r}V(|B_s|)\,ds}h(B_{\tau_r})\right].
\]
Since $h=g(r)$ on $\partial B_r$, this proves the first identity. At $x=0$, rotational invariance makes the weighted exit measure uniform; by the first identity its total mass is $1/g(r)$, proving the second.
\end{proof}

Projection onto spherical harmonics orthonormal for normalised surface measure gives the coefficients $c_{\ell j}f_\ell(r)$; Parseval and the notation above yield
\begin{equation}\label{gauge:eq:modesum}
H_u(r)=\sum_{\ell\ge0}A_\ell\Psi_\ell(r),
\qquad
\Psi_\ell(r):=\frac{f_\ell(r)^2}{g(r)}.
\end{equation}

If $V\ge0$, then $f_\ell>0$ on $(0,R_*)$ for every $\ell\ge0$.
Indeed, suppose $f_\ell(r_1)=0$ for some $r_1<R_*$. For a nonzero spherical harmonic $Y_\ell$, the function $w(x):=f_\ell(|x|)Y_\ell(x/|x|)$ solves the equation in $B_{r_1}$, extends regularly across $0$, and vanishes on the boundary. The stopped local martingale
$$
e^{\int_0^{t\wedge\tau_{r_1}}V(|B_s|)\,ds}
w(B_{t\wedge\tau_{r_1}})
$$
is $L^2$-bounded by \cref{gauge:lem:gauge}, hence uniformly integrable. Optional stopping gives $w\equiv0$, a contradiction. Thus $f_\ell$ has no zero before $R_*$ and is positive there by its normalisation at $0$.

\subsection{Riccati equations and an integral formula}
\label{gauge:sec:riccati}

Set $t=\log r$, let a dot denote $d/dt$, and put
\[
T_*:=\log R_*,\qquad w(t):=2e^{2t}V(e^t)=O(e^{2t})
\quad(t\to-\infty).
\]
Thus $w\ge0$ when $V\ge0$. Wherever $f_\ell$ and $g$ are nonzero, set
\[
N_\ell:=\frac{rf_\ell'}{f_\ell},\qquad N_g:=\frac{rg'}g,\qquad
X_\ell:=N_\ell+\nu,\qquad Y:=N_g+\nu.
\]
Substitution in the radial equations gives $\dot N_\ell=-(m-2)N_\ell-N_\ell^2+L_\ell-w$ and $\dot N_g=-(m-2)N_g-N_g^2-2w$. Hence
\begin{equation}\label{gauge:eq:system}
\dot X_\ell=\beta_\ell^2-X_\ell^2-w,
\qquad
\dot Y=\nu^2-Y^2-2w.
\end{equation}
Moreover,
\begin{equation}\label{gauge:eq:initialdata}
X_\ell(t)=\beta_\ell+O(e^{2t}),
\qquad
Y(t)=\nu+O(e^{2t})
\quad(t\to-\infty).
\end{equation}
When $V\ge0$, this positivity makes these functions finite on $(-\infty,T_*)$. Since $\Psi_\ell=f_\ell^2/g$, the previously defined $G_\ell$ satisfies
\begin{equation}\label{gauge:eq:state}
G_\ell(t)=2\dot X_\ell-\dot Y
=Y^2-2X_\ell^2+2\beta_\ell^2-\nu^2.
\end{equation}
The terms containing $w$ cancel: squaring the martingale produces exactly the doubled Feynman--Kac weight used in $g$. In particular, $G_\ell(t)\to0$ as $t\to-\infty$.

We use variation of constants, also in its Stieltjes form; all boundary terms at $-\infty$ vanish by \eqref{gauge:eq:initialdata} and $w(t)=O(e^{2t})$. In the borderline case $\beta_0=0$ of dimension two, the homogeneous factors are bounded rather than decaying, which still suffices, since the terms they multiply vanish at $-\infty$.

\begin{lemma}[Integral formula]\label{gauge:lem:defect}
If $V\ge0$, then on $(-\infty,T_*)$,
\begin{equation}\label{gauge:eq:defectODE}
\dot G_\ell=-2X_\ell G_\ell-2(X_\ell-Y)\dot Y,
\end{equation}
and
\begin{equation}\label{gauge:eq:defectrep}
G_\ell(t)=2\int_{-\infty}^t
e^{-2\int_s^tX_\ell(\tau)\,d\tau}
(X_\ell-Y)(s)(-\dot Y(s))\,ds.
\end{equation}
\end{lemma}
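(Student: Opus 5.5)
Differentiating \eqref{gauge:eq:state} in $t$ and substituting the Riccati system \eqref{gauge:eq:system} gives the ODE directly. From $G_\ell=Y^2-2X_\ell^2+2\beta_\ell^2-\nu^2$,
\[
\dot G_\ell=2Y\dot Y-4X_\ell\dot X_\ell .
\]
Using $G_\ell=2\dot X_\ell-\dot Y$, write $4X_\ell\dot X_\ell=2X_\ell(G_\ell+\dot Y)$. Then
\[
\dot G_\ell=2Y\dot Y-2X_\ell G_\ell-2X_\ell\dot Y
=-2X_\ell G_\ell-2(X_\ell-Y)\dot Y,
\]
which is \eqref{gauge:eq:defectODE}. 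This step needs only that $X_\ell,Y$ are finite and $C^1$ on $(-\infty,T_*)$. That holds because $V\ge0$ gives $f_\ell>0$ there (shown after \eqref{gauge:eq:modesum}), and $g>0$ by the definition of $R_*$. Since $V$ is only continuous, $w$ is continuous, so $X_\ell$ and $Y$ are $C^1$ and $G_\ell$ is $C^1$. No Stieltjes form is needed at this stage.

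For \eqref{gauge:eq:defectrep}, treat \eqref{gauge:eq:defectODE} as a linear first-order ODE with integrating factor $E(s,t)=\exp(-2\int_s^tX_\ell)$. For $t_0<t<T_*$,
\[
G_\ell(t)=E(t_0,t)G_\ell(t_0)+2\int_{t_0}^tE(s,t)(X_\ell-Y)(s)(-\dot Y(s))\,ds.
\]
Next let $t_0\to-\infty$. By \eqref{gauge:eq:initialdata}, $X_\ell=\beta_\ell+O(e^{2t})$ as $t\to-\infty$. Hence $\int_s^tX_\ell=\beta_\ell(t-s)+O(1)$ uniformly for $s\le t\le t_1$, and $E(s,t)\le Ce^{-2\beta_\ell(t-s)}\le C$. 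The boundary term $E(t_0,t)G_\ell(t_0)$ then tends to $0$, because $E$ is bounded and $G_\ell(t_0)\to0$. This last fact follows from \eqref{gauge:eq:state} and \eqref{gauge:eq:initialdata}: indeed $Y^2-2X_\ell^2+2\beta_\ell^2-\nu^2=O(e^{2t_0})$.

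It remains to check that the integral converges absolutely near $-\infty$. From \eqref{gauge:eq:system}, $\dot Y=\nu^2-Y^2-2w=O(e^{2s})$, using $Y=\nu+O(e^{2s})$ and $w=O(e^{2s})$. Also $X_\ell-Y$ is bounded, tending to $\beta_\ell-\nu=\ell$. So the integrand is $O(e^{2s})$, which is integrable on $(-\infty,t]$, and dominated convergence gives \eqref{gauge:eq:defectrep}.

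The only delicate point is the uniform bound on $E(s,t)$, which the paper flags for the case $\beta_0=0$ with $m=2$. There $E$ does not decay but remains bounded, since $\int_s^tX_0=O(\int_{-\infty}^te^{2\tau}d\tau)=O(1)$. This suffices, because the factors it multiplies, $G_\ell(t_0)$ and the $O(e^{2s})$ integrand, vanish or are integrable at $-\infty$. I would present this boundedness estimate, uniform in $s\le t$ on compact $t$-ranges, as the main step and handle $\beta_\ell>0$ and $\beta_\ell=0$ together.
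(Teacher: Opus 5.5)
Your proposal is correct and follows the paper's proof. You derive \eqref{gauge:eq:defectODE} by differentiating \eqref{gauge:eq:state} and substituting the Riccati system; the paper instead uses the equivalent identity $2(\beta_\ell^2-X_\ell^2)=G_\ell+\nu^2-Y^2$. You then let $t_0\to-\infty$ in the variation-of-constants formula using $G_\ell(t_0)=O(e^{2t_0})$, boundedness of the integrating factor (the paper phrases this as integrability of $(X_\ell)_-$ near $-\infty$), and the $O(e^{2s})$ bound on the source, exactly as the paper does.
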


\begin{proof}
Using \eqref{gauge:eq:system} and $2(\beta_\ell^2-X_\ell^2)=G_\ell+\nu^2-Y^2$ gives
\[
\begin{aligned}
\dot G_\ell
&=-2X_\ell G_\ell
+2(X_\ell-Y)(2w-\nu^2+Y^2)\\
&=-2X_\ell G_\ell-2(X_\ell-Y)\dot Y.
\end{aligned}
\]
Apply variation of constants and let $a\to-\infty$. By \eqref{gauge:eq:initialdata} and \eqref{gauge:eq:system}, $G_\ell(a)=O(e^{2a})$, $(2X_\ell)_-$ is integrable near $-\infty$, and $(X_\ell-Y)\dot Y=O(e^{2t})$.
\end{proof}

\subsection{Comparison lemmas}\label{gauge:sec:comparison}

Throughout this subsection $V\ge0$, and hence $w\ge0$.

\begin{lemma}\label{gauge:lem:comparison}
On $(-\infty,T_*)$:
\begin{enumerate}
\item[(i)] $X_\ell\le\beta_\ell$ and $Y\le\nu$;
\item[(ii)] $X_\ell\ge X_{\ell'}$ if $\ell\ge\ell'$, strictly at every finite $t$ if $\ell>\ell'$;
\item[(iii)] $X_\ell\ge Y$ for every $\ell$, and $X_0(t)>Y(t)$ if $w$ is not identically zero on $(-\infty,t]$.
\end{enumerate}
\end{lemma}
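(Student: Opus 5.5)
All three parts follow from comparison arguments for the scalar Riccati equations \eqref{gauge:eq:system}, using the initial data \eqref{gauge:eq:initialdata} at $t=-\infty$ and $w\ge0$. The basic tool is this: if $Z_1,Z_2$ solve $\dot Z_i=F_i(t,Z_i)$ on $(-\infty,T)$, with $F_1\ge F_2$, locally Lipschitz in $Z$, and $Z_1-Z_2\to0$ suitably at $-\infty$, then $Z_1\ge Z_2$. I will run it through the difference equation rather than abstractly. For a difference $D=Z_1-Z_2$ one gets
\[
\dot D=-(Z_1+Z_2)D+(\text{source}),
\]
and variation of constants gives
\[
D(t)=\int_{-\infty}^te^{-\int_s^t(Z_1+Z_2)}(\text{source})(s)\,ds .
\]
This is legitimate as long as the boundary term vanishes. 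That holds because $D(a)=O(e^{2a})$ or $D(a)\to$ a constant, while $\int_a^t(Z_1+Z_2)\ge(\beta+\beta')a-C$, so the exponential weight grows at most like $e^{-(\beta+\beta')a}$. The dimension-two borderline $\nu=0$ is the only place where this needs care, exactly as in the remark before \cref{gauge:lem:defect}.

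For (i), take $D=\beta_\ell-X_\ell$, which satisfies
\[
\dot D=-(\beta_\ell+X_\ell)D+w .
\]
Since $D\to0$ at $-\infty$ and $w\ge0$, the representation gives $D\ge0$. The same argument with $\nu-Y$ and source $2w$ gives $Y\le\nu$. For (ii), take $D=X_\ell-X_{\ell'}$, which satisfies
\[
\dot D=-(X_\ell+X_{\ell'})D+(\beta_\ell^2-\beta_{\ell'}^2).
\]
The source is a positive constant when $\ell>\ell'$, so the integral is strictly positive at every finite $t$. Here $D(a)\to\beta_\ell-\beta_{\ell'}>0$ is not zero, so instead of letting the boundary term vanish I keep it. It equals $D(a)e^{-\int_a^t(X_\ell+X_{\ell'})}\ge0$, so positivity survives and no decay is needed.

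For (iii), take $D=X_\ell-Y$, which satisfies
\[
\dot D=-(X_\ell+Y)D+(\beta_\ell^2-\nu^2)+w .
\]
Both source terms are nonnegative and $D(a)\to\beta_\ell-\nu\ge0$, so $D\ge0$. For $\ell=0$ we have $\beta_0=\nu$, the constant source vanishes, and $D(a)=O(e^{2a})$. Hence
\[
D(t)=\int_{-\infty}^te^{-\int_s^t(X_0+Y)}w(s)\,ds,
\]
which is strictly positive whenever $w\not\equiv0$ on $(-\infty,t]$, since $w$ is continuous and nonnegative.

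The main obstacle is justifying the limit $a\to-\infty$ in the $\ell=0$ cases, where the homogeneous factor $e^{-\int_s^t(X_0+Y)}$ behaves like $e^{-2\nu(t-s)}$. This factor does not decay when $m=2$, but it is bounded. The integrand $w$ is $O(e^{2s})$, so it is integrable. The boundary term is $O(e^{2a})$ times a bounded factor, so it still tends to $0$. A further point is finiteness of $X_\ell$ and $Y$ on $(-\infty,T_*)$, which requires positivity of $f_\ell$ and $g$ there; this was established just before \cref{gauge:sec:riccati}.
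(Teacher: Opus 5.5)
Your proof is correct and follows the paper's route: you apply variation of constants to the Riccati differences and use \eqref{gauge:eq:initialdata} to control the boundary term at $-\infty$. The only differences are that you treat $X_\ell-Y$ directly through its source $\beta_\ell^2-\nu^2+w\ge0$ (the paper proves only $\ell=0$ this way and then uses (ii) to get $X_\ell\ge X_0\ge Y$), and that in (ii) you keep the nonnegative boundary term rather than passing to the limit.

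One slip should be corrected. The right bound is $\int_a^t(Z_1+Z_2)\,d\tau\ge(\beta+\beta')(t-a)-C_t$, so the weight $e^{-\int_a^t(Z_1+Z_2)}$ stays bounded as $a\to-\infty$ and decays when $\beta+\beta'>0$, as your last paragraph correctly says. A weight growing like $e^{-(\beta+\beta')a}$, as you first wrote, would not be offset by $D(a)=O(e^{2a})$ once $\beta+\beta'\ge2$.
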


\begin{proof}
The relevant differences satisfy
$$
\begin{aligned}
\frac{d}{dt}(X_\ell-\beta_\ell)
  &=-(X_\ell+\beta_\ell)(X_\ell-\beta_\ell)-w,\\
\frac{d}{dt}(X_\ell-X_{\ell'})
  &=\beta_\ell^2-\beta_{\ell'}^2
    -(X_\ell+X_{\ell'})(X_\ell-X_{\ell'}),\\
\frac{d}{dt}(X_0-Y)&=-(X_0+Y)(X_0-Y)+w.
\end{aligned}
$$
Variation of constants and \eqref{gauge:eq:initialdata} yield \textup{(i)} from the first equation and its $Y-\nu$ analogue, and \textup{(ii)} from the second. The third gives
$$
X_0(t)-Y(t)=\int_{-\infty}^t
\exp\!\left(-\int_s^t(X_0(\tau)+Y(\tau))\,d\tau\right)w(s)\,ds,
$$
which proves \textup{(iii)} together with \textup{(ii)}; positivity of the sources gives the strict statements.
\end{proof}

The same comparison gives monotonicity of boundary variance in the radial model. In the setting of \cref{gauge:sec:setup}, assume $V\ge0$ and $u(0)\ne0$. Let $\nu_r$ be the normalised exit distribution weighted by
\[
\exp\left(\int_0^{\tau_r}V(|B_s|)\,ds\right),
\]
and let $\delta_\nu(r)$ be the relative variance of $u|_{\partial B_r}$ under $\nu_r$. Then $r\mapsto\delta_\nu(r)$ is nondecreasing on $(0,R_*)$, strictly unless $u$ is radial.
Indeed, rotational invariance makes $\nu_r$ normalised surface measure. With the notation of \eqref{gauge:eq:modesum},
\[
\frac{\delta_\nu(r)}{1-\delta_\nu(r)}
=\frac{\sum_{\ell\ge1}A_\ell f_\ell(r)^2}{A_0f_0(r)^2}.
\]
By \cref{gauge:lem:comparison}, for $\ell\ge1$,
\[
\frac d{d\log r}\log\frac{f_\ell(r)^2}{f_0(r)^2}
=2(X_\ell-X_0)>0.
\]
The claim follows term by term.

\begin{lemma}[Monotonicity of $Y$]\label{gauge:lem:classmonotone}
Suppose $w$ is nondecreasing, equivalently $r\mapsto r^2V(r)$ is nondecreasing. Then $\dot Y\le0$. Moreover, for every $\ell$,
\begin{equation}\label{gauge:eq:Eell}
E_\ell:=-\dot Y-G_\ell,
\qquad E_\ell(t)=2\int_{(-\infty,t]}
\exp\!\left(-2\int_s^tX_\ell(\tau)\,d\tau\right)\,dw(s)\ge0.
\end{equation}
\end{lemma}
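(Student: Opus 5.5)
We must prove $\dot Y\le0$ and the formula for $E_\ell$, which gives $E_\ell\ge0$. We start from the Riccati system \eqref{gauge:eq:system}. Differentiating $\dot Y=\nu^2-Y^2-2w$ in the Stieltjes sense (the function $w$ is monotone, so $dw$ is a nonnegative measure) gives $d\dot Y=-2Y\dot Y\,dt-2\,dw$. Writing $Z=-\dot Y$, we obtain $dZ=-2YZ\,dt+2\,dw$. By \eqref{gauge:eq:initialdata} and $w=O(e^{2t})$, $Z(a)\to0$ as $a\to-\infty$; we check this directly from $Z=Y^2-\nu^2+2w$. Variation of constants then gives
\[
Z(t)=2\int_{(-\infty,t]}\exp\!\left(-2\int_s^tY(\tau)\,d\tau\right)dw(s)\ge0 .
\]
The integrating factor stays bounded near $-\infty$ because $Y\to\nu\ge0$. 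In dimension $2$, where $\nu=0$, the factor is still bounded, since $Y=O(e^{2t})$ is integrable. This proves $\dot Y\le0$.

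For $E_\ell=-\dot Y-G_\ell=Z-G_\ell$, we combine the evolution of $Z$ with \eqref{gauge:eq:defectODE}, which reads $\dot G_\ell=-2X_\ell G_\ell+2(X_\ell-Y)Z$. Then
\[
dE_\ell=-2YZ\,dt+2\,dw+2X_\ell G_\ell\,dt-2(X_\ell-Y)Z\,dt=-2X_\ell(Z-G_\ell)\,dt+2\,dw .
\]
So $dE_\ell=-2X_\ell E_\ell\,dt+2\,dw$, and the $Y$-terms cancel exactly. Since $E_\ell(a)\to0$ as $a\to-\infty$ (both $Z$ and $G_\ell$ are $O(e^{2a})$), the Stieltjes variation-of-constants formula gives \eqref{gauge:eq:Eell}. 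Nonnegativity follows because $dw\ge0$.

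The main technical point is justifying the Stieltjes calculus when $w$ is only monotone and continuous, so that $\dot Y$ is merely continuous and of bounded variation. We would handle it as follows. The quantity $Z=Y^2-\nu^2+2w$ is of locally bounded variation, and the product rule for $e^{2\int Y}Z$ holds because $Y$ is $C^1$. The same argument applies to $E_\ell$, using that $G_\ell$ is $C^1$ by \eqref{gauge:eq:defectODE}. The boundary terms at $-\infty$ vanish because the exponential factors $\exp(2\int_a^t X_\ell)$ stay bounded as $a\to-\infty$ ($X_\ell\to\beta_\ell\ge0$ up to an integrable error), while $Z(a)$ and $E_\ell(a)$ are $O(e^{2a})$.
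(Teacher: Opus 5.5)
Your proposal is correct and follows the paper's proof essentially step for step. You apply Stieltjes variation of constants to $\dot Y$ (your $Z=-\dot Y$ is the paper's $-Z$), then derive the cancellation giving $dE_\ell=-2X_\ell E_\ell\,dt+2\,dw$ and integrate from $-\infty$; your explicit check that the boundary terms at $-\infty$ vanish, including the $\nu=0$ and $\beta_0=0$ cases in dimension $2$, spells out what the paper leaves implicit.
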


\begin{proof}
For $Z:=\dot Y$, \eqref{gauge:eq:system} gives $dZ=-2YZ\,dt-2\,dw$ and $Z(t)=O(e^{2t})$ as $t\to-\infty$. Thus
$$
\dot Y(t)=-2\int_{(-\infty,t]}
\exp\!\left(-2\int_s^tY(\tau)\,d\tau\right)\,dw(s)\le0.
$$
Since $E_\ell=-\dot Y-G_\ell$, combining $d\dot Y=-2Y\dot Y\,dt-2\,dw$ with \eqref{gauge:eq:defectODE} gives
\[
dE_\ell=-2X_\ell E_\ell\,dt+2\,dw.
\]
Variation of constants, together with $E_\ell(t)\to0$ as $t\to-\infty$, gives \eqref{gauge:eq:Eell}. All integrals against $dw$ are Stieltjes integrals; continuity and monotonicity suffice.
\end{proof}

\begin{proof}[Proof of \cref{gauge:thm:main}]
The hypothesis makes $w$ nondecreasing with limit zero at $-\infty$, and therefore $V\ge0$. By \cref{gauge:lem:classmonotone,gauge:lem:comparison}, $-\dot Y\ge0$ and $X_\ell\ge Y$; hence \eqref{gauge:eq:defectrep} gives $G_\ell\ge0$. Thus every $\Psi_\ell(e^t)$ is log-convex. H\"older's inequality gives the same conclusion for finite nonnegative sums. The nonzero partial sums in \eqref{gauge:eq:modesum} increase pointwise to $H_u$, so their three-point inequalities pass to the limit. The frequency and three-sphere statements follow.

If $m=2$, then $\nu=0$ and $\dot Y=-Y^2-2w\le0$ under $V\ge0$ alone. By \cref{gauge:lem:comparison}, $X_\ell\ge Y$, so \eqref{gauge:eq:defectrep} gives $G_\ell\ge0$ for every $\ell$. H\"older's inequality gives the three-point inequality for each finite mode sum, and monotone convergence in \eqref{gauge:eq:modesum} gives the result.
\end{proof}

\subsection{Convexity modulus and equality}\label{gauge:sec:modulus}

Assume the hypothesis of \cref{gauge:thm:main} throughout this subsection. The preceding proof and \eqref{gauge:eq:Eell} give
\begin{equation}\label{gauge:eq:Gceiling}
0\le G_\ell(t)\le-\dot Y(t),
\qquad \ell\ge0,\quad t<T_*.
\end{equation}

For a nonzero finite sum $h(t)=\sum_{\ell\le L}A_\ell\Psi_\ell(e^t)$, set
$$
\mu_\ell(t):=\frac{A_\ell\Psi_\ell(e^t)}{h(t)},
\qquad \mu_{\ge1,L}(t):=\sum_{\ell=1}^L\mu_\ell(t).
$$
These are the contributions of the spherical harmonic degrees to the Brownian second moment. Twice differentiating gives
\begin{equation}\label{gauge:eq:variance}
(\log h)''
=\sum_\ell\mu_\ell G_\ell
+\operatorname{Var}_{\mu(t)}(2X_\ell-Y-\nu).
\end{equation}

\begin{proof}[Proof of \cref{gauge:thm:modulus}]
We first show that, for every $t<T_*$, $G_\ell(t)\ge G_{\ell'}(t)$ whenever $\ell\ge\ell'$, with strict inequality if $\ell>\ell'$ and $w$ is not identically zero on $(-\infty,t]$. For $D:=G_\ell-G_{\ell'}$, \cref{gauge:lem:defect} gives
$$
\dot D=-2X_\ell D+2(X_\ell-X_{\ell'})E_{\ell'},
\qquad D(t)\to0\quad(t\to-\infty).
$$
The source is nonnegative by \cref{gauge:lem:comparison} and \eqref{gauge:eq:Eell}, so $D\ge0$. If $\ell>\ell'$, then $X_\ell-X_{\ell'}>0$; if $w$ has been nonzero before $t$, then $E_{\ell'}>0$ on a nonempty interval below $t$, proving strictness.

For finite sums, \eqref{gauge:eq:variance} and this monotonicity give
$$
(\log h)''\ge G_0+\mu_{\ge1,L}(G_1-G_0)\ge G_0.
$$
By Parseval, $h_L\uparrow H_u(e^t)$ and $\mu_{\ge1,L}\to\mu_{\ge1}$. Local uniform convergence of $\log h_L$ shows that the inequalities pass to the limit in the sense of distributions; they hold pointwise because $H_u(e^t)$ is positive and $C^2$: by the divergence theorem, $\frac d{dr}\fint_{\partial B_r}u^2\,d\sigma$ equals $|\partial B_r|^{-1}\int_{B_r}\Delta(u^2)$, and $\Delta(u^2)=2|\nabla u|^2-4Vu^2$ is continuous.

If $e^t>r_V$, then $X_0-Y$ and $-\dot Y$ are positive on a nonempty interval below $t$. Thus \eqref{gauge:eq:defectrep} gives $G_0(t)>0$. If equality holds there, the stronger inequality and $G_1(t)>G_0(t)$ force $\mu_{\ge1}(t)=0$; positivity of every $\Psi_\ell$ then gives $A_\ell=0$ for $\ell\ge1$. Hence $u$ is radial. The converse follows directly from the definition of $G_0$.
\end{proof}

\begin{corollary}[Three-sphere inequality for surface means]
\label{gauge:cor:threesphere}
Under the hypotheses of \cref{gauge:thm:main}, if $0<r_1<r_2<r_3<R_*$ and $\vartheta=\log(r_3/r_2)/\log(r_3/r_1)$, then
$$
\fint_{\partial B_{r_2}}u^2\,d\sigma\le
\frac{f_0(r_2)^2}{f_0(r_1)^{2\vartheta}f_0(r_3)^{2(1-\vartheta)}}
\left(\fint_{\partial B_{r_1}}u^2\,d\sigma\right)^{\!\vartheta}
\left(\fint_{\partial B_{r_3}}u^2\,d\sigma\right)^{\!1-\vartheta}.
$$
The constant is optimal: $u=f_0$ gives equality, and equality for any triple with $r_3>r_V$ forces $u$ to be a constant multiple of $f_0$.
\end{corollary}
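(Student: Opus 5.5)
The plan is to translate the three-point convexity of $\log H_u(e^t)$ from \cref{gauge:thm:main} into surface means using the gauge identity, and to track the normalising factors through $g$. By \eqref{gauge:eq:height}, $\fint_{\partial B_r}u^2\,d\sigma=g(r)H_u(r)$. For $f_0$ the expansion \eqref{gauge:eq:modesum} has only the $\ell=0$ term, so $H_{f_0}(r)=\Psi_0(r)=f_0(r)^2/g(r)$. It is therefore natural to work with the ratio $H_u/\Psi_0$ instead of $H_u$ alone: the factors of $g$ then cancel, and the constant appears directly in terms of $f_0$.

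The key step is that $t\mapsto\log\bigl(H_u(e^t)/\Psi_0(e^t)\bigr)$ is convex on $(-\infty,T_*)$. This is exactly the inequality \eqref{gauge:eq:modulus} of \cref{gauge:thm:modulus}, since $(\log\Psi_0(e^t))''=G_0(t)$. Applying three-point convexity at $t_i=\log r_i$ with weight $\vartheta$ gives
\[
\frac{H_u(r_2)}{\Psi_0(r_2)}\le
\left(\frac{H_u(r_1)}{\Psi_0(r_1)}\right)^{\vartheta}
\left(\frac{H_u(r_3)}{\Psi_0(r_3)}\right)^{1-\vartheta}.
\]
Next substitute $H_u=g^{-1}\fint u^2$ and $\Psi_0=f_0^2/g$. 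Then $H_u/\Psi_0=f_0^{-2}\fint u^2$, and rearranging yields the displayed inequality with the stated constant. Positivity of $f_0$ on $(0,R_*)$ has already been shown under $V\ge0$, and $V\ge0$ follows from the hypothesis. So every quotient is well defined.

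For optimality, take $u=f_0$. Then $\log(H_u/\Psi_0)$ is constant, so equality holds. For the equality case, set $\phi(t)=\log(H_u/\Psi_0)(e^t)$. This function is $C^2$, convex by \eqref{gauge:eq:modulus}, and equality in the three-point inequality forces $\phi$ to be affine on $[t_1,t_3]$. In particular $\phi''\equiv0$ there.

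This is the step I expect to be the main obstacle. It needs a point $t\in(t_1,t_3)$ with $e^t>r_V$, where the equality characterisation of \cref{gauge:thm:modulus} applies. Since $r_3>r_V$, such points exist: take any $t\in(\max\{t_1,\log r_V\},t_3)$, which is a nonempty interval. At such a $t$, $\phi''(t)=0$ means equality in \eqref{gauge:eq:modulus}. \Cref{gauge:thm:modulus} then gives that $u$ is a constant multiple of $f_0$.

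One more point needs checking: that $\phi$ is genuinely $C^2$, so that "affine" really yields pointwise $\phi''=0$. This holds because both $H_u(e^t)$ and $\Psi_0(e^t)$ are positive and $C^2$, as noted at the end of the proof of \cref{gauge:thm:modulus}.
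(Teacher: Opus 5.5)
Your proposal is correct and follows the paper's proof essentially step for step. You establish convexity of $t\mapsto\log\bigl(\fint_{\partial B_{e^t}}u^2\,d\sigma/f_0(e^t)^2\bigr)$ via \eqref{gauge:eq:modulus}, then use affinity on $[\log r_1,\log r_3]$ to get a vanishing second derivative at an interior point with $e^t>r_V$, where the equality clause of \cref{gauge:thm:modulus} applies.
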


\begin{proof}
By \cref{gauge:thm:modulus} and \eqref{gauge:eq:height},
$$
t\longmapsto
\log\frac{\fint_{\partial B_{e^t}}u^2\,d\sigma}{f_0(e^t)^2}
$$
is convex. Its three-point inequality is the display. It is constant for $u=f_0$. Conversely, equality makes it affine on $[\log r_1,\log r_3]$; if $r_3>r_V$, its second derivative vanishes at an interior point where $e^t>r_V$, and \cref{gauge:thm:modulus} applies.
\end{proof}

\paragraph{Constant potential.}
If $V\equiv a>0$ and $0<r<R_*$, the Brownian exit-time moment $q_m$ and the product formula for $J_\nu$ give, with $r=e^t$ and $z=\sqrt{2a}\,r$,
$$
G_0(t)=\frac{d^2}{dt^2}\log\frac{q_m(2ar^2)}{q_m(ar^2)^2}
=8z^2\sum_{n\ge1}j_{\nu,n}^2
\left(\frac1{(j_{\nu,n}^2-2z^2)^2}
-\frac1{(j_{\nu,n}^2-z^2)^2}\right).
$$
For general $\ell$, \eqref{gauge:eq:state} gives $G_\ell(t)=\frac{16\ell a}{m(m+2\ell)}r^2 +O_{m,\ell}(a^2r^4)$ as $r\downarrow0$. The product formula gives the same coefficient as $16a\sum_{n\ge1}(j_{\nu,n}^{-2}-j_{\nu+\ell,n}^{-2})$, where $j_{\gamma,n}$ is the $n$th positive zero of $J_\gamma$. The equality follows from $\sum_{n\ge1}j_{\gamma,n}^{-2}=1/[4(\gamma+1)]$ \cite[Section 15.51]{WatsonBessel1944}.

\subsection{The characterisation}\label{gauge:sec:characterisation}

\begin{proof}[Proof of \cref{gauge:thm:characterisation}]
Suppose first that \textup{(b)} holds. By \cref{gauge:lem:comparison}\textup{(iii)}, $X_\ell\ge Y$, so \eqref{gauge:eq:defectrep} gives $G_\ell\ge0$ for every $\ell$. Hence each $\Psi_\ell(e^t)$ is log-convex. H\"older's inequality gives the three-point inequality for every finite mode sum, and monotone convergence in \eqref{gauge:eq:modesum} gives \textup{(a)}.

Conversely, \textup{(a)} applied to $f_\ell Y_{\ell j}$ gives $G_\ell\ge0$. Suppose $\dot Y(t_*)>0$ for some $t_*<\log\rho$, and set $W:=\int_{-\infty}^{t_*}w(s)\,ds$. If $\beta_\ell>W$, then \cref{gauge:lem:comparison}\textup{(i)} and a first-crossing argument give
$$
\beta_\ell-W\le X_\ell(s)\le\beta_\ell,
\qquad s\le t_*.
$$
Indeed, for $z_\ell:=\beta_\ell-X_\ell\ge0$, one has $\dot z_\ell=-(\beta_\ell+X_\ell)z_\ell+w$ until a possible first zero of $X_\ell$, and hence $z_\ell\le W<\beta_\ell$, excluding such a zero. Since $Y$ and $\dot Y$ are bounded on $(-\infty,t_*]$ and $\dot Y$ is continuous at $t_*$, \eqref{gauge:eq:defectrep} becomes
$$
G_\ell(t_*)=-\int_{-\infty}^{t_*}\varphi_\ell(s)
\frac{X_\ell(s)-Y(s)}{X_\ell(s)}\dot Y(s)\,ds,
\qquad
\varphi_\ell(s):=2X_\ell(s)
\exp\!\left(-2\int_s^{t_*}X_\ell(\tau)\,d\tau\right).
$$
Here $\varphi_\ell$ is a probability density and, for every $\delta>0$,
$$
\int_{-\infty}^{t_*-\delta}\varphi_\ell(s)\,ds
\le e^{-2(\beta_\ell-W)\delta},
\qquad
\sup_{s\le t_*}\left|\frac{X_\ell(s)-Y(s)}{X_\ell(s)}-1\right|
\longrightarrow0.
$$
Thus $G_\ell(t_*)\to-\dot Y(t_*)<0$, a contradiction. Hence $\dot Y\le0$, proving \textup{(b)}.

The hypothesis of \cref{gauge:thm:main} implies \textup{(b)} by \cref{gauge:lem:classmonotone}, but is not necessary. In dimension $3$, take $V(r)=1+2\sin^2(3r)$ on $[0,\infty)$. Since $V\le3$, Sturm comparison with $\mathbb E_0[e^{6\tau_r}]$ gives $R_*\ge\pi/\sqrt{12}>\pi/4$. Moreover,
$$
\frac{d}{dr}\bigl(r^2V(r)\bigr)
=2r\bigl(1+2\sin^2(3r)\bigr)+6r^2\sin(6r).
$$
This is positive for $0<r\le1/4$, but at $r=\pi/4$ it equals $\pi-3\pi^2/8<0$. On $r\le1/4$, \cref{gauge:lem:classmonotone} gives $\dot Y\le0$; on $1/4\le r<R_*$, $2w=4r^2V\ge1/4=\nu^2$, so $\dot Y=\nu^2-Y^2-2w\le0$. Thus \textup{(b)} holds on $(0,R_*)$ although $r^2V$ is not nondecreasing.
\end{proof}

By \cref{gauge:lem:gauge}, condition \textup{(b)} says that the reciprocal of $\mathbb E_0[\exp(2\int_0^{\tau_r}V(|B_s|)\,ds)]$ is log-concave in $\log r$ for $0<r<\rho$.

\subsection{Coefficients below two}\label{gauge:sec:threshold}

\begin{proof}[Proof of \cref{gauge:thm:threshold}]
By continuity, we may work at small $r$ where $V>0$ and the radial solutions below are positive. Let $Y_b:=N_{g_{bV}}+\nu$. Then
$$
\dot Y_b=\nu^2-Y_b^2-bw
$$
and
$$
\frac{d^2}{dt^2}\log\frac{f_0^2}{g_{bV}}
=2\dot X_0-\dot Y_b
=Y_b^2-2X_0^2+\nu^2+(b-2)w.
$$
By \cref{gauge:lem:gauge}, $f_0^{-1}$ and $g_{bV}^{-1}$ are the corresponding first-exit moments. Brownian scaling and $\mathbb E_0\tau_r=r^2/m$, equivalently one integration of the radial equations, give
$$
X_0=\nu-\frac{V(0)}{\nu+1}r^2+o(r^2),
\qquad
Y_b=\nu-\frac{bV(0)}{\nu+1}r^2+o(r^2),
\qquad
w=2V(0)r^2+o(r^2).
$$
Substitution yields
$$
\frac{d^2}{dt^2}\log\frac{f_0(r)^2}{g_{bV}(r)}
=\frac{2(b-2)V(0)}{\nu+1}r^2+o(r^2),
$$
which is negative for all sufficiently small $r$ when $b<2$.
\end{proof}

\subsection{A counterexample in dimensions \texorpdfstring{$m\ge3$}{m >= 3}}
\label{gauge:sec:counterexample}

\begin{proof}[Proof of \cref{gauge:prop:counterexample}]
Set
$$
I:=\frac4{m-2}\int_0^\infty sV(s)\,ds<1.
$$
Let $[0,\varrho)$ be the maximal interval on which $g>0$. There,
$$
g'(r)=-\frac4{r^{m-1}}\int_0^r s^{m-1}V(s)g(s)\,ds\le0,
$$
so $0<g\le1$. Fubini's theorem gives
$$
1-g(r)
\le\frac4{m-2}\int_0^\infty sV(s)\,ds=I,
$$
and hence $g\ge1-I>0$. Thus $\varrho=\infty$, $R_*=\infty$, and $1-I\le g\le1$.

Since $V\not\equiv0$, $g'(r_1)<0$. For $r>r_1$,
$$
g(r)=A+Br^{2-m},
\qquad
B=-\frac{g'(r_1)r_1^{m-1}}{m-2}>0,
\qquad
A=\lim_{r\to\infty}g(r)\ge1-I>0.
$$
Consequently,
$$
N_g(r)=\frac{-(m-2)Br^{2-m}}{A+Br^{2-m}}\in(-(m-2),0),
$$
so $Y=N_g+\nu\in(-\nu,\nu)$. Since $w=0$ for $r>r_1$,
$$
\dot Y=\nu^2-Y^2>0.
$$
Fix $t_*:=\log r_2$. The approximate-identity estimate in the proof of \cref{gauge:thm:characterisation} gives
\[
G_\ell(t_*)\longrightarrow-\dot Y(t_*)<0
\qquad(\ell\to\infty).
\]
Choose $\ell$ so that $G_\ell(t_*)<0$. By continuity, $\log\Psi_\ell(e^t)$ is strictly concave near $t_*$, and the solution $u=f_\ell Y_{\ell j}$ proves the claim.
\end{proof}

For $r>r_1$, the term $r^{2-m}$ is, up to a constant, the probability that Brownian motion starting at radius $r$ ever hits $B(0,r_1)$. Thus the counterexample is a transience effect. When $R_*<\infty$, the argument does not decide whether convexity fails before $R_*$.

\subsection{The energy representation and the role of radiality}

The energy identity below does not require radiality; exact log-convexity does.

\begin{proposition}[Energy representation and frequency]
\label{gauge:prop:energy}
Let $V\in C(B_R)$ be real, not necessarily radial, and let $u\not\equiv0$ solve $(\tfrac12\Delta+V)u=0$ in $B_R$. For every $r<R$ such that $-\tfrac12\Delta_{B_r}-2V$ has positive first Dirichlet eigenvalue, set
$$
H_u(r):=\mathbb E_0\!\left[
e^{2\int_0^{\tau_r}V(B_s)\,ds}u(B_{\tau_r})^2\right],
$$
and let $G^{2V}_{B_r}$ be the positive Dirichlet Green kernel of this operator. Then
\begin{equation}\label{gauge:eq:energy-representation}
H_u(r)=u(0)^2+\int_{B_r}G^{2V}_{B_r}(0,y)|\nabla u(y)|^2\,dy.
\end{equation}
On every interval of such radii,
$$
N_u(r):=\frac r2\frac{H_u'(r)}{H_u(r)}
=\frac{r}{2H_u(r)}\int_{B_r}
\partial_rG^{2V}_{B_r}(0,y)|\nabla u(y)|^2\,dy,
$$
where $y$ is fixed when the radius is differentiated. The weight is nonnegative by domain monotonicity. If $V=0$, then $\partial_rG^0_{B_r}(0,y)=2/|\partial B_r|$ throughout $B_r$, and the last formula is Almgren's frequency.
\end{proposition}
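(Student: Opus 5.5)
The plan is to run the second-moment stopping argument of \cref{thm:wavelength-stopping-identities} with a general potential, and then differentiate in $r$. Put $M_t=e^{\int_0^tV(B_s)\,ds}u(B_t)$. By It\^o's formula and $(\tfrac12\Delta+V)u=0$, $M$ is a local martingale with $d\langle M\rangle_t=e^{2\int_0^tV}|\nabla u|^2(B_t)\,dt$. The square $M_t^2$ is not itself a martingale; instead I will use the process
\[
N_t=e^{2\int_0^tV}u(B_t)^2-\int_0^te^{2\int_0^sV}|\nabla u|^2(B_s)\,ds.
\]
Applying It\^o's formula to $u^2$ gives $\tfrac12\Delta(u^2)=u\Delta u+|\nabla u|^2=-2Vu^2+|\nabla u|^2$. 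The drift of $N$ is therefore $e^{2\int V}\bigl(2Vu^2+\tfrac12\Delta(u^2)-|\nabla u|^2\bigr)=0$, and $N$ is a local martingale.

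The next step is optional stopping at $\tau_r$. The spectral hypothesis (positive first Dirichlet eigenvalue of $-\tfrac12\Delta_{B_r}-2V$) makes $(B_r,2V)$ gaugeable, as in the proof of \cref{gauge:lem:gauge} via \cite[Theorem~4.17]{CZ95}. Hence $\sup_x\mathbb E_x[e^{2\int_0^{\tau_r}V^+}]<\infty$. Since $u$ and $|\nabla u|$ are bounded on $\overline{B_r}$, both terms of $N_{t\wedge\tau_r}$ are dominated by an integrable variable. For the integral term, $\mathbb E\int_0^{\tau_r}e^{2\int_0^sV}\,ds$ is the $2V$-Green potential of $1$, which is finite by gaugeability. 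Optional stopping then gives $H_u(r)=u(0)^2+\mathbb E_0\int_0^{\tau_r}e^{2\int_0^sV}|\nabla u|^2(B_s)\,ds$. The Feynman--Kac occupation formula identifies the last expectation with $\int_{B_r}G^{2V}_{B_r}(0,y)|\nabla u|^2\,dy$, which proves \eqref{gauge:eq:energy-representation}.

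For the frequency formula I would differentiate \eqref{gauge:eq:energy-representation} in $r$ with $y$ fixed. This requires the map $r\mapsto G^{2V}_{B_r}(0,y)$ to be differentiable, with $\partial_rG$ integrable against $|\nabla u|^2$. I plan to obtain this by scaling to the unit ball, which turns the problem into a smooth family of operators on a fixed domain. The difference-quotient and Green-kernel argument from the proof of \cref{lem:poisson-comp-small} handles that family, and the boundary term vanishes because $G=0$ on $\partial B_r$. If differentiability a.e.\ is enough, an alternative is to use monotonicity in $r$ and dominated convergence.

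Nonnegativity of the weight comes from domain monotonicity. For $r<r'$, the process killed at $\tau_r\le\tau_{r'}$ accumulates less occupation, so $G^{2V}_{B_r}\le G^{2V}_{B_{r'}}$. This holds because the Feynman--Kac weight is positive, even when $V$ changes sign.

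For $V=0$ I would compute directly. In that case $G^0_{B_r}(0,y)=2c_m(|y|^{2-m}-r^{2-m})/(m-2)$ for $m\ge3$ and $(1/\pi)\log(r/|y|)$ for $m=2$, with $\tfrac12\Delta$ normalised. In both cases $\partial_rG=2c_m r^{1-m}=2/|\partial B_r|$. Substituting into the formula for $N_u$ gives $N_u(r)=r\int_{B_r}|\nabla u|^2/\int_{\partial B_r}u^2$, using $H_u=\fint_{\partial B_r}u^2$, which is Almgren's frequency.

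The main obstacle is the differentiability of the Green kernel in the radius uniformly near the pole. I expect the $r$-dependence after rescaling to be smooth, so the perturbation bootstrap from \cref{lem:poisson-comp-small} should carry over.
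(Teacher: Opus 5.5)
Your derivation of \eqref{gauge:eq:energy-representation} follows the paper: It\^o for $e^{A_t}u(B_t)^2$ with $A_t=2\int_0^tV(B_s)\,ds$, then stopping, then the occupation formula. The domination step, however, is misstated. Gaugeability of $(B_r,2V)$ bounds $\sup_x\mathbb E_x[e^{A_{\tau_r}}]$, not $\mathbb E_x[e^{2\int_0^{\tau_r}V^+}]$, and when $V$ changes sign the latter can be infinite. For example, a rapidly oscillating $V$ with negative mean but large positive part keeps $-\tfrac12\Delta_{B_r}-2V$ positive definite while $-\tfrac12\Delta_{B_r}-2V^+$ has negative first eigenvalue. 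The paper avoids domination altogether. It stops at $\tau_r\wedge T$ and notes that the contribution of $\{T<\tau_r\}$ is the killed Feynman--Kac semigroup applied to $u^2$, which tends to zero by the spectral gap. The two remaining nonnegative terms then pass to the limit by monotone convergence. Alternatively, $e^{A_{t\wedge\tau_r}}\le\mathbb E[e^{A_{\tau_r}}\mid\mathcal F_{t\wedge\tau_r}]/\inf g$, with $g$ the gauge, gives uniform integrability.

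The real gap is the differentiation step. You correctly flag it as the obstacle, but your plan fails under the stated hypothesis $V\in C(B_R)$. Rescaling $B_r$ to $B_1$ produces the potential $r^2V(rz)$, and $r\mapsto r^2V(rz)$ is not differentiable when $V$ is merely continuous. In \cref{lem:poisson-comp-small} the $r$-dependence enters only through the smooth metric $g_r$ and a constant potential, so that bootstrap does not transfer. Your fallback is also insufficient. Monotonicity gives a.e.\ differentiability of each $r\mapsto G^{2V}_{B_r}(0,y)$, but by Fatou only $H_u'(r)\ge\int_{B_r}\partial_rG^{2V}_{B_r}(0,y)|\nabla u(y)|^2\,dy$, since nothing dominates the difference quotients. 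The missing ingredient is Hadamard's variational formula, which the paper uses:
\[
\partial_rG^{2V}_{B_r}(0,y)=\frac12\int_{\partial B_r}\partial_{n_z}G^{2V}_{B_r}(0,z)\,\partial_{n_z}G^{2V}_{B_r}(y,z)\,d\sigma(z).
\]
It fits your probabilistic framework. For $r<r'$, the strong Markov property at $\tau_r$, for the process killed at $\tau_{r'}$, gives
\[
G^{2V}_{B_{r'}}(x,y)-G^{2V}_{B_r}(x,y)=\mathbb E_x\!\left[e^{A_{\tau_r}}G^{2V}_{B_{r'}}(B_{\tau_r},y)\right].
\]
Here $B_{\tau_r}$ lies at distance $r'-r$ from $\partial B_{r'}$. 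Divide by $r'-r$ and use $C^{1,\alpha}$ boundary regularity and standard boundary bounds for the Green kernel, which need only bounded $V$. This yields the formula, an integrable majorant for the difference quotients, and the sign of the weight, all without differentiating $V$. The vanishing boundary term, your domain-monotonicity argument for nonnegativity, and your explicit computation for $V=0$ are correct.
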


\begin{proof}
Put $A_t:=2\int_0^tV(B_s)\,ds$ and $\rho:=|\nabla u|^2$. Since $(\tfrac12\Delta+2V)u^2=\rho$, It\^o's formula and optional stopping at $\tau_r\wedge T$ give
$$
\mathbb E_0\!\left[e^{A_{\tau_r\wedge T}}
u(B_{\tau_r\wedge T})^2\right]
=u(0)^2+\mathbb E_0\!\int_0^{\tau_r\wedge T}e^{A_s}\rho(B_s)\,ds.
$$
The part of the left side on $\{T<\tau_r\}$ is the Dirichlet Feynman--Kac semigroup applied to $u^2$ and tends to zero because its first eigenvalue is positive. Monotone convergence for the remaining two terms and the occupation formula give \eqref{gauge:eq:energy-representation}. For an admissible radius, Hadamard's formula for $-\tfrac12\Delta-2V$ gives, for $y\in B_r$,
\[
\partial_rG^{2V}_{B_r}(0,y)
=\frac12\int_{\partial B_r}
 \partial_{n_z}G^{2V}_{B_r}(0,z)\,
 \partial_{n_z}G^{2V}_{B_r}(y,z)\,d\sigma(z).
\]
The two normal derivatives have the same sign, so this quantity is nonnegative. Differentiating \eqref{gauge:eq:energy-representation} gives
\[
H_u'(r)=\int_{B_r}
\partial_rG^{2V}_{B_r}(0,y)|\nabla u(y)|^2\,dy;
\]
the moving-boundary term is zero because $G^{2V}_{B_r}(0,\cdot)$ vanishes on $\partial B_r$. This proves the frequency identity.
\end{proof}

\begin{proposition}[A nonradial counterexample]
\label{gauge:prop:radiality-essential}
Let $m=2$, $R>1$, $n\ge1$, and $0\ne\varphi\in C_c^\infty((0,\infty))$ with $\varphi\ge0$, extended by zero to $\mathbb R$. For all sufficiently small $\varepsilon>0$, set
$$
V_\varepsilon(r,\theta)=\varepsilon\,
\frac{\varphi(-\log r)}{2r^2}(1+\cos 2n\theta)\ge0,
$$
with $V_\varepsilon(0)=0$, and let $u_\varepsilon$ solve $(\tfrac12\Delta+V_\varepsilon)u_\varepsilon=0$ in $B_R$ with boundary value $R^n\sin(n\theta)$. The solution is unique, $-\tfrac12\Delta_{B_R}-2V_\varepsilon$ has positive first eigenvalue, and
\begin{equation}\label{gauge:eq:first-variation}
\frac{d^2}{dt^2}\Big|_{t=0}\log H_{u_\varepsilon}(e^t)
=2n\varepsilon\int_0^\infty
(3-2ns)e^{-2ns}\varphi(s)\,ds+O(\varepsilon^2).
\end{equation}
Consequently, if $\operatorname{supp}\varphi\subset(3/(2n),\infty)$, then $\log H_{u_\varepsilon}(e^t)$ is strictly concave at $t=0$ for every sufficiently small $\varepsilon>0$, and the three-circle inequality with constant $1$ fails on some three nearby radii.
\end{proposition}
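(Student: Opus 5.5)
The plan is a first-order perturbation computation in logarithmic coordinates, where the two-dimensional problem becomes conformally flat. Put $t=\log r$ and $s=-t$. In dimension $2$, $\Delta=e^{-2t}(\partial_t^2+\partial_\theta^2)$, and $r^2V_\varepsilon=\tfrac{\varepsilon}{2}\varphi(-t)(1+\cos2n\theta)$. The equation therefore becomes
\[
(\partial_t^2+\partial_\theta^2)u+\varepsilon\varphi(-t)(1+\cos2n\theta)\,u=0
\]
on the half-cylinder $(-\infty,\log R)\times S^1$.

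\emph{Preliminaries.} Since $\operatorname{supp}\varphi$ is compact in $(0,\infty)$, $V_\varepsilon$ is bounded, supported in a fixed annulus inside $B_1$, and $O(\varepsilon)$ in $L^\infty$. For small $\varepsilon$, the first Dirichlet eigenvalue of $-\tfrac12\Delta_{B_R}-2V_\varepsilon$ therefore stays positive by the min-max principle. This also gives uniqueness, and it makes $H_{u_\varepsilon}$ well defined on $(0,R)$ via \cref{gauge:prop:energy}.

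\emph{Expansion of $u_\varepsilon$.} Write $u_\varepsilon=u_0+\varepsilon u_1+O(\varepsilon^2)$ in $C^2$ near $|x|=1$, with $u_0=e^{nt}\sin n\theta$. Since $(1+\cos2n\theta)\sin n\theta=\tfrac12\sin n\theta+\tfrac12\sin3n\theta$, we get $u_1=a(t)\sin n\theta+b(t)\sin3n\theta$, where
\[
a''-n^2a=-\tfrac12\varphi(-t)e^{nt},
\]
$b$ satisfies the analogous equation with $9n^2$, both are regular at $-\infty$, and both vanish at $t=\log R$. The boundary condition only adds multiples of the regular homogeneous solutions. In the $\sin n\theta$ mode such a multiple is $c\,u_0$, which rescales $H$ to first order and so does not affect $\partial_t^2\log H$. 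The $\sin3n\theta$ mode is orthogonal to $\sin n\theta$ in the surface average and enters $H$ only at order $\varepsilon^2$. This explains why \eqref{gauge:eq:first-variation} contains no $R$. For $t\ge0$, that is, outside the support, the decaying Green function gives
\[
a(t)=\tfrac1{4n}\int_0^\infty \varphi(s)\bigl(e^{nt}-e^{-nt-2ns}\bigr)\,ds+c\,e^{nt}.
\]
Only the $e^{-nt}$ part matters.

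\emph{The weight.} For $r\ge1$ write $H_{u_\varepsilon}(r)=\int_{\partial B_r}u_\varepsilon^2P^\varepsilon_r\,d\sigma$, where $P^\varepsilon_r$ is the Poisson kernel at $0$ for $\tfrac12\Delta+2V_\varepsilon$. Expand $P^\varepsilon_r=P^0_r\bigl(1+\varepsilon k_r(\theta)\bigr)+O(\varepsilon^2)$. The correction $k_r$ is the normal derivative of $w$, where $\tfrac12\Delta w=-2V_1G_0(\cdot,0)$ with $w=0$ on $\partial B_r$. Only the constant mode and the $\cos2n\theta$ mode of $k_r$ pair with $\sin^2n\theta=\tfrac12(1-\cos2n\theta)$. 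The constant mode is again a multiplicative factor in $H$, so only the $\cos2n\theta$ mode contributes. In cylinder coordinates it solves a one-dimensional equation with $4n^2$. Since the source $G_0(\cdot,0)$ is essentially $\log r$, which is linear in $t$, this mode is explicitly computable, and on $t\ge0$ it contributes terms proportional to $e^{-2nt}\int\varphi(s)(\alpha+\beta s)e^{-2ns}\,ds$.

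\emph{Assembly and the main obstacle.} Summing the two first-order contributions and computing $\partial_t^2\log H$ at $t=0$ should produce $2n\varepsilon\int(3-2ns)e^{-2ns}\varphi(s)\,ds$. The main obstacle is this step: correctly computing the weighted-exit correction $k_r$ and its interaction with the $u$-correction to get exactly the kernel $(3-2ns)e^{-2ns}$. As a check on the constants I would cross-validate with the energy representation of \cref{gauge:prop:energy}, expanding $G^{2V}$ to first order.

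\emph{Conclusion.} If $\operatorname{supp}\varphi\subset(3/(2n),\infty)$, then $3-2ns<0$ on the support and $\varphi\ge0$, $\varphi\ne0$, so the integral is strictly negative. The leading term is of exact order $\varepsilon$ while the remainder is $O(\varepsilon^2)$, so for small $\varepsilon$ the second derivative is strictly negative at $t=0$. By continuity $\log H_{u_\varepsilon}(e^t)$ is strictly concave near $0$, and taking three nearby radii shows that the three-circle inequality with constant $1$ fails.
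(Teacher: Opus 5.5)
Your first-order decomposition is the paper's: $H_{u_\varepsilon}(e^t)=\tfrac12e^{2nt}+\varepsilon h_1(e^t)+O(\varepsilon^2)$. Here $h_1$ is the sum of the solution variation $2\fint_{\partial B_r}u_0u_1\,d\sigma$ and a weight variation, and the $R$-dependent multiple of $u_0$ in $u_1$ only rescales. The gap is that \eqref{gauge:eq:first-variation} \emph{is} the explicit kernel, and you stop at ``should produce''.

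Worse, the one explicit formula you give is wrong in exactly the term you say matters. Above the support, every solution of $a''-n^2a=-\tfrac12\varphi(-t)e^{nt}$ regular at $t=-\infty$ equals $\tfrac{J}{4n}e^{-nt}+c\,e^{nt}$, where $J=\int_0^\infty e^{-2ns}\varphi(s)\,ds$. The sign is forced: the source is $\le0$, so the solution decaying at both ends is nonnegative by the maximum principle. Your formula has $-\tfrac{J}{4n}e^{-nt}$. Carried through, this turns the $u_1$-contribution $+\tfrac{J}{2n}e^{-2nt}$ to $F(t):=2e^{-2nt}h_1(e^t)$ into $-\tfrac{J}{2n}e^{-2nt}$. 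You would then arrive at the kernel $(1-2ns)e^{-2ns}$, not $(3-2ns)e^{-2ns}$.

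The weight step also has two unjustified reductions. First, the constant mode of $k_r$ depends on $r$, so being ``a multiplicative factor in $H$'' does not remove it from $\partial_t^2\log H$. It drops out only because $\fint k_r=2\int_{B_r}G^0_{B_r}(0,z)V_1(z)\,dz=2\int_0^\infty(t+s)\varphi(s)\,ds$ is affine in $t$ once $B_r$ contains the support. Second, the $\cos2n\theta$ term is not $e^{-2nt}$ times a constant. Since $G^0_{B_r}(0,z)=\pi^{-1}(t+s)$ for $|z|=e^{-s}$, its contribution to $F$ is $-e^{-2nt}\int_0^\infty(t+s)e^{-2ns}\varphi(s)\,ds$. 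Your $(\alpha+\beta s)$ must therefore allow $\alpha=t$, and that $t$ supplies $4nJ$ of the $6nJ$ in the final answer.

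To close the gap, do as the paper does and avoid perturbing the Poisson kernel. By the Markov property and the occupation formula, the weight variation is $2\mathbb E_0\bigl[u_0(B_{\tau_r})^2\int_0^{\tau_r}V_1(B_p)\,dp\bigr]=2\int_{B_r}G^0_{B_r}(0,z)V_1(z)\,\mathbb E_z[u_0(B_{\tau_r})^2]\,dz$. Since $\mathbb E_z[u_0(B_{\tau_r})^2]=\tfrac12(r^{2n}-\rho^{2n}\cos2n\theta)$ is explicit, both modes come out at once as $\tfrac12\int_0^\infty(2e^{2nt}-e^{-2ns})(t+s)\varphi(s)\,ds$. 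Your $w$-route is the Green-adjoint form of the same integral. Adding the correct solution term $e^{nt}a(t)=\tfrac{J}{4n}(1-e^{2nt}R^{-2n})$ gives $F(t)=2\int_0^\infty(t+s)\varphi\,ds-e^{-2nt}\int_0^\infty(t+s)e^{-2ns}\varphi\,ds+\tfrac{J}{2n}(e^{-2nt}-R^{-2n})$. Hence $F''(0)=6nJ-4n^2K$ with $K=\int_0^\infty se^{-2ns}\varphi(s)\,ds$, which is \eqref{gauge:eq:first-variation}.

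Finally, because you differentiate twice in $t$, the $O(\varepsilon^2)$ remainder must be controlled in $C^2$ in $t$, not just at each fixed $r$. The paper gets this by Brownian scaling to $B_1$, where the exponential weight is dominated by $e^{c\tau_1}$ with $c<\lambda_1(-\tfrac12\Delta_{B_1})$. It combines this with the $C^{2,\alpha}$ Neumann-series expansion of $u_\varepsilon$ and dominated differentiation under the expectation.
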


\begin{proof}
Write $V_\varepsilon=\varepsilon V_1$ and let $\mathcal R=(-\tfrac12\Delta_{B_R})^{-1}$ with Dirichlet boundary condition. If $w_\varepsilon=u_\varepsilon-u_0$, then
\[
w_\varepsilon
=\varepsilon\mathcal R\bigl(V_1(u_0+w_\varepsilon)\bigr).
\]
For small $\varepsilon$, $I-\varepsilon\mathcal RV_1$ is invertible on the zero-boundary subspace of $C^{2,\alpha}(\overline{B_R})$, and its Neumann series gives
$$
u_\varepsilon=u_0+\varepsilon u_1+O(\varepsilon^2)
\quad\text{in }C^{2,\alpha}(\overline{B_R}),\qquad
u_0=r^n\sin(n\theta),\qquad
\tfrac12\Delta u_1=-V_1u_0,\quad u_1|_{\partial B_R}=0.
$$
The min--max principle also gives
\[
\lambda_1\!\left(-\tfrac12\Delta_{B_R}-2\varepsilon V_1\right)
\ge\lambda_1\!\left(-\tfrac12\Delta_{B_R}\right)
-2\varepsilon\|V_1\|_\infty>0,
\]
and, since $V_1\ge0$,
\[
-\tfrac12\Delta_{B_R}-\varepsilon V_1
\ge-\tfrac12\Delta_{B_R}-2\varepsilon V_1>0.
\]
This proves the asserted positivity and uniqueness.

Brownian scaling gives, for $|t|$ small,
\[
H_{u_\varepsilon}(e^t)
=\mathbb E_0\!\left[
 \exp\!\left(2\varepsilon e^{2t}\int_0^{\tau_1}V_1(e^tB_s)\,ds\right)
 u_\varepsilon(e^tB_{\tau_1})^2\right].
\]
After decreasing $\varepsilon$, the exponential is dominated uniformly by $e^{c\tau_1}$ with $c<\lambda_1(-\tfrac12\Delta_{B_1})$. The exit time therefore has the required exponential moment. Smoothness of $V_1$, the $C^{2,\alpha}$ expansion above, and dominated differentiation justify the expansions below, including two derivatives in $t$. Since $H_{u_0}(r)=\tfrac12r^{2n}$, write $H_{u_\varepsilon}(r)=\tfrac12r^{2n}+\varepsilon h_1(r)+O(\varepsilon^2)$. Differentiating the Brownian exit formula at $\varepsilon=0$ gives
\begin{equation}\label{gauge:eq:h1}
h_1(r)=2\fint_{\partial B_r}u_0u_1\,d\sigma
+2\mathbb E_0\!\left[u_0(B_{\tau_r})^2
\int_0^{\tau_r}V_1(B_p)\,dp\right].
\end{equation}

Put $r=e^t$ and take $|t|$ small, so that $B_r$ contains the support of $V_1$. For $z=(\rho,\theta)$, Brownian exit from the disk gives
$$
\mathbb E_z[u_0(B_{\tau_r})^2]
=\tfrac12\bigl(r^{2n}-\rho^{2n}\cos 2n\theta\bigr),\qquad
G^0_{B_r}(0,z)=\frac1\pi\log\frac r\rho.
$$
The Markov property, the occupation formula, angular integration, and $\rho=e^{-s}$ give
\begin{equation}\label{gauge:eq:weight-variation}
2\mathbb E_0\!\left[u_0(B_{\tau_r})^2
\int_0^{\tau_r}V_1(B_p)\,dp\right]
=\frac12\int_0^\infty
\bigl(2e^{2nt}-e^{-2ns}\bigr)(t+s)\varphi(s)\,ds.
\end{equation}

Only the $n$th sine mode of $u_1$ contributes to the first term of \eqref{gauge:eq:h1}. Write this contribution as
\[
\rho^nq(-\log\rho)\sin(n\theta).
\]
Since $(1+\cos2n\theta)\sin(n\theta) =\tfrac12\sin(n\theta)+\tfrac12\sin(3n\theta)$,
$$
q''-2nq'=-\tfrac12\varphi,\qquad
q(-\log R)=0,\qquad q'(\infty)=0.
$$
Thus, with $J:=\int_0^\infty e^{-2ns}\varphi(s)\,ds$ and $|t|$ small,
$$
q(-t)=\frac{J}{4n}\bigl(e^{-2nt}-R^{-2n}\bigr),
$$
and hence
\begin{equation}\label{gauge:eq:solution-variation}
2\fint_{\partial B_r}u_0u_1\,d\sigma
=\frac{J}{4n}\bigl(1-e^{2nt}R^{-2n}\bigr).
\end{equation}

Set $F(t):=2e^{-2nt}h_1(e^t)$. Combining \eqref{gauge:eq:weight-variation} and \eqref{gauge:eq:solution-variation} gives
$$
F(t)=2\int_0^\infty(t+s)\varphi(s)\,ds
-e^{-2nt}\int_0^\infty e^{-2ns}(t+s)\varphi(s)\,ds
+\frac{J}{2n}\bigl(e^{-2nt}-R^{-2n}\bigr).
$$
The first term is affine in $t$, and the $R$-dependent term is constant, so
$$
F''(0)=2n\int_0^\infty
(3-2ns)e^{-2ns}\varphi(s)\,ds.
$$
Finally, $\log H_{u_\varepsilon}(e^t)=2nt+\mathrm{const} +\varepsilon F(t)+O(\varepsilon^2)$, proving \eqref{gauge:eq:first-variation}. The last assertion follows because $\varphi$ is nonzero and the integrand is strictly negative on its support.
\end{proof}

\section{Frequency, doubling, and Remez inequalities}
\label{sec:freq}

Fix $0<R\le r_{\mathrm{geo}}/2$ and a ball $B(x_0,R)$ with $\overline{B(x_0,R)}\subset X\setminus\partial X$. We study solutions of
\begin{equation}\label{eq:L_a_u}
\bigl(\tfrac12\Delta_g+a\bigr)u=0 \qquad\text{in }B(x_0,R),
\end{equation}
where $a\ge0$ is constant. For an eigenfunction $\Delta_g\varphi+\lambda\varphi=0$, one has $a=\lambda/2$. We write formulas for real $u$; for complex $u$, replace squares by absolute squares and, in every product, conjugate the first factor and take the real part.

\begin{definition}[Height and frequency]\label{def:ost-frequency}
Let $u$ solve \eqref{eq:L_a_u}. For $x\in B(x_0,R)$ and $0<r<\operatorname{dist}(x,\partial B(x_0,R))$, let $\tau_{x,r}=\inf\{t\ge0:B_t\notin B(x,r)\}$, with $B_0=x$, and define
\begin{equation}\label{eq:ost-height}
H_u(x,r):=\mathbb E_x\bigl[e^{2a\tau_{x,r}}u(B_{\tau_{x,r}})^2\bigr]
=\int_{\partial B(x,r)}u(\xi)^2P^{(2a)}_{x,r}(x,\xi)\,d\sigma_g(\xi).
\end{equation}
This is finite when $2a<\mu_1(B(x,r))$ by \cref{lem:survival-gap}\textup{(2)}; $P^{(2a)}_{x,r}$ is the Poisson kernel with exponential weight $e^{2a\tau_{x,r}}$. Whenever $H_u(x,r)>0$ and is differentiable in $r$, define
\begin{equation}\label{eq:ost-frequency}
N_u(x,r):=\frac r2\,\partial_r\log H_u(x,r).
\end{equation}
The exponent $2a$ is the second-moment weight of the martingale $e^{at}u(B_t)$ stopped on leaving the ball.
\end{definition}

In Euclidean balls, \cref{gauge:thm:main,gauge:thm:threshold} show that this height is log-convex and that the coefficient $2a$ is the smallest with this property.

\begin{lemma}[Height monotonicity]\label{lem:height-mono}
Let $u$ solve \eqref{eq:L_a_u}, let $x\in B(x_0,R)$, and suppose $0<r<\operatorname{dist}(x,\partial B(x_0,R))$ and $2a<\mu_1(B(x,r))$. Then $s\mapsto H_u(x,s)$ is nondecreasing on $(0,r]$.
\end{lemma}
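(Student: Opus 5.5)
The natural route is the submartingale property of the squared, doubly weighted process. Put $Z_t:=e^{2at}u(B_t)^2$. Since $(\tfrac12\Delta_g+a)u=0$, It\^o's formula gives
\[
dZ_t=e^{2at}\bigl(2au^2+u\Delta_g u+|\nabla u|_g^2\bigr)(B_t)\,dt+d(\text{mart})
=e^{2at}|\nabla u|_g^2(B_t)\,dt+d(\text{mart}),
\]
because $u\Delta_gu=-2au^2$. Equivalently, $(\tfrac12\Delta_g+2a)u^2=|\nabla u|_g^2\ge0$, the same computation as in the proof of \cref{gauge:prop:energy} with $V\equiv a$. Thus $Z$ stopped at $\tau_{x,r}$ is a local submartingale.

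For $0<s\le r$ we have $\tau_{x,s}\le\tau_{x,r}$. We stop at $\tau_{x,s}\wedge T$ and at $\tau_{x,r}\wedge T$, and apply optional stopping for the bounded stopping times. This gives
\[
\mathbb E_x[Z_{\tau_{x,s}\wedge T}]\le\mathbb E_x[Z_{\tau_{x,r}\wedge T}].
\]
To justify it we localise on the $C^\infty$ up to $\overline{B(x,r)}$ solution. Here $u$, $\nabla u$ are bounded on $\overline{B(x,r)}$, since $\overline{B(x,r)}\subset B(x_0,R)$. Hence the martingale part is a true martingale on $[0,T\wedge\tau_{x,r}]$.

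Next we let $T\to\infty$. We have
\[
|Z_{\tau\wedge T}|\le\|u\|^2_{L^\infty(\overline{B(x,r)})}e^{2a\tau_{x,r}}
\]
for both stopping times. By \cref{lem:survival-gap}\textup{(2)}, or the final clause of \cref{thm:wavelength-stopping-identities} applied with $2a<\mu_1(B(x,r))$, the right-hand side is integrable. Indeed, $\mathbb E_x e^{2a\tau_{x,r}}<\infty$ follows from killed-semigroup ultracontractivity and the spectral gap, as used there. Dominated convergence then yields $H_u(x,s)\le H_u(x,r)$. The same argument applies to any pair $s_1<s_2\le r$, since $\mu_1(B(x,s_2))\ge\mu_1(B(x,r))>2a$ by domain monotonicity. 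This gives monotonicity on $(0,r]$.

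An alternative presentation uses the identity
\[
H_u(x,s)=u(x)^2+\int_{B(x,s)}G^{(2a)}_{B(x,s)}(x,y)|\nabla u|^2\,dy,
\]
which is the manifold version of \cref{gauge:prop:energy}. Monotonicity of the positive Green kernel in the domain then gives the claim directly. The only delicate step is the uniform integrability near the threshold $2a<\mu_1$, where \cref{lem:survival-gap} is applied with $\theta=2a/\mu_1(B(x,r))<1$. Everything else is routine.
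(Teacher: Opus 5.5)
Your proposal is correct and follows essentially the same route as the paper: It\^o's formula with $(\tfrac12\Delta_g+2a)u^2=|\nabla u|_g^2$ makes $e^{2at}u(B_t)^2$ a local submartingale. Domination of the stopped process by $\|u\|_{L^\infty(\overline{B(x,r)})}^2e^{2a\tau_{x,r}}$, integrable by \cref{lem:survival-gap}, then gives $H_u(x,s')-H_u(x,s)=\mathbb E_x\int_{\tau_{x,s}}^{\tau_{x,s'}}e^{2at}|\nabla u(B_t)|_g^2\,dt\ge0$; your localisation at $\tau\wedge T$ followed by dominated convergence spells out the paper's one-line justification, and your Green-kernel alternative is also valid but not needed.
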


\begin{proof}
Since $(\tfrac12\Delta_g+2a)(u^2)=|\nabla u|^2$, It\^o's formula shows that $e^{2at}u(B_t)^2$ is a local submartingale. Stopped at $\tau_{x,r}$, it is dominated by $\|u\|_{L^\infty(\overline{B(x,r)})}^2e^{2a\tau_{x,r}}$, which is integrable by \cref{lem:survival-gap}. Hence, for $0<s\le s'\le r$,
\[
H_u(x,s')-H_u(x,s)
=\mathbb E_x\int_{\tau_{x,s}}^{\tau_{x,s'}}
e^{2at}|\nabla u(B_t)|^2\,dt\ge0.
\]
\end{proof}

\paragraph{Boundary integrals.}
Fix $x$ and $0<r<\operatorname{dist}(x,\partial B(x_0,R))$ with $2a<\mu_1(B(x,r))$. On $S_r=\partial B(x,r)$, put $\rho=d(x,\cdot)$, $\mathbf n=\nabla\rho$, and let $\nabla_T$ be the tangential gradient. With $P_r(\xi)=P^{(2a)}_{x,r}(x,\xi)$, set
\[
\begin{aligned}
H&=\int_{S_r}u^2P_r\,d\sigma_g,&
D&=\int_{S_r}u\,\partial_{\mathbf n}u\,P_r\,d\sigma_g,\\
I&=\int_{S_r}(\partial_{\mathbf n}u)^2P_r\,d\sigma_g,&
T&=\int_{S_r}|\nabla_Tu|^2P_r\,d\sigma_g.
\end{aligned}
\]
Thus $H=H_u(x,r)$.

\begin{lemma}[Weighted Rellich identity]
\label{lem:frozen-angular-rellich}
Let $m\ge2$, assume bounded geometry of order $2$ on an open set $U$, and let $B_r=B(x,r)\Subset U\subset X\setminus\partial X$, $0<r\le r_0$. Let $\kappa$ be the curvature bound in \cref{lem:poisson-comp-small} with $\bar r=r$. Let $u\in C^2(\overline{B_r})$ solve $(\tfrac12\Delta_g+a)u=0$ in $B_r$, where $a\ge0$ and $ar^2\le c_0$. Define $P_r,H,D,I,T$ as above. Set $Z=\rho\mathbf n$ and extend $P_r$ constantly on radial geodesics:
\[
\Phi(\exp_x(s\omega)):=P_r(\exp_x(r\omega)),
\qquad 0<s\le r,\quad
\omega\in S_xX.
\]
Then $\Phi|_{S_r}=P_r$ and $Z\Phi=0$ on $B_r\setminus\{x\}$. Put
\[
U_\Phi:=\int_{B_r}u^2\Phi\,d\operatorname{vol}_g,
\qquad
\mathsf S[u]:=\nabla u\otimes\nabla u-\tfrac12|\nabla u|^2g+au^2g,
\qquad A_Z:=\nabla Z-g.
\]
Then
\begin{equation}\label{eq:frozen-angular-rellich}
T=\frac{m-2}{r}D+I+2aH-\frac{4a}{r}U_\Phi+E_{\mathrm{fr}}(r),
\end{equation}
where
\begin{equation}\label{eq:frozen-angular-error-decomp}
E_{\mathrm{fr}}(r)=-\frac{m-2}{r}Q_\Phi
-\frac2rJ_{\mathrm{geo}}-\frac2rJ_{\mathrm{ang}},
\end{equation}
with
\begin{align}
Q_\Phi&:=\int_{B_r}u\langle\nabla\Phi,\nabla u\rangle
\,d\operatorname{vol}_g, \label{eq:Qphi-def}\\
J_{\mathrm{geo}}&:=\int_{B_r}\Phi\,\mathsf S[u]:A_Z
\,d\operatorname{vol}_g, \label{eq:Jgeo-def}\\
J_{\mathrm{ang}}&:=\int_{B_r}\rho\,\partial_{\mathbf n}u
\langle\nabla_Tu,\nabla_T\Phi\rangle\,d\operatorname{vol}_g.
\label{eq:Jang-def}
\end{align}
In Euclidean space, $\Phi$ is constant, so all three error terms vanish. Moreover, the bounds in \eqref{eq:frozen-weight-calculus} give
\begin{equation}\label{eq:frozen-angular-error-bound}
|E_{\mathrm{fr}}(r)|
\le C(\kappa+a)rH^{1/2}I^{1/2}
+C(\kappa+a)H+C(\kappa+a)ar^2H.
\end{equation}
Here $C$ depends only on $m$ and the bounded-geometry data.
\end{lemma}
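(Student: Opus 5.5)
The identity \eqref{eq:frozen-angular-rellich} is a Rellich--Pohozaev identity for the vector field $\Phi Z$, where $Z=\rho\mathbf n=\tfrac12\nabla\rho^2$. I will integrate $\operatorname{div}(\mathsf S[u](\Phi Z,\cdot))$ over $B_r$ and apply the divergence theorem. First, $Z\Phi=0$ holds by construction, since $\Phi$ is constant along radial geodesics. The singularity at $x$ is harmless: $\Phi$ is bounded and $|Z|\le\rho$, so I excise $B(x,\epsilon)$ and let $\epsilon\to0$.

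On $S_r$ we have $Z=r\mathbf n$, and the boundary term is
\[
r\int_{S_r}\Phi\bigl((\partial_{\mathbf n}u)^2-\tfrac12|\nabla u|^2+au^2\bigr)d\sigma_g
=\tfrac r2(I-T)+raH .
\]
For the interior, I use $\operatorname{div}\mathsf S[u]=(\Delta_gu+2au)\nabla u=0$, which follows from $\tfrac12\Delta_gu=-au$. Hence
\[
\operatorname{div}(\mathsf S(\Phi Z))=\Phi\,\mathsf S:\nabla Z+\mathsf S(\nabla\Phi,Z).
\]
For the first term I split $\nabla Z=g+A_Z$. The trace part gives $\Phi\operatorname{tr}\mathsf S=\Phi\bigl((1-\tfrac m2)|\nabla u|^2+mau^2\bigr)$, and the remainder is $\Phi\,\mathsf S:A_Z$, which integrates to $J_{\mathrm{geo}}$. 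For the second term, $Z\Phi=0$ means $\nabla\Phi$ is tangential, so $\mathsf S(\nabla\Phi,Z)=\rho\,\partial_{\mathbf n}u\langle\nabla_Tu,\nabla_T\Phi\rangle$, which integrates to $J_{\mathrm{ang}}$.

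Next I rewrite the bulk terms. I compute $\int\Phi|\nabla u|^2$ by integrating $\operatorname{div}(\Phi u\nabla u)$:
\[
\int_{B_r}\Phi|\nabla u|^2=rD\cdot r^{-1}\cdot r\,\text{(i.e. }D\text{)}+2aU_\Phi-Q_\Phi ,
\]
that is, $\int_{B_r}\Phi|\nabla u|^2=D+2aU_\Phi-Q_\Phi$, using $\Delta_gu=-2au$. Substituting gives
\[
\tfrac r2(I-T)+raH=\bigl(1-\tfrac m2\bigr)(D+2aU_\Phi-Q_\Phi)+maU_\Phi+J_{\mathrm{geo}}+J_{\mathrm{ang}} .
\]
Multiplying by $2/r$ and collecting terms should give \eqref{eq:frozen-angular-rellich}. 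The $U_\Phi$ coefficient is $(2-m)a+ma=2a$, so it enters with factor $-4a/r$ after moving it across. I will track the signs carefully so that they match \eqref{eq:frozen-angular-error-decomp}. In Euclidean space $P_r$ is constant, so $\nabla\Phi=0$ and $A_Z=0$, and all three errors vanish.

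For the bound \eqref{eq:frozen-angular-error-bound}, the normal-coordinate estimates give $|A_Z|\le C\kappa\rho^2$. Together with \eqref{eq:frozen-weight-calculus}, which gives $|\nabla\Phi|\le C(\kappa+a)r^2\rho^{-1}\Phi$, each error term is bounded by $C(\kappa+a)r^2$ times a bulk quantity. These are $\int\Phi\rho^{-1}|u||\nabla u|$, $\int\Phi(|\nabla u|^2+au^2)$, and $\int\Phi|\nabla u|^2$.

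The main obstacle is converting these bulk integrals into the boundary quantities $H$ and $H^{1/2}I^{1/2}$. My first attempt will be the identity above, $\int\Phi|\nabla u|^2=D+2aU_\Phi-Q_\Phi$, combined with $|D|\le H^{1/2}I^{1/2}$ and with $|Q_\Phi|$ being small times the same bulk quantity, so that it can be absorbed.

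For $U_\Phi$, I will use the height monotonicity of \cref{lem:height-mono} together with the comparison \eqref{eq:H-comp-surfmean}. Since $\Phi\asymp P_s$ on each $S_s$, this gives $\int_{S_s}u^2\Phi\lesssim H_u(x,s)\le H$, hence $U_\Phi\le CrH$.

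For the $\rho^{-1}$-weighted cross term, I will use Cauchy--Schwarz together with a Hardy-type inequality $\int\Phi\rho^{-2}u^2\lesssim r^{-1}H+\int\Phi|\nabla u|^2$. For $m=2$ this needs care; I expect the stated bound, with its $(\kappa+a)H$ term, to absorb the resulting losses. After these reductions, the bound follows once $\kappa r^2$ is small enough to allow the absorption.
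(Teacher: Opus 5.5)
The identity part of your proposal is correct and matches the paper's computation. You take the divergence of $\Phi\,\mathsf S[u](Z,\cdot)$, split $\nabla Z=g+A_Z$, use tangentiality of $\nabla\Phi$ to get $J_{\mathrm{ang}}$, and use $K_\Phi:=\int_{B_r}\Phi|\nabla u|^2=D+2aU_\Phi-Q_\Phi$. One small point: integrability of the bulk term at $x$ uses $\rho|\nabla\Phi|\lesssim\Phi$, not just boundedness of $\Phi$.

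The gap is in the error bound for $m=2$. You need $K_\Phi$ for $J_{\mathrm{geo}}$ and $J_{\mathrm{ang}}$ in every dimension, and you propose to get it from $K_\Phi=D+2aU_\Phi-Q_\Phi$. That forces you to bound $Q_\Phi$ even when $m=2$. Your tool for this is the inequality $\int_{B_r}\Phi\rho^{-2}u^2\lesssim r^{-1}H+K_\Phi$, which is false in dimension two. There $\Phi\asymp r^{-1}$, so the left side is at least $c\,u(x)^2r^{-1}\int_0^{s_0}s^{-1}\,ds=\infty$ whenever $u(x)\neq0$, while the right side is finite. Your remark that the $(\kappa+a)H$ term should absorb the losses does not repair this. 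So for $m=2$ the proposal gives no bound on $K_\Phi$, and hence none on $E_{\mathrm{fr}}$.

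The missing idea is to bound $K_\Phi$ without $Q_\Phi$, as the paper does. Since $\Phi\le Cr^{1-m}$, use the unweighted energy identity
$\int_{B_r}|\nabla u|^2=\int_{S_r}u\,\partial_{\mathbf n}u\,d\sigma_g+2a\int_{B_r}u^2$.
Together with $P_r\asymp r^{1-m}$ and height monotonicity, this gives $K_\Phi\le C(H^{1/2}I^{1/2}+arH)$ for all $m\ge2$.

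Then $Q_\Phi$ is needed only when its coefficient $(m-2)/r$ is nonzero, i.e. $m\ge3$. There you can estimate it directly. Integrate by parts on each $S_s$ and use the $\Delta_{S_s}\Phi$ bound in \eqref{eq:frozen-weight-calculus} to get $|Q_\Phi|\le C(\kappa+a)r^2\int_0^rs^{-2}\int_{S_s}u^2\Phi\,d\sigma_g\,ds$.

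Your comparison on spheres also needs correcting. On $S_s$ one has $\Phi\asymp r^{1-m}\asymp(s/r)^{m-1}P_s$, not $\Phi\asymp P_s$. Hence $\int_{S_s}u^2\Phi\,d\sigma_g\le C(s/r)^{m-1}H$. This factor does no harm to your bound $U_\Phi\le CrH$. It is what makes the $s$-integral converge for $m\ge3$, giving $|Q_\Phi|\le C(\kappa+a)rH$ and hence the $C(\kappa+a)H$ term in \eqref{eq:frozen-angular-error-bound}. With only $\int_{S_s}u^2\Phi\lesssim H$, as your justification gives, the integral diverges in every dimension. That is why you were pushed toward a Hardy inequality with a gradient term.

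For $m\ge3$ your Cauchy--Schwarz and absorption route does close, since Hardy's inequality with a boundary term holds there. It is, however, a detour around this direct estimate.
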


\begin{proof}
Since $\operatorname{div}\mathsf S[u]=0$,
$$
\operatorname{div}\bigl(\Phi\mathsf S[u](Z,\cdot)\bigr)
=\Phi\mathsf S[u]:\nabla Z+\mathsf S[u](Z,\nabla\Phi).
$$
Excision of $x$ and passage to the limit are justified by $|\nabla_T\Phi|\lesssim\Phi/\rho$; the inner boundary term vanishes. Since $Z\Phi=0$,
$$
\tfrac r2(I-T)+arH
=\int_{B_r}\Phi\mathsf S[u]:\nabla Z\,d\operatorname{vol}_g
+J_{\mathrm{ang}}.
$$
Put $K_\Phi=\int_{B_r}\Phi|\nabla u|^2\,d\operatorname{vol}_g$. From $\nabla Z=g+A_Z$, the trace of $\mathsf S[u]$, and integration by parts against $u\Phi$,
$$
\int_{B_r}\Phi\mathsf S[u]:\nabla Z\,d\operatorname{vol}_g
 =-\tfrac{m-2}{2}K_\Phi
+maU_\Phi+J_{\mathrm{geo}}.
\qquad
K_\Phi=D+2aU_\Phi-Q_\Phi.
$$
Substitution gives \eqref{eq:frozen-angular-rellich}--\eqref{eq:frozen-angular-error-decomp}.

The bounds $|A_Z|\le C\kappa\rho^2$ and $\rho|\nabla_T\Phi|\le C(\kappa+a)r^2\Phi$ give
$$
|J_{\mathrm{geo}}|\le C\kappa r^2(K_\Phi+aU_\Phi),
\qquad
|J_{\mathrm{ang}}|\le C(\kappa+a)r^2K_\Phi.
$$
Surface comparison, coarea, \cref{lem:height-mono}, and the unweighted energy identity yield
\begin{equation}\label{eq:vol-height-bound}
U_\Phi\le CrH,
\qquad
K_\Phi\le C(H^{1/2}I^{1/2}+arH).
\end{equation}
Hence
$$
\frac2r\bigl(|J_{\mathrm{geo}}|+|J_{\mathrm{ang}}|\bigr)
\le C(\kappa+a)rH^{1/2}I^{1/2}
+C(\kappa+a)ar^2H.
$$

If $m=2$, $Q_\Phi$ has zero coefficient in \eqref{eq:frozen-angular-error-decomp}. If $m\ge3$, integration by parts on each $S_s$ and \eqref{eq:frozen-weight-calculus} give
$$
|Q_\Phi|
\le C(\kappa+a)r^{3-m}H\int_0^rs^{m-3}\,ds
\le C(\kappa+a)rH.
$$
These estimates prove \eqref{eq:frozen-angular-error-bound}.
\end{proof}

\begin{theorem}[Almost-monotonicity of the frequency]
\label{thm:ost-freq-almostmono}
Assume bounded geometry of order $2$ on a domain $\Omega\subset X\setminus\partial X$. Let $u\not\equiv0$ solve $(\tfrac12\Delta_g+a)u=0$ in $\Omega$, where $a\ge0$, and let $B(x,r_*)\Subset\Omega$, $r_*\le r_0$, and $ar_*^2\le c_0$. Let $\kappa$ be the curvature bound of \cref{lem:poisson-comp-small} on $B(x,r_*)$. For $0<r<r_*$, write $H(r)=H_u(x,r)$, let $D(r)$ be as above, and set
\[
\mathcal N(r):=\frac{rD(r)}{H(r)}.
\]
Then $\mathcal N$ is locally absolutely continuous and
\begin{equation}\label{eq:ost-freq-almostmono}
\mathcal N'(r)\ge
-C_{\mathrm{am}}(\kappa+a)r\bigl(1+\mathcal N(r)\bigr)
\qquad\text{for a.e. }r\in(0,r_*),
\end{equation}
where $C_{\mathrm{am}}$ depends only on $m$ and the bounded-geometry data. Moreover,
\begin{equation}\label{eq:Nu-compare-Naux}
N_u(x,r)=\mathcal N(r)+O\bigl((\kappa+a)r^2\bigr)
\end{equation}
for almost every $r$, and, after decreasing $r_0$ and $c_0$ if necessary, $1+N_u(x,r)\asymp1+\mathcal N(r)$ for almost every $r$.
\end{theorem}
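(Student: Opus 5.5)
We compute $H'$ and $D'$ directly from the boundary integrals and then substitute the weighted Rellich identity of \cref{lem:frozen-angular-rellich}. In Euclidean space this closes up as an exact monotonicity; on a manifold the weight $P_r$ is no longer constant and the metric is only approximately Euclidean, so we track the errors.

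\textbf{Derivative of $H$.} Pulling back to $S_xX$ gives
\[
H(r)=\int_{S_xX}u(\exp_x(r\omega))^2P_r(\exp_x(r\omega))\,\mathcal J(r,\omega)\,d\omega,
\]
where $\mathcal J$ is the area density, with $\partial_r\log\mathcal J=\Delta_g\rho$. Differentiating in $r$ and using the definition of $\mathcal R_r$ in \eqref{eq:poisson-transport-bounds}:
\[
H'=2D+\int_{S_r}u^2\mathcal R_r\,d\sigma_g=2D+O\bigl((\kappa+a)rH\bigr).
\]
Local absolute continuity of $H$ follows from that of $r\mapsto\Theta(r,\cdot)$ in \cref{lem:poisson-comp-small}(ii). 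Hence
\[
N_u=\frac{rH'}{2H}=\mathcal N+O\bigl((\kappa+a)r^2\bigr),
\]
which is \eqref{eq:Nu-compare-Naux}. Once $C(\kappa+a)r^2\le\tfrac12$ (after shrinking $r_0,c_0$), this also gives $1+N_u\asymp1+\mathcal N$. Note that $N_u\ge0$ by \cref{lem:height-mono}, so $\mathcal N\ge -C(\kappa+a)r^2$.

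\textbf{Derivative of $D$.} Write $D=\int_{S_r}\langle\nabla(u^2/2),\mathbf n\rangle P_r\,d\sigma_g$. Differentiating the same way gives $D'$ as the sum of three pieces:
\[
D'=\int_{S_r}\bigl(\partial_{\mathbf n}(u\partial_{\mathbf n}u)+(\Delta_g\rho)\,u\partial_{\mathbf n}u\bigr)P_r\,d\sigma_g+\int_{S_r}u\,\partial_{\mathbf n}u\,\mathcal R_r\,d\sigma_g .
\]
In the first integral, expand $\partial_{\mathbf n}(u\partial_{\mathbf n}u)=(\partial_{\mathbf n}u)^2+u\,\partial^2_{\mathbf n}u$. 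Then use the equation in the form $\partial_{\mathbf n}^2u+(\Delta_g\rho)\partial_{\mathbf n}u+\Delta_{S_r}u=-2au$, and integrate $u\Delta_{S_r}u\,P_r$ by parts on the sphere. This yields
\[
D'=I+T-2aH+O\bigl((\kappa+a)r\,H^{1/2}(I+T)^{1/2}\bigr)+O\bigl((\kappa+a)r\,|D|\bigr),
\]
where the error terms come from $\nabla_{S_r}P_r$ and $\mathcal R_r$, bounded by \eqref{eq:poisson-transport-bounds}.

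\textbf{Inserting the Rellich identity.} Substitute \eqref{eq:frozen-angular-rellich} for $T$. The terms $2aH$ cancel, leaving
\[
D'=2I+\frac{m-2}{r}D-\frac{4a}{r}U_\Phi+E_{\mathrm{fr}}+\text{errors}.
\]
Now compute
\[
\mathcal N'=\frac{D}{H}+\frac{rD'}{H}-\frac{rDH'}{H^2}.
\]
With $H'=2D+O((\kappa+a)rH)$, the main terms combine to
\[
\frac{2r}{H^2}\bigl(IH-D^2\bigr)+\frac{(m-1)D}{H}+\dots .
\]
To make this work, I would instead use the frozen-weight normalisation in which the $(m-2)/r$ and $\Delta_g\rho$ terms cancel exactly, as they do for the Euclidean Almgren frequency. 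The Cauchy--Schwarz term $IH-D^2\ge0$ is the good term.

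The delicate remainder is $-4aU_\Phi r/H$. By \eqref{eq:vol-height-bound} it is $O(ar^2)$ per unit $r$, i.e.\ $O(ar)$ after the scaling. This is acceptable; alternatively, one can absorb it by combining it with the $2aH$ coming from the weighted kernel, exactly as in the radial model.

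\textbf{Absorbing the errors (expected main obstacle).} Errors of the form $(\kappa+a)rH^{1/2}I^{1/2}/H$ do not carry a factor $(1+\mathcal N)$ directly. To handle them I would use Young's inequality,
\[
(\kappa+a)r\,H^{1/2}I^{1/2}\le \varepsilon\,r\,(IH-D^2)/H+C\varepsilon^{-1}(\kappa+a)^2r\,H+(\kappa+a)r\,|D|.
\]
The first term is absorbed into the good term $2r(IH-D^2)/H^2$, and $|D|r/H=|\mathcal N|\le 1+\mathcal N$ plus a small correction. The error $E_{\mathrm{fr}}$ from \eqref{eq:frozen-angular-error-bound} is handled the same way. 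The last term in \eqref{eq:frozen-angular-error-bound}, $C(\kappa+a)ar^2H$, is harmless since $ar^2\le c_0$.

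Collecting everything gives
\[
\mathcal N'\ge -C(\kappa+a)r(1+\mathcal N)
\]
almost everywhere, which is \eqref{eq:ost-freq-almostmono}.
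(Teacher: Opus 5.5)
Your strategy is the paper's: differentiate $H$ and $D$, insert the Rellich identity of \cref{lem:frozen-angular-rellich}, keep $\mathcal V=\frac{2r}{H}\bigl(I-D^2/H\bigr)\ge0$ as the good term, and absorb the rest. Your treatment of $H'$ and of \eqref{eq:Nu-compare-Naux} is fine. The gap is that your formula for $D'$ is false, so the decisive cancellation is never obtained.

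On $S_xX$, the $r$-derivative of the density $P_r\mathcal J$ is exactly $\mathcal R_r\mathcal J$. The area-growth term $(\Delta_g\rho)P_r$ is already inside $\mathcal R_r$; that is why $\mathcal R_r/P_r$ is small, and you used this correctly for $H'$. Hence
\[
D'=\int_{S_r}\partial_{\mathbf n}(u\,\partial_{\mathbf n}u)\,P_r\,d\sigma_g+\int_{S_r}u\,\partial_{\mathbf n}u\,\mathcal R_r\,d\sigma_g,
\]
without your extra term $(\Delta_g\rho)\,u\,\partial_{\mathbf n}u\,P_r$. After the polar form of the equation is inserted, the term $-\int_{S_r}(\Delta_g\rho)\,u\,\partial_{\mathbf n}u\,P_r\,d\sigma_g=-\frac{m-1}{r}D+O\bigl(\kappa rH^{1/2}I^{1/2}\bigr)$ survives. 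This gives \eqref{eq:Dprime-main}.

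Your version $D'=I+T-2aH+\text{errors}$ already fails for $u=|x|^kY_k$ in $\mathbb R^m$ with $a=0$. Normalising $\fint_{S^{m-1}}Y_k^2=1$, one has $D'=k(2k-1)r^{2k-2}$, whereas $I+T=k(2k+m-2)r^{2k-2}$.

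With the correct $D'$, the coefficient of $D/H$ in $\mathcal N'$ is $1-(m-1)+(m-2)=0$. The three contributions come from the factor $r$, from $D'$, and from the Rellich identity, and together they give exactly \eqref{eq:frequency-assembly}. Your leftover $(m-1)D/H$ is an artefact of the double count. No change of ``normalisation'' is needed, and your appeal to one does not replace the computation.

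The error bookkeeping also needs two repairs. First, $E_D$ contains $H^{1/2}T^{1/2}$, and you never bound $T$. The paper uses the Rellich identity a second time to get $T/H\le C\bigl((1+\mathcal N^2)/r^2+\mathcal V/r+a+(\kappa+a)^2r^2\bigr)$ before Young's inequality can be applied.

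Second, the error enters $\mathcal N'$ as $r|E_D|/H$. With that prefactor, your Young split leaves a remainder of order $(\kappa+a)^2r^2$. This is not $O\bigl((\kappa+a)r\bigr)$ with constants depending only on $m$ and the geometry, because $ar$ can be as large as $(c_0a)^{1/2}$, as happens at the wavelength scale. Split instead as $(\kappa+a)r^{3/2}\mathcal V^{1/2}\le\varepsilon\mathcal V+C\varepsilon^{-1}(\kappa+a)^2r^3$, and then use $(\kappa+a)r^2\le C$.

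Finally, defining $\mathcal N$ and proving its local absolute continuity requires $H(r)>0$. This follows from the stopping identity together with unique continuation.
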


\begin{proof}
Put $B_r=B(x,r)$ and $\mathcal R_r=\partial_rP_r+(\Delta_g\rho)P_r$. The kernel bounds make $H$ and $D$ locally absolutely continuous. Moreover $H(r)>0$: otherwise the stopping identity for $u$ forces $u=0$ in $B_r$, contrary to unique continuation; see, for example, \cite{Aronszajn1957,GarofaloLin1986}. Thus $\mathcal N$ is locally absolutely continuous.

First variation gives
\begin{equation}\label{eq:EH-bound}
H'=2D+E_H,
\qquad
E_H=\int_{S_r}u^2\mathcal R_r\,d\sigma_g,
\qquad
|E_H|\le C(\kappa+a)rH.
\end{equation}
Thus $N_u=\tfrac r2H'/H=\mathcal N+O((\kappa+a)r^2)$. Writing $\mathbf n=\nabla\rho$, the polar decomposition $\Delta_gu=\partial_{\mathbf n\mathbf n}u+(\Delta_g\rho) \partial_{\mathbf n}u+\Delta_{S_r}u$ and integration by parts on $S_r$ give
\begin{equation}\label{eq:Dprime-main}
D'=I+T-2aH-\frac{m-1}{r}D+E_D,
\end{equation}
where
\[
\begin{aligned}
E_D={}&\int_{S_r}u\partial_{\mathbf n}u
\left(\frac{\mathcal R_r}{P_r}
-\Delta_g\rho+\frac{m-1}{r}\right)P_r\,d\sigma_g\\
&+\int_{S_r}u\langle\nabla_Tu,\nabla_T\log P_r\rangle
P_r\,d\sigma_g.
\end{aligned}
\]
The bounds in \cref{lem:poisson-comp-small}\textup{(ii$'$)} and $|\Delta_g\rho-(m-1)/r|\le C\kappa r$ imply
\begin{equation}\label{eq:ED-bound-root}
|E_D|\le C(\kappa+a)rH^{1/2}(I^{1/2}+T^{1/2}).
\end{equation}

Differentiate $\mathcal N=rD/H$ and use \cref{lem:frozen-angular-rellich}, with $\Phi$ extended constantly along radial geodesics and $U_\Phi=\int_{B_r}u^2\Phi\,d\operatorname{vol}_g$. The terms involving $2aH$ and $(m-2)D/r$ cancel, leaving
\begin{equation}\label{eq:frequency-assembly}
\mathcal N'=\mathcal V-\frac{4a}{H}U_\Phi
+r\frac{E_D}{H}+r\frac{E_{\mathrm{fr}}}{H}
-\mathcal N\frac{E_H}{H},
\qquad
\mathcal V:=\frac{2r}{H}\left(I-\frac{D^2}{H}\right)\ge0.
\end{equation}
By \eqref{eq:vol-height-bound}, $I/H=\mathcal N^2/r^2+\mathcal V/(2r)$, and Young's inequality,
$$
\frac{4aU_\Phi}{H}\le Car,
\qquad
r\frac{|E_{\mathrm{fr}}|}{H}
\le\tfrac18\mathcal V
+C(\kappa+a)r(1+|\mathcal N|).
$$

The Rellich identity also gives
$$
\frac TH\le C\left(
\frac{1+\mathcal N^2}{r^2}+\frac{\mathcal V}{2r}
+a+(\kappa+a)^2r^2\right).
$$
Substitution in \eqref{eq:ED-bound-root} yields
$$
r\frac{|E_D|}{H}
\le\tfrac18\mathcal V+C(\kappa+a)r(1+|\mathcal N|).
$$
Together with \eqref{eq:EH-bound} and \eqref{eq:frequency-assembly}, this gives
$$
\mathcal N'\ge\tfrac34\mathcal V
-C(\kappa+a)r(1+|\mathcal N|).
$$
Finally, \cref{lem:height-mono} gives $N_u\ge0$ almost everywhere, so \eqref{eq:Nu-compare-Naux} and the smallness of $(\kappa+a)r^2$ give $\mathcal N\ge-1/2$. Hence $1+|\mathcal N|\le3(1+\mathcal N)$, proving \eqref{eq:ost-freq-almostmono} and the last assertion.
\end{proof}

\paragraph{Relation with classical frequencies.}
The leading Rellich cancellation does not require the Poisson weight. The latter makes $H(r)=\mathbb E_x[e^{2a\tau_{x,r}}u(B_{\tau_{x,r}})^2]$ the second moment of the stopped martingale, so a frequency bound gives doubling directly. Classical Almgren and Garofalo--Lin frequencies use geometric surface energies rather than this exponential weight; see \cite{Alm79,GarofaloLin1986}. For the Helmholtz equation, at fixed radius in the standard model geometries, any sup-norm three-ball constant is necessarily exponential in the wave number \cite[Theorems~2.2 and~2.9]{BergeMalinnikova2021}.

The three conclusions below have different inputs. Part~\textup{(i)}
combines integration of the defining logarithmic derivative with the
standard norm comparisons; part~\textup{(ii)} uses
\cref{thm:ost-freq-almostmono}; and part~\textup{(iii)} combines the
doubling estimate in part~\textup{(i)} with a ground-state transform and
a Remez theorem.

\begin{proposition}
[Doubling, a weighted three-radius inequality, and a Remez consequence]
\label{prop:remez-from-ost}
Assume $B(x_0,4R)\Subset X\setminus\partial X$, with bounded geometry of order $2$ there, and let $u\not\equiv0$ solve $(\tfrac12\Delta_g+a)u=0$ there, where $4R\le r_0$ and $a(2R)^2\le c_0$. Let $\kappa$ be the curvature bound of \cref{lem:poisson-comp-small} on $B(x_0,4R)$. Then:
\begin{enumerate}
\item[\textup{(i)}] For $x\in B(x_0,R/2)$ and $0<r\le R/4$, set
\[
N_*(x;r)=\operatorname*{ess\,sup}_{s\in[r,2r]}N_u(x,s),
\qquad
\widetilde N_*(x;r)=
\operatorname*{ess\,sup}_{s\in[r/2,2r]}N_u(x,s).
\]
Then
\begin{equation}\label{eq:ost-H-doubling}
H_u(x,2r)\le2^{2N_*(x;r)}H_u(x,r).
\end{equation}
Moreover,
\[
\fint_{\partial B(x,2r)}|u|^2\,d\sigma_g
\le C2^{2N_*(x;r)}
\fint_{\partial B(x,r)}|u|^2\,d\sigma_g,
\]
and
\[
\fint_{B(x,2r)}|u|^2\,d\operatorname{vol}_g
\le C4^{2\widetilde N_*(x;r)}
\fint_{B(x,r)}|u|^2\,d\operatorname{vol}_g.
\]

\item[\textup{(ii)}] For $0<r<R$, set
\[
C_0=C_{\mathrm{am}}\sup_{0<s\le2R}(\kappa+a)s^2
\]
and, if $C_0>0$,
\[
\widetilde\alpha=
\frac{R^{-C_0}-(2R)^{-C_0}}
{r^{-C_0}-(2R)^{-C_0}},
\]
while $\widetilde\alpha=\log2/\log(2R/r)$ if $C_0=0$. Then
\begin{equation}\label{eq:3ball-H}
H_u(x_0,R)\le
C H_u(x_0,r)^{\widetilde\alpha}
H_u(x_0,2R)^{1-\widetilde\alpha}.
\end{equation}

\item[\textup{(iii)}] The following measurable-set estimate is a
consequence of the Remez theorem of Logunov--Malinnikova
\cite[Lemma~4.2 and Remark~4.3]{LogunovMalinnikova2018}. Let
\[
N_\sharp=
\sup_{x\in B(x_0,R)}
\operatorname*{ess\,sup}_{\substack{0<r\le R\\
B(x,2r)\subset B(x_0,2R)}}N_u(x,r).
\]
If $N_\sharp<\infty$, $0<\gamma\le1$, and the measurable set $E\subset B(x_0,R/2)$ satisfies
\[
\operatorname{vol}_g(E)\ge
\gamma\operatorname{vol}_g(B(x_0,R/2)),
\]
then
\begin{equation}\label{eq:remez-from-ost}
\int_{B(x_0,R/2)}u^2\,d\operatorname{vol}_g
\le\left(\frac C\gamma\right)^{C(1+N_\sharp)}
\int_Eu^2\,d\operatorname{vol}_g.
\end{equation}
\end{enumerate}
Here $C$ depends only on $m$ and the bounded-geometry data.
\end{proposition}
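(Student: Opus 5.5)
Part (i) follows by integrating the defining logarithmic derivative. Since $H_u(x,\cdot)$ is locally absolutely continuous and positive, for $s\in[r,2r]$ we have $\partial_s\log H_u(x,s)=2N_u(x,s)/s$. Integrating over $[r,2r]$ gives
\[
\log H_u(x,2r)-\log H_u(x,r)\le 2N_*(x;r)\log 2,
\]
which is \eqref{eq:ost-H-doubling}. The surface-mean version then follows from the two-sided comparison \eqref{eq:H-comp-surfmean} of \cref{lem:poisson-comp-small}\textup{(iii)}, applied at radii $r$ and $2r$. Here $a(2r)^2\le c_0$ and $2r\le r_0$ are guaranteed by the hypotheses. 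The solid-ball version is obtained by coarea on both sides. For the upper side, write $\int_{B(x,2r)}u^2=\int_0^{2r}\int_{S_s}u^2$. For $s\le r/2$, the monotonicity of $H$ from \cref{lem:height-mono} bounds the surface mean at $s$ by that at $r/2$ up to $C$. For $s\in[r/2,2r]$, doubling at most twice (from $r/2$) bounds it by $C4^{2\widetilde N_*}$ times the surface mean at radius $r/2$. For the lower side, $\fint_{B(x,r)}u^2\ge c\fint_{\partial B(x,r/2)}u^2$, using monotonicity of $H$ on $[r/2,r]$ together with the surface comparison. Combining the two sides gives the stated constant.

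For part (ii), I would work with $\mathcal N$ at the centre $x_0$. By \cref{thm:ost-freq-almostmono}, $\mathcal N'\ge-C_{\mathrm{am}}(\kappa+a)s(1+\mathcal N)$. Since $(\kappa+a)s\le C_0/s$ on $(0,2R]$, this gives $\bigl(s^{C_0}(1+\mathcal N(s))\bigr)'\ge0$. Next I would use \eqref{eq:Nu-compare-Naux} to replace $N_u$ by $\mathcal N$ at the cost of an $O((\kappa+a)s^2)$ error. Integrated against $ds/s$, this error contributes only a bounded multiplicative constant to $H$, which is the $C$ in \eqref{eq:3ball-H}. Now set $\Lambda:=R^{C_0}(1+\mathcal N(R))$. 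Monotonicity of $s^{C_0}(1+\mathcal N(s))$ gives $1+\mathcal N(s)\le\Lambda s^{-C_0}$ for $s\le R$ and $1+\mathcal N(s)\ge\Lambda s^{-C_0}$ for $s\ge R$. Integrating $2(1+\mathcal N)/s$ yields
\[
\log\frac{H(R)}{H(r)}\le \frac{2\Lambda}{C_0}(r^{-C_0}-R^{-C_0})+C',
\qquad
\log\frac{H(2R)}{H(R)}\ge \frac{2\Lambda}{C_0}(R^{-C_0}-(2R)^{-C_0})-C',
\]
with the extra linear $-2\log$ terms handled identically. Eliminating $\Lambda$ between the two inequalities gives the convex combination with exponent $\widetilde\alpha$, and the case $C_0=0$ is the limit of the same computation. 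The main bookkeeping point is choosing the weights so that the exponent comes out exactly as $\widetilde\alpha$.

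For part (iii), I would apply a ground-state transform. Let $h>0$ solve $(\tfrac12\Delta_g+a)h=0$ on $B(x_0,2R)$ with $h\asymp1$ and $\|\nabla h\|\lesssim a R$; the regular positive solution $h_{x_0,2R}$ from \cref{thm:D}'s setting is one choice. Then $v=u/h$ solves a divergence-form equation $\operatorname{div}(h^2\nabla v)=0$, with uniformly elliptic, Lipschitz coefficients in the rescaled ball. Part (i), combined with $h\asymp1$, bounds the doubling index of $v$ on every ball $B(x,2r)\subset B(x_0,2R)$ by $C(1+N_\sharp)$. Lemma~4.2 and Remark~4.3 of \cite{LogunovMalinnikova2018} then give the Remez inequality for $v$ with exponent $C(1+N_\sharp)$. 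Multiplying back by $h$ costs only constants, which yields \eqref{eq:remez-from-ost}. I expect this step to be the main obstacle: one must check that the hypotheses of the cited Remez theorem (regularity of the coefficients and a doubling index on all subballs) are exactly what part (i) supplies after rescaling.
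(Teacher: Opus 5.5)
Parts (i) and (ii) are correct and follow the paper. In (i) you integrate $2N_u/s$ and then use \eqref{eq:H-comp-surfmean}, coarea and \cref{lem:height-mono}, exactly as the paper does. In (ii), comparing $1+\mathcal N$ with $\Lambda s^{-C_0}$ on the two sides of $R$ is the slope form of the paper's observation: with $q=N_u-\mathcal N$, the function $\log H_u(x_0,e^t)+2t-2\int_{t_1}^t q$ is convex in $z=e^{-C_0t}$.

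Two small points in (ii). The passage from the almost-monotonicity inequality to $(s^{C_0}(1+\mathcal N(s)))'\ge0$ uses $1+\mathcal N\ge\tfrac12$ from \cref{thm:ost-freq-almostmono}. The linear terms you wave at do close up: they enter as $2(1-\widetilde\alpha)\log2-2\widetilde\alpha\log(R/r)\le2\log2$.

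The genuine gap is in (iii). Your ground-state transform with $h=\mathbb E_y[e^{a\tau_{B(x_0,2R)}}]$ is the paper's, but the argument stops where the paper's proof does its real work. The cited Remez lemma is a sup-norm estimate on a single cube, $\sup_Q|v|\le(C/\gamma)^{CN}\sup_{E\cap Q}|v|$. Here $N$ is the sup-norm doubling index from $Q$ to $2Q$, and the equation is needed on a large dilate $K_mQ$. This leaves three steps missing.

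\emph{(a)} ``Multiplying back by $h$'' only gives $\sup_{B(x_0,R/2)}|u|\lesssim\sup_E|u|$. That does not imply \eqref{eq:remez-from-ost}, because $\sup_Eu^2$ can be far larger than $\operatorname{vol}_g(E)^{-1}\int_Eu^2$. The paper applies the lemma to the Chebyshev subset $E'=\{y\in E:|v(y)|^2\le2\operatorname{vol}_g(E)^{-1}\int_E|v|^2\}$, which keeps at least half the volume of $E$.

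\emph{(b)} $B(x_0,R/2)$ cannot itself serve as such a cube with $K_mQ\Subset B(x_0,3R/2)$. You must cover it by boundedly many small coordinate cubes and pigeonhole to find one $Q$ with $|E'\cap Q|\ge c\gamma|Q|$. Then apply the lemma there and propagate the bound along chains $Q_{j+1}\subset2Q_j$ of bounded length, each step costing $2^{C(1+N_\sharp)}$.

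\emph{(c)} Part (i) gives $L^2$ doubling of $u$ only about centres in $B(x_0,R/2)$ and at radii $r\le R/4$, not ``on every ball $B(x,2r)\subset B(x_0,2R)$''. To obtain the sup-norm index of $v$ on each covering cube, you need local boundedness for $v$. You also need concentric balls $B_-\subset Q\subset2Q\subset\tfrac12B_+$, with $2B_+\subset B(x_0,2R)$ and bounded radius ratio. Finitely many applications of (i) then give $\sup_{2Q}|v|^2/\sup_Q|v|^2\le2^{C(1+N_\sharp)}$.
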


\begin{proof}
For \textup{(i)},
\[
\log\frac{H_u(x,2r)}{H_u(x,r)}
=\int_r^{2r}\frac{2N_u(x,s)}s\,ds
\le2N_*(x;r)\log2.
\]
The surface statement follows from \cref{eq:H-comp-surfmean}. Moreover, surface comparison, coarea, and \cref{lem:height-mono} give
\[
\int_{B(x,2r)}u^2\,d\operatorname{vol}_g\le Cr^mH_u(x,2r),
\qquad
\int_{B(x,r)}u^2\,d\operatorname{vol}_g\ge cr^mH_u(x,r/2).
\]
Integrating the frequency from $r/2$ to $2r$ proves the volume statement.

For \textup{(ii)}, put $t_1=\log r$, $t_2=\log R$, $t_3=\log(2R)$, $L(t)=\log H_u(x_0,e^t)$, and
\[
q(t)=N_u(x_0,e^t)-\mathcal N(e^t),
\qquad |q(t)|\le C(\kappa+a)e^{2t}.
\]
Since $1+\mathcal N\ge1/2$, \eqref{eq:ost-freq-almostmono} implies that $F(t)=e^{C_0t}(1+\mathcal N(e^t))$ is nondecreasing. Define
\[
G(t)=L(t)-L(t_1)+2(t-t_1)-2\int_{t_1}^tq(s)\,ds.
\]
Then $G'=2(1+\mathcal N)$, so for $C_0>0$ the function $G$ is convex in $z=e^{-C_0t}$; for $C_0=0$ it is convex in $t$. Evaluation at $t_1,t_2,t_3$ gives the exponent $\widetilde\alpha$. The linear correction is at most $2\log2$, while
\[
\int_{t_1}^{t_3}|q(s)|\,ds\le C(\kappa+a)R^2\le C,
\]
which proves \eqref{eq:3ball-H}.

For \textup{(iii)}, let $D=B(x_0,2R)$ and
\[
h(y)=\mathbb E_y[e^{a\tau_D}].
\]
Then $h\asymp1$ with uniform rescaled $C^1$ bounds on $B(x_0,3R/2)$, and $v=u/h$ satisfies
\[
\operatorname{div}_g(h^2\nabla_gv)=0.
\]
Choose a cover $\mathcal Q$ of $B(x_0,R/2)$ by a uniformly bounded number of congruent coordinate cubes with fixed overlap, centred in that ball and fine enough that $K_mQ\Subset B(x_0,3R/2)$ for a fixed sufficiently large $K_m>20m$. Choose it so that any two members are joined by a uniformly bounded chain $Q_0,\ldots,Q_J$ in $\mathcal Q$ with $Q_{j+1}\subset2Q_j$. In these coordinates $v$ satisfies $\operatorname{div}(A\nabla v)=0$ with uniformly elliptic Lipschitz $A$ after rescaling. For every $Q\in\mathcal Q$ there are concentric metric balls of comparable radii such that $B_-\subset Q\subset2Q\subset\frac12B_+$, $2B_+\subset B(x_0,2R)$, and $\operatorname{rad}B_+\le R/2$. Since $h\asymp1$, local boundedness and a fixed number of applications of \textup{(i)} on these admissible intermediate balls give
\[
\frac{\sup_{2Q}|v|^2}{\sup_Q|v|^2}
\le C\frac{\fint_{B_+}|u|^2\,d\operatorname{vol}_g}
{\fint_{B_-}|u|^2\,d\operatorname{vol}_g}
\le2^{C(1+N_\sharp)}.
\]
Thus the doubling index from $Q$ to $2Q$ is at most $C(1+N_\sharp)$.

Set
\[
E'=\left\{y\in E:|v(y)|^2\le
\frac2{\operatorname{vol}_g(E)}\int_E|v|^2\,d\operatorname{vol}_g\right\}.
\]
Then $\operatorname{vol}_g(E')\ge\operatorname{vol}_g(E)/2$. Coordinate-volume comparison and pigeonhole give some $Q\in\mathcal Q$ such that, with $E_Q:=E'\cap Q$, $|E_Q|_{\rm e}\ge c\gamma|Q|_{\rm e}$.

If $v$ is real, the Remez estimate of \cite[Lemma~4.2 and Remark~4.3]{LogunovMalinnikova2018} gives
\[
\sup_Q|v|
\le\left(\frac C\gamma\right)^{C(1+N_\sharp)}\sup_{E_Q}|v|.
\]

For complex $v$, choose $\theta$ so that, for $w=\operatorname{Re}(e^{-i\theta}v)$, $\sup_Q|w|=\sup_Q|v|$. Since $\sup_{2Q}|w|\le\sup_{2Q}|v|$ and $|w|\le|v|$, the one-scale version of the same estimate, applied to $w$, gives the displayed bound.

In either case, propagation along the chains chosen above gives
\[
\sup_{B(x_0,R/2)}|v|
\le\left(\frac C\gamma\right)^{C(1+N_\sharp)}\sup_{E'}|v|.
\]
Integrating and using $h\asymp1$ proves \eqref{eq:remez-from-ost}.
\end{proof}

\section{Superlevel components and nodal rigidity}
\label{sec:rigidity}

\subsection{Unconditional superlevel components}
\label{subsec:superlevel-components}

Assume in this subsection that $X$ is closed. Let $\varphi$ be real-valued and satisfy $\Delta_g\varphi+\lambda\varphi=0$, with $\lambda>0$. For $\ell\ge0$, let $C$ be a component of $\{\varphi>\ell\}$, choose $x_C\in C$ with
\[
M_C:=\varphi(x_C)=\max_C\varphi,
\]
set $\vartheta_C:=\ell/M_C\in[0,1)$, and let $\tau_C$ be the first exit time from $C$.

\begin{theorem}[Brownian exit and volume of a superlevel component]
\label{thm:superlevel-stopping}
For every $x\in C$ and $t>0$,
\begin{equation}\label{eq:superlevel-exit}
\mathbb P_x(\tau_C\le t)
\le
\min\left\{1,
\frac{1-e^{-\lambda t/2}\varphi(x)/M_C}{1-\vartheta_C}\right\}.
\end{equation}
Moreover,
\begin{equation}\label{eq:superlevel-occupation}
\mathbb E_{x_C}\int_0^{\tau_C}
\frac{\varphi(B_s)}{M_C}\,ds
=\frac{2(1-\vartheta_C)}{\lambda}.
\end{equation}
If $\ell>0$, then
\begin{equation}\label{eq:superlevel-exp-moment}
\mathbb E_{x_C}e^{(\lambda/2)\tau_C}=\frac{M_C}{\ell}
\end{equation}
and
\begin{equation}\label{eq:superlevel-mean-exit}
\frac{2(1-\vartheta_C)}{\lambda}
\le\mathbb E_{x_C}\tau_C
\le\frac{2}{\lambda}\log\frac{M_C}{\ell}.
\end{equation}

There are $r_0,c,C>0$, depending only on bounded geometry, such that for $0<r\le r_0$,
\begin{equation}\label{eq:superlevel-local-volume}
\frac{\operatorname{vol}_g(B(x_C,r)\setminus C)}
{\operatorname{vol}_g(B(x_C,r))}
\le
C\min\left\{1,\frac{\lambda r^2}{1-\vartheta_C}\right\}.
\end{equation}
Consequently, for every $\eta\in(0,1)$, if
\[
r_{C,\eta}=c\min\left\{r_0,
\sqrt{\eta(1-\vartheta_C)}\,\lambda^{-1/2}\right\},
\]
then
\begin{equation}\label{eq:superlevel-almost-full}
\operatorname{vol}_g(C\cap B(x_C,r_{C,\eta}))
\ge(1-\eta)\operatorname{vol}_g(B(x_C,r_{C,\eta})).
\end{equation}
Finally,
\begin{equation}\label{eq:superlevel-global-volume}
\operatorname{vol}_g(C)
\ge c_g(1-\vartheta_C)^{m/2}\lambda^{-m/2},
\end{equation}
where $c_g>0$ depends only on $(X,g)$. The same conclusions hold for components of $\{\varphi<-\ell\}$ after replacing $\varphi$ by $-\varphi$.
\end{theorem}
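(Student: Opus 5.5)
Throughout, the main tool is the Feynman--Kac martingale $M_t=e^{(\lambda/2)t}\varphi(B_t)$ together with the additive identity $\varphi(B_t)+\tfrac{\lambda}{2}\int_0^t\varphi(B_s)\,ds$, which is a local martingale by It\^o's formula since $\tfrac12\Delta_g\varphi=-\tfrac\lambda2\varphi$. On $\overline C$ we have $\ell\le\varphi\le M_C$, and $\varphi=\ell$ on $\partial C$ (a component of the open set $\{\varphi>\ell\}$ on a closed manifold; $\partial C\subset\{\varphi=\ell\}$). Because $C\ne X$ (it omits the negative set, as $\int\varphi=0$), $\tau_C$ has exponential moments of all small orders. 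The first step is to prove \eqref{eq:superlevel-occupation}. Stop the additive martingale at $\tau_C\wedge t$ and let $t\to\infty$: $\varphi$ is bounded on $\overline C$ and $\mathbb E\tau_C<\infty$, so dominated/monotone convergence gives $M_C=\ell+\tfrac\lambda2\mathbb E_{x_C}\int_0^{\tau_C}\varphi(B_s)\,ds$. Dividing by $M_C$ gives the identity; it requires no spectral gap and holds also for $\ell=0$.

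For \eqref{eq:superlevel-exp-moment}, when $\ell>0$ I would use the bounded stopped martingale $M_{t\wedge\tau_C}$ up to fixed $t$. On $[0,\tau_C]$ we have $\varphi\ge\ell>0$, so Fatou and monotone convergence give $\mathbb E e^{(\lambda/2)\tau_C}\ell\le M_C$; in particular $e^{(\lambda/2)\tau_C}$ is integrable. Since $\sup_{s\le\tau_C}|M_s|\le M_Ce^{(\lambda/2)\tau_C}$, the stopped martingale is uniformly integrable, so equality holds. Jensen applied to the exponential then gives the upper bound in \eqref{eq:superlevel-mean-exit}. The lower bound follows from \eqref{eq:superlevel-occupation} because $\varphi/M_C\le1$.

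The exit bound \eqref{eq:superlevel-exit} comes from optional stopping of $M$ at $\tau_C\wedge t$:
\[
\varphi(x)=\mathbb E_x\bigl[e^{(\lambda/2)(\tau_C\wedge t)}\varphi(B_{\tau_C\wedge t})\bigr]\le e^{\lambda t/2}\bigl(\ell\,\mathbb P(\tau_C\le t)+M_C(1-\mathbb P(\tau_C\le t))\bigr).
\]
Solving for $\mathbb P(\tau_C\le t)$ gives the claim.

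For the local volume bound \eqref{eq:superlevel-local-volume}, take $x=x_C$ in \eqref{eq:superlevel-exit}, which gives $\mathbb P_{x_C}(\tau_C\le t)\le(1-e^{-\lambda t/2})/(1-\vartheta_C)\le \lambda t/(2(1-\vartheta_C))$. Set $t=r^2$. On the event $\{\tau_C>r^2\}$ we have $B_{r^2}\in C$, so
\[
\mathbb P_{x_C}(B_{r^2}\notin C)\le \lambda r^2/(2(1-\vartheta_C)).
\]
Local Gaussian lower bounds give $p_{r^2}(x_C,y)\ge c\,r^{-m}$ for $y\in B(x_C,r)$, $r\le r_0$, so $\operatorname{vol}(B\setminus C)\,c r^{-m}\le\mathbb P(B_{r^2}\in B\setminus C)$. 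Combining with $\operatorname{vol}B\asymp r^m$ yields \eqref{eq:superlevel-local-volume}. Then \eqref{eq:superlevel-almost-full} follows by choosing $c$ so that $C\lambda r^2/(1-\vartheta_C)\le\eta$. The global bound \eqref{eq:superlevel-global-volume} comes from taking $\eta=\tfrac12$, which gives $\operatorname{vol}C\ge\tfrac12\operatorname{vol}B(x_C,r_{C,1/2})$; when $r_0$ is the active term, $\lambda^{-1/2}$ is bounded below by $r_0$ up to a constant only if $\lambda\gtrsim1$, and the case of small $\lambda$ is handled by the spectral gap on a closed manifold ($\lambda\ge\lambda_1(X)$), which is absorbed into $c_g$. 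The statement for $\{\varphi<-\ell\}$ follows by applying all of this to $-\varphi$.

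The main obstacle is rigour at the stopping times. One must ensure that $\varphi=\ell$ quasi-surely at $B_{\tau_C}$, which holds by continuity of paths and of $\varphi$, and obtain enough integrability of $\tau_C$ when $\ell=0$; there $C$ lies inside a nodal domain whose Dirichlet eigenvalue is at least $\lambda$. With $\ell=0$ I would avoid the exponential moment entirely, since only the additive identity is used, and $\mathbb E\tau_C<\infty$ follows from $\operatorname{vol}(X\setminus C)>0$ together with the heat kernel lower bound.
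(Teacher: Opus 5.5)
Your proposal is correct and follows the paper's proof essentially step by step. It uses optional stopping of $e^{(\lambda/2)t}\varphi(B_t)$ at $t\wedge\tau_C$ for the exit bound, and the same stopping with monotone convergence and uniform integrability for the exponential moment. It uses the unweighted stopped formula for the occupation identity, Jensen for the mean-exit bounds, and the local heat-kernel lower bound for the local volume estimate.

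There are two minor deviations, and both are valid. First, for the occupation identity at $\ell=0$ you argue directly from $\tau_C<\infty$ a.s., which follows from $\operatorname{vol}_g\{\varphi<0\}>0$ and a uniform heat-kernel lower bound; the paper instead approximates by nested components at levels $\ell'\downarrow0$. Second, you derive the global volume bound from the almost-full ball with $\eta=\tfrac12$ together with the spectral gap $\lambda\ge\lambda_1(X)$; the paper instead combines $\mathbb P_{x_C}(\tau_C>(1-\vartheta_C)/\lambda)\ge\tfrac12$ with the heat-kernel upper bound $p_C(t,x_C,\cdot)\le C_gt^{-m/2}$. In the case where $r_0$ is the active term in $r_{C,1/2}$, what you need is an upper bound on $\lambda^{-1/2}$, not the lower bound your wording suggests. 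That upper bound is exactly what $\lambda\ge\lambda_1(X)$ supplies.
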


\begin{proof}
Put $a=\lambda/2$ and $\tau=\tau_C$. Since $\partial C\subset\{\varphi=\ell\}$, optional stopping at $t\wedge\tau$ yields
$$
M_C=\mathbb E_{x_C}\!\left[
e^{a(t\wedge\tau)}\varphi(B_{t\wedge\tau})\right].
$$
For $\ell>0$, comparison with $\ell e^{a(t\wedge\tau)}$, followed by monotone convergence and optional stopping at $\tau$, proves \eqref{eq:superlevel-exp-moment}. Starting instead from any $x\in C$, splitting according to $\{\tau\le t\}$, and using $\varphi\le M_C$ gives
$$
e^{-at}\frac{\varphi(x)}{M_C}
\le\mathbb P_x(\tau>t)+\vartheta_C\mathbb P_x(\tau\le t),
$$
hence \eqref{eq:superlevel-exit}. The unweighted stopped formula is
$$
\mathbb E_{x_C}\varphi(B_{t\wedge\tau})
=M_C-a\mathbb E_{x_C}\int_0^{t\wedge\tau}\varphi(B_s)\,ds.
$$
Letting $t\to\infty$ proves \eqref{eq:superlevel-occupation} when $\ell>0$; for $\ell=0$, apply it to the nested components of $\{\varphi>\ell'\}$ and let $\ell'\downarrow0$. The mean-exit bounds follow from $\varphi/M_C\le1$ and Jensen's inequality.

For $0<r\le r_0$, the local heat-kernel lower bound gives
$$
cr^{-m}\operatorname{vol}_g(B(x_C,r)\setminus C)
\le\mathbb P_{x_C}(\tau_C\le r^2).
$$
Taking $x=x_C$ in \eqref{eq:superlevel-exit} proves \eqref{eq:superlevel-local-volume} and \eqref{eq:superlevel-almost-full}.

Finally, taking $x=x_C$ and $t=(1-\vartheta_C)/\lambda$ in the exit estimate gives $\mathbb P_{x_C}(\tau_C>t)\ge1/2$. Since the positive spectrum of the fixed manifold is bounded away from zero, $t$ is uniformly bounded. The ambient heat-kernel bound therefore gives
$$
\frac12
\le\int_Cp_C(t,x_C,y)\,d\operatorname{vol}_g(y)
\le C_gt^{-m/2}\operatorname{vol}_g(C),
$$
which proves \eqref{eq:superlevel-global-volume}.
\end{proof}

At level zero, \eqref{eq:superlevel-occupation} gives $\mathbb E_{x_C}\tau_C\ge2/\lambda$, the mean-exit form of the landscape bound \cite[Section~6.2.1]{GrebenkovNguyen2013}; the same stopping argument gives the Euclidean slab corollary below without assuming that $M_C$ is attained.

\begin{corollary}[Superlevel components in a slab]
\label{cor:superlevel-slab}
Let $\lambda>0$ and let $\varphi\in C^2(\mathbb R^m)$ satisfy $\Delta\varphi+\lambda\varphi=0$. Let $C$ be a component of $\{\varphi>\ell\}$ contained in $\{x:|x\cdot e-b|<a\}$ for some unit vector $e$. If $\ell>0$, $M_C:=\sup_C\varphi<\infty$, and $a\sqrt\lambda<\pi/2$, then
$$
\frac{\ell}{M_C}\ge\cos(a\sqrt\lambda),
\qquad
1-\frac{\ell}{M_C}\le\frac{\lambda a^2}{2}.
$$
At level zero, any nodal domain on which $|\varphi|$ is bounded and which is contained in a slab of width $2a$ satisfies $2a\ge\pi\lambda^{-1/2}$. All constants are sharp for the one-dimensional plane wave.
\end{corollary}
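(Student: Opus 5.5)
The plan is to run the same first-moment stopping argument as in \cref{thm:superlevel-stopping}, replacing the attained maximum by a near-maximiser and the spectral-gap input by an explicit one-dimensional supersolution in the slab direction. After translating and rotating, we may assume $e=e_1$ and $b=0$, so that $C\subset\{|x_1|<a\}$. Put $k=\sqrt\lambda$ and $\psi(x)=\cos(kx_1)$. Since $ka<\pi/2$, $\psi\ge\cos(ka)>0$ on the slab, and $\tfrac12\Delta\psi+\tfrac\lambda2\psi=0$.

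\textbf{Exponential moment of the exit time.} Let $\tau=\tau_C$, and write $\sigma$ for the exit time from the slab, so that $\tau\le\sigma$. The process $e^{(\lambda/2)t}\psi(B_t)$ is a martingale, and $\psi=\cos(ka)$ on the boundary of the slab. Optional stopping at $t\wedge\sigma$, followed by Fatou as $t\to\infty$, gives
\[
\mathbb E_x e^{(\lambda/2)\sigma}\le\frac{\psi(x)}{\cos(ka)}\le\frac1{\cos(ka)}.
\]
In particular $\tau<\infty$ almost surely, and $e^{(\lambda/2)\tau}$ is uniformly integrable.

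\textbf{First-moment identity.} For $x\in C$, the process $M_t=e^{(\lambda/2)t}\varphi(B_t)$, stopped at $\tau$, takes values in $[\ell e^{(\lambda/2)(t\wedge\tau)},M_Ce^{(\lambda/2)(t\wedge\tau)}]$, because $\ell<\varphi\le M_C$ on $C$. It is therefore dominated by $M_Ce^{(\lambda/2)\tau}$, which is integrable by the previous step. Optional stopping, together with $\varphi(B_\tau)=\ell$ on $\partial C$, gives
\[
\varphi(x)=\ell\,\mathbb E_x e^{(\lambda/2)\tau}\le\ell\,\mathbb E_x e^{(\lambda/2)\sigma}\le\frac{\ell}{\cos(ka)}.
\]
Taking the supremum over $x\in C$ yields $\ell/M_C\ge\cos(a\sqrt\lambda)$. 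The second inequality then follows from $1-\cos s\le s^2/2$. The only technical point is continuity of $\varphi$ up to $\partial C$, so that $\varphi(B_\tau)=\ell$; this holds because $\varphi\in C^2$ and $C$ is a component of an open superlevel set.

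\textbf{Level zero.} Let $\Omega$ be a nodal domain with $|\varphi|$ bounded, contained in a slab of width $2a$; replacing $\varphi$ by $-\varphi$ if necessary, we may assume $\varphi>0$ on $\Omega$. Suppose, for contradiction, that $2a<\pi\lambda^{-1/2}$, so that $ka<\pi/2$. By the first step, $\mathbb E_xe^{(\lambda/2)\sigma}<\infty$. The same stopping argument with $\ell=0$ then gives $\varphi(x)=\mathbb E_x[e^{(\lambda/2)\tau}\varphi(B_\tau)]=0$ for $x\in\Omega$, since the stopped process is dominated by $\sup|\varphi|\,e^{(\lambda/2)\tau}$. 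This contradicts $\varphi>0$ on $\Omega$.

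\textbf{Sharpness.} Take $\varphi=\cos(kx_1)$. The component of $\{\varphi>\ell\}$ containing the origin is the slab $\{|x_1|<a\}$ with $\cos(ka)=\ell$ and $M_C=1$, so the first inequality is an equality. The nodal domain $\{|x_1|<\pi/(2k)\}$ has width exactly $\pi\lambda^{-1/2}$. For the quadratic bound, $1-\cos(ka)\sim(ka)^2/2$ as $a\to0$, so the constant $\tfrac12$ cannot be improved.

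The main obstacle I expect is integrability, since nothing is assumed about $C$ beyond confinement to the slab, and $C$ may be unbounded. This is settled by dominating $e^{(\lambda/2)\tau}$ through the explicit cosine supersolution, which requires exactly $a\sqrt\lambda<\pi/2$.
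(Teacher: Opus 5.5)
Your proposal is correct and is essentially the paper's proof. Both apply optional stopping to $e^{(\lambda/2)t}\varphi(B_t)$ at $\tau_C$, get integrability from $\tau_C\le\tau_S$ and the explicit slab moment $\mathbb E_xe^{(\lambda/2)\tau_S}=\cos(\sqrt\lambda(x\cdot e-b))/\cos(a\sqrt\lambda)$, take the supremum over $x\in C$, and use the same domination at level zero. The only difference is cosmetic: you obtain the slab moment as an upper bound, by stopping the cosine martingale and passing to the limit, rather than quoting the exact formula, and the upper bound is all the argument needs.
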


\begin{proof}
Let $S=\{x:|x\cdot e-b|<a\}$ and let $\tau_C,\tau_S$ be the corresponding exit times. If $\ell>0$, then $0<\varphi\le M_C$ in $C$ and $e^{(\lambda/2)\tau_S}$ is integrable, so optional stopping gives, for every $x\in C$,
$$
\frac{\varphi(x)}{\ell}
=\mathbb E_xe^{(\lambda/2)\tau_C}
\le\mathbb E_xe^{(\lambda/2)\tau_S}
=\frac{\cos(\sqrt\lambda(x\cdot e-b))}{\cos(a\sqrt\lambda)}
\le\frac1{\cos(a\sqrt\lambda)}.
$$
Taking the supremum over $x$ and using $1-\cos z\le z^2/2$ proves the first claim. At level zero, if $a\sqrt\lambda<\pi/2$, boundedness on the nodal domain again permits optional stopping at $\tau_C\le\tau_S$, which would give $\varphi=0$ on $C$; apply this to $-\varphi$ in a negative domain.
\end{proof}

Since $\{\varphi\le\ell\}\cap B(x_C,r)\subset B(x_C,r)\setminus C$, the bound \eqref{eq:superlevel-local-volume} holds unconditionally for the relative volume of the sublevel set $\{\varphi\le\ell\}$ in $B(x_C,r)$.

\paragraph{Component counts and nodal extrema.}
For $\ell>0$, let $\mathcal C_\ell$ be the components of $\{\varphi>\ell\}$ and set $M_C=\max_C\varphi$. Then
\begin{equation}\label{eq:weighted-component-count}
\sum_{C\in\mathcal C_\ell}
\left(1-\frac{\ell}{M_C}\right)^{m/2}
\le C_g\lambda^{m/2}\operatorname{vol}_g\{\varphi>\ell\}.
\end{equation}
Hence, for every $0<\eta<1$, the number of components satisfying $M_C\ge\ell/(1-\eta)$ is at most $C_g\eta^{-m/2}\lambda^{m/2}\operatorname{vol}_g\{\varphi>\ell\}$.

If $\{\Omega_i\}$ are the nodal domains and $M_i=\|\varphi\|_{L^\infty(\Omega_i)}$, then for every $p>0$,
\begin{equation}\label{eq:nodal-extrema-moments}
\sum_iM_i^p
\le C_{g,p}\lambda^{m/2}\|\varphi\|_{L^p(X)}^p.
\end{equation}
Indeed, the components are disjoint and fill $\{\varphi>\ell\}$, so summing \eqref{eq:superlevel-global-volume} proves \eqref{eq:weighted-component-count}. For each nodal domain, apply \eqref{eq:superlevel-global-volume} to the component containing a point where $|\varphi|=M_i$, at level $\vartheta M_i$. Since $|\varphi|\ge\vartheta M_i$ there,
\[
\|\varphi\|_{L^p(X)}^p
\ge c_g\vartheta^p(1-\vartheta)^{m/2}\lambda^{-m/2}
\sum_iM_i^p.
\]
Taking $\vartheta=2p/(2p+m)$ proves \eqref{eq:nodal-extrema-moments}.

At level zero, \eqref{eq:superlevel-global-volume} is the Faber--Krahn estimate used in the Courant--Pleijel and Krahn--Szeg\H{o} arguments. Positive levels have no analogue of Courant's nodal-domain count, and \eqref{eq:weighted-component-count} supplies a weighted substitute. In the nodal-domain case, \eqref{eq:superlevel-exit} gives the estimate posed in \cite[Question~3.8]{MukherjeeSaha2026HeatProfile}; at level zero it reduces to \cite[Proposition~3.7]{MukherjeeSaha2026HeatProfile} after accounting for their heat generator $\Delta$ rather than our Brownian generator $\tfrac12\Delta$. Finally, \eqref{eq:nodal-extrema-moments} is the arbitrary-dimensional $L^p$ estimate of \cite[Theorem~1.2.4 and \textup{(2.3.2)}]{Poliquin2017}, extending the surface estimates in \cite[Theorems~1.3 and~1.6]{PolterovichSodin2007}.

\begin{corollary}[Euclidean superlevel-volume bound]
\label{cor:euclidean-superlevel}
Let $\Omega\subset\mathbb R^m$ be open and let $\varphi\in C^2(\Omega)\cap C(\overline\Omega)$ satisfy $\Delta\varphi+\lambda\varphi=0$ in $\Omega$ and $\varphi=0$ on $\partial\Omega$; the case $\Omega=\mathbb R^m$ is allowed. Let $C$ be a bounded component as above. For $0<\vartheta_C<1$, let $\beta_m(\vartheta_C)$ be determined by
\[
q_m(\beta_m(\vartheta_C))=\vartheta_C^{-1},
\qquad
0<\beta_m(\vartheta_C)<\tfrac12j_{\nu,1}^2.
\]
Then
\begin{equation}\label{eq:sharp-euclidean-superlevel}
|C|\ge
\omega_m\left(\frac{2\beta_m(\vartheta_C)}{\lambda}\right)^{m/2}.
\end{equation}
The estimate is sharp for concentric superlevel components of the regular radial Helmholtz solution, equivalently for the first Dirichlet eigenfunction inside its first nodal ball. Moreover,
\[
\beta_m(\vartheta)=m(1-\vartheta)+O_m((1-\vartheta)^2)
\quad\text{as }\vartheta\uparrow1,
\]
while $\beta_m(\vartheta)\to j_{\nu,1}^2/2$ as $\vartheta\downarrow0$. Thus \eqref{eq:sharp-euclidean-superlevel} tends to the sharp Faber--Krahn bound at level zero.
\end{corollary}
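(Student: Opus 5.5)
The plan is to compare the exit time from $C$ with the exit time from a Euclidean ball of the same volume, using Brownian symmetrisation, and then read off the volume from the exponential moment identity \eqref{eq:superlevel-exp-moment}. Put $a=\lambda/2$ and let $x_C$ be a maximum point of $\varphi$ on $C$. Since $C$ is bounded and $\varphi=\ell$ on $\partial C$ (note $\partial C\cap\partial\Omega=\varnothing$ because $\varphi=0$ on $\partial\Omega$ and $\ell>0$), the proof of \cref{thm:superlevel-stopping} applies verbatim in $\mathbb R^m$ and gives $\mathbb E_{x_C}e^{a\tau_C}=M_C/\ell=\vartheta_C^{-1}$. In particular $a<\mu_1(C)$, since otherwise the moment would be infinite.

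Let $C^*$ be the centred ball with $|C^*|=|C|$ and radius $\rho$. The key input is the symmetrisation theorem for exit-time distributions: for every $t$, $\sup_x\mathbb P_x(\tau_C>t)\le\mathbb P_0(\tau_{C^*}>t)$. This follows from the Aizenman--Simon / Luttinger rearrangement inequality for multiple heat-kernel integrals, written with the product formula for the killed semigroup. Integrating against $a e^{at}\,dt$ then gives
\[
\vartheta_C^{-1}=\mathbb E_{x_C}e^{a\tau_C}\le\mathbb E_0e^{a\tau_{C^*}}.
\]
Brownian scaling reduces the right-hand side to $q_m(a\rho^2)=q_m(\lambda\rho^2/2)$ whenever $a\rho^2<\tfrac12j_{\nu,1}^2$, and to $+\infty$ otherwise. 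If $\lambda\rho^2/2\ge\tfrac12j_{\nu,1}^2>\beta_m(\vartheta_C)$, the bound is immediate. Otherwise $q_m(\lambda\rho^2/2)\ge\vartheta_C^{-1}=q_m(\beta_m(\vartheta_C))$. Since $q_m$ is strictly increasing on $[0,\tfrac12j_{\nu,1}^2)$, as an exponential moment of a positive variable, this gives $\rho^2\ge2\beta_m(\vartheta_C)/\lambda$, which is \eqref{eq:sharp-euclidean-superlevel}. The existence and uniqueness of $\beta_m$ follow because $q_m$ is continuous and increasing from $1$ to $\infty$, by \eqref{eq:q_m_bessel} and the blow-up at $J_\nu$'s first zero.

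For sharpness, take $\varphi(x)=|x|^{-\nu}J_\nu(\sqrt\lambda|x|)$ times the normalising constant, restricted to its first nodal ball. Its superlevel sets are concentric balls $B_\rho$ with maximum at $0$, and $M/\ell=q_m(\lambda\rho^2/2)$ by the radial formula for $q_m$, so equality holds.

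For the asymptotics, use \eqref{eq:q_m_small_beta}: $q_m(\beta)=1+\beta/m+O(\beta^2)$, and $\vartheta^{-1}=1+(1-\vartheta)+O((1-\vartheta)^2)$. Inverting gives $\beta_m=m(1-\vartheta)+O((1-\vartheta)^2)$. As $\vartheta\downarrow0$ we need $q_m(\beta)\to\infty$, which forces $\beta\to\tfrac12j_{\nu,1}^2$, and the bound tends to $\omega_m(j_{\nu,1}^2/\lambda)^{m/2}$, which is Faber--Krahn.

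The main obstacle is the symmetrisation step. It must be applied with the supremum over starting points, since $x_C$ is not the symmetrisation centre, and for an arbitrary bounded open $C$. I would cite the known result that $\sup_x\mathbb P_x(\tau_D>t)\le\mathbb P_0(\tau_{D^*}>t)$. It is proved by approximating survival by $\int\prod p_{t/n}\mathbf 1_D$ and applying the Brascamp--Lieb--Luttinger inequality with the starting point smeared by a small approximate identity, then removing the smearing via continuity of $x\mapsto\mathbb P_x(\tau_D>t)$ at interior points.
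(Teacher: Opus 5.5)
Your proposal is correct and is essentially the paper's proof: Brownian symmetrisation gives $\mathbb P_{x_C}(\tau_C>t)\le\mathbb P_0(\tau_{C^*}>t)$ (the paper cites Ba\~nuelos--M\'endez-Hern\'andez for this). Integrating against $(\lambda/2)e^{\lambda t/2}$ together with $\mathbb E_{x_C}e^{(\lambda/2)\tau_C}=\vartheta_C^{-1}$ then gives $\vartheta_C^{-1}\le q_m(\lambda\rho^2/2)$, and monotonicity of $q_m$ finishes the argument. Your extra verifications are sound and spell out what the paper states only briefly: existence of $\beta_m$, the explicit Bessel equality case, the infinite-moment case, and the two asymptotic regimes via \eqref{eq:q_m_small_beta} and the first pole of \eqref{eq:q_m_bessel}.
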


\begin{proof}
Let $B_R$ be the ball with $|B_R|=|C|$. Brownian symmetrisation \cite[Theorem~1.4 and (5.3)]{BanuelosMendezHernandez2010} gives
\[
\mathbb P_{x_C}(\tau_C>t)\le\mathbb P_0(\tau_{B_R}>t)
\qquad(t>0).
\]
Integration against $(\lambda/2)e^{\lambda t/2}$ and \eqref{eq:superlevel-exp-moment} give
\[
\vartheta_C^{-1}
\le q_m\left(\frac{\lambda R^2}{2}\right),
\]
with the right side understood as infinite beyond its first pole. Since $q_m$ is increasing, this proves \eqref{eq:sharp-euclidean-superlevel}. The asymptotics follow from \eqref{eq:q_m_small_beta} and the first pole in \eqref{eq:q_m_bessel}.
\end{proof}

\paragraph{Krahn--Szeg\H{o} at nonzero levels.}
In the Euclidean setting, let $C_1,\ldots,C_k$ be distinct bounded components chosen from $\{\varphi>\ell\}$ and $\{\varphi<-\ell\}$, and set
$$
M_i=\max_{C_i}|\varphi|,
\qquad \vartheta_i=\frac{\ell}{M_i},
\qquad V=\sum_{i=1}^k|C_i|.
$$
With $\beta_m(0)=j_{\nu,1}^2/2$,
$$
\lambda\ge2\omega_m^{2/m}V^{-2/m}
\left(\sum_{i=1}^k\beta_m(\vartheta_i)^{m/2}\right)^{2/m}.
$$
In particular, if $M_i\ge\ell/(1-\eta)$ for every $i$, then
$$
\lambda\ge2\beta_m(1-\eta)
\left(\frac{k\omega_m}{V}\right)^{2/m}.
$$
At $\ell=0$, taking the two nodal domains of a second Dirichlet eigenfunction on a bounded connected Euclidean domain gives the sharp Krahn--Szeg\H{o} inequality.
For $\ell>0$, this follows by applying \eqref{eq:sharp-euclidean-superlevel} to $\varphi$ or $-\varphi$ on each component and summing:
$$
V\ge\omega_m\left(\frac2\lambda\right)^{m/2}
\sum_{i=1}^k\beta_m(\vartheta_i)^{m/2}.
$$
For $\ell=0$, apply this estimate at levels $\ell'\downarrow0$ to the components of $\{\pm\varphi>\ell'\}$ containing the maxima of the chosen nodal domains. Their volumes increase to those of the nodal domains, while $\beta_m(\ell'/M_i)\to\beta_m(0)$. The conclusions follow by rearranging and using the monotonicity of $\beta_m$.

\paragraph{No matching unconditional upper bound.}
The reverse of \eqref{eq:superlevel-global-volume} at the same scale is false when $m>1$. On the flat torus, $\varphi_k(x)=\cos(kx_1)$ has $\lambda=k^2$, and every component of $\{\varphi_k>\vartheta\}$ has volume
\[
2(2\pi)^{m-1}k^{-1}\arccos\vartheta
\asymp(1-\vartheta)^{1/2}\lambda^{-1/2}
\quad\text{as }\vartheta\uparrow1.
\]
Thus one retains only a lower volume bound and a ball centred at the component maximum that lies almost entirely in the component. A concentric ball contained in the component requires the extra hypotheses below; so does control of the first eigenvalue.

\subsection{Uniformly elliptic equations and composite membranes}

Let $\mathbf A$ be a measurable symmetric endomorphism satisfying
$$
a_-|\xi|_g^2\le\langle\mathbf A\xi,\xi\rangle_g\le a_+|\xi|_g^2,
$$
and let $(X_t^{\mathbf A})$ be the diffusion associated with $\tfrac12\int\langle\mathbf A\nabla v,\nabla v\rangle_g\, d\operatorname{vol}_g$.

\begin{theorem}[Superlevel components for uniformly elliptic equations]
\label{thm:elliptic-superlevel}
Let $U\subset X$ be open, let $0\le q\le Q$ with $Q>0$, and let $u$ be a nonzero continuous weak solution of
$$
\tfrac12\operatorname{div}_g(\mathbf A\nabla u)+qu=0
\qquad\text{in }U.
$$
Let $C\Subset U$ be a proper component of $\{u>\ell\}$, $\ell\ge0$, with $\partial C\ne\varnothing$, and choose $x_C\in C$ with $M_C=u(x_C)=\max_Cu$, put $\vartheta_C=\ell/M_C$, and let $\tau_C$ be the first exit time from $C$. Then
\begin{align}
\mathbb P_{x_C}^{\mathbf A}(\tau_C\le t)
&\le\min\left\{1,\frac{1-e^{-Qt}}{1-\vartheta_C}\right\},
\label{eq:elliptic-component-exit}\\
\mathbb E_{x_C}^{\mathbf A}\int_0^{\tau_C}
q(X_s^{\mathbf A})\frac{u(X_s^{\mathbf A})}{M_C}\,ds
&=1-\vartheta_C.
\label{eq:elliptic-component-occupation}
\end{align}
If $\ell>0$, then
\begin{equation}\label{eq:elliptic-component-moment}
\mathbb E_{x_C}^{\mathbf A}
\exp\left(\int_0^{\tau_C}q(X_s^{\mathbf A})\,ds\right)
=\frac{M_C}{\ell}.
\end{equation}
Consequently,
$$
\mathbb E_{x_C}^{\mathbf A}\tau_C\ge\frac{1-\vartheta_C}{Q};
$$
if $\ell>0$ and $q\ge q_0>0$, then $\mathbb E_{x_C}^{\mathbf A}\tau_C\le q_0^{-1}\log(M_C/\ell)$.

There are $r_0,c,C>0$, depending only on bounded geometry and $a_-,a_+$, such that
\begin{align}
\frac{\operatorname{vol}_g(B(x_C,r)\setminus C)}
{\operatorname{vol}_g(B(x_C,r))}
&\le C\min\left\{1,\frac{Qr^2}{1-\vartheta_C}\right\},
\qquad 0<r\le r_0,
\label{eq:elliptic-component-local}\\
\operatorname{vol}_g(C)
&\ge c\min\left\{r_0^m,
\left(\frac{1-\vartheta_C}{Q}\right)^{m/2}\right\}.
\label{eq:elliptic-component-volume}
\end{align}
\end{theorem}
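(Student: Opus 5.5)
The plan follows the proof of \cref{thm:superlevel-stopping}, with the constant exponent $\lambda/2$ replaced by the additive functional $Q_t:=\int_0^tq(X_s^{\mathbf A})\,ds$ and with Brownian motion replaced by the diffusion $X^{\mathbf A}$.

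\textbf{Step 1: the martingale.} First I verify that $M_t=e^{Q_t}u(X_t^{\mathbf A})$, stopped at $\tau_C$, is a martingale. No regularity of $\mathbf A$ is available, so I will not use It\^o's formula. Instead, $u$ is a continuous weak solution of $(L+q)u=0$ with $L=\tfrac12\operatorname{div}_g(\mathbf A\nabla\cdot)$. The Fukushima decomposition for the Dirichlet form, together with the Feynman--Kac perturbation by the bounded potential $q$, shows that $M_{t\wedge\tau_C}$ is a local martingale. On $[0,\tau_C]$ one has $0<\ell\le u\le M_C$ when $\ell>0$ (and $0\le u\le M_C$ when $\ell=0$), so the stopped process is bounded by $M_Ce^{Qt}$ on bounded time intervals and is therefore a true martingale up to each fixed $t$. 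As a fallback, I would mollify $\mathbf A$, use the stability of weak solutions (De Giorgi--Nash continuity) and the convergence in law of the corresponding diffusions, and pass to the limit.

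\textbf{Step 2: the identities at $x_C$.} Applying the martingale at $x_C$ gives
\[
M_C=\mathbb E_{x_C}\bigl[e^{Q_{t\wedge\tau}}u(X_{t\wedge\tau})\bigr],
\qquad \tau=\tau_C.
\]
\begin{itemize}
\item \emph{Exit bound \eqref{eq:elliptic-component-exit}.} Split according to $\{\tau\le t\}$. On $\{\tau>t\}$ use $u\le M_C$ and $e^{Q_t}\le e^{Qt}$. On $\{\tau\le t\}$, continuity gives $u(X_\tau)=\ell$ on $\partial C$ and $e^{Q_\tau}\le e^{Qt}$. The result is $e^{-Qt}\le\mathbb P(\tau>t)+\vartheta_C\mathbb P(\tau\le t)$, which rearranges to \eqref{eq:elliptic-component-exit}.
\item \emph{Moment identity \eqref{eq:elliptic-component-moment}.} For $\ell>0$, compare with $\ell e^{Q_{t\wedge\tau}}$ and use monotone convergence to get $\mathbb E e^{Q_\tau}\le M_C/\ell$. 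This integrability permits optional stopping at $\tau$, which yields equality. Finiteness of $\tau$ follows because $C\Subset U$ has a uniformly elliptic exit-time bound.
\item \emph{Occupation identity \eqref{eq:elliptic-component-occupation}.} Use the unweighted relation: $u(X_t)+\int_0^tq\,u(X_s)\,ds$ is a local martingale. Stop it, let $t\to\infty$ (using $\mathbb E\tau<\infty$ and boundedness of $u$), and use $u(X_\tau)=\ell$. For $\ell=0$, approximate by nested components at levels $\ell'\downarrow0$, exactly as in \cref{thm:superlevel-stopping}.
\item \emph{Mean exit bounds.} Bounding $qu/M_C\le Q$ in the occupation identity gives the lower bound on $\mathbb E\tau_C$. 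Jensen's inequality applied to $\mathbb E e^{q_0\tau}\le\mathbb E e^{Q_\tau}=M_C/\ell$ gives the upper bound.
\end{itemize}

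\textbf{Step 3: volume estimates.} These are the main obstacle, because the heat-kernel input must come from Aronson's Gaussian bounds for divergence-form operators with measurable coefficients, uniform in $a_\pm$ and the bounded geometry, and must be valid up to time $r^2\le r_0^2$.
\begin{itemize}
\item \emph{Local estimate \eqref{eq:elliptic-component-local}.} The Aronson lower bound gives $p^{\mathbf A}(r^2,x_C,y)\ge cr^{-m}$ for $y\in B(x_C,r)$. Since $X_{r^2}\notin C$ forces $\tau_C\le r^2$, I get $cr^{-m}\operatorname{vol}(B\setminus C)\le\mathbb P(\tau_C\le r^2)$. I will apply this with the unkilled diffusion on all of $X$; because $C\Subset U$, only paths up to $\tau_C$ matter. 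Then \eqref{eq:elliptic-component-exit} with $1-e^{-Qt}\le Qt$ gives the claim.
\item \emph{Volume bound \eqref{eq:elliptic-component-volume}.} Take
\[
t=\min\Bigl\{r_0^2,\ \frac{1-\vartheta_C}{2Q}\Bigr\}.
\]
Then the exit bound gives $\mathbb P(\tau_C>t)\ge1/2$, while the Aronson upper bound $p_C\le p\le Ct^{-m/2}$ for $t\le r_0^2$ gives $\tfrac12\le Ct^{-m/2}\operatorname{vol}(C)$. This yields \eqref{eq:elliptic-component-volume}. Truncating $t$ at $r_0^2$ is what produces the minimum with $r_0^m$, because no global spectral information is available here.
\end{itemize}
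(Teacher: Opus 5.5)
Your argument is the paper's proof almost step for step. The paper also uses the Fukushima decomposition for $e^{A_t}u(X^{\mathbf A}_{t\wedge\tau_C})$ and the split on $\{\tau_C\le t\}$ for the exit bound. For the moment identity it compares with $\ell e^{A_t}$ and then uses monotone convergence and uniform integrability. The occupation identity comes from the unweighted stopped formula. The volume bounds use the small-time Gaussian lower and upper bounds at times $r^2$ and $\min\{r_0^2,(1-\vartheta_C)/(2Q)\}$.

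The one step you skip is that the Fukushima decomposition gives the martingale only for quasi-every starting point. Every identity you derive is evaluated at the specific point $x_C$. Since points are polar when $m\ge2$, $x_C$ may lie in the exceptional set, so as written $M_C=\mathbb E_{x_C}[e^{A_{t\wedge\tau_C}}u(X^{\mathbf A}_{t\wedge\tau_C})]$ is not yet justified.

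The paper closes this by applying the Markov property at a small time $s>0$. Before exit, the transition density does not charge the polar exceptional set, so the identity holds from $X^{\mathbf A}_s$. After exit, the stopped value is $\ell$. It then lets $s\downarrow0$, using continuity of paths and of $u$.

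Your level approximation for $\ell=0$ and your mollification fallback are unnecessary. Since $\tau_C\le\tau_D$ has finite mean, the unweighted stopped formula passes to the limit directly for every $\ell\ge0$.
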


\begin{proof}
Put $Y_t=X_{t\wedge\tau_C}^{\mathbf A}$ and $A_t=\int_0^{t\wedge\tau_C}q(X_s^{\mathbf A})\,ds$. The Fukushima decomposition makes $e^{A_t}u(Y_t)$ a martingale quasi-everywhere. For an arbitrary start in $C$, apply the Markov property at time $s>0$: before exit, the transition density does not charge the polar exceptional set, while after exit the stopped value is $\ell$. Letting $s\downarrow0$ and using continuity of the paths and of $u$ extends the identity to every starting point. Choosing a smooth $D$ with $\overline C\subset D\Subset U$, positivity of its first Dirichlet eigenvalue gives $\tau_C\le\tau_D<\infty$ almost surely. Since $A_t\le Qt$, $u\le M_C$ in $C$, and $u=\ell$ on $\partial C$,
$$
1\le e^{Qt}\bigl(\mathbb P_{x_C}^{\mathbf A}(\tau_C>t)
+\vartheta_C\mathbb P_{x_C}^{\mathbf A}(\tau_C\le t)\bigr),
$$
which proves \eqref{eq:elliptic-component-exit}. If $\ell>0$, then $u(Y_t)\ge\ell$ and the martingale identity gives $\ell\,\mathbb E_{x_C}e^{A_t}\le M_C$. Monotone convergence yields $\mathbb E_{x_C}e^{A_{\tau_C}}\le M_C/\ell$, while $e^{A_t}u(Y_t)\le M_Ce^{A_{\tau_C}}$; hence the stopped martingale is uniformly integrable and \eqref{eq:elliptic-component-moment} follows. The unweighted stopped Fukushima formula gives \eqref{eq:elliptic-component-occupation}. The mean-exit bounds are immediate.

The small-time Gaussian lower bound gives
$$
cr^{-m}\operatorname{vol}_g(B(x_C,r)\setminus C)
\le\mathbb P_{x_C}^{\mathbf A}(\tau_C\le r^2),
$$
and hence \eqref{eq:elliptic-component-local}. Taking $t=\min\{r_0^2,(1-\vartheta_C)/(2Q)\}$ in \eqref{eq:elliptic-component-exit} and using the Gaussian upper bound proves \eqref{eq:elliptic-component-volume}.
\end{proof}

Cox and McLaughlin prove existence for the Euclidean Dirichlet composite-membrane problem with $0<\alpha<\beta$ \cite[Proposition~4.4]{CoxMcLaughlin1990I}; their subsequent analysis gives the bang-bang structure \cite[Proposition~6.1 and Corollary~6.2\textup{(i)}]{CoxMcLaughlin1990II}. The related Schr\"odinger-potential problem is reviewed in \cite[Section~1]{ChanilloKenig2008}. The following application permits measurable $\mathbf A$ on a Lipschitz Riemannian domain and also allows $\alpha=0$.

\paragraph{Composite membranes.}
Let $\Omega$ be a bounded connected Lipschitz domain, let $\mathbf A$ satisfy the bounds above near $\overline\Omega$, and fix $0\le\alpha<\beta$ and $m_0$ strictly between $\alpha\operatorname{vol}_g(\Omega)$ and $\beta\operatorname{vol}_g(\Omega)$. Minimise
\[
\Lambda_1(\rho)=\inf_{\substack{v\in H^1_0(\Omega)\\
\int_\Omega\rho v^2\,d\operatorname{vol}_g>0}}
\frac{\tfrac12\int_\Omega
\langle\mathbf A\nabla v,\nabla v\rangle_g\,d\operatorname{vol}_g}
{\int_\Omega\rho v^2\,d\operatorname{vol}_g}
\]
over $\alpha\le\rho\le\beta$ with $\int_\Omega\rho\,d\operatorname{vol}_g=m_0$. There are a minimiser $\rho_{\mathrm{opt}}=\beta\mathbf1_D+\alpha\mathbf1_{\Omega\setminus D}$, a positive first eigenfunction $u$, and $\ell>0$ such that
\[
\{u>\ell\}\subset D\subset\{u\ge\ell\}
\quad\text{up to null sets}.
\]
Write $\Lambda_*=\Lambda_1(\rho_{\mathrm{opt}})$. For every component $C$ of $\{u>\ell\}$, choose $x_C\in C$ with $M_C=u(x_C)=\max_Cu$, put $\vartheta_C=\ell/M_C$, and let $\tau_C$ be its exit time. Then
\[
\mathbb E_{x_C}^{\mathbf A}e^{\beta\Lambda_*\tau_C}
=\frac{M_C}{\ell},
\qquad
\mathbb E_{x_C}^{\mathbf A}\int_0^{\tau_C}
\frac{u(X_s^{\mathbf A})}{M_C}\,ds
=\frac{1-\vartheta_C}{\beta\Lambda_*},
\]
while
\[
\begin{aligned}
\frac{\operatorname{vol}_g(B(x_C,r)\setminus D)}
{\operatorname{vol}_g(B(x_C,r))}
&\le C\min\left\{1,
\frac{\beta\Lambda_*r^2}{1-\vartheta_C}\right\},\\[-2pt]
&\hspace{6em}0<r\le r_0.
\end{aligned}
\]
\[
\sum_C(1-\vartheta_C)^{m/2}
\le C_\Omega(\beta\Lambda_*)^{m/2}\operatorname{vol}_g(D).
\]
Consequently, for $0<\eta<1$, the number of components with $M_C\ge\ell/(1-\eta)$ is at most $C_\Omega\eta^{-m/2}(\beta\Lambda_*)^{m/2}\operatorname{vol}_g(D)$. No regularity of $\partial D$ is assumed.

Indeed, the direct method gives a minimiser and a positive first eigenfunction. Replacing the density by a maximiser of $\int_\Omega\rho u^2\,d\operatorname{vol}_g$ can only decrease the Rayleigh quotient, so the new density is still minimising and $u$ is still a first eigenfunction; the bathtub principle gives the stated bang-bang form. Boundary H\"older regularity makes $u$ continuous on $\overline\Omega$ and zero on $\partial\Omega$, so $\ell>0$ and the components $C$ are relatively compact. On each such component, $\rho_{\mathrm{opt}}=\beta$ almost everywhere; since the uniformly elliptic diffusion has transition densities, it spends zero time in the exceptional null set. Thus \cref{thm:elliptic-superlevel} applies with $q=\Lambda_*\rho_{\mathrm{opt}}$ and $Q=\beta\Lambda_*$ and gives the two stopping identities and, because $C\subset D$ up to a null set, the displayed local estimate. Its volume bound, the inequality $\beta\Lambda_*\ge(a_-/2)\mu_1(-\Delta_g,\Omega)>0$, and summation over the disjoint components contained in $D$ give the weighted count.

\subsection{Boundary measures and quantitative estimates}
\label{subsec:boundary-quantities}

We now fix the analytic setup. Let $m\ge2$, $a\ge0$, and let the real-valued function $u\in C^2(\overline D)$ solve $(\tfrac12\Delta_g+a)u=0$ in $D=B(x_0,r)$, where $u(x_0)>0$, $0<r\le r_0$, and $ar^2\le c_0$. Shrink $r_0,c_0$ if necessary so that $2a<\mu_1(D)$. For $y\in D$, set
$$H_1(y):=\mathbb E_y[e^{a\tau_D}],\qquad
\widehat{\mathbb P}_y(A):=H_1(y)^{-1}\mathbb E_y[e^{a\tau_D}\mathbf{1}_A],$$
where $A\in\mathcal F_{\tau_D}$.  Write $\widehat{\mathbb E}_y$ for expectation under this probability, let $\nu_y$ be the distribution of $B_{\tau_D}$ under it, and set $\nu:=\nu_{x_0}$.

\begin{definition}[Boundary quantities]
\label{def:boundary-quantities}
Set $\bar m:=\int u\,d\nu=u(x_0)/H_1(x_0)>0$ and $S_+:=\sup_{\partial D}u>0$ and $S:=\sup_{\partial D}|u|$. For $1\le p\le\infty$, define
$$\varepsilon_p(x_0,r):=\frac{\|(\bar m-u)_+\|_{L^p(\nu)}}{\bar m},
\qquad
\varepsilon_+:=1-\frac{\bar m}{S_+},
\qquad
\varepsilon_{\partial}:=1-\frac{\bar m}{S},$$
and
\begin{align*}
Q_1(x_0)&:=\mathbb E_{x_0}[e^{a\tau_D}u(B_{\tau_D})^2],&
\delta_\nu&:=\frac{\operatorname{Var}_\nu(u)}{\int u^2\,d\nu}
=1-\frac{u(x_0)^2}{H_1(x_0)Q_1(x_0)},\\
H_2(x_0)&:=\mathbb E_{x_0}[e^{2a\tau_D}u(B_{\tau_D})^2],&
\delta_2&:=1-\frac{u(x_0)^2}{H_1(x_0)H_2(x_0)}\ge\delta_\nu,\\
\varepsilon_{\mathrm{hit}}&:=\widehat{\mathbb P}_{x_0}(\tau_Z<\tau_D),&
Z&:=\{u=0\}\cap D.
\end{align*}
Here $\tau_Z$ is the first hitting time of $Z$.  The quantities $\varepsilon_p$ are nondecreasing in $p$, and $\delta_\nu$ is exactly the relative loss in Cauchy--Schwarz applied to the stopping identity. The signed-boundary condition
\[
\frac{u(x_0)}{S_+H_1(x_0)}\ge1-\varepsilon
\]
is precisely $\varepsilon_+\le\varepsilon$; the corresponding condition with the absolute boundary maximum is $\varepsilon_{\partial}\le\varepsilon$.
\end{definition}

\begin{proposition}[Energy identity for the boundary variance]
\label{prop:boundary-variance-energy}
Put $h=H_1$ and $v=u/h$. Under $\widehat{\mathbb P}_x$, $v(B_{t\wedge\tau_D})$ is a square-integrable martingale, and
\begin{equation}\label{eq:boundary-variance-energy}
\operatorname{Var}_{\nu_x}(u)
=\widehat{\mathbb E}_x\int_0^{\tau_D}|\nabla v(B_s)|^2\,ds
=\frac1{h(x)}\mathbb E_x\int_0^{\tau_D}
e^{as}h(B_s)|\nabla(u/h)(B_s)|^2\,ds.
\end{equation}
Consequently,
\begin{equation}\label{eq:relative-boundary-variance-energy}
\frac{\delta_\nu}{1-\delta_\nu}
=\frac{H_1(x_0)}{u(x_0)^2}
\mathbb E_{x_0}\int_0^{\tau_D}e^{as}H_1(B_s)
\left|\nabla\!\left(\frac{u}{H_1}\right)(B_s)\right|^2\,ds.
\end{equation}
The identity \eqref{eq:boundary-variance-energy} requires only $a<\mu_1(D)$ and no sign assumption on $u$. If $u(x_0)\ne0$, then \eqref{eq:relative-boundary-variance-energy} holds and $\delta_\nu=0$ precisely when $u/H_1$ is constant in $D$. It is the first-moment counterpart of \cref{gauge:prop:energy}.
\end{proposition}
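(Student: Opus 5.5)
The plan is a Doob $h$-transform: under $\widehat{\mathbb P}_x$ the process $v(B_t)=u/h(B_t)$ becomes a martingale, and the boundary variance is then its quadratic-variation energy. We work under $a<\mu_1(D)$, so \cref{lem:survival-gap} and the final clause of \cref{thm:wavelength-stopping-identities} give $\sup_D h<\infty$ and, after the small-ball choice of $r_0,c_0$, also $e^{2a\tau_D}\in L^1$. The function $h=H_1$ solves $(\tfrac12\Delta_g+a)h=0$ with $h=1$ on $\partial D$. It is positive, smooth inside $D$, and bounded above and below on $\overline D$. The Feynman--Kac martingale $e^{at}h(B_t)$ defines the transformed law: on $\mathcal F_{t\wedge\tau_D}$, $\widehat{\mathbb P}_x$ has density $e^{a(t\wedge\tau_D)}h(B_{t\wedge\tau_D})/h(x)$. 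This is consistent with the definition via $e^{a\tau_D}$ at $\tau_D$, because $h=1$ there and the martingale is uniformly integrable.

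First, $v(B_{t\wedge\tau_D})$ is a $\widehat{\mathbb P}_x$-martingale. Indeed, $\widehat{\mathbb E}_x[v(B_{t\wedge\tau_D})\mid\mathcal F_s]$ is the ratio of $\mathbb E[e^{a(t\wedge\tau_D)}u(B_{t\wedge\tau_D})\mid\mathcal F_s]$ to $e^{a(s\wedge\tau_D)}h(B_{s\wedge\tau_D})$. Since $e^{at}u(B_t)$ is a martingale up to $\tau_D$, this ratio equals $v(B_{s\wedge\tau_D})$. Its terminal value is $v(B_{\tau_D})=u(B_{\tau_D})$, whose $\widehat{\mathbb P}_x$-law is $u_*\nu_x$. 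The mean is $v(x)=u(x)/h(x)=\int u\,d\nu_x$, and the second moment is $\int u^2\,d\nu_x\le\|u\|_\infty^2$. So the martingale is bounded, hence square integrable.

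Next, the variance. By Girsanov, $B$ under $\widehat{\mathbb P}_x$ is Brownian motion with drift $\nabla\log h$, so its quadratic variation is unchanged. Equivalently, the generator of the transformed process is $\tfrac12\Delta_g+\nabla\log h\cdot\nabla$, and $v$ is harmonic for it. The Itô isometry then gives
\[
\operatorname{Var}_{\nu_x}(u)=\widehat{\mathbb E}_x\Bigl[\int_0^{\tau_D}|\nabla v(B_s)|^2\,ds\Bigr].
\]
This is the first equality in \eqref{eq:boundary-variance-energy}.

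The second equality undoes the change of measure inside the time integral. By optional projection, the density at the running time $s<\tau_D$ is $e^{as}h(B_s)/h(x)$, and Tonelli justifies the exchange; this gives the displayed $\mathbb E_x$ formula. The main obstacle is integrability near $\partial D$, where $\nabla v$ need not be bounded if only $u\in C^2(\overline D)$ is assumed. I would first stop at $\tau_{D_n}$ for smooth exhausting balls $D_n\uparrow D$, where everything is bounded. Monotone convergence then handles the energy, and $L^2$-martingale convergence handles the terminal variance, using the boundedness of $v$. No sign of $u$ enters anywhere.

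For \eqref{eq:relative-boundary-variance-energy}, note that $\int u^2\,d\nu=Q_1/H_1$ and $(\int u\,d\nu)^2=u(x_0)^2/H_1^2$. Hence
\[
\frac{\delta_\nu}{1-\delta_\nu}=\frac{\operatorname{Var}_\nu(u)}{(\int u\,d\nu)^2}=\frac{H_1(x_0)^2}{u(x_0)^2}\operatorname{Var}_\nu(u),
\]
and substituting the second equality (which carries the factor $1/H_1(x_0)$) gives the stated formula. Finally, $\delta_\nu=0$ if and only if the energy vanishes. Since $e^{as}h>0$ and the occupation measure of $B$ before $\tau_D$ has a positive density on $D$ (the positive killed Green kernel), this forces $\nabla v\equiv0$, i.e.\ $u/H_1$ is constant. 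The converse is immediate.
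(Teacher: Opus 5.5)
Your proof is correct and follows the paper's own route: the Doob $h$-transform with density $e^{a(t\wedge\tau_D)}h(B_{t\wedge\tau_D})/h(x)$, the fact that $v=u/h$ is a $\widehat{\mathbb P}_x$-martingale with quadratic variation $\int_0^{t\wedge\tau_D}|\nabla v(B_s)|^2\,ds$, the It\^o isometry, undoing the change of measure inside the time integral, and the algebra $\delta_\nu/(1-\delta_\nu)=\operatorname{Var}_\nu(u)/\bar m^2$. The one quibble is that the integrability of $e^{2a\tau_D}$ you invoke is never used---only $e^{a\tau_D}\in L^1$ and $h\ge1$ enter, which is why $a<\mu_1(D)$ suffices---while your exhaustion argument and your use of the positive killed Green kernel make explicit details that the paper leaves implicit.
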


\begin{proof}
The density of $\widehat{\mathbb P}_x$ up to time $t\wedge\tau_D$ is
\[
\frac{e^{a(t\wedge\tau_D)}h(B_{t\wedge\tau_D})}{h(x)}.
\]
Since $(\tfrac12\Delta_g+a)(hv)=0$, It\^o's formula after this change of measure shows that $v(B_{t\wedge\tau_D})$ is a martingale with quadratic variation $\int_0^{t\wedge\tau_D}|\nabla v(B_s)|^2\,ds$. Its terminal value is $u(B_{\tau_D})$, because $h=1$ on $\partial D$. It\^o's isometry gives the first equality in \eqref{eq:boundary-variance-energy}; changing measure inside the time integral gives the second. Finally, $\bar m=u(x_0)/H_1(x_0)$ and $\delta_\nu/(1-\delta_\nu)=\operatorname{Var}_\nu(u)/\bar m^2$. The energy vanishes exactly when $\nabla(u/H_1)=0$ in $D$.
\end{proof}

The boundary quantities satisfy
\begin{enumerate}
\item[\textup{(i)}] $\varepsilon_1\le \varepsilon_+/(1-\varepsilon_+)\le \varepsilon_{\partial}/(1-\varepsilon_{\partial})$;
\item[\textup{(ii)}] $\varepsilon_2\le (\delta_\nu/(1-\delta_\nu))^{1/2}$;
\item[\textup{(iii)}] $\nu(\{u<0\})\le\varepsilon_{\mathrm{hit}} \le\min\{\varepsilon_+,\delta_\nu\}$;
\item[\textup{(iv)}] Let $\delta_2(x_0,\rho)$ denote the same quantity with $D$ replaced by $B(x_0,\rho)$.  Then $\rho\mapsto\delta_2(x_0,\rho)$ is nondecreasing for $0<\rho\le r$. For constant boundary data, $\delta_\nu=0$, whereas
$$\delta_2=1-\frac{H_1(x_0)}{\mathbb E_{x_0}[e^{2a\tau_D}]}>0$$
when $a>0$;
\item[\textup{(v)}] $\delta_\nu\le 2\varepsilon_{\partial}/(1-\varepsilon_{\partial})^2$, hence $\delta_\nu\le8\varepsilon_{\partial}$ if $\varepsilon_{\partial}\le\tfrac12$.
\end{enumerate}
To verify these claims, let $W:=u(B_{\tau_D})$. Since $\mathbb E_\nu[(W-\bar m)_+]= \mathbb E_\nu[(\bar m-W)_+]$,
$$
\mathbb E_\nu[(\bar m-W)_+]
\le S_+-\bar m=\varepsilon_+S_+,
$$
which proves the first inequality in \textup{(i)}; the second follows from $S_+\le S$. Also $\|(\bar m-W)_+\|_2^2\le\operatorname{Var}_\nu(W)$ and $\operatorname{Var}_\nu(W)/\bar m^2=\delta_\nu/(1-\delta_\nu)$, proving \textup{(ii)}.  The inequality $\delta_\nu\le\delta_2$ follows from $Q_1\le H_2$.

For \textup{(iii)}, put $A:=\{\tau_Z<\tau_D\}$.  The strong Markov property and optional stopping from $Z$ give $\widehat{\mathbb E}_{x_0}[W\mathbf{1}_A]=0$.  Hence
$$\bar m=\widehat{\mathbb E}_{x_0}[W\mathbf{1}_{A^c}]
\le S_+(1-\varepsilon_{\mathrm{hit}}),\qquad
\bar m^2\le(1-\varepsilon_{\mathrm{hit}})
\widehat{\mathbb E}_{x_0}[W^2],$$
which gives the two upper bounds.  If $W<0$, continuity forces the Brownian path to hit $Z$ before leaving $D$, proving the lower bound.

For \textup{(iv)}, the exit times of nested balls increase with the radius; thus $H_1$ increases, while $H_2$ increases because $e^{at}u(B_t)$ is a square-integrable martingale.  The formula for $\delta_2$ gives the claim.  The formula for constant boundary data follows directly, and its right-hand side is positive for $a>0$.  Finally,
$$\operatorname{Var}_\nu(W)\le\mathbb E_\nu[(S-W)^2]
\le2S\mathbb E_\nu[S-W]=2S^2\varepsilon_{\partial},$$
which proves \textup{(v)}.

\begin{lemma}[Bounds for weighted exit distributions]
\label{lem:offcentre-kernel}
There are constants depending only on bounded geometry and, where indicated, on $p$ or $q$, such that for $y\in D$ and $\xi\in\partial D$,
$$\frac{d\nu_y}{d\sigma_g}(\xi)\le C\,d(y,\xi)^{1-m}.$$
Moreover, $\operatorname{supp}\nu=\partial D$. If $\operatorname{dist}(y,\partial D)\ge\delta r$ and $1\le q\le\infty$, then
$$\left\|\frac{d\nu_y}{d\nu}\right\|_{L^q(\nu)}
\le C_q\delta^{-(m-1)(1-1/q)}.$$
Equivalently, for $1\le p\le\infty$ and $F\ge0$,
$$\int_{\partial D}F\,d\nu_y\le
C_p\delta^{-(m-1)/p}\|F\|_{L^p(\nu)}.$$
\end{lemma}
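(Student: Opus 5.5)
We write $\nu_y$ as a weighted harmonic measure, bound its density by comparison with the unweighted Poisson kernel, and then use the two-sided bound on $d\nu/d\sigma_g$ from \cref{lem:poisson-comp-small}. Write
\[
\frac{d\nu_y}{d\sigma_g}(\xi)=\frac{P^{(a)}_D(y,\xi)}{H_1(y)},
\]
where $P^{(a)}_D(y,\cdot)$ is the Poisson kernel of $\tfrac12\Delta_g+a$ on $D$. Since $H_1\ge1$, it suffices to bound $P^{(a)}_D(y,\xi)$.

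\emph{Pointwise bound.} Green's formula gives $P^{(a)}_D(y,\xi)=-\tfrac12\partial_{n_\xi}G^{(a)}_D(y,\xi)$, where $G^{(a)}_D$ is the positive Dirichlet Green kernel of $-\tfrac12\Delta_g-a$. The resolvent identity gives
\[
G^{(a)}_D=G^{(0)}_D+a\,G^{(0)}_DG^{(a)}_D .
\]
By the uniform $L^\infty$ resolvent bound used in \cref{lem:poisson-comp-small} (valid because $ar^2\le c_0$), together with standard Green estimates on the rescaled ball, we get $G^{(a)}_D(y,z)\le C\,G^{(0)}_D(y,z)$ up to harmless terms. Concretely, we compare near the boundary point $\xi$ via the boundary Harnack principle for the smooth rescaled ball, which yields $G^{(a)}_D(y,\cdot)\le C\,G^{(0)}_D(y,\cdot)$ in a neighbourhood of $\partial D$. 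Taking normal derivatives reduces the claim to the classical estimate $P^{(0)}_D(y,\xi)\le C\,d(y,\xi)^{1-m}$ for near-Euclidean balls. This holds after rescaling by $r$ with uniform $C^{2}$ metric bounds, e.g.\ by comparison with the explicit Euclidean Poisson kernel plus Schauder boundary estimates. The scale invariance of $d^{1-m}$ makes the constant uniform in $r$. We expect this step to be the main obstacle: making the comparison between weighted and unweighted kernels uniform up to the boundary. As a fallback, we can use the strong Markov property and the exit-time moment bound of \cref{lem:survival-gap}.

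\emph{Support.} \Cref{lem:poisson-comp-small}(i) with $a$ in place of $2a$ (the same proof applies, since $a\le2a$ in the admissible range) gives $d\nu/d\sigma_g\ge c\,r^{1-m}>0$ on all of $\partial D$. Hence $\operatorname{supp}\nu=\partial D$, and in addition
\[
\frac{d\nu}{d\sigma_g}\asymp r^{1-m}, \qquad \nu(\partial D)=1 .
\]

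\emph{$L^q$ bound.} Set $f=d\nu_y/d\nu$. If $\operatorname{dist}(y,\partial D)\ge\delta r$, then $d(y,\xi)\ge\delta r$, so
\[
f\le C\,\delta^{1-m}, \qquad \int f\,d\nu=1 .
\]
Hölder interpolation gives
\[
\|f\|_{L^q(\nu)}\le\|f\|_\infty^{1-1/q}\|f\|_1^{1/q}\le C_q\,\delta^{-(m-1)(1-1/q)} .
\]
The equivalent $L^p$ form, with $p=q'$, follows from
\[
\int F\,d\nu_y=\int Ff\,d\nu\le\|F\|_{L^p(\nu)}\|f\|_{L^{q}(\nu)},
\]
since $(m-1)(1-1/q)=(m-1)/p$.
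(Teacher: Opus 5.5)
Your support step matches the paper. Your $L^q$ step is correct and more economical than the paper's: the paper integrates the sharper bound $P^{(0)}(y,\xi)\le Cs(s+t)^{-m}$ in the tangential variable, whereas interpolating between $\int(d\nu_y/d\nu)\,d\nu=1$ and $d\nu_y/d\nu\le C\delta^{1-m}$ needs only the stated pointwise bound.

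That pointwise bound is the real content of the lemma, and your argument for it does not go through. The comparison $G^{(a)}_D(y,\cdot)\le CG^{(0)}_D(y,\cdot)$ near $\partial D$, with $C$ independent of $y$, is not a routine reduction. Taking normal derivatives, it yields $P^{(a)}_D(y,\xi)\le CP^{(0)}_D(y,\xi)$ uniformly in $y,\xi$, which is exactly the nontrivial estimate. None of the tools you cite supplies it:
\begin{itemize}
\item The boundary Harnack principle compares positive solutions of one and the same operator. Here $G^{(a)}_D(y,\cdot)$ and $G^{(0)}_D(y,\cdot)$ solve $(\tfrac12\Delta_g+a)w=0$ and $\tfrac12\Delta_gw=0$ respectively. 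Even for a single operator, it only transfers a comparison from an interior reference point $A$ at scale $d(y,\xi)$ to the boundary. You would still need $G^{(a)}_D(y,A)\le CG^{(0)}_D(y,A)$ uniformly as $y$ approaches $\partial D$, which is the same global comparison again.
\item The uniform $L^\infty$ resolvent bound controls $G^{(a)}_D$ acting on bounded functions. It says nothing about ratios of two kernels that both vanish on $\partial D$, so it does not give the comparison ``up to harmless terms''.
\item The fallback is too unspecific. Exponential moments of $\tau_D$ enter a density estimate only through H\"older, which gives $\nu_y(A)\le C\,\omega_y(A)^{1-1/p}$ for harmonic measure $\omega_y$. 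This loses a power and gives no density bound.
\end{itemize}

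The paper closes this step with the conditional gauge. For the Doob transform by $h_\xi=P^{(0)}(\cdot,\xi)$ one has $P^{(a)}(y,\xi)=P^{(0)}(y,\xi)\,\mathbb E^\xi_y[e^{a\tau_D}]$. The uniform conditional lifetime bound $\sup_{y,\xi}\mathbb E^\xi_y\tau_D\le Cr^2$ comes from the boundary 3G estimate of Cranston--Fabes--Zhao when $m\ge3$, and from Cranston--McConnell when $m=2$. It gives $\mathbb E^\xi_y[\tau_D^n]\le n!(Cr^2)^n$, hence $P^{(a)}\le(1+Car^2)P^{(0)}\le Cd(y,\xi)^{1-m}$.

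If you want to keep your resolvent idea, apply it to the Poisson kernel rather than the Green function. The strong Markov property gives
\[
P^{(a)}_D(y,\xi)=\sum_{k\ge0}a^k\bigl[(G^{(0)}_D)^kP^{(0)}_D(\cdot,\xi)\bigr](y).
\]
Since only $d(y,\xi)^{1-m}$ is claimed, the crude interior bound $G^{(0)}_D(y,w)\lesssim d(y,w)^{2-m}$ (logarithmic for $m=2$) already gives
\[
\int_DG^{(0)}_D(y,w)\,d(w,\xi)^{1-m}\,d\operatorname{vol}_g(w)\le Cr^2\,d(y,\xi)^{1-m}.
\]
To see this, split into $w$ near $y$, $w$ near $\xi$, and $w$ far from both. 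By induction, with $P^{(0)}_D\le K_0d^{1-m}$, the $k$th term is at most $(Car^2)^kK_0\,d(y,\xi)^{1-m}$. The series then sums after decreasing $c_0$. This would be a more elementary proof of the stated bound than the 3G route, but you have to supply it; as written, the key step is asserted rather than proved.
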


\begin{proof}
Let $P^{(0)}(y,\xi)$ denote the Poisson kernel of $D$ for Brownian motion, with respect to $d\sigma_g(\xi)$. Put $s:=\operatorname{dist}(y,\partial D)$ and $t:=d(\xi,\xi_y)$, where $\xi_y$ is a nearest boundary point to $y$. Small-ball geometry gives $d(y,\xi)\asymp s+t$. Standard small-ball Poisson-kernel bounds (compare the Euclidean formulas in \cite[Sections~5.2--5.3, equations~(27) and~(34)]{CZ95}) therefore give
\[
P^{(0)}(y,\xi)\le Cs(s+t)^{-m},\qquad
P^{(0)}(x_0,\xi)\ge cr^{1-m}.
\]
For $h_\xi=P^{(0)}(\cdot,\xi)$, let $\mathbb E_y^\xi$ denote expectation under the corresponding Doob transform. Its Green density gives
$$
\mathbb E_y^\xi\tau_D
=\frac{1}{P^{(0)}(y,\xi)}
\int_DG_D(y,z)P^{(0)}(z,\xi)\,d\operatorname{vol}_g(z).
$$
For $m\ge3$, normal-coordinate and time rescaling make the metric process a uniformly bounded time change of a uniformly elliptic divergence-form diffusion on $B_1$.  Its Green density relative to metric volume is the divergence-form Green kernel, the volume density is uniformly comparable to the Euclidean one, and the boundary Jacobian cancels in the Poisson-kernel quotient.  The boundary $3G$ estimate \cite[Theorem~3.1, estimate~(3.1.2)]{CranstonFabesZhao1988} therefore gives
$$
\frac{G_D(y,z)P^{(0)}(z,\xi)}{P^{(0)}(y,\xi)}
\le C\bigl(d(y,z)^{2-m}+d(z,\xi)^{2-m}\bigr).
$$
Integrating gives $\mathbb E_y^\xi\tau_D\le Cr^2$.  For $m=2$, controlled isothermal coordinates make the metric $h_\xi$-process a time change of its planar counterpart, with time-change rate bounded above and below, and the coordinate domain has area $O(r^2)$.  Thus \cite[Theorem~1]{CranstonMcConnell1983} gives the same bound. Consequently,
$$M:=\sup_{y\in D,\,\xi\in\partial D}\mathbb E_y^\xi\tau_D\le Cr^2.$$
The strong Markov property gives, by induction, $\mathbb E_y^\xi[\tau_D^n]\le n!M^n$.  After decreasing $c_0$ once and for all so that $aM\le\tfrac12$, summing the exponential series gives
$$
\mathbb E_y^\xi[e^{a\tau_D}]
\le(1-aM)^{-1}\le1+Car^2.
$$
Disintegration \cite[Proposition~5.12]{CZ95} gives $P^{(a)}(y,\xi)=P^{(0)}(y,\xi)\mathbb E_y^\xi[e^{a\tau_D}]$ for $\sigma_g$-almost every $\xi$; continuity extends the resulting inequality to every $\xi$.  Integrating it gives $1\le H_1(y)\le1+Car^2$.  Since $d\nu_y/d\sigma_g=P^{(a)}(y,\xi)/H_1(y)$ and $d(y,\xi)\le s+t$, the first bound follows.

At the pole, \cref{lem:poisson-comp-small}\textup{(i)}, applied with parameter $a/2$, and $H_1(x_0)\asymp1$ give $d\nu/d\sigma_g\asymp r^{1-m}$; in particular, $\operatorname{supp}\nu=\partial D$. Hence
$$\frac{d\nu_y}{d\nu}(\xi)\le Cr^{m-1}s(s+t)^{-m},\qquad
\nu\{t\le\rho\}\le C(\rho/r)^{m-1}.$$
For $1\le q<\infty$, integration in $t$ gives
$$\int\left(\frac{d\nu_y}{d\nu}\right)^q d\nu
\le Cr^{(m-1)q}s^q\int_0^{Cr}(s+t)^{-mq}t^{m-2}\frac{dt}{r^{m-1}}
\le C_q\left(\frac rs\right)^{(m-1)(q-1)}.$$
The case $q=\infty$ follows pointwise, and H\"older's inequality gives the last statement.
\end{proof}

\begin{lemma}[Distribution bound for the weighted Poisson extension]
\label{lem:weighted-poisson-distribution-bound}
For $F\ge0$ on $\partial D$, set
$$\mathcal PF(y):=\mathbb E_y[e^{a\tau_D}F(B_{\tau_D})].$$
Then, for $1\le p<\infty$ and $t>0$,
$$\frac{\operatorname{vol}_g\{y\in D:\mathcal PF(y)>t\}}
{\operatorname{vol}_g(D)}
\le C_p\left(\frac{\|F\|_{L^p(\nu)}}{t}\right)^{mp/(m-1)}.$$
\end{lemma}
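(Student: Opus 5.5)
The plan is to treat $\mathcal P$ as a fractional integral of order one from the $(m-1)$-dimensional boundary into the $m$-dimensional ball. First I would prove a weak-type estimate at $p=1$ by duality against level sets. Then I would interpolate with the trivial $L^\infty$ bound.

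\emph{Step 1 (normalisation and kernel bound).} By the proof of \cref{lem:offcentre-kernel}, $1\le H_1(y)\le 1+Car^2\le C$. Combined with the first kernel bound there, this gives
\[
\mathcal PF(y)=H_1(y)\int F\,d\nu_y\le C\int_{\partial D}F(\xi)\,d(y,\xi)^{1-m}\,d\sigma_g(\xi).
\]
At the pole, $d\nu/d\sigma_g\asymp r^{1-m}$ (from \cref{lem:poisson-comp-small}\textup{(i)}). Also $\operatorname{vol}_g(D)\asymp r^m$. After rescaling by $r$ we may therefore take $r=1$, with $\nu$ comparable to $\sigma_g$ and $\operatorname{vol}_g(D)\asymp1$. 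Both sides of the claimed inequality are scale invariant, so nothing is lost.

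\emph{Step 2 (weak type at $p=1$).} Let $E=\{\mathcal PF>t\}$. By Tonelli,
\[
t\operatorname{vol}_g(E)\le\int_E\mathcal PF
\le C\int_{\partial D}F(\xi)\left(\int_E d(y,\xi)^{1-m}\,d\operatorname{vol}_g(y)\right)d\sigma_g(\xi).
\]
The inner integral is largest when $E$ is a geodesic ball about $\xi$ of the same volume; this is a rearrangement using bounded geometry. Hence it is at most $C\operatorname{vol}_g(E)^{1/m}$. It follows that $t\operatorname{vol}_g(E)^{(m-1)/m}\le C\|F\|_{L^1(\nu)}$, which is the claimed bound for $p=1$.

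\emph{Step 3 (interpolation).} For $p=\infty$ we have the trivial bound $\mathcal PF\le H_1\|F\|_\infty\le C\|F\|_{L^\infty(\nu)}$. The operator $\mathcal P$ is positive and sublinear on $F\ge0$. Marcinkiewicz interpolation between weak type $(1,m/(m-1))$ and strong type $(\infty,\infty)$ then gives strong type $(p,q)$ with $1/q=(m-1)/(mp)$, for $1<p<\infty$. The constant depends on $p$ and blows up only as $p\downarrow1$, where Step 2 already covers the case. Chebyshev's inequality turns the strong bound into the stated distribution bound with exponent $q=mp/(m-1)$.

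The main obstacle is the rearrangement bound in Step 2. One must check that $\int_E d(y,\xi)^{1-m}\,dy\le C|E|^{1/m}$ holds uniformly for $\xi\in\partial D$ and $E\subset D$. I would derive this from volume comparison of small geodesic balls: split $E$ into $E\cap B(\xi,\rho)$ and its complement, with $\rho$ chosen so that $\operatorname{vol}_g B(\xi,\rho)=\operatorname{vol}_g(E)$. Apart from this, the argument only needs the kernel bound $d\nu_y/d\sigma_g\le C\,d(y,\xi)^{1-m}$ already proved in \cref{lem:offcentre-kernel}.
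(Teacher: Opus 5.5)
Your proposal is correct and follows the paper's route. Both arguments use the kernel bound from \cref{lem:offcentre-kernel}, then prove a weak-type $(1,m/(m-1))$ estimate via Fubini and Chebyshev, then interpolate with the $L^\infty$ bound. The only difference is cosmetic: you bound $\int_E d(y,\xi)^{1-m}\,d\operatorname{vol}_g(y)\le C\operatorname{vol}_g(E)^{1/m}$ by duality with the level set, which avoids the paper's case split. The paper instead truncates the kernel at a radius $\rho$ chosen so that the far part contributes at most $t/4$.
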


\begin{proof}
First let $p=1$ and put $d\mu:=F\,d\sigma_g$.  Then $\mu(\partial D)\le Cr^{m-1}\|F\|_{L^1(\nu)}$ and the preceding lemma gives $\mathcal PF(y)\le C_0\int d(y,\xi)^{1-m}\,d\mu(\xi)$.  If $t\le C\|F\|_{L^1(\nu)}$, the claim is immediate.  Otherwise
$$\rho:=\left(4C_0\mu(\partial D)/t\right)^{1/(m-1)}\le cr.$$
The part with $d(y,\xi)\ge\rho$ is at most $t/4$.  Chebyshev's inequality and Fubini's theorem give
$$\operatorname{vol}_g\{\mathcal PF>t\}
\le\frac Ct\int_{\partial D}\int_{d(y,\xi)<\rho}
d(y,\xi)^{1-m}\,d\operatorname{vol}_g(y)d\mu(\xi)
\le\frac{C\rho}{t}\mu(\partial D).$$
Using $\operatorname{vol}_g(D)\asymp r^m$ proves the case $p=1$. Interpolation with $\|\mathcal PF\|_\infty\le C\|F\|_\infty$ gives the remaining cases.
\end{proof}

\begin{lemma}[Volume and eigenvalue bounds]\label{lem:whitney-packing}
Let $1\le p<\infty$, let $\Omega_+$ be the component of $\{u>0\}\cap D$ containing $x_0$, and suppose $\varepsilon_p\le\varepsilon\le\varepsilon_0(p)$.  Then
$$\frac{\operatorname{vol}_g(D\setminus\Omega_+)}{\operatorname{vol}_g(D)}
\le C_p\varepsilon^{mp/(m-1)},$$
and
$$\mu_1(D)\le\mu_1(\Omega_+)\le
\bigl(1+C_p\varepsilon^{mp/(m-1)}\bigr)\mu_1(D).$$
If $\varepsilon_\infty<1$, then $\Omega_+=D$, so both losses vanish.
\end{lemma}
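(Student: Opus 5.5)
The plan is to represent $u$ inside $D$ by the stopping identity and compare it with the positive function $\bar m H_1$. Write $F:=(\bar m-u)_+$ on $\partial D$. Since $(\tfrac12\Delta_g+a)u=0$, $a<\mu_1(D)$, and $H_1=1$ on $\partial D$, \cref{thm:wavelength-stopping-identities}\textup{(i)} (applied with $a$ in place of $\lambda/2$) gives, for every $y\in D$,
\[
u(y)=\mathbb E_y[e^{a\tau_D}u(B_{\tau_D})]\ge \bar m H_1(y)-\mathcal PF(y)\ge \bar m-\mathcal PF(y),
\]
where $H_1\ge1$. Hence $\{u\le \bar m/2\}\cap D\subset\{\mathcal PF\ge \bar m/2\}$. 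By \cref{lem:weighted-poisson-distribution-bound} with $t=\bar m/2$, this set has relative volume at most $C_p(2\varepsilon_p)^{mp/(m-1)}$. The same pointwise bound, combined with \cref{lem:offcentre-kernel}, gives
\[
\mathcal PF(y)\le C_p\delta^{-(m-1)/p}\varepsilon\bar m \qquad\text{whenever } \operatorname{dist}(y,\partial D)\ge\delta r .
\]
Choosing $\delta=(2C_p\varepsilon)^{p/(m-1)}$ shows that $u>\bar m/2>0$ on the concentric ball $B(x_0,(1-\delta)r)$. That ball is connected and contains $x_0$, so it lies in $\Omega_+$.

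Next I must show that the remaining set $\{u>\bar m/2\}\cap\{d(\cdot,\partial D)<\delta r\}$ lies in $\Omega_+$ up to small volume. This is the main obstacle, because a positivity set in the collar need not connect to the core. I would use a Whitney decomposition of the collar into cubes $Q$ of side comparable to the distance $s$ to $\partial D$. On each cube, apply \cref{lem:offcentre-kernel} at scale $s/r$. A cube on which $\mathcal PF<\bar m/2$ throughout has $u>0$ on all of $Q$, and so on the whole radial chain of Whitney cubes above it. If the entire chain is good, $Q$ is connected to the inner ball inside $\{u>0\}$. The bad cubes are those meeting $\{\mathcal PF\ge\bar m/2\}$, together with the cubes below them in the chain, i.e.\ the cubes lying in the shadow of a bad cube. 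I would control the total volume of these shadows by a maximal-function or tent argument: the shadow of a bad cube at scale $s$ has volume comparable to $s^m$, and Harnack makes $\mathcal PF\gtrsim\bar m$ on the whole cube. Summing then reproduces the weak-type bound of \cref{lem:weighted-poisson-distribution-bound}. The first thing to try is that, for each boundary point, the set of bad scales is controlled by $\sup_s\mathcal PF$ along the nontangential ray, which is itself dominated by the boundary maximal function of $F$. The maximal function is weak $(p,p)$ on $(\partial D,\nu)$ and, with the exponent gained from the shadow height, yields the relative volume bound $C_p\varepsilon^{mp/(m-1)}$.

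For the eigenvalues, the lower inequality $\mu_1(D)\le\mu_1(\Omega_+)$ is domain monotonicity. For the upper inequality, take the first Dirichlet eigenfunction $\psi$ of $D$ and multiply it by a cutoff $\eta$ that equals $1$ on $\Omega_+$ away from the bad set. A cleaner route is the test function $\psi\cdot\min\{1,(2u/\bar m)_+\}$ restricted to $\Omega_+$, which lies in $H^1_0(\Omega_+)$ because $u=0$ on $\partial\Omega_+\cap D$. Its Rayleigh quotient differs from $\mu_1(D)$ only through integrals over $\{u<\bar m/2\}$. There $|\psi|\lesssim r^{-m/2}\cdot(\text{distance}/r)$ is small near $\partial D$. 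I would bound the gradient contribution $\int\psi^2|\nabla u|^2/\bar m^2$ by Caccioppoli and the energy identity \cref{prop:boundary-variance-energy}, giving an $O(\varepsilon^{mp/(m-1)})$ relative error. Finally, if $\varepsilon_\infty<1$, then $u\ge(1-\varepsilon_\infty)\bar m>0$ $\nu$-a.e.\ on $\partial D$, and since $\operatorname{supp}\nu=\partial D$, the stopping identity gives $u\ge(1-\varepsilon_\infty)\bar m H_1>0$ throughout $D$. Hence $\Omega_+=D$.
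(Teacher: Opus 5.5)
Your volume argument is correct, and it reorganises the paper's argument rather than changing it. The paper uses neither the threshold $\bar m/2$ nor a radial Whitney tree. At every zero $z$ of $u$, optional stopping gives $\mathcal PF(z)\ge\bar m H_1(z)\ge\bar m$, with your unnormalised $F$. For $y\in D\setminus\Omega_+$, the paper takes the first zero $z$ on the radial geodesic from $x_0$ to $y$, so that $d(y,z)<d_z:=\operatorname{dist}(z,\partial D)$. It then applies Vitali to the balls $B(z,d_z)$ and bounds $\sum_i d_i^m$ using Harnack on $B(z,c'd_z)$ and \cref{lem:weighted-poisson-distribution-bound}. Your shadows of bad cubes play the role of these balls. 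The maximal-function detour is unnecessary. Each shadow has volume $\lesssim|Q|$, the bad cubes are disjoint, and Harnack places them in $\{\mathcal PF\ge c\bar m\}$, so the distribution bound already suffices.

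The gap is in the eigenvalue step, in your ``cleaner route''. The estimate you need is
\[
\frac1{\bar m^2}\int_{\Omega_+\cap\{0<u<\bar m/2\}}\psi^2|\nabla u|^2\,d\operatorname{vol}_g\lesssim\varepsilon^{mp/(m-1)}r^{-2}.
\]
It cannot come from \cref{prop:boundary-variance-energy}. That identity bounds a weighted energy by $\operatorname{Var}_\nu(u)$, and the variance is not controlled by $\varepsilon_p$ at all. The quantity $\varepsilon_p$ sees only the downward deficit $(\bar m-u)_+$. The upward excess is constrained only by $\int(u-\bar m)_+\,d\nu=\bar m\varepsilon_1$, so a tall positive spike on a set of small $\nu$-measure makes $\operatorname{Var}_\nu(u)/\bar m^2$ arbitrarily large with $\varepsilon_p$ fixed. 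Local Caccioppoli on Whitney balls fails for the same reason, since it needs $\int_{2B}u^2$.

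The Caccioppoli idea does work in a global truncated form. Test the equation with $\psi^2S(u)$, where $S(t)=\min\{(\bar m/2-t)_+,\bar m/2\}$, and integrate by parts using $\nabla(\psi^2)=0$ on $\partial D$. This gives
\[
\int_{\{0<u<\bar m/2\}}\psi^2|\nabla u|^2\,d\operatorname{vol}_g\le Cr^{-m-2}\,\bar m\int_{\{u<\bar m/2\}}(\bar m+u_-)\,d\operatorname{vol}_g .
\]
Then $u_-\le(\mathcal PF-\bar m)_+$, together with a layer-cake integration of \cref{lem:weighted-poisson-distribution-bound}, gives the required $O(\bar m^2\varepsilon^{mp/(m-1)}r^{-2})$. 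The layer-cake integral converges because $mp/(m-1)>1$. The loss in the denominator must also include $D\setminus\Omega_+$, not only $\{u<\bar m/2\}$; your volume bound handles this.

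The other fix is your first suggestion, which is what the paper does. Build $\eta=\prod_i(1-\chi_i)$ from cutoffs equal to $1$ on $B(z_i,5d_i)$ and supported in $B(z_i,10d_i)$. Then $\eta$ vanishes on $D\setminus\Omega_+$, and $|\nabla\eta|\lesssim\operatorname{dist}(\cdot,\partial D)^{-1}\mathbf 1_U$ with $\operatorname{vol}_g(U)\lesssim\varepsilon^{mp/(m-1)}r^m$. The boundary decay $\psi\lesssim r^{-m/2}\operatorname{dist}(\cdot,\partial D)/r$ then bounds $\int\psi^2|\nabla\eta|^2$ directly, and no derivative of $u$ ever appears.
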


\begin{proof}
Let $F:=(\bar m-u)_+/\bar m$ on $\partial D$ and $v:=\mathcal PF$.  At every $z\in Z$ optional stopping gives
$$v(z)\ge\frac{\mathbb E_z[e^{a\tau_D}(\bar m-u(B_{\tau_D}))]}{\bar m}
=H_1(z)\ge1.$$
Harnack's inequality therefore gives $v\ge c$ on $B(z,c' d_z)$, where $d_z:=\operatorname{dist}(z,\partial D)$.

For $y\in D\setminus\Omega_+$, let $z$ be the first zero on the radial geodesic from $x_0$ to $y$. Then $d(y,z)=d(x_0,y)-d(x_0,z)<r-d(x_0,z)=d_z$. Thus $D\setminus\Omega_+$ is covered by the balls $B(z,d_z)$. Vitali's lemma gives points $z_i\in Z$, with $d_i:=d_{z_i}$, such that the balls $B(z_i,d_i)$ are disjoint and the balls $B(z_i,5d_i)$ cover $D\setminus\Omega_+$.  The smaller Harnack balls are disjoint; hence the preceding distribution bound yields
$$\sum_i d_i^m\le C\operatorname{vol}_g\{v\ge c\}
\le C_p\varepsilon^{mp/(m-1)}r^m.$$
This proves the volume estimate.

For the eigenvalue bound, let $\psi$ be the positive first Dirichlet eigenfunction of $D$, normalised in $L^2(D)$.  Then
$$\psi\le Cr^{-m/2},\qquad
\psi(y)\le Cr^{-m/2}\frac{\operatorname{dist}(y,\partial D)}r.$$
Choose on $D$ functions $0\le\chi_i\le1$, equal to $1$ on $B(z_i,5d_i)\cap D$, supported in $B(z_i,10d_i)\cap D$, with $|\nabla\chi_i|\le C/d_i$, and put $U:=\bigcup_iB(z_i,10d_i)$ and $\eta:=\prod_i(1-\chi_i)$. This product is locally finite in $D$. At a point $y$, every contributing $d_i$ is at least $\operatorname{dist}(y,\partial D)/11$; disjointness bounds their number in each dyadic range, and summing those ranges gives
$$|\nabla\eta(y)|\le
\frac{C}{\operatorname{dist}(y,\partial D)}
\mathbf{1}_U(y).$$
For $\eta_N:=\prod_{i\le N}(1-\chi_i)$ the same bound is uniform in $N$. The boundary estimate for $\psi$, $|\eta_N|\le1$, and $\operatorname{vol}_g(U)\le C\sum_i d_i^m$ give, by dominated convergence, $\psi\eta_N\to\psi\eta$ in $W^{1,2}(D)$.  Since $\psi\eta_N\in W^{1,2}_0(D)$ and $\eta$ vanishes on the open union of the $B(z_i,5d_i)$ containing $D\setminus\Omega_+$, closedness and the standard quasi-everywhere characterisation give $\psi\eta\in W^{1,2}_0(\Omega_+)$. The same domination permits passage to the limit in the identities. The ground-state identity gives
$$\frac12\int_D|\nabla(\psi\eta)|^2
=\mu_1(D)\int_D\psi^2\eta^2+\tfrac12\int_D\psi^2|\nabla\eta|^2.$$
The preceding bounds imply
$$\int_D\psi^2|\nabla\eta|^2
\le C\varepsilon^{mp/(m-1)}r^{-2},\qquad
\int_D\psi^2\eta^2\ge1-C\varepsilon^{mp/(m-1)}\ge\tfrac12.$$
The Rayleigh quotient and $\mu_1(D)\asymp r^{-2}$ prove the upper bound; the lower bound is domain monotonicity. Finally, if $\varepsilon_\infty<1$, then, by the full support of $\nu$ and continuity, $u\ge(1-\varepsilon_\infty)\bar m>0$ on $\partial D$, and the stopping formula gives $u>0$ in $D$.
\end{proof}

\begin{theorem}[Rigidity from an $L^p$ boundary deficit]
\label{thm:boundary-deficit-rigidity}
For $0<\delta\le1$ and every $y\in D$ with $\operatorname{dist}(y,\partial D)\ge\delta r$,
\begin{equation}\label{eq:boundary-deficit-interior-bound}
u(y)\ge H_1(y)\bar m
\bigl(1-C_p\delta^{-(m-1)/p}\varepsilon_p\bigr),
\qquad 1\le p\le\infty .
\end{equation}
For $p=\infty$ the power of $\delta$ is understood to be $0$. Let $1\le p<\infty$.  There are $\varepsilon_0(p)>0$ and constants depending only on the indicated parameters and bounded geometry such that, if $\varepsilon_p(x_0,r)\le\varepsilon\le\varepsilon_0(p)$ and $\Omega_+$ is the component of $\{u>0\}\cap D$ containing $x_0$, then:
\begin{enumerate}
\item[\textup{(i)}] \textup{(Profile.)} For $\theta\in(0,1)$ and every integer $k\ge0$, with constants also depending on the order-$k$ geometry bounds,
$$
\sum_{j=0}^kr^j
\left\|\nabla^j\left(\frac{u}{u(x_0)}
-\frac{H_1}{H_1(x_0)}\right)\right\|_{L^\infty(B(x_0,\theta r))}
\le C_{\theta,k}\varepsilon .
$$
In particular, $\inf_{B(x_0,\theta r)}u\ge\tfrac14u(x_0)$ when $\varepsilon\le\varepsilon_{\theta,p}$.
\item[\textup{(ii)}] \textup{(Depth.)}
$$B\bigl(x_0,(1-C_p\varepsilon^{p/(m-1)})r\bigr)\subset\Omega_+.$$
\item[\textup{(iii)}] \textup{(Volume.)}
$$
\frac{\operatorname{vol}_g(D\setminus\Omega_+)}
{\operatorname{vol}_g(D)}
\le C_p\varepsilon^{mp/(m-1)}.
$$
\item[\textup{(iv)}] \textup{(Spectrum.)}
$$
\mu_1(D)\le\mu_1(\Omega_+)\le
\bigl(1+C_p\varepsilon^{mp/(m-1)}\bigr)\mu_1(D).
$$
\end{enumerate}
For $p=\infty$ the profile estimate holds with $\varepsilon=\varepsilon_\infty$, and $\varepsilon_\infty<1$ implies $\Omega_+=D$.
\end{theorem}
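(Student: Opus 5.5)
The interior bound \eqref{eq:boundary-deficit-interior-bound} is the starting point. I would write the stopping identity from $y$ as $u(y)=H_1(y)\int u\,d\nu_y$. Since $u\ge\bar m-(\bar m-u)_+$ on $\partial D$ and $\nu_y$ is a probability measure,
\[
\frac{u(y)}{H_1(y)}\ge\bar m-\int(\bar m-u)_+\,d\nu_y .
\]
\Cref{lem:offcentre-kernel}, applied with $F=(\bar m-u)_+$, bounds the integral by $C_p\delta^{-(m-1)/p}\|F\|_{L^p(\nu)}=C_p\delta^{-(m-1)/p}\bar m\varepsilon_p$. For $p=\infty$ I would use $\|d\nu_y/d\nu\|_{L^1(\nu)}=1$ instead.

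\emph{Profile (i).} I would compare $u$ with $w:=\bar m H_1$. This function solves the same equation, and $u-w$ has boundary data $u-\bar m$, so the interior lower bound already gives $u-w\ge -C\varepsilon\bar m$ on $B(x_0,\theta' r)$. For the upper bound I would use that $u-\bar m$ has $\nu$-mean zero, so $\|(u-\bar m)_+\|_{L^1(\nu)}=\|(\bar m-u)_+\|_{L^1(\nu)}\le\bar m\varepsilon_1\le\bar m\varepsilon_p$. Then $|u-w|(y)\le H_1(y)\int|u-\bar m|\,d\nu_y\le C_\theta\bar m\varepsilon$, using \cref{lem:offcentre-kernel} with $q=\infty$ and $\delta=1-\theta'$. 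Interior Schauder estimates for $(\tfrac12\Delta_g+a)$ at scale $r$ upgrade this $L^\infty$ bound on $B(x_0,\theta'r)$ to $C^k$ bounds on $B(x_0,\theta r)$, with $\theta'=(1+\theta)/2$. Dividing by $u(x_0)=\bar mH_1(x_0)$, and using $H_1\asymp1$ by \cref{lem:survival-gap}, gives (i). The lower bound $\inf u\ge u(x_0)/4$ follows for small $\varepsilon$.

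\emph{Depth (ii).} I would choose $\delta:=(2C_p\varepsilon)^{p/(m-1)}$. The interior bound then gives $u(y)\ge\tfrac12H_1(y)\bar m>0$ whenever $\operatorname{dist}(y,\partial D)\ge\delta r$. So the ball $B(x_0,(1-\delta)r)$ is a connected set containing $x_0$ on which $u>0$, and it therefore lies in $\Omega_+$. This gives radius loss $C_p\varepsilon^{p/(m-1)}$.

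\emph{Volume and spectrum (iii), (iv).} These are exactly \cref{lem:whitney-packing}. The $p=\infty$ statements also come from there, and from the $p=\infty$ case of the interior bound.

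The main obstacle is the profile estimate. A one-sided deficit controls only $(\bar m-u)_+$, while the upper bound requires two-sided control. The mean-zero property of $u-\bar m$ under $\nu$ supplies this in $L^1(\nu)$, and the sup-norm of $d\nu_y/d\nu$ on interior points turns it into a pointwise bound. If that step is unavailable for some reason, I would fall back on Harnack applied to the nonnegative solution $w-u+C\varepsilon\bar m H_1$ on the inner ball, using its smallness at $x_0$.
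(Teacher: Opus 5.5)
Your proposal is correct and follows essentially the same route as the paper. The paper also uses the stopping identity from $y$ together with \cref{lem:offcentre-kernel} for the interior bound, and the equality $\int(u-\bar m)_+\,d\nu=\int(\bar m-u)_+\,d\nu=\bar m\varepsilon_1$ followed by interior estimates on $\bar mH_1-u$ for the profile; it then takes $\delta\asymp\varepsilon^{p/(m-1)}$ for the depth and defers the volume, spectral, and $\varepsilon_\infty<1$ assertions to \cref{lem:whitney-packing}, so your Harnack fallback is never needed.
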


\begin{proof}
Optional stopping from $y$ gives
$$
u(y)=H_1(y)\int u\,d\nu_y
\ge H_1(y)\left(\bar m-\int(\bar m-u)_+\,d\nu_y\right),
$$
and \cref{lem:offcentre-kernel} gives \eqref{eq:boundary-deficit-interior-bound}.  Since $\int(u-\bar m)_+\,d\nu=\int(\bar m-u)_+\,d\nu=\bar m\varepsilon_1$, the same bound applied to $|u-\bar m|$ yields
$$
\left|1-\frac{u}{\bar mH_1}\right|
\le C\delta^{-(m-1)}\varepsilon_1
$$
at relative depth $\delta$.  Set $w:=\bar mH_1-u$.  On a fixed smaller ball this gives $|w|\le C_\theta\varepsilon\bar m$, and interior estimates give
$$
\|\nabla^jw\|_{L^\infty(B(x_0,\theta r))}
\le C_{\theta,j}r^{-j}\varepsilon\bar m .
$$
Since $w(x_0)=0$ and
$$
\frac{u}{u(x_0)}-\frac{H_1}{H_1(x_0)}=-\frac{w}{u(x_0)},
$$
\textup{(i)} follows; the lower bound uses $H_1/H_1(x_0)\ge\tfrac12$ after the fixed choice of $c_0$.

If $\delta\ge(2C_p\varepsilon)^{p/(m-1)}$, then \eqref{eq:boundary-deficit-interior-bound} is positive, proving \textup{(ii)}. Clauses \textup{(iii)} and \textup{(iv)}, and the final assertion for $p=\infty$, follow from \cref{lem:whitney-packing}.
\end{proof}

\paragraph{Signed-boundary near equality.}
Specialise the preceding setup to $a=\lambda/2$ and $u=\varphi|_{\overline D}$, where $\varphi$ is a real eigenfunction with $\Delta_g\varphi+\lambda\varphi=0$, $\lambda>0$. Let $D=B(x_0,r)$ be a wavelength-scale ball as in \cref{rem:cES-choice}, with $r=c_{\mathrm{ES}}\lambda^{-1/2}\le r_0$ and $\varphi(x_0)>0$, and set
\[
S_+:=\sup_{\partial D}\varphi>0,
\qquad
H_1(x):=\mathbb E_x[e^{(\lambda/2)\tau_D}].
\]
Near equality at level $\varepsilon\in(0,1)$ means that
\begin{equation}\label{eq:optional-stopping-near-equality}
\frac{\varphi(x_0)}{S_+H_1(x_0)}\ge1-\varepsilon.
\end{equation}
Optional stopping bounds the left-hand side by $1$; thus the condition says that the boundary value is nearly maximal under the exponentially weighted exit distribution.
Let $\Omega_+$ be the component of $\{\varphi>0\}\cap D$ containing $x_0$. There are $C\ge1$ and $\varepsilon_0>0$, depending only on bounded geometry, with $C\varepsilon_0^{1/(m-1)}\le1/2$, such that if \eqref{eq:optional-stopping-near-equality} holds with $\varepsilon\le\varepsilon_0$, then:
\begin{enumerate}
\item[\textup{(i)}] $B(x_0,(1-C\varepsilon^{1/(m-1)})r)\subset\Omega_+$.
\item[\textup{(ii)}] $\operatorname{vol}_g(D\setminus\Omega_+)/\operatorname{vol}_g(D) \le C\varepsilon^{m/(m-1)}$.
\item[\textup{(iii)}] $\mu_1(D)\le\mu_1(\Omega_+)\le (1+C\varepsilon^{m/(m-1)})\mu_1(D)$.
\item[\textup{(iv)}] For each $\theta\in(0,1)$ there is $\varepsilon_\theta>0$, depending only on $\theta$ and bounded geometry, such that, if $\varepsilon\le\varepsilon_\theta$, then $\inf_{B(x_0,\theta r)}\varphi\ge\tfrac14\varphi(x_0)$.
\end{enumerate}
The inclusion, volume estimate, and lower bound also hold with $\Omega_+$ replaced by the global positive nodal domain containing $x_0$. The spectral comparison concerns the component inside $D$; on a closed manifold, or for Dirichlet boundary conditions, the global nodal domain instead has first eigenvalue $\lambda/2$.
Indeed, if $\bar m=\varphi(x_0)/H_1(x_0)$, then \eqref{eq:optional-stopping-near-equality} and the first boundary-quantity comparison above give
\[
\varepsilon_1\le\frac{\varepsilon_+}{1-\varepsilon_+}
\le\frac{\varepsilon}{1-\varepsilon}\le2\varepsilon.
\]
The local conclusions follow from \cref{thm:boundary-deficit-rigidity} with $p=1$. The global assertions follow because $\Omega_+$ is contained in the corresponding global positive nodal domain; the eigenvalue identity is the ground-state property of a nodal domain.

\paragraph{Comparison with unconditional inner-radius bounds.}
At level zero and for $m\ge3$, optional stopping in \cref{thm:superlevel-stopping} recovers the local volume estimate at a nodal-domain maximum of Georgiev--Mukherjee \cite{MR3707293}. Such a volume estimate does not by itself exclude a codimension-one nodal set from an almost full ball. Unconditional inner-radius bounds use different information: Mangoubi \cite{Mangoubi2008Asymmetry} starts from local volume asymmetry in balls centred on the nodal set, while Charron--Mangoubi \cite{CharronMangoubi2024} obtain, for $m\ge3$ and large $\lambda$, a ball of radius $c\lambda^{-1/2}(\log\lambda)^{-(m-2)/2}$ centred at a nodal-domain maximum. For large $\lambda$, Charron \cite[Theorem~1.1]{Charron2026Capacity} subsequently improved this to
\[
c\lambda^{-1/2}(\log\log\lambda)^{-1/2}\quad(m=3),
\qquad
c\lambda^{-1/2}(\log\lambda)^{-(m-3)/2}\quad(m\ge4),
\]
with the ball centred at any point where the eigenfunction attains its maximum in absolute value on the nodal domain. Charron also constructs, on the flat square torus $\mathbb T^m$ for every $m\ge3$, sequences of eigenfunctions and nodal domains $\Omega_\lambda$ satisfying $\operatorname{inrad}(\Omega_\lambda)=o(\lambda^{-1/2})$
\cite[Theorem~1.2]{Charron2026Capacity}. Thus a uniform
$c_g\lambda^{-1/2}$ lower bound is false in general.

Under \eqref{eq:optional-stopping-near-equality}, the preceding conclusion gives the stronger radius $(1-C\varepsilon^{1/(m-1)})r$, together with volume and eigenvalue control. The construction in \cref{cor:egp-sharpness} below shows that this exponent is sharp, although small $\varepsilon$ need not occur at every nodal-domain maximum.

\subsubsection{Variance, perturbations, and frequency criteria}

\begin{corollary}[Stability in Cauchy--Schwarz]\label{cor:cs-stability}
The stopping identity and Cauchy--Schwarz give
$$u(x_0)^2\le H_1(x_0)Q_1(x_0),$$
with relative loss $\delta_\nu$. Equality holds exactly when the boundary trace is constant $\nu$-almost everywhere, in which case $u=\bar mH_1$ in $D$. If $\delta_\nu\le\varepsilon\le\varepsilon_0$, then \cref{thm:boundary-deficit-rigidity}\textup{(i)}--\textup{(iv)} hold with $p=2$ and with its error parameter replaced by $C\varepsilon^{1/2}$. The square root cannot be removed. The example in \cref{prop:sharpness-capacity} attains the depth exponent. Its volume and spectral exponents are sharp as well.
\end{corollary}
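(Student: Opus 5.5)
The inequality and its equality case follow from Cauchy--Schwarz under the probability $\widehat{\mathbb P}_{x_0}$. The reduction to the rigidity theorem goes through the boundary-quantity comparison (ii). The one substantive point is the sharpness of the square root, which needs an explicit example.

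\emph{Inequality and equality.} By the stopping identity, $\bar m=u(x_0)/H_1(x_0)=\int u\,d\nu$. Cauchy--Schwarz for the probability $\nu$ gives $\bar m^2\le\int u^2\,d\nu=Q_1(x_0)/H_1(x_0)$, which is $u(x_0)^2\le H_1(x_0)Q_1(x_0)$. The relative loss is $1-\bar m^2/\int u^2\,d\nu=\delta_\nu$, by \cref{def:boundary-quantities}. Equality means $\operatorname{Var}_\nu(u)=0$, that is, $u=\bar m$ holds $\nu$-almost everywhere on $\partial D$. Since $\operatorname{supp}\nu=\partial D$ (\cref{lem:offcentre-kernel}) and $u$ is continuous, $u\equiv\bar m$ on $\partial D$. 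Optional stopping from each $y\in D$ then gives $u(y)=\bar m H_1(y)$. The same conclusion also follows from \cref{prop:boundary-variance-energy}, since $\nabla(u/H_1)=0$.

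\emph{Reduction to the $L^2$ deficit.} Let $\delta_\nu\le\varepsilon\le\varepsilon_0$ with $\varepsilon_0\le\tfrac12$. Comparison (ii) gives
\[
\varepsilon_2\le\Bigl(\frac{\delta_\nu}{1-\delta_\nu}\Bigr)^{1/2}\le(2\varepsilon)^{1/2}.
\]
Decreasing $\varepsilon_0$ if needed, we have $(2\varepsilon_0)^{1/2}\le\varepsilon_0(2)$. \Cref{thm:boundary-deficit-rigidity} with $p=2$ and error parameter $C\varepsilon^{1/2}$ then yields clauses (i)--(iv). In terms of $\varepsilon$, the profile error is $O(\varepsilon^{1/2})$, the radius loss is $O(\varepsilon^{1/(m-1)})$, and the volume and spectral errors are $O(\varepsilon^{m/(m-1)})$.

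\emph{Sharpness of the square root.} For the profile statement I would take the test family $u=H_1+t\psi$, where $\psi$ solves $(\tfrac12\Delta_g+a)\psi=0$ in $D$, has mean zero under $\nu$, and is not small in the interior; a first spherical-harmonic mode, $\psi(x_0)=0$, is a natural choice. For small $t$ we have $\bar m=1$ and $\delta_\nu\asymp t^2$. The profile deviation $u/u(x_0)-H_1/H_1(x_0)=t\psi/u(x_0)$ on $B(x_0,\theta r)$ has size $\asymp t\asymp\delta_\nu^{1/2}$, so the square root cannot be removed. The depth, volume and spectral exponents are attained by the example of \cref{prop:sharpness-capacity}, which I take as given.

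The main obstacle is this sharpness part: matching the variance against the interior deviation requires the nondegenerate test mode, while the rigidity conclusions themselves are a direct invocation of earlier results.
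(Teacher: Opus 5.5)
Your proposal is correct and matches the paper's proof step for step. Cauchy--Schwarz under $\nu$ gives the inequality with loss $\delta_\nu$; the comparison $\varepsilon_2\le(\delta_\nu/(1-\delta_\nu))^{1/2}\le(2\varepsilon)^{1/2}$ feeds \cref{thm:boundary-deficit-rigidity} at $p=2$; and your sharpness family $H_1+t\psi$ with $\psi(x_0)=0$ is the paper's mean-zero boundary data $1+\sqrt\varepsilon\,g$. The only difference is cosmetic: for the equality case you use $\operatorname{supp}\nu=\partial D$ and continuity of $u$, whereas the paper uses $\nu_y\ll\nu$, and either works.
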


\begin{proof}
At equality the terminal value is $\bar m$ $\nu$-almost everywhere. The weighted exit kernels are strictly positive, so $\nu_y\ll\nu$, and optional stopping gives $u(y)=\bar mH_1(y)$. If $\delta_\nu\le\varepsilon\le\tfrac12$, then the boundary-quantity comparison above gives
\[
\varepsilon_2\le\left(\frac{\delta_\nu}{1-\delta_\nu}\right)^{1/2}
\le(2\varepsilon)^{1/2},
\]
proving the quantitative claims.

For sharpness of the square root, take harmonic boundary data $1+\sqrt\varepsilon\,g$ on a flat ball, where $g$ is smooth, has mean zero, and has nonzero Poisson extension. Then $\delta_\nu\asymp\varepsilon$, while the interior change is $\asymp\sqrt\varepsilon$.
\end{proof}

\paragraph{Small $L^2$ boundary perturbations.}
Suppose that on $\partial D$ one has $u=b+\xi$ with $b>0$, $\int_{\partial D}\xi\,d\nu=0$, and $\|\xi\|_{L^2(\nu)}\le\eta b$, where $\eta$ is sufficiently small. Then $\bar m=b$ and $\delta_\nu/(1-\delta_\nu)=\|\xi\|_{L^2(\nu)}^2/b^2\le\eta^2$, so \cref{cor:cs-stability} applies with error $C\eta$: for each fixed $\theta\in(0,1)$ and $k\ge0$ the profile error $\sum_{j\le k}r^j\|\nabla^j(u/u(x_0)-H_1/H_1(x_0))\|_{L^\infty(B(x_0,\theta r))}$ is at most $C_{\theta,k}\eta$, $B(x_0,(1-C\eta^{2/(m-1)})r)\subset\Omega_+$, and the relative volume and first-eigenvalue errors are at most $C\eta^{2m/(m-1)}$. No quantitative pointwise or derivative bound on $\xi$ enters.

In particular, suppose $u=b_0>0$ off a measurable set $A\subset\partial D$, $|u-b_0|\le Lb_0$ on $A$, and $L\nu(A)\le\tfrac12$ with $L^2\nu(A)$ sufficiently small. Since variance is minimised by the mean, $\operatorname{Var}_\nu(u)\le L^2b_0^2\nu(A)$ and $\bar m\ge b_0(1-L\nu(A))\ge\tfrac12b_0$, so one may take $\eta=2L\nu(A)^{1/2}$: the relative radius loss is at most $C(L^2\nu(A))^{1/(m-1)}$, and the relative volume loss and eigenvalue error are at most $C(L^2\nu(A))^{m/(m-1)}$. Since $\nu$ is comparable to normalised surface measure by \cref{prop:pole_isotropy_explicit}, $\nu(A)$ may be replaced by the relative surface area of $A$.

\begin{proposition}[Boundary-cap example]\label{prop:sharpness-capacity}
Let $D=B(0,r)\subset\mathbb R^m$, $m\ge2$, and let $a\ge0$ satisfy $ar^2\le c_0$.  For $0<\delta\le\delta_0$, let $u_\delta$ solve $(\tfrac12\Delta+a)u_\delta=0$ in $D$, with boundary value $-1$ on a spherical cap of radius $\delta r/2$, value $1$ outside the concentric cap $A_\delta$ of radius $\delta r$, and a fixed rescaled smooth transition between them taking values in $[-1,1]$.  Let $\Omega_+$ be the component of $\{u_\delta>0\}\cap D$ containing $0$.  Then, for every fixed $1\le p<\infty$:
\begin{enumerate}
\item[\textup{(i)}] $\varepsilon_p\asymp\delta^{(m-1)/p}$ and $\varepsilon_{\partial}\asymp\delta_\nu\asymp\delta^{m-1}$;
\item[\textup{(ii)}] $u_\delta<0$ at depth $c\delta r$ below the cap centre, so the depth $\asymp\varepsilon_p^{p/(m-1)}r$ is attained;
\item[\textup{(iii)}] $u_\delta<0$ on a ball of radius $c'\delta r$ there, and hence
$$
\frac{\operatorname{vol}_g(D\setminus\Omega_+)}
{\operatorname{vol}_g(D)}\asymp\delta^m,\qquad
\frac{\mu_1(\Omega_+)}{\mu_1(D)}-1\asymp\delta^m
\asymp\varepsilon_p^{mp/(m-1)}.
$$
\end{enumerate}
\end{proposition}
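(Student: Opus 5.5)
The plan is to obtain every estimate from the weighted Poisson representation $u_\delta(y)=\mathbb E_y[e^{a\tau_D}F_\delta(B_{\tau_D})]$, where $F_\delta$ is the boundary datum. Since $u_\delta$ is close to $H_1$ away from the cap, the argument writes $F_\delta=1-G_\delta$ with $0\le G_\delta\le2$, $G_\delta=2$ on the inner cap of radius $\delta r/2$, and $G_\delta$ supported in $A_\delta$. Then $u_\delta=H_1-\mathcal PG_\delta$, with $\mathcal P$ as in \cref{lem:weighted-poisson-distribution-bound}.

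\textbf{Part (i).} At the pole, \cref{lem:poisson-comp-small}\textup{(i)} gives $d\nu/d\sigma\asymp r^{1-m}$, so $\nu(A_\delta)\asymp\delta^{m-1}$. From this:
\begin{itemize}
\item $\bar m=\int F_\delta\,d\nu=1-O(\delta^{m-1})$.
\item $(\bar m-u)_+$ is $\asymp1$ on the inner cap and $O(\delta^{m-1})$ elsewhere. Hence $\varepsilon_p^p\asymp\delta^{m-1}+\delta^{(m-1)p}\asymp\delta^{m-1}$, which gives $\varepsilon_p\asymp\delta^{(m-1)/p}$.
\item $\operatorname{Var}_\nu(F_\delta)\asymp\nu(A_\delta)\asymp\delta^{m-1}$, so $\delta_\nu\asymp\delta^{m-1}$.
\item $S=1$ gives $\varepsilon_\partial=1-\bar m\asymp\delta^{m-1}$. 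For the lower bound, note that $\int G_\delta\,d\nu\gtrsim\delta^{m-1}$.
\end{itemize}

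\textbf{Parts (ii) and (iii).} Let $\xi_0$ be the cap centre and $y_s=(1-s)\xi_0$ the point at depth $sr$ below it. We need a lower bound for the weighted Poisson kernel near the boundary, $P^{(a)}(y,\xi)\ge P^{(0)}(y,\xi)\ge c\,\operatorname{dist}(y,\partial D)\,d(y,\xi)^{-m}$. The middle inequality holds because $a\ge0$. The Euclidean ball has the explicit Poisson kernel $c_m r^{-1}(r^2-|y|^2)|y-\xi|^{-m}$, which gives the last inequality directly with no small-ball comparison needed.

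Take $s=c\delta$ with $c$ small. Each $\xi$ in the inner cap then satisfies $|y_s-\xi|\le C\delta r$, and the inner cap has area $\asymp(\delta r)^{m-1}$. Therefore
\[
\mathcal PG_\delta(y_s)\ge 2c\,\delta r\,(C\delta r)^{-m}(\delta r)^{m-1}\ge c_1,
\]
where $c_1$ is independent of $c$ as long as $c$ ranges over a compact subinterval, and $H_1(y_s)\le1+Car^2$. This alone does not make $u_\delta(y_s)$ negative.

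The fix is to use the full Poisson mass of the cap rather than this crude bound. As $y\to\xi_0$ nontangentially, harmonic measure of the inner cap seen from $y$ tends to $1$. By Brownian scaling at scale $\delta r$, at depth $c\delta r$ with $c$ small, the probability of exiting through the inner cap is at least $1-\eta(c)$ with $\eta(c)\to0$. The weight $e^{a\tau}$ is $1+O(ar^2)$ by \cref{lem:survival-gap}. Hence
\[
u_\delta(y_s)\le -(1-\eta)+\eta\,(1+Car^2)<0 .
\]
The same bound holds on $B(y_s,c'\delta r)$ by the nontangential version of the scaling argument. This gives (ii), and it also gives the lower bound $\operatorname{vol}(D\setminus\Omega_+)\gtrsim(\delta r)^m$.

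The upper volume bound follows from \cref{lem:whitney-packing} with $p=1$, since $\varepsilon_1^{m/(m-1)}\asymp\delta^m$.

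\textbf{Eigenvalue lower bound.} This is the step I expect to be the main obstacle, because the upper bound on $\mu_1(\Omega_+)/\mu_1(D)-1$ follows from \cref{lem:whitney-packing}. For the lower bound, $\Omega_+$ omits a ball $B'$ of radius $c'\delta r$ at distance $\asymp\delta r$ from $\partial D$. The plan is to compare with $\mu_1(D\setminus\overline{B'})$ via a Hadamard/capacity-type expansion. Removing the ball costs eigenvalue of order
\[
\operatorname{Cap}(B')\,\psi_1(\text{centre})^2\asymp(\delta r)^{m-2}\cdot r^{-m}\delta^2=\delta^m r^{-2}
\]
relative to $\mu_1(D)$, where $\psi_1\asymp r^{-m/2}\operatorname{dist}/r$ near the boundary. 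Showing that this near-boundary capacity heuristic is a genuine lower bound would be done by a test-function argument on the dual side. I would first try Rayleigh-quotient monotonicity combined with an explicit lower bound via the ground-state transform $v=w/\psi_1$ on $D\setminus\overline{B'}$: this reduces the question to a weighted capacity of $B'$ with weight $\psi_1^2$, which is $\asymp\delta^m r^{-2}\int\psi_1^2$. Together with the upper bound this gives $\asymp\delta^m\asymp\varepsilon_p^{mp/(m-1)}$.
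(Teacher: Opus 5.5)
Your arguments for (i), for negativity at depth $c\delta r$ and on a ball of radius $c'\delta r$ (the scaling/half-space limit of the ordinary harmonic measure of the inner cap), and for both volume bounds in (iii) are essentially the paper's. The paper gets (i) slightly more cleanly from rotation invariance, which makes $\nu$ exactly normalised surface measure.

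The genuine gap is the eigenvalue lower bound $\mu_1(\Omega_+)/\mu_1(D)-1\gtrsim\delta^m$, which you leave as a heuristic. After the ground-state transform you need
\[
\tfrac12\int_D\psi_1^2|\nabla v|^2\ge c\,\delta^mr^{-2}\int_D\psi_1^2v^2
\]
for every $v$ vanishing on $B'$, and you only assert this weighted capacity bound. It is true, but it is not routine. If you compare the mean of $v$ on $B(y,\delta r/2)$ with its global $\psi_1^2$-mean in one step via the weighted Poincar\'e inequality, you lose a factor $\delta^{-2}$, because $\int_{B(y,\delta r/2)}\psi_1^2\asymp\delta^{m+2}$. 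Recovering $\delta^m$ requires a dyadic chain of balls moving away from $\partial D$.

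None of this is needed. You have already shown $\operatorname{vol}_g(D\setminus\Omega_+)\ge c\delta^m\operatorname{vol}_g(D)$, so Faber--Krahn and scaling give
\[
\frac{\mu_1(\Omega_+)}{\mu_1(D)}\ge\left(\frac{\operatorname{vol}_g(D)}{\operatorname{vol}_g(\Omega_+)}\right)^{2/m}\ge(1-c\delta^m)^{-2/m}\ge1+\frac{2c}{m}\delta^m .
\]
This is exactly how the paper closes (iii).

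Two smaller repairs are needed.

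In (ii), let $A$ be the event of exiting through the inner cap. Your bound $\mathbb E_y[e^{a\tau}\mathbf 1_{A^c}]\le\eta(1+Car^2)$ does not follow from \cref{lem:survival-gap}, which controls $\mathbb E_ye^{a\tau}$ but not $e^{a\tau}$ on the event $A^c$. There are two ways to fix it:
\begin{itemize}
\item Use Cauchy--Schwarz: $\mathbb E_y[e^{a\tau}\mathbf 1_{A^c}]\le\eta^{1/2}(\mathbb E_ye^{2a\tau})^{1/2}\le C\eta^{1/2}$, since $2a\le\mu_1(D)/2$.
\item Argue as the paper does, with the normalised law. One has $\widehat{\mathbb P}_y(A)\ge\mathbb P_y(A)/H_1(y)$ and $H_1(y)-1\le Ca\,r\operatorname{dist}(y,\partial D)$. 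Hence $\int F_\delta\,d\nu_y\le1-2\widehat{\mathbb P}_y(A)\le-\tfrac12$.
\end{itemize}

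In (i), $(\bar m-u)_+$ is not $O(\delta^{m-1})$ on the transition region, where it can be close to $2$. It vanishes outside $A_\delta$ and is at most $2$ on $A_\delta$, which still gives $\varepsilon_p^p\asymp\delta^{m-1}$.
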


\begin{proof}
Rotation invariance makes $\nu$ the normalised surface measure.  Thus $1-\bar m\asymp\delta^{m-1}$,
$$
\|(\bar m-u_\delta)_+\|_{L^p(\nu)}^p\asymp\delta^{m-1},
\qquad
\operatorname{Var}_\nu(u_\delta)\asymp\delta^{m-1},
$$
which proves \textup{(i)}.

Let $y$ lie at depth $c\delta r$ below the cap centre.  For ordinary Brownian motion the probability of leaving through the inner cap tends to $1$ as $c\downarrow0$, uniformly for small $\delta$, by the half-space limit.  If $s=\operatorname{dist}(y,\partial D)=c\delta r$, then
$$
H_1(y)-1\le a\|H_1\|_\infty\mathbb E_y\tau_D\le C a r s,
$$
because the Markov property gives $H_1(y)-1=a\mathbb E_y\int_0^{\tau_D}H_1(B_t)\,dt$ and $\mathbb E_y\tau_D\le Crs$.  Hence the weighted inner-cap probability is at least the ordinary one divided by $H_1(y)$. Choosing $c$ and then $\delta_0$ small makes it greater than $3/4$.  If $F_\delta$ is the boundary value, then $u_\delta(y)=H_1(y)\int F_\delta\,d\nu_y\le-\tfrac12H_1(y)$; the same argument holds on $B(y,c'\delta r)$.  This proves \textup{(ii)} and the lower volume bound, while \cref{thm:boundary-deficit-rigidity} gives the matching upper bound. Faber--Krahn gives
$$
\frac{\mu_1(\Omega_+)}{\mu_1(D)}
\ge\left(\frac{\operatorname{vol}_g(D)}
{\operatorname{vol}_g(\Omega_+)}\right)^{2/m}
\ge1+c\delta^m,
$$
and the theorem gives the matching upper bound.
\end{proof}

\begin{corollary}[Sharpness for eigenfunctions on rational flat tori]
\label{cor:egp-sharpness}
Let $\mathbf T^m=\mathbb R^m/\Gamma$ be a flat torus such that the quadratic form $Q_\Gamma$ parametrising its Laplace spectrum is a positive multiple of a quadratic form with integer coefficients, and let $m\ge2$. For every fixed $1\le p<\infty$ and every $\delta\in(0,\delta_0]$, there is a sequence $\lambda\to\infty$ with eigenfunctions $\varphi_\lambda$ and balls $D_\lambda$ of radius comparable to $\lambda^{-1/2}$.  Write $\Omega_{+,\lambda}$ for the component of $\{\varphi_\lambda>0\}\cap D_\lambda$ containing the centre.  Then
$$
\varepsilon_p\asymp\delta^{(m-1)/p},\qquad
\sup_{\substack{z\in D_\lambda\\ \varphi_\lambda(z)=0}}
\operatorname{dist}(z,\partial D_\lambda)
\asymp\delta\,\lambda^{-1/2},
$$
and both the relative volume loss and $\mu_1(\Omega_{+,\lambda})/\mu_1(D_\lambda)-1$ are $\asymp\delta^m$. These rates are attained by genuine eigenfunctions.
\end{corollary}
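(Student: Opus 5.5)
The plan is to transplant the boundary-cap example of \cref{prop:sharpness-capacity} into a genuine eigenfunction on a rational flat torus, using a large-scale approximation argument. Rationality of $Q_\Gamma$ gives eigenspaces of high multiplicity whose frequency vectors equidistribute on the sphere of radius $\sqrt\lambda$ along a suitable sequence $\lambda\to\infty$ (lattice points on spheres for integral quadratic forms, $m\ge4$ directly and $m=2,3$ along subsequences via Duke-type equidistribution or, more crudely, via the density of rational directions on the ellipsoid). Rescaling $x\mapsto \lambda^{1/2}x$, a ball $D_\lambda$ of radius $c\lambda^{-1/2}$ becomes a fixed unit ball $B_c$, and the rescaled eigenfunctions are finite exponential sums $\sum_k a_k e^{i\langle\xi_k,x\rangle}$ with $|\xi_k|=1$ in the $Q_\Gamma$-metric (after a linear change making it Euclidean). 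The key approximation step is that every solution $w$ of $\Delta w+w=0$ on a neighbourhood of $\overline{B_c}$ (in particular the cap solution $u_\delta$ with $a=1/2$, $c$ below the Bessel threshold) is a $C^k(\overline{B_c})$ limit of such sums with frequencies in any dense subset of the sphere. I would prove this via the Herglotz-wave approximation (Runge/Weinstock): solutions on a ball are $C^k$-limits of Herglotz waves $\int_{S^{m-1}}e^{i\langle\omega,x\rangle}g(\omega)\,d\omega$ with smooth $g$, and Riemann sums over equidistributed lattice directions approximate these uniformly with derivatives.

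Given $\delta$, fix $u_\delta$ from \cref{prop:sharpness-capacity} on $B(0,c)$, with $c=c_{\mathrm{ES}}$-type and slightly enlarged domain so that $u_\delta$ extends as a solution to $B(0,c(1+\eta))$; to do this, solve the cap problem on the larger ball $B(0,c(1+\eta))$ with $\eta\ll\delta$, or simply take the Helmholtz solution on $B(0,c)$ and note that the quantities in (i)--(iii) are stable under $C^1$ perturbation of size $o(\delta^{m})$ on $\overline{B_c}$. Choose $\varphi_\lambda$ (real part of the exponential sum, rescaled back) with $\|\varphi_\lambda(\lambda^{-1/2}\cdot)-u_\delta\|_{C^1(\overline{B_c})}\le \kappa(\delta)$, $\kappa(\delta)$ much smaller than $\delta^{m}$.

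Then I check stability of each quantity. The normalised weighted exit law $\nu$ is the same after rescaling (exact Euclidean Brownian scaling with $a=\lambda/2$), so $\bar m$, $\varepsilon_p$, and $\delta_\nu$ change by $O(\kappa)$, preserving $\varepsilon_p\asymp\delta^{(m-1)/p}$. The negative ball of radius $c'\delta$ at depth $c\delta$ persists, since there $u_\delta\le-\tfrac12H_1$, giving the lower bounds on nodal depth, volume loss $\gtrsim\delta^m$, and, via Faber--Krahn, eigenvalue excess $\gtrsim\delta^m$. The upper bounds follow directly from \cref{thm:boundary-deficit-rigidity} applied to $\varphi_\lambda$ itself, using its own $\varepsilon_p\lesssim\delta^{(m-1)/p}$. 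The only point needing care is the nodal-depth upper bound $\sup\operatorname{dist}(z,\partial D)\lesssim\delta\lambda^{-1/2}$, which is part (ii) of that theorem.

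The main obstacle is the approximation step, namely producing frequency sets on the rescaled sphere dense enough, uniformly as $\lambda\to\infty$, for the Riemann sums of a fixed Herglotz density to converge. For $m\ge5$, and $m=4$ with standard restrictions, this follows from equidistribution of lattice points on ellipsoids of an integral form. In $m=2,3$ I would pick $\lambda$ along multiplicative subsequences (for instance, products of many split primes for $m=2$) where the angular density is known, which suffices since only some sequence $\lambda\to\infty$ is required.
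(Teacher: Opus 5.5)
Your architecture agrees with the paper's in everything after the approximation step. That includes exact Brownian scaling on the flat ball, persistence of $\varepsilon_p\asymp\delta^{(m-1)/p}$ and of the negative ball under a small $C^1$ error, and Faber--Krahn for the lower spectral bound. It also includes \cref{thm:boundary-deficit-rigidity} applied to $\varphi_\lambda$ itself for the depth, volume and spectral upper bounds. The weak point is the approximation step.

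The paper does not prove that step. It first replaces the cap data by a finite spherical-harmonic sum, with $C^2$ error a small multiple of $\delta^{m-1}$. Since $1/2<\mu_1(B(0,\rho))$, every radial mode is nonzero at $\rho$, so the new solution is a finite Fourier--Bessel sum and hence entire. It then invokes the inverse-localisation theorem \cite[Definition~1.1 and Theorem~1.2]{EGP2023}. That theorem says exactly that, on a torus with rational $Q_\Gamma$, such an entire Helmholtz solution is a $C^1$ limit on a fixed ball of rescaled genuine eigenfunctions along some $\lambda\to\infty$. You propose instead to reprove that theorem via Herglotz waves and lattice points, and as sketched this does not reach the stated generality.

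Here is the concrete gap. An eigenfunction needs all its frequencies on one level set $Q^*_\Gamma(k)=\lambda$. So you need, for each $\eta>0$, some arbitrarily large $\lambda$ for which $\{k/\sqrt\lambda:Q^*_\Gamma(k)=\lambda\}$ is $\eta$-dense on the unit ellipsoid. Density suffices: for fixed $\delta$ you need only fixed accuracy, so one lattice direction per $\eta$-cell gives the Riemann sum.

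\begin{itemize}
\item Your ``crude'' fallback, density of rational directions, does not give this. Distinct rational directions generally lie on levels whose ratios are not rational squares, so scaling cannot put them on a common level.
\item The split-prime device is specific to $x_1^2+x_2^2$, and Duke-type equidistribution is a statement about sums of three squares.
\item For a general positive definite integral binary or ternary form you need further arithmetic input (class groups of imaginary quadratic orders, respectively spinor genera), which the proposal does not supply.
\end{itemize}

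That input is precisely the content of the cited theorem. So the fix is to cite it, or to give the number-theoretic argument for every integral form.

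Separately, your second option for extending $u_\delta$ (``stability under $C^1$ perturbation'') does not produce a solution on a neighbourhood of $\overline{B_c}$. The larger-ball option can be made to work, but every estimate would have to be re-checked. The paper's Fourier--Bessel truncation is cleaner: by Funk--Hecke it is already an exact Herglotz wave with polynomial density, so no Runge or Weinstock density theorem is needed.
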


\begin{proof}
Choose $\rho>0$ with $\rho^2/2\le c_0$ and apply \cref{prop:sharpness-capacity} to the equation $(\Delta+1)h_\delta=0$ on $B(0,\rho)$.  Approximate its smooth boundary data by a finite spherical-harmonic sum, with $C^2$ error smaller than a fixed small multiple of $\delta^{m-1}$. Since $1/2<\mu_1(B(0,\rho))$, the regular radial solution in every spherical-harmonic degree is nonzero at $\rho$. Hence the resulting finite Fourier--Bessel series is an entire Helmholtz solution. Boundary and interior estimates show that this approximation preserves all conclusions of the proposition.

By the inverse-localisation property \cite[Definition~1.1 and Theorem~1.2]{EGP2023}, the hypothesis on $Q_\Gamma$ gives a sequence of real eigenfunctions $-\Delta\varphi_\lambda=\lambda\varphi_\lambda$, with $\lambda\to\infty$, whose $\Gamma$-periodic lifts satisfy
\[
\varphi_\lambda(\,\cdot\,/\sqrt\lambda)\longrightarrow h_\delta
\quad\text{in }C^1(\overline{B(0,2\rho)}).
\]
Here one applies Definition~1.1 on a ball containing $\overline{B(0,2\rho)}$; the eigenfunctions may be taken real by taking real parts. For all sufficiently large $\lambda$, set $D_\lambda=B_{\mathbf T^m}(0,\rho/\sqrt\lambda)$. After lifting and dilation, this ball is identified with $B(0,\rho)$, so the boundary errors and the ball on which the limiting solution is negative persist. Since $a=\lambda/2$ and
\[
a(\rho/\sqrt\lambda)^2=\rho^2/2\le c_0,
\]
\cref{thm:boundary-deficit-rigidity} gives the spectral upper bound, while the ball on which $\varphi_\lambda<0$, together with Faber--Krahn, gives the matching lower bound.
\end{proof}

\begin{corollary}[Frequency and near-maximality criteria for rigidity]
\label{cor:frequency-near-maximality-rigidity}
Each condition below implies the conclusions of \cref{thm:boundary-deficit-rigidity} when the resulting error is sufficiently small:
\begin{enumerate}
\item[\textup{(i)}]
$$
\mathfrak F(x_0,r)+C a r^2\le c\varepsilon,\qquad
\mathfrak F(x_0,r):=\int_0^r\frac{2N_u(x_0,s)}s\,ds
=\log\frac{H_2(x_0)}{u(x_0)^2},
$$
with $p=2$ and error $C\varepsilon^{1/2}$;
\item[\textup{(ii)}] If $u=\varphi|_{\overline D}$ for a bounded eigenfunction $\Delta_g\varphi+\lambda\varphi=0$ on $X$, so that $a=\lambda/2$, and $\varphi(x_0)\ge(1-\eta)\|\varphi\|_{L^\infty(X)}$, then the conclusions hold with $p=1$ and error $C(\eta+\lambda r^2)$.
\end{enumerate}
Moreover, if $u$ has a zero in $B(x_0,(1-\delta)r)$, where $0<\delta\le\tfrac12$, then
\[
\mathfrak F(x_0,r)\ge c\delta^{m-1}-C a r^2.
\]
\end{corollary}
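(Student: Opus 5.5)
The plan is to reduce each criterion to a bound on one of the boundary quantities in \cref{def:boundary-quantities}, and then invoke \cref{cor:cs-stability} (for $p=2$) or \cref{thm:boundary-deficit-rigidity} (for $p=1$).

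\emph{Criterion (i).} First I verify the identity for $\mathfrak F$. By definition $N_u=\tfrac r2\partial_r\log H_u$, so
\[
\int_0^r\frac{2N_u(x_0,s)}{s}\,ds=\log H_u(x_0,r)-\lim_{s\downarrow0}\log H_u(x_0,s).
\]
Continuity gives $H_u(x_0,s)\to u(x_0)^2$, and $H_u(x_0,r)=H_2(x_0)$. Next, $\delta_2=1-u(x_0)^2/(H_1H_2)$ and $H_1(x_0)\le1+Car^2$ by \cref{lem:survival-gap}. Hence
\[
\log\frac{1}{1-\delta_2}=\mathfrak F-\log H_1(x_0)\le\mathfrak F+Car^2\le c\varepsilon,
\]
which gives $\delta_\nu\le\delta_2\le C\varepsilon$. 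Then \cref{cor:cs-stability} yields the conclusions with $p=2$ and error $C\varepsilon^{1/2}$.

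\emph{Criterion (ii).} Write $M=\|\varphi\|_\infty$, so $S_+\le M$. Then
\[
\bar m=\frac{\varphi(x_0)}{H_1(x_0)}\ge\frac{(1-\eta)M}{1+C\lambda r^2}.
\]
This gives $\varepsilon_+\le1-\bar m/M\le\eta+C\lambda r^2$. By comparison (i) after \cref{prop:boundary-variance-energy}, $\varepsilon_1\le\varepsilon_+/(1-\varepsilon_+)\le C(\eta+\lambda r^2)$ once this is small, and \cref{thm:boundary-deficit-rigidity} applies with $p=1$.

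\emph{The zero-implies-frequency bound.} This is the contrapositive of the depth clause, and I expect it to be the main obstacle. Take $p=2$ in \cref{thm:boundary-deficit-rigidity}(ii): if $\varepsilon_2\le\varepsilon_0$, then $u>0$ on $B(x_0,(1-C\varepsilon_2^{2/(m-1)})r)$. So a zero in $B(x_0,(1-\delta)r)$ forces either $\varepsilon_2>\varepsilon_0$ or $C\varepsilon_2^{2/(m-1)}\ge\delta$. In both cases $\varepsilon_2^2\ge c\delta^{m-1}$, using $\delta\le\tfrac12$. By comparison (ii), $\varepsilon_2^2\le\delta_\nu/(1-\delta_\nu)\le\delta_2/(1-\delta_2)$. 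Combined with the identity above, $\mathfrak F\ge\log(1/(1-\delta_2))-Car^2$. Now $\delta_2/(1-\delta_2)\ge c\delta^{m-1}$ gives
\[
\log\frac{1}{1-\delta_2}=\log\Bigl(1+\frac{\delta_2}{1-\delta_2}\Bigr)\ge c\min\{1,\delta^{m-1}\},
\]
which is the claim.

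The delicate point is keeping the constants uniform. The threshold $\varepsilon_0(2)$ has to be absorbed into $c$, and the theorem requires $u(x_0)>0$; if $u(x_0)=0$ then $\mathfrak F=\infty$ and the claim is trivial. If $u(x_0)<0$, replace $u$ by $-u$.
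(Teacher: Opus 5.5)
Your proposal is correct and follows the paper's argument. Criteria (i) and (ii) reduce to $\delta_\nu\le\delta_2$ and to $\varepsilon_+\le\eta+C\lambda r^2$ exactly as in the paper, and your contrapositive for the final bound is the paper's contradiction argument (part (i) with $\varepsilon=c'\delta^{m-1}$ plus the $p=2$ depth clause) traversed in the opposite direction.

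One slip: since $1-\delta_2=u(x_0)^2/(H_1(x_0)H_2(x_0))$, the identity is $\log\frac{1}{1-\delta_2}=\mathfrak F+\log H_1(x_0)$, not $\mathfrak F-\log H_1(x_0)$. With the corrected sign, both of your subsequent inequalities, $\log\frac{1}{1-\delta_2}\le\mathfrak F+Car^2$ and $\mathfrak F\ge\log\frac{1}{1-\delta_2}-Car^2$, follow exactly as you wrote them.
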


\begin{proof}
For \textup{(i)},
$$
\delta_2
=1-\exp\bigl(-\mathfrak F-\log H_1(x_0)\bigr)
\le\mathfrak F+C a r^2 .
$$
Hence $\delta_\nu\le\delta_2\le c\varepsilon$, and \cref{cor:cs-stability} applies. For \textup{(ii)}, if $M=\|\varphi\|_\infty$, then $\sup_{\partial D}\varphi\le M$ and $H_1(x_0)\le1+C\lambda r^2$, so $\varepsilon_+\le\eta+C\lambda r^2$.

For the final assertion, otherwise part \textup{(i)}, with $\varepsilon=c'\delta^{m-1}$, and \cref{thm:boundary-deficit-rigidity}\textup{(ii)} at $p=2$ would give a zero-free ball larger than $B(x_0,(1-\delta)r)$.
\end{proof}

\subsection{Averaging over the centre}\label{subsec:averaged-variance}

Assume $X$ is closed and connected, and let $\varphi$ be a nonzero real-valued eigenfunction satisfying $\Delta_g\varphi+\lambda\varphi=0$, $\lambda\ge1$. Fix $0<c\le c_1<j_{(m-2)/2,1}$, put $r=c\lambda^{-1/2}$, and, for $x\in X$, suppose $r$ is below the radius in \cref{prop:pole_isotropy_explicit}. Write $\tau_{x,r}$ for the exit time from $D_{x,r}=B(x,r)$ and set
\[
h_{x,r}(y)=\mathbb E_y[e^{(\lambda/2)\tau_{x,r}}],
\qquad
\nu_{x,r}(A)=\frac{\mathbb E_x[e^{(\lambda/2)\tau_{x,r}}
\mathbf1_{\{B_{\tau_{x,r}}\in A\}}]}{h_{x,r}(x)}
\]
and
\[
\delta_{x,r}=\frac{\operatorname{Var}_{\nu_{x,r}}(\varphi)}
{\int\varphi^2\,d\nu_{x,r}},\qquad
\mathcal V_{\lambda,c}(x)
=h_{x,r}(x)^2\operatorname{Var}_{\nu_{x,r}}(\varphi).
\]
Then $0\le\delta_{x,r}\le1$, with $\delta_{x,r}=1$ when $\varphi(x)=0$, and, where $\varphi(x)\ne0$,
\begin{equation}\label{eq:variance-odds-moving}
\mathcal V_{\lambda,c}(x)
=\frac{\delta_{x,r}}{1-\delta_{x,r}}\varphi(x)^2.
\end{equation}
By \cref{prop:boundary-variance-energy}, $\mathcal V_{\lambda,c}(x)$ also equals
\[
h_{x,r}(x)\mathbb E_x\int_0^{\tau_{x,r}}e^{(\lambda/2)s}
h_{x,r}(B_s)\left|\nabla\left(\frac{\varphi}{h_{x,r}}\right)(B_s)\right|^2ds.
\]

\begin{theorem}[Averaged boundary variance]
\label{thm:averaged-boundary-variance}
For every $\chi\in C^2(X)$,
\begin{equation}\label{eq:averaged-boundary-variance}
\left|\int_X\chi\,\mathcal V_{\lambda,c}\,d\operatorname{vol}_g
-Q_c\int_X\chi\varphi^2\,d\operatorname{vol}_g\right|
\le C_{g,c_1}\frac{c^2}{\lambda}\|\chi\|_{C^2}
\|\varphi\|_{L^2(X)}^2,
\qquad
Q_c:=q_m(c^2/2)^2-1.
\end{equation}
Moreover, as $c\downarrow0$, $Q_c=c^2/m+O_m(c^4)$. On a flat torus, the left-hand side of \eqref{eq:averaged-boundary-variance} vanishes for $\chi=1$.
\end{theorem}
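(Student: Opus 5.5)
The plan is to reduce $\mathcal V_{\lambda,c}$ to an explicit spherical mean of $\varphi^2$ via the first-moment stopping identity, and then integrate in $x$ using a Fubini-type symmetry argument. Put $a=\lambda/2$ and $\beta=ar^2=c^2/2\le c_1^2/2<\tfrac12j_{\nu,1}^2$, so that \cref{prop:pole_isotropy_explicit} applies with $\beta_{\max}=c_1^2/2$.

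\emph{Step 1: a pointwise formula for $\mathcal V_{\lambda,c}$.} Expanding the variance and using \cref{thm:wavelength-stopping-identities}(i), in the form $h_{x,r}(x)\int\varphi\,d\nu_{x,r}=\varphi(x)$, gives
\[
\mathcal V_{\lambda,c}(x)=h_{x,r}(x)\,\mathbb E_x\!\left[e^{a\tau_{x,r}}\varphi(B_{\tau_{x,r}})^2\right]-\varphi(x)^2 .
\]
I apply \eqref{eq:poisson_two_sided} twice. With $F=1$ it gives $h_{x,r}(x)=q_m(\beta)(1+O(\kappa r^2))$. With $F=\varphi^2$ it gives
\[
\mathbb E_x[e^{a\tau}\varphi^2(B_\tau)]=q_m(\beta)(1+O(\kappa r^2))\,M_r[\varphi^2](x),
\qquad
M_r[f](x):=\fint_{\partial B(x,r)}f\,d\sigma_g .
\]
Since $q_m$ is bounded on $[0,\beta_{\max}]$, it follows that
\[
\mathcal V_{\lambda,c}(x)=q_m(\beta)^2M_r[\varphi^2](x)-\varphi(x)^2+O(r^2)\,M_r[\varphi^2](x),
\]
with constants depending only on bounded geometry and $c_1$.

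\emph{Step 2: averaging over the centre.} I write
\[
\int\chi\,M_r[\varphi^2]\,d\mathrm{vol}_g=\iint_{d(x,y)=r}\frac{\chi(x)\varphi(y)^2}{\sigma_g(\partial B(x,r))}\,d\sigma\,d\mathrm{vol}_g .
\]
The coarea formula applied to the distance function on $X\times X$ shows that the surface measure on $\{d(x,y)=r\}$ is symmetric in $(x,y)$. The only asymmetry is the normaliser, and $\sigma_g(\partial B(x,r))/\sigma_g(\partial B(y,r))=1+O(\kappa r^2)$. Hence
\[
\int\chi\,M_r[\varphi^2]\,d\mathrm{vol}_g=\int\varphi^2\bigl(M_r[\chi]+O(\kappa r^2)\|\chi\|_\infty\bigr)\,d\mathrm{vol}_g .
\]
The expansion of geodesic spherical means (normal coordinates, with the odd terms cancelling) gives $M_r[\chi]=\chi+O(r^2\|\chi\|_{C^2})$. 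The case $\chi\equiv1$ of the same computation gives $\int M_r[\varphi^2]\le C\|\varphi\|_2^2$, which controls the $O(r^2)M_r[\varphi^2]$ remainder from Step 1. Collecting terms, the left side of \eqref{eq:averaged-boundary-variance} is
\[
\bigl(q_m(\beta)^2-1\bigr)\int\chi\varphi^2\,d\mathrm{vol}_g+O\bigl(r^2\|\chi\|_{C^2}\|\varphi\|_2^2\bigr),
\]
and $r^2=c^2/\lambda$.

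\emph{Step 3: expansion of $Q_c$ and the flat torus.} By \eqref{eq:q_m_small_beta}, $q_m(c^2/2)=1+c^2/(2m)+O_m(c^4)$, so squaring gives $Q_c=c^2/m+O_m(c^4)$. On a flat torus (with $r$ below the injectivity radius), every error above vanishes identically: the weighted Poisson kernel at the pole is exactly $q_m(\beta)/\sigma(S_r)$, so $h_{x,r}(x)=q_m(\beta)$, and $\sigma(\partial B(x,r))$ is independent of $x$. For $\chi=1$ we have $M_r[1]=1$ exactly, and Fubini gives $\int M_r[\varphi^2]=\int\varphi^2$. The left side is therefore exactly zero.

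The main obstacle I expect is Step 1's uniformity. The $O(\kappa r^2)$ relative error in \eqref{eq:poisson_isotropic} must hold uniformly in $x$ and up to $\beta_{\max}$ close to the Bessel pole. That uniformity is precisely the content of \cref{prop:pole_isotropy_explicit}, so the remaining care is to ensure that all remainders are multiplied by $M_r[\varphi^2]$, which is nonnegative and integrable, rather than by $\varphi(x)^2$ pointwise. This is what lets the Fubini step in Step 2 absorb them.
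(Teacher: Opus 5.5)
Your proposal is correct and follows the paper's proof essentially step for step. The paper also rewrites $\mathcal V_{\lambda,c}$ by optional stopping and applies \eqref{eq:poisson_two_sided} with $F=1$ and $F=\varphi^2$ to get $q_m(c^2/2)^2M_r[\varphi^2]-\varphi^2+O(r^2M_r[\varphi^2])$. It then uses Fubini to move the spherical average onto $\chi$, with $M_r^*\chi=\chi+O(r^2\|\chi\|_{C^2})$, and uses exactness of the pole kernel and translation invariance on the flat torus. Your coarea symmetry argument with the normaliser ratio $1+O(\kappa r^2)$ is a spelled-out version of the paper's one-line ``Fubini and normal-coordinate expansion'' step.
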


\begin{proof}
Optional stopping gives
\[
\mathcal V_{\lambda,c}(x)
=h_{x,r}(x)\mathbb E_x[e^{(\lambda/2)\tau_{x,r}}
\varphi(B_{\tau_{x,r}})^2]-\varphi(x)^2.
\]
Let $A_r$ be spherical averaging at radius $r$. By \cref{prop:pole_isotropy_explicit}, with $q=q_m(c^2/2)$,
\[
\mathcal V_{\lambda,c}=q^2A_r(\varphi^2)-\varphi^2
+O_{g,c_1}\bigl(r^2A_r(\varphi^2)\bigr).
\]
Fubini and the normal-coordinate expansion give $A_r^*\chi=\chi+O_g(r^2\|\chi\|_{C^2})$. Since $r^2=c^2/\lambda$, this proves \eqref{eq:averaged-boundary-variance}; the expansion of $Q_c$ follows from \eqref{eq:q_m_small_beta}. On a flat torus $A_r$ preserves the integral and the pole kernel is exactly constant.
\end{proof}

\begin{corollary}[Rigidity outside a set of small $\varphi^2$-mass]\label{cor:typical-rigidity}
For $0<\varepsilon<1$,
\begin{equation}\label{eq:variance-tail}
\int_{\{\delta_{x,r}>\varepsilon\}}\varphi^2\,d\operatorname{vol}_g
\le\frac{1-\varepsilon}{\varepsilon}
\left(Q_c+C_{g,c_1}\frac{c^2}{\lambda}\right)\|\varphi\|_2^2.
\end{equation}
For $c\le c_{\mathrm{rig}}$ small, take $\varepsilon_c=Q_c^{(m-1)/m}$. Outside a set of $\varphi^2$-mass at most $C Q_c^{1/m}\|\varphi\|_2^2$, the conclusions of \cref{cor:cs-stability} hold on $B(x,r)$ after replacing $\varphi$ by $\operatorname{sgn}(\varphi(x))\varphi$. The profile error and relative radius loss are $O(Q_c^{(m-1)/(2m)})$ and $O(Q_c^{1/m})$, while the volume and spectral errors are $O(Q_c)$. Equivalently, these four quantities are $O(c^{(m-1)/m})$, $O(c^{2/m})$, $O(c^2)$, and $O(c^2)$.
\end{corollary}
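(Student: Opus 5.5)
The tail bound \eqref{eq:variance-tail} is a Chebyshev-type estimate. On the set $\{\delta_{x,r}>\varepsilon\}$ we have $\delta/(1-\delta)>\varepsilon/(1-\varepsilon)$, since $t\mapsto t/(1-t)$ is increasing. Hence, by \eqref{eq:variance-odds-moving}, at every point of this set with $\varphi(x)\ne0$,
\[
\varphi(x)^2\le\frac{1-\varepsilon}{\varepsilon}\,\mathcal V_{\lambda,c}(x).
\]
Points with $\varphi(x)=0$ contribute nothing to the $\varphi^2$-mass. Integrating this inequality and using $\mathcal V_{\lambda,c}\ge0$, we enlarge the domain of integration to all of $X$. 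We then apply \cref{thm:averaged-boundary-variance} with $\chi=1$, so that $\|\chi\|_{C^2}=1$. This bounds $\int_X\mathcal V_{\lambda,c}$ by $(Q_c+C_{g,c_1}c^2\lambda^{-1})\|\varphi\|_2^2$, which gives \eqref{eq:variance-tail} directly.

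For the rigidity statement we take $\varepsilon_c=Q_c^{(m-1)/m}$. We use $\lambda\ge1$ and $Q_c\ge c^2/(2m)$ for $c\le c_{\mathrm{rig}}$, which follows from the expansion $Q_c=c^2/m+O_m(c^4)$. Together these give $c^2\lambda^{-1}\le CQ_c$, so the right side of \eqref{eq:variance-tail} is at most
\[
\varepsilon_c^{-1}CQ_c\|\varphi\|_2^2=CQ_c^{1/m}\|\varphi\|_2^2 .
\]
At any remaining centre we have $\delta_{x,r}\le\varepsilon_c<1$. In particular $\varphi(x)\ne0$, because $\delta_{x,r}=1$ whenever $\varphi(x)=0$. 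Replacing $\varphi$ by $\operatorname{sgn}(\varphi(x))\varphi$, we can place ourselves in the setting of \cref{subsec:boundary-quantities} with $u(x_0)>0$ and $a=\lambda/2$. The normalised measure $\nu_{x,r}$ is exactly the $\nu$ used there, so $\delta_\nu=\delta_{x,r}$.

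Before invoking \cref{cor:cs-stability}, we have to check the admissibility conditions of that subsection. Since $r=c\lambda^{-1/2}$, we have $ar^2=c^2/2$ and $r\le c$. Choosing $c_{\mathrm{rig}}$ small enough ensures $ar^2\le c_0$ and $r\le r_0$, and also ensures $\varepsilon_c=Q_c^{(m-1)/m}\le\varepsilon_0$, which holds because $Q_c\to0$. \Cref{cor:cs-stability} then applies with error parameter $C\varepsilon_c^{1/2}=CQ_c^{(m-1)/(2m)}$ substituted into \cref{thm:boundary-deficit-rigidity} at $p=2$. The resulting bounds are as follows:
\begin{itemize}
\item the profile error is $O(Q_c^{(m-1)/(2m)})$;
\item the radius loss is $O\bigl((\varepsilon_c^{1/2})^{2/(m-1)}\bigr)=O(Q_c^{1/m})$;
\item the volume and spectral errors are $O\bigl((\varepsilon_c^{1/2})^{2m/(m-1)}\bigr)=O(Q_c)$.
\end{itemize}
Converting with $Q_c\asymp c^2$ yields the stated powers of $c$.

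I do not expect a genuine obstacle. The only care needed is in bookkeeping: choosing $c_{\mathrm{rig}}$ small enough that every threshold holds simultaneously, including the requirement that $r$ lies below the radius of \cref{prop:pole_isotropy_explicit}; checking that constants depend only on $g$ (with $c_1$ fixed); and absorbing the $c^2/\lambda$ error into $Q_c$ using $\lambda\ge1$.
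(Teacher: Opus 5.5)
Your proposal is correct and follows essentially the same argument as the paper. You obtain a Chebyshev bound from \eqref{eq:variance-odds-moving} together with \cref{thm:averaged-boundary-variance} at $\chi=1$. Then you apply \cref{cor:cs-stability} with $\varepsilon_c=Q_c^{(m-1)/m}$, after absorbing $c^2/\lambda\le c^2\lesssim Q_c$; you spell out the admissibility checks and the exponent bookkeeping that the paper's proof leaves implicit.
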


\begin{proof}
On $\{\delta_{x,r}>\varepsilon\}$, \eqref{eq:variance-odds-moving} gives $\mathcal V_{\lambda,c}\ge\varepsilon(1-\varepsilon)^{-1}\varphi^2$; points with $\delta_{x,r}=1$ have $\varphi(x)=0$. Apply \cref{thm:averaged-boundary-variance} with $\chi=1$, then use \cref{cor:cs-stability}.
\end{proof}

\begin{remark}[Mass near the nodal set and Hardy bounds]
\label{rem:nodal-tube-mass}
Let $Z_\varphi=\{\varphi=0\}$.  Fix $\varepsilon_*>0$ and $A>1$ so that the depth conclusion of \cref{cor:cs-stability}, at radius $At\lambda^{-1/2}$, excludes every zero within distance $t\lambda^{-1/2}$ whenever $\delta_{x,At\lambda^{-1/2}}\le\varepsilon_*$.  The tube $\{d(\,\cdot\,,Z_\varphi)<t\lambda^{-1/2}\}$ then lies, apart from its zero-mass nodal set, in $\{\delta_{x,At\lambda^{-1/2}}>\varepsilon_*\}$, so \eqref{eq:variance-tail} with $c=At$ and $Q_{At}\le Ct^2$ gives $t_0,C>0$, depending only on bounded geometry, with
\begin{equation}\label{eq:nodal-tube-mass}
\int_{\{d(x,Z_\varphi)<t\lambda^{-1/2}\}}\varphi^2\,d\operatorname{vol}_g
\le Ct^2\|\varphi\|_2^2
\qquad(0<t\le t_0).
\end{equation}
Consequently, if $\{\Omega_i\}$ are the nodal domains and $\rho_i:=\operatorname{inrad}(\Omega_i) =\sup_{x\in\Omega_i}d(x,Z_\varphi)$, then the domains with $\rho_i<t\lambda^{-1/2}$ carry together at most
$Ct^2\|\varphi\|_2^2$ of the $\varphi^2$-mass. In particular, if
$a_i:=\|\varphi\|_{L^2(\Omega_i)}^2/\|\varphi\|_2^2$, then
$\rho_i\ge c\sqrt{a_i}\lambda^{-1/2}$. After decreasing $c$ if
necessary, for every $0<\theta\le1$, at least
$(1-\theta)\|\varphi\|_2^2$ of the $\varphi^2$-mass is carried by
nodal domains satisfying
$\rho_i\ge c\sqrt{\theta}\lambda^{-1/2}$.

Domain monotonicity gives the pointwise bound $d(x,Z_\varphi)\le C_g\lambda^{-1/2}$, hence the upper half of the two-sided comparison $\int_Xd(x,Z_\varphi)^p\varphi^2\,d\operatorname{vol}_g \asymp_p\lambda^{-p/2}\|\varphi\|_2^2$ for $p>0$; the lower half follows from \eqref{eq:nodal-tube-mass} at a single fixed small $t$. Layer cake converts \eqref{eq:nodal-tube-mass} into the Hardy bound
\[
\int_X\frac{\varphi^2}{d(x,Z_\varphi)^p}\,d\operatorname{vol}_g
\le C_p\lambda^{p/2}\|\varphi\|_2^2\qquad(0<p<2),
\]
the restriction $p<2$ coming from the convergence of $\int_0^{t_0}t^{1-p}\,dt$; at the endpoint $p=2$, \eqref{eq:nodal-tube-mass} rewritten with $s=t^{-2}$ gives the weak form
\[
\sup_{s>0}s\!\!\int_{\{(\sqrt\lambda\,d(x,Z_\varphi))^{-2}>s\}}\!\!
\varphi^2\,d\operatorname{vol}_g\le C\|\varphi\|_2^2.
\]
\end{remark}

\begin{corollary}[Quantum-ergodic sequences]\label{cor:qe-rigidity}
Let $\|\varphi_j\|_2=1$, $\lambda_j\to\infty$, and suppose
\[
\varphi_j^2\,d\operatorname{vol}_g
\ \rightharpoonup\
\operatorname{vol}_g(X)^{-1}d\operatorname{vol}_g.
\]
For fixed $c<j_{(m-2)/2,1}$,
\[
\mathcal V_{\lambda_j,c}\,d\operatorname{vol}_g
\ \rightharpoonup\
\frac{Q_c}{\operatorname{vol}_g(X)}\,d\operatorname{vol}_g.
\]
Hence, if $\operatorname{vol}_g(\partial U)=0$ and $0<\varepsilon<1$,
\[
\limsup_{j\to\infty}
\int_{U\cap\{\delta_{x,c\lambda_j^{-1/2}}>\varepsilon\}}\varphi_j^2
\le\frac{1-\varepsilon}{\varepsilon}\,
\frac{Q_c\,\operatorname{vol}_g(U)}{\operatorname{vol}_g(X)}.
\]
If $c\le c_{\mathrm{rig}}$, taking $\varepsilon=Q_c^{(m-1)/m}$ shows that the conclusions of \cref{cor:typical-rigidity} hold in every fixed $U$ outside $O(Q_c^{1/m})$ of its limiting eigenfunction mass. If the geodesic flow is ergodic, this applies to a full-density subsequence of every real eigenbasis \cite{ColinDeVerdiere1985,Zelditch1987}.
\end{corollary}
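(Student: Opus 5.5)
The plan is to obtain weak convergence of $\mathcal V_{\lambda_j,c}\,d\operatorname{vol}_g$ from the localised averaged identity \cref{thm:averaged-boundary-variance}, and then to pass from smooth test functions to the set $U\cap\{\delta>\varepsilon\}$ by an outer approximation of $\mathbf 1_U$ together with the odds identity \eqref{eq:variance-odds-moving}.

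\emph{Weak convergence.} Fix $\chi\in C^2(X)$ and apply \eqref{eq:averaged-boundary-variance} with $\lambda=\lambda_j$ and $\|\varphi_j\|_2=1$. The error is $C_{g,c_1}c^2\lambda_j^{-1}\|\chi\|_{C^2}\to0$. By hypothesis,
\[
Q_c\int\chi\varphi_j^2\to \frac{Q_c}{\operatorname{vol}_g(X)}\int\chi.
\]
Hence $\int\chi\mathcal V_{\lambda_j,c}\to Q_c\operatorname{vol}_g(X)^{-1}\int\chi$ for every $\chi\in C^2$. The measures $\mathcal V_{\lambda_j,c}\,d\operatorname{vol}_g$ are nonnegative. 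Taking $\chi=1$ shows that their total masses are bounded. Since $C^2$ is dense in $C(X)$ in the sup norm, the convergence extends to every continuous $\chi$. This requires $r=c\lambda_j^{-1/2}$ to be below the radius in \cref{prop:pole_isotropy_explicit}, which holds for large $j$ because $c$ is fixed.

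\emph{Tail bound.} On $\{\delta>\varepsilon\}$ with $\varphi_j(x)\ne0$, \eqref{eq:variance-odds-moving} gives
\[
\varphi_j^2\le \frac{1-\varepsilon}{\varepsilon}\,\mathcal V_{\lambda_j,c}.
\]
Points with $\varphi_j(x)=0$ contribute nothing to the integral of $\varphi_j^2$. Take continuous $0\le\chi\le1$ with $\chi=1$ on $\overline U$ and $\int\chi\le\operatorname{vol}_g(U)+\eta$. This is possible because the closures of neighbourhoods of $\overline U$ shrink to $\overline U$, and $\operatorname{vol}(\overline U)=\operatorname{vol}(U)$ since $\operatorname{vol}(\partial U)=0$. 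Then
\[
\int_{U\cap\{\delta>\varepsilon\}}\varphi_j^2\le\frac{1-\varepsilon}{\varepsilon}\int\chi\,\mathcal V_{\lambda_j,c},
\]
whose limit is $\frac{1-\varepsilon}{\varepsilon}\frac{Q_c}{\operatorname{vol}_g(X)}(\operatorname{vol}_g(U)+\eta)$. Let $\eta\downarrow0$.

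\emph{Rigidity and QE.} For $c\le c_{\mathrm{rig}}$, set $\varepsilon=Q_c^{(m-1)/m}$. The bound becomes $\le Q_c^{1/m}\operatorname{vol}(U)/\operatorname{vol}(X)$, i.e.\ $O(Q_c^{1/m})$ of the limiting mass of $U$. At the remaining centres, \cref{cor:cs-stability} applies exactly as in \cref{cor:typical-rigidity}. The last sentence then follows from the quantum ergodicity theorem, which provides the full-density subsequence with the stated weak limit.

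The main obstacle is the passage from smooth test functions to the indicator of $U$. It needs two things: density to reach continuous $\chi$, which uses positivity of the measures and uniform mass bounds, and the null-boundary hypothesis, which ensures $\mathbf 1_U$ can be dominated from above at small volume cost.
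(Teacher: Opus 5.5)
Your proposal is correct and follows the paper's argument: the weak convergence comes from the localised identity \eqref{eq:averaged-boundary-variance}, whose error $O(c^2\lambda_j^{-1}\|\chi\|_{C^2})$ vanishes, and the tail bound comes from the odds identity \eqref{eq:variance-odds-moving} combined with that weak convergence. Your outer continuous approximation of $\mathbf 1_{\overline U}$ is the closed-set half of the portmanteau theorem that the paper uses implicitly, with $\operatorname{vol}_g(\partial U)=0$ giving $\operatorname{vol}_g(\overline U)=\operatorname{vol}_g(U)$.
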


\begin{proof}
The first assertion follows from \eqref{eq:averaged-boundary-variance}; the second follows from \eqref{eq:variance-odds-moving} and weak convergence.
\end{proof}

\paragraph{Comparison with nodal-mass estimates.}
Jakobson--Nadirashvili \cite[Theorem~1]{JakobsonNadirashvili2002} prove that, for every $1\le p\le\infty$, the $L^p$ norms of the positive and negative parts of a nonconstant real eigenfunction are uniformly comparable. Mukherjee \cite[Theorem~2.3 and Remark~3.13]{Mukherjee2021Mass} strengthens this by showing that each sign retains a fixed fraction of its $L^p$ norm outside a sufficiently thin wavelength-scale nodal tube. For $p=2$, \eqref{eq:nodal-tube-mass} supplies a rate: the $\varphi^2$-mass in a tube of width $t\lambda^{-1/2}$ is $O(t^2)\|\varphi\|_2^2$.

Unlike these global mass estimates, \cref{thm:D} gives local profile, nodal-depth, volume, and first-eigenvalue control on typical wavelength-scale balls.

On surfaces, \cite{GeorgievMukherjee2022} gives the same $t^2$ rate in \eqref{eq:nodal-tube-mass}. The rate also follows in every dimension from the local $L^2$ wavelength-scale gradient estimate and Fubini. The unweighted tube volume is studied in \cite{MR3948283}. In dimensions $m\ge3$, Hezari \cite[Corollary~1.3]{Hezari2018InnerRadius} obtains an inner-radius improvement under configuration-space quantum ergodicity. For sequences satisfying the hypothesis of \cref{cor:qe-rigidity}, our corollary instead localises the averaged variance identity and yields the local conclusions of \cref{cor:typical-rigidity}.

\section{Steklov spectra via boundary local time}
\label{sec:steklov}

This section proves \cref{thm:steklov-boundary-collar-supremum} for weak
Steklov eigenfunctions on bounded Lipschitz domains. On smooth compact
manifolds with smooth boundary, Wang--Zhang
\cite[Corollary~1 and Lemma~6]{WangZhang2024} prove qualitative lower
comparisons between boundary and parallel-hypersurface norms in $L^2$ and,
for frequency-localised data, in $L^p$ for $1\le p\le\infty$ at
inverse-frequency distances (in particular, $t\le\sigma^{-1}$ for Steklov
eigenfunctions). The classical pseudodifferential description is not
available at Lipschitz regularity: the approach of
\cite{HislopLutzer2001,WangZhang2024} assumes smoothness, while the
pointwise method of \cite{GalkowskiToth2019} assumes real analyticity. The
stopping identity instead uses only the reflecting process and its boundary
local time. We first treat the $C^2$ case, where an explicit collar barrier
gives constants at the optimal scale $\sigma^{-1}$, and then deduce the
Lipschitz estimate from a quantitative local-time bound.

\subsection{Optional stopping for reflected Brownian motion}

The same stopping argument applies to the Steklov problem on a compact Riemannian manifold $(X,g)$ with $C^2$ boundary:
\begin{equation}\label{eq:steklov}
\tfrac12\Delta_g u=0\quad\text{in }X,\qquad
\partial_{\nu_{\mathrm{out}}}u=\sigma u\quad\text{on }\partial X,
\qquad \sigma\ge1.
\end{equation}
All eigenfunctions in this section are real-valued. Let $(B_t)$ be reflecting Brownian motion and $L_t$ its boundary local time, normalised so that It\^o's formula contains $\partial_{\nu_{\mathrm{in}}}u\,dL_t$. Both the Skorokhod--It\^o and Fukushima decompositions give the basic local martingale
\[
M_t:=e^{\sigma L_t}u(B_t).
\]

\begin{proposition}[Stopping identities for Robin boundary data]
Let $\mathcal C$ be a compact $C^2$ collar whose boundary components are $\partial X$ and $\Gamma$, let $\tau:=\inf\{t:B_t\in\Gamma\}$, and set
$$
\mathcal F_{\mathcal C}
=\{\phi\in H^1(\mathcal C):\operatorname{Tr}_\Gamma\phi=0\}.
$$
Suppose that $u\in H^1(\mathcal C)\cap L^\infty(\mathcal C)$ is real-valued and satisfies
$$
\frac12\int_{\mathcal C}\langle\nabla u,\nabla\phi\rangle_g\,
d\operatorname{vol}_g
=\frac\sigma2\int_{\partial X}u\phi\,d\sigma_g
\qquad(\phi\in\mathcal F_{\mathcal C}).
$$
Assume
$$
\sup_{x\in\mathcal C}\mathbb E_x[e^{2\sigma L_\tau}]<\infty.
$$
Then $(e^{\sigma L_{t\wedge\tau}}\widetilde u(B_{t\wedge\tau}))_{t\ge0}$ is a uniformly integrable martingale, and for quasi-every $x\in\mathcal C$:
\begin{enumerate}
\item[\textup{(i)}] \textup{(Optional stopping.)}
\begin{equation}\label{eq:steklov-rep}
u(x)=\mathbb E_x[e^{\sigma L_\tau}u(B_\tau)].
\end{equation}

\item[\textup{(ii)}] \textup{(Second moment.)} The function
$$
H_u^\sigma(x;\mathcal C):=\mathbb E_x[e^{2\sigma L_\tau}u(B_\tau)^2]
$$
is the unique bounded weak solution of
$$
\tfrac12\Delta_g H=0\quad\text{in }\mathcal C,\qquad
H=u^2\quad\text{on }\Gamma,\qquad
\partial_{\nu_{\mathrm{in}}}H=-2\sigma H\quad\text{on }\partial X.
$$

\item[\textup{(iii)}] \textup{(Variance identity.)}
$$
H_u^\sigma(x;\mathcal C)-u(x)^2
=\operatorname{Var}_x(M_\tau)
=\mathbb E_x\int_0^\tau e^{2\sigma L_t}|\nabla u(B_t)|^2\,dt.
$$
\end{enumerate}
If the representatives involved are continuous, the conclusions hold for every $x$.
\end{proposition}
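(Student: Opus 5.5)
The plan is to run the Fukushima decomposition for the reflecting process on the collar, killed at $\Gamma$, and then to use the exponential-moment hypothesis to pass from local martingales to uniformly integrable ones.

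\textbf{Step 1: the martingale property.} Since $u\in H^1(\mathcal C)$ satisfies the weak Robin equation against test functions vanishing on $\Gamma$, the Fukushima decomposition for the part process on $\mathcal C\setminus\Gamma$ gives, for quasi-every starting point,
\[
\widetilde u(B_{t\wedge\tau})-u(x)=M^u_{t\wedge\tau}-\sigma\int_0^{t\wedge\tau}\widetilde u(B_s)\,dL_s .
\]
Here the Revuz measure of $L$ is surface measure on $\partial X$, matching the normalisation in which $\partial_{\nu_{\mathrm{in}}}u=-\sigma u$. The martingale part has energy measure $|\nabla u|^2\,d\operatorname{vol}_g$, so $\langle M^u\rangle_t=\int_0^t|\nabla u|^2(B_s)\,ds$. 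Because $L$ is continuous and of bounded variation, the product rule with $e^{\sigma L_t}$ cancels the drift and leaves
\[
d\bigl(e^{\sigma L_t}\widetilde u(B_t)\bigr)=e^{\sigma L_t}\,dM^u_t ,
\]
so $e^{\sigma L_{t\wedge\tau}}\widetilde u(B_{t\wedge\tau})$ is a local martingale.

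\textbf{Step 2: uniform integrability and (i).} The stopped process is bounded by $\|u\|_\infty e^{\sigma L_\tau}$, and the hypothesis makes $e^{\sigma L_\tau}$ bounded in $L^2$. Hence the process is an $L^2$-bounded, uniformly integrable martingale. Note that $\tau<\infty$ almost surely, since $\mathbb E_x\tau<\infty$ on the compact collar by transience of the part process killed at $\Gamma$. Optional stopping then gives \eqref{eq:steklov-rep}.

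\textbf{Step 3: the variance identity (iii).} The It\^o isometry applied to the $L^2$-bounded martingale gives
\[
\mathbb E_x M_\tau^2-u(x)^2=\mathbb E_x\int_0^\tau e^{2\sigma L_t}|\nabla u(B_t)|^2\,dt,
\]
and the left side is $H^\sigma_u(x;\mathcal C)-u(x)^2$.

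\textbf{Step 4: characterising $H$ in (ii).} This is the main obstacle. Set $h(x)=\mathbb E_x[e^{2\sigma L_\tau}u(B_\tau)^2]$, which is bounded by the hypothesis. The strong Markov property makes $e^{2\sigma L_{t\wedge\tau}}h(B_{t\wedge\tau})$ a bounded-in-$L^1$ martingale. The Feynman--Kac semigroup $f\mapsto\mathbb E_x[e^{2\sigma L_t}f(B_t);t<\tau]$ is the semigroup of the form
\[
\mathcal E(f)-\sigma\int_{\partial X}f^2\,d\sigma_g
\]
on $\mathcal F_{\mathcal C}$; this is the form realisation of the Robin condition $\partial_{\nu_{\mathrm{in}}}H=-2\sigma H$ in the paper's normalisation, with the factor $2$ coming from $2\sigma L$. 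I would identify $h$ with the weak solution by the standard gauge argument (compare \cite{CZ95}, Theorems~4.7 and~4.17). Finiteness of the gauge $\sup_x\mathbb E_x e^{2\sigma L_\tau}$ yields strict positivity of the bottom of the spectrum of this form on $\mathcal F_{\mathcal C}$. That coercivity gives existence and uniqueness of a bounded weak solution $H$ with trace $u^2$ on $\Gamma$ by Lax--Milgram after subtracting an extension of the boundary data. Applying Step 1 to $H$ with $2\sigma$ in place of $\sigma$, together with the gauge bound for uniform integrability, gives $H=h$ quasi-everywhere.

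The delicate point is deriving coercivity of the form from the gauge hypothesis. I would try a Picone/ground-state argument using the positive function $\mathbb E_x e^{2\sigma L_\tau}$ as a supersolution. When the representatives are continuous, the quasi-everywhere statements upgrade to every $x$ by the continuity of paths combined with the Markov property at small positive times.
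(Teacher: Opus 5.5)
Your Steps 1--3, and the identification at the end of Step 4 (optional stopping for $e^{2\sigma L_{t\wedge\tau}}H(B_{t\wedge\tau})$, dominated by $\|H\|_\infty e^{2\sigma L_\tau}$), are the paper's argument. That identification already gives uniqueness among bounded weak solutions, with no spectral input. The gap is the existence half of (ii): showing that $h=H_u^\sigma$ is itself a weak solution of the mixed problem.

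You get existence from coercivity of $\mathcal E(f)-\sigma\int_{\partial X}f^2\,d\sigma_g$ on $\mathcal F_{\mathcal C}$ plus Lax--Milgram. There are three problems with this.
\begin{enumerate}
\item Your Picone/ground-state proof of coercivity needs $g(x)=\mathbb E_x e^{2\sigma L_\tau}$ to be a bounded $H^1$ weak (super)solution of the same Robin problem. That is exactly the ``Feynman--Kac expectation solves the weak problem'' statement you are trying to prove for $h$, so as written the step is circular.
\item The Chung--Zhao theorems you cite treat Kato-class potentials for Brownian motion killed at the boundary, not boundary local time of the reflecting process, so they do not apply directly.
\item Lax--Milgram gives only an $H^1$ solution. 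Your identification step needs $\|H\|_\infty<\infty$ for uniform integrability, so boundedness would have to be proved separately.
\end{enumerate}

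The missing idea, which the paper uses, is to show directly from the strong Markov property and the Revuz correspondence that the bounded function $h$ solves the weak problem. Write $e^{2\sigma L_\tau}=1+2\sigma\int_0^\tau e^{2\sigma(L_\tau-L_s)}\,dL_s$ and condition at time $s$ (optional projection). This gives
\[
h=h_0+2\sigma\,\mathbb E_\cdot\!\int_0^\tau h(B_s)\,dL_s=h_0+G_\Gamma(\sigma h\,d\sigma_g).
\]
Here $h_0(x)=\mathbb E_x[u(B_\tau)^2]$ is the $\mathcal E$-harmonic function with trace $u^2$ on $\Gamma$ and natural (Neumann) condition on $\partial X$, and $G_\Gamma$ is the Green operator of the process killed at $\Gamma$. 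The gauge hypothesis makes $h$ bounded, so $\sigma h\,d\sigma_g$ has finite energy. Hence $G_\Gamma(\sigma h\,d\sigma_g)\in\mathcal F_{\mathcal C}$ and $\mathcal E(h,\phi)=\sigma\int_{\partial X}h\phi\,d\sigma_g$ for all $\phi\in\mathcal F_{\mathcal C}$, which is the weak form of $\partial_{\nu_{\mathrm{in}}}h=-2\sigma h$. No coercivity is needed, and your uniqueness argument then completes (ii).

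There is also a normalisation slip in Step 1. For the drift $-\sigma\int\widetilde u\,dL$ to follow from $\mathcal E(u,\phi)=\tfrac\sigma2\int_{\partial X}u\phi\,d\sigma_g$, the Revuz measure of $L$ must be $\tfrac12\,d\sigma_g$, not $d\sigma_g$. Your Step 4 form $\mathcal E(f)-\sigma\int f^2\,d\sigma_g$ already uses the correct value.
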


\begin{proof}
For reflecting Brownian motion killed on hitting $\Gamma$, $\tau$ is the lifetime. In \cite[Example~5.2.2, especially (5.2.50)]{FOT11}, the boundary PCAF $\mathcal L_t$ has Revuz measure $d\sigma$ and enters the Skorokhod formula with coefficient $1/2$; the same calculation using Green's formula gives $d\sigma_g$ on the Riemannian collar. Thus the local time in our It\^o normalisation is $L_t=\mathcal L_t/2$ and has Revuz measure $\tfrac12d\sigma_g|_{\partial X}$. Compactness, uniform ellipticity, and the strong Markov property give an exponential tail for $\tau$, hence $\tau<\infty$ almost surely.

The local Fukushima decomposition \cite[Theorems~5.5.1 and~5.5.2]{FOT11} and the weak equation give, up to $\tau$,
$$
\widetilde u(B_t)-\widetilde u(B_0)
=\widetilde M_t^u-\sigma\int_0^t\widetilde u(B_s)\,dL_s,
\qquad
d\langle\widetilde M^u\rangle_t=|\nabla u(B_t)|_g^2\,dt.
$$
Consequently,
$$
e^{\sigma L_{t\wedge\tau}}\widetilde u(B_{t\wedge\tau})
=u(x)+\int_0^{t\wedge\tau}e^{\sigma L_s}\,d\widetilde M_s^u
$$
is a continuous local martingale. It is dominated by $\|u\|_\infty e^{\sigma L_\tau}\in L^2(\mathbb P_x)$, so localisation and dominated convergence make it a uniformly integrable martingale converging in $L^2$ at $\tau$. Optional stopping gives \textup{(i)}, and the isometry for the stochastic integral gives \textup{(iii)}.

For \textup{(ii)}, the strong Markov property and the Revuz formula give the stated weak mixed problem. If $H$ is another bounded weak solution, the same decomposition makes $e^{2\sigma L_{t\wedge\tau}}H(B_{t\wedge\tau})$ a local martingale. It is dominated by $\|H\|_\infty e^{2\sigma L_\tau}$, so optional stopping gives
$$
H(x)=\mathbb E_x[e^{2\sigma L_\tau}H(B_\tau)]
=\mathbb E_x[e^{2\sigma L_\tau}u(B_\tau)^2].
$$
Thus $H=H_u^\sigma$.
\end{proof}

The relevant collar width is of order $\sigma^{-1}$.  In the flat product collar $0\le s\le\rho$, stopped at $s=\rho$, one has, for $0\le\alpha<\rho^{-1}$,
$$
\mathbb E_s[e^{\alpha L_\tau}]
=\frac{1-\alpha s}{1-\alpha\rho},
$$
and the supremum over starting points is infinite for $\alpha\rho\ge1$. Thus the first moment requires $\sigma\rho<1$, while the second moment above requires $2\sigma\rho<1$. This $\sigma^{-1}$ scale is sharp.  On the unit ball, $u(r,\omega)=r^kY_k(\omega)$ is a Steklov eigenfunction with $\sigma=k$, and at distance $s=1-r$ from the boundary,
$$
\frac{\sup_{|x|=1-s}|u(x)|}{\sup_{|x|=1}|u(x)|}
=(1-s)^\sigma\le e^{-\sigma s}.
$$
Consequently, no comparison with a constant independent of $\sigma$ can hold on collars for which $\sigma\rho\to\infty$.

\subsection{Boundary-to-collar estimates for Steklov eigenfunctions}

Write $d(x):=d(x,\partial X)$ and $\mathcal C_\rho:=\{x:d(x)\le\rho\}$. Choose $\rho_*>0$ so that $d$ is $C^2$ on $\mathcal C_{\rho_*}$, and put $C_H=\|\Delta_gd\|_{L^\infty(\mathcal C_{\rho_*})}$. Throughout this subsection, $u$ satisfies \eqref{eq:steklov} and $\sigma\ge1$. Standard Robin boundary regularity makes $u$ bounded and continuous on the collar, so the preceding proposition applies.

\begin{lemma}[Exponential moment of boundary local time]
\label{lem:steklov_barrier}
Assume $C_H\rho_*\le\tfrac14$ and set $\rho_\sigma:=\min\{\rho_*,(4\sigma)^{-1}\}$ and $\tau:=\inf\{t:d(B_t)=\rho_\sigma\}$.  Then
$$
\sup_{x\in\mathcal C_{\rho_\sigma}}
\mathbb E_x[e^{2\sigma L_\tau}]\le4.
$$
Consequently, $(M_{t\wedge\tau})_{t\ge0}$ is uniformly integrable.
\end{lemma}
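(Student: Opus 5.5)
The plan is to use a barrier function in the distance variable, mimicking the flat product computation $\mathbb E_s[e^{\alpha L_\tau}]=(1-\alpha s)/(1-\alpha\rho)$ recorded just before the lemma. Put $\alpha:=2\sigma$ and define on $\mathcal C_{\rho_\sigma}$
\[
w(x):=1-\alpha\,d(x)+\beta\,d(x)^2,
\]
with a constant $\beta\ge0$ to be chosen to absorb the mean-curvature drift. On $\partial X$ we have $d=0$ and $\partial_{\nu_{\mathrm{in}}}d=1$, so $\partial_{\nu_{\mathrm{in}}}w=-\alpha=-\alpha w$ there; this is exactly the Robin condition making $e^{\alpha L_t}w(B_t)$ have no $dL_t$ drift. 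In the interior, $|\nabla d|=1$ gives
\[
\tfrac12\Delta_g w=\tfrac12\bigl(-\alpha+2\beta d\bigr)\Delta_g d+\beta,
\]
and $|\Delta_g d|\le C_H$. Since $\alpha d\le 2\sigma\rho_\sigma\le\tfrac12$ and $C_H\rho_*\le\tfrac14$, the choice $\beta:=\alpha C_H$ gives $\tfrac12\Delta_g w\ge -\tfrac12\alpha C_H+\beta\ge0$. Thus $w$ is subharmonic and $e^{\alpha L_t}w(B_t)$ is a local submartingale up to $\tau$, via It\^o's formula for reflecting Brownian motion; the proposition above supplies the normalisation of $L_t$.

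Next I need $w$ bounded above and below on the collar. For $0\le d\le\rho_\sigma$,
\[
w\ge1-\alpha\rho_\sigma\ge\tfrac12,\qquad w\le1+\beta\rho_\sigma^2\le1+\alpha C_H\rho_\sigma^2\le 1+\tfrac{1}{8},
\]
using $\alpha\rho_\sigma\le\tfrac12$ and $C_H\rho_\sigma\le\tfrac14$. On $\Gamma=\{d=\rho_\sigma\}$, $w\ge1-\alpha\rho_\sigma+\beta\rho_\sigma^2\ge\tfrac12$.

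The submartingale direction runs the wrong way for an upper bound, so I would flip signs: take instead $w:=1-\alpha d-\beta d^2$, making $\tfrac12\Delta_g w\le0$ (supermartingale) while the Robin identity at $\partial X$ is unchanged. The requirement $w\ge c>0$ on the collar then holds with $c=\tfrac12-\tfrac1{8}\cdot$ small, say $w\ge\tfrac14$ after checking $\alpha\rho_\sigma+\beta\rho_\sigma^2\le\tfrac12+\tfrac18$. The supermartingale property at $t\wedge\tau$ gives
\[
\mathbb E_x\bigl[e^{\alpha L_{t\wedge\tau}}w(B_{t\wedge\tau})\bigr]\le w(x)\le1,
\]
and $w\ge\tfrac14$ gives $\mathbb E_x e^{\alpha L_{t\wedge\tau}}\le4$. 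Fatou's lemma as $t\to\infty$, with $\tau<\infty$ a.s. from the exponential tail noted in the proposition, yields $\mathbb E_x e^{2\sigma L_\tau}\le4$ for every $x$ (continuity of $w$ removes the quasi-everywhere caveat). Uniform integrability of $M_{t\wedge\tau}$ then follows, since $|M_{t\wedge\tau}|\le\|u\|_\infty e^{\sigma L_\tau}$ and this is square-integrable.

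The main obstacle is the constant bookkeeping: making $w\ge\tfrac14$ on the whole collar with $\beta$ large enough to dominate $\alpha C_H$, under only $\alpha\rho_\sigma\le\tfrac12$ and $C_H\rho_\sigma\le\tfrac14$. I expect this to need $\beta\rho_\sigma^2\le\alpha C_H\rho_\sigma^2\le\tfrac18$. If the sharp constant $4$ fails, I would shrink $\rho_*$, which the statement permits. A second, minor point is justifying It\^o's formula for a $C^2$ function of $d$ under reflecting Brownian motion up to $\tau$; this is handled by the Skorokhod decomposition cited in the proposition.
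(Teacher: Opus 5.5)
Your final argument, after you discard the subharmonic variant, is the paper's proof. Your barrier $w=1-2\sigma d-2\sigma C_H d^2$ is exactly the paper's $\psi$ with $K=2\sigma C_H$. The Robin identity $\partial_{\nu_{\mathrm{in}}}w=-2\sigma w$ on $\partial X$ removes the $dL_t$ drift, and $C_Hd\le\tfrac14$ makes $\tfrac12\Delta_g w\le0$. Since $w\ge\tfrac38\ge\tfrac14$ on the collar, the nonnegative supermartingale gives the bound $4$, and the domination $|M_{t\wedge\tau}|\le\|u\|_\infty e^{\sigma L_\tau}\in L^2$ gives uniform integrability.

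With $\beta=\alpha C_H$ the constants close without shrinking $\rho_*$. Your use of $w\ge\tfrac14$ on the whole collar at time $t\wedge\tau$, followed by monotone convergence, is only a cosmetic variant of the paper's evaluation at $\tau$.
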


\begin{proof}
As in the preceding proposition, $\tau<\infty$ almost surely. Set $\psi:=1-2\sigma d-Kd^2$, where $K:=2\sigma C_H$.  Then $\psi=1$ and $\partial_{\nu_{\mathrm{in}}}\psi=-2\sigma$ on $\partial X$, so the boundary drift of $e^{2\sigma L_t}\psi(B_t)$ vanishes.  In the collar,
$$
\tfrac12\Delta_g\psi
=-K-\sigma\Delta_gd-Kd\Delta_gd
\le\sigma C_H(-1+2C_Hd)\le0.
$$
Moreover, $\psi$ decreases with $d$, and
$$
\psi(\rho_\sigma)
\ge1-\tfrac12-\tfrac18=\tfrac38,
$$
because $2\sigma\rho_\sigma\le\tfrac12$ and $K\rho_\sigma^2\le\tfrac12C_H\rho_*\le\tfrac18$.  Thus $e^{2\sigma L_{t\wedge\tau}}\psi(B_{t\wedge\tau})$ is a nonnegative local supermartingale, hence a supermartingale.  Therefore
$$
\tfrac14\mathbb E_x[e^{2\sigma L_\tau}]
\le\mathbb E_x[e^{2\sigma L_\tau}\psi(B_\tau)]
\le\psi(x)\le1,
$$
which proves the bound.  By Cauchy--Schwarz, $\sup_x\mathbb E_x[e^{\sigma L_\tau}]\le2$, and
$$
|M_{t\wedge\tau}|
\le\|u\|_{L^\infty(\mathcal C_{\rho_\sigma})}e^{\sigma L_\tau},
$$
so the stopped martingale is uniformly integrable.
\end{proof}

\begin{theorem}[Steklov observability at scale $\sigma^{-1}$]
\label{thm:steklov_obs}
With $\rho_\sigma$ as above,
$$
\sup_{\partial X}|u|
\le2\sup_{d=\rho_\sigma}|u|,
$$
and
$$
\|u\|_{L^2(\partial X)}^2
\le4e^{C_H\rho_\sigma}\|u\|_{L^2(\{d=\rho_\sigma\})}^2
\le6\|u\|_{L^2(\{d=\rho_\sigma\})}^2.
$$
\end{theorem}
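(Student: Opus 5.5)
The sup-norm bound comes from optional stopping at the inner hypersurface $\Gamma=\{d=\rho_\sigma\}$. \Cref{lem:steklov_barrier} makes $M_{t\wedge\tau}$ uniformly integrable with $\sup_x\mathbb E_x[e^{\sigma L_\tau}]\le2$ (Cauchy--Schwarz from the bound $4$ on the second moment). The stopping identity \eqref{eq:steklov-rep} then gives, for every $x\in\partial X$,
\[
|u(x)|\le\mathbb E_x[e^{\sigma L_\tau}]\sup_{\Gamma}|u|\le2\sup_{d=\rho_\sigma}|u|.
\]
Continuity of $u$ on the collar lets us use every $x$, not just quasi-every $x$.

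For the $L^2$ bound I will not stop pointwise. Instead I integrate over $\partial X$ and use the weighted exit measure from boundary starting points. Jensen/Cauchy--Schwarz gives
\[
u(x)^2\le\mathbb E_x[e^{\sigma L_\tau}]\,\mathbb E_x[e^{\sigma L_\tau}u(B_\tau)^2]\le2\,\mathbb E_x[e^{\sigma L_\tau}u(B_\tau)^2].
\]
Integrating over $x\in\partial X$ reduces the claim to
\[
\int_{\partial X}\mathbb E_x[e^{\sigma L_\tau}F(B_\tau)]\,d\sigma_g(x)\le2e^{C_H\rho_\sigma}\int_\Gamma F\,d\sigma_g
\]
for $F\ge0$ on $\Gamma$. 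I will prove this by duality. Let $w$ be the function on $\mathcal C_{\rho_\sigma}$ solving $\tfrac12\Delta_g w=0$, $w=F$ on $\Gamma$, $\partial_{\nu_{\mathrm{in}}}w=-\sigma w$ on $\partial X$; by the stopping proposition, $w(x)=\mathbb E_x[e^{\sigma L_\tau}F(B_\tau)]$. Pair $w$ with the function $\phi=\phi(d)$ solving $\tfrac12\Delta\phi=0$ and $\phi=0$ on $\Gamma$, normalised so that $\partial_{\nu_{\mathrm{in}}}\phi=1$ at $\partial X$, roughly $\phi\approx\rho_\sigma-d$. To make this exact in the curved collar I would use the ODE $\phi''+(\Delta_g d)\phi'=0$ only as a sub/supersolution, controlled by $|\Delta_g d|\le C_H$; this is where the factor $e^{C_H\rho_\sigma}$ enters.

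Green's identity on the collar gives
\[
\int_{\partial X}\bigl(w\,\partial_{\nu_{\mathrm{in}}}\phi-\phi\,\partial_{\nu_{\mathrm{in}}}w\bigr)=\int_\Gamma F\,|\partial_\nu\phi|
\]
up to sign conventions, plus an error term $\int w\,\Delta\phi$ that has a definite sign when $\phi$ is chosen as a supersolution. The left side equals $\int_{\partial X}w\,(1+\sigma\phi)$, and $1+\sigma\phi\ge1$ with $\sigma\rho_\sigma\le1/4$. On $\Gamma$, $|\partial_\nu\phi|\le e^{C_H\rho_\sigma}$ by integrating $\phi'$ along normals. Together these give $\int_{\partial X}w\le e^{C_H\rho_\sigma}\int_\Gamma F$, which combined with the factor $2$ gives at most $2e^{C_H\rho_\sigma}$. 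This is within the stated $4e^{C_H\rho_\sigma}$, leaving slack for the crude steps. Finally $C_H\rho_\sigma\le\tfrac14$ gives $4e^{1/4}<6$.

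The main obstacle is getting the sign of the Green's-identity error right with a merely $C^2$ distance function. I would first try $\phi(d)=\int_d^{\rho_\sigma}e^{C_H s}\,ds$, for which $\tfrac12\Delta\phi=\tfrac12\phi''+\tfrac12(\Delta_g d)\phi'\le0$ pointwise. The error then has the favourable sign against $w\ge0$ (take $F\ge0$, so $w\ge0$ by the representation). Regularity of $w$ at Lipschitz level is not needed here, since $\partial X$ is $C^2$; standard Robin regularity justifies the integrations by parts.
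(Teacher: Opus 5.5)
Your sup-norm argument is correct and is essentially the paper's. The $L^2$ part, however, has a sign error that makes your key intermediate inequality false.

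\textbf{The sign error.} Take your explicit $\phi(d)=\int_d^{\rho_\sigma}e^{C_Hs}\,ds\ge0$, and let $n$ be the outward unit normal of the collar. On $\partial X$ one has $\partial_n\phi=1$ and $\partial_nw=\sigma w$. On $\Gamma$ one has $\phi=0$ and $\partial_n\phi=-e^{C_H\rho_\sigma}$. Testing the weak Robin problem for $w$ against $\phi\in\mathcal F_{\mathcal C}$ therefore gives
\[
\bigl(1-\sigma\phi(0)\bigr)\int_{\partial X}w\,d\sigma_g
=e^{C_H\rho_\sigma}\int_\Gamma F\,d\sigma_g
+\int_{\mathcal C}w\,\Delta_g\phi\,d\operatorname{vol}_g
\le e^{C_H\rho_\sigma}\int_\Gamma F\,d\sigma_g .
\]
The Robin term enters as $1-\sigma\phi$, not $1+\sigma\phi\ge1$: the weight $e^{\sigma L_\tau}$ amplifies boundary values rather than damping them.

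Your claimed bound $\int_{\partial X}w\le e^{C_H\rho_\sigma}\int_\Gamma F$ already fails in a flat product collar with $F\equiv1$. There $w=(1-\sigma\rho_\sigma)^{-1}$ on $\partial X$, which is the paper's formula $\mathbb E_s[e^{\alpha L_\tau}]=(1-\alpha s)/(1-\alpha\rho)$ at $s=0$. A warning sign is that your duality step never used the smallness of $\sigma\rho_\sigma$, even though the $\sigma^{-1}$ collar scale is sharp.

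\textbf{The repair.} Absorb the Robin term. One has $\phi(0)\le\rho_\sigma e^{C_H\rho_\sigma}$, together with $\sigma\rho_\sigma\le\tfrac14$ and $C_H\rho_\sigma\le C_H\rho_*\le\tfrac14$. Hence $\sigma\phi(0)\le\tfrac14e^{1/4}<\tfrac13$, and so
\[
\|u\|_{L^2(\partial X)}^2\le2\int_{\partial X}w\,d\sigma_g\le3e^{C_H\rho_\sigma}\|u\|_{L^2(\Gamma)}^2,
\]
which is within the stated constant. With this correction your route is a valid alternative.

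\textbf{Comparison with the paper.} The paper avoids the Robin term entirely by splitting Cauchy--Schwarz the other way, $u(x)^2\le\mathbb E_x[e^{2\sigma L_\tau}]\,\mathbb E_x[u(B_\tau)^2]$. All of the weight goes into the constant $4$ from \cref{lem:steklov_barrier}, and $v(x)=\mathbb E_x[u(B_\tau)^2]$ is Neumann-harmonic. Zero flux through each level set $\{d=r\}$ and the coarea formula then give $|I'(r)|\le C_HI(r)$ for $I(r)=\int_{d=r}v\,dS_r$, so $I(0)\le e^{C_H\rho_\sigma}I(\rho_\sigma)$ with no absorption needed. Your split keeps half the weight inside $w$. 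That yields a slightly better constant, at the cost of the supersolution duality and the absorption step, which is exactly where the collar scale re-enters.
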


\begin{proof}
Optional stopping and Cauchy--Schwarz give, for $x\in\partial X$,
$$
u(x)^2
\le\mathbb E_x[e^{2\sigma L_\tau}]\,
\mathbb E_x[u(B_\tau)^2]\le4v(x),
\qquad v(x):=\mathbb E_x[u(B_\tau)^2].
$$
By definition, $v(x)\le\sup_{d=\rho_\sigma}u^2$, so the first estimate follows directly.

The strong Markov property identifies $v$ as the bounded weak harmonic function with boundary value $u^2$ on $\{d=\rho_\sigma\}$ and zero normal derivative on $\partial X$. Set $I(r):=\int_{d=r}v\,dS_r$. The weak divergence theorem and coarea give, for almost every $r$, $\int_{d=r}\partial_{\nabla d}v\,dS_r=0$, and hence
$$
I'(r)=\int_{d=r}v\Delta_gd\,dS_r,
\qquad |I'(r)|\le C_HI(r).
$$
Thus $I(0)\le e^{C_H\rho_\sigma}I(\rho_\sigma)$.  Integrating $u^2\le4v$ over $\partial X$ proves the second estimate; its last constant uses $4e^{1/4}<6$.
\end{proof}

\subsection{Steklov eigenfunctions on bounded Lipschitz domains}

Let $\Omega\subset\mathbb R^m$, $m\ge2$, be a bounded Lipschitz domain.  A weak Steklov eigenfunction is a function $u\in H^1(\Omega)$ satisfying
\begin{equation}\label{eq:weak-steklov}
\int_\Omega\nabla u\cdot\nabla v\,dx
=\sigma\int_{\partial\Omega}uv\,d\mathcal H^{m-1}
\qquad(v\in H^1(\Omega)).
\end{equation}
A weak Steklov eigenfunction is harmonic in $\Omega$, and \eqref{eq:weak-steklov} is a homogeneous Robin condition with coefficient $\beta=-\sigma\in L^\infty(\partial\Omega)$. Hence \cite[Theorem~3.14\textup{(ii)}]{Nittka2011} yields a H\"older-continuous representative on $\overline\Omega$. We denote this H\"older-continuous, and therefore quasi-continuous, representative by $\widetilde u$, and identify $u$ with $\widetilde u$ below.

Let $(B_t)$ be the reflecting Brownian motion for $\mathcal E(f,g)=\tfrac12\int_\Omega\nabla f\cdot\nabla g$.  If $\mathcal L_t$ is the Bass--Hsu boundary local time, whose Revuz measure is $d\mathcal H^{m-1}$, set
$$
L_t:=\tfrac12\mathcal L_t.
$$
With this normalisation, the boundary term in It\^o's formula applied to the Skorokhod equation is $\partial_{\nu_{\mathrm{in}}}u\,dL_t$ \cite[Theorem~1]{BassHsu90}.

For $\rho>0$ set
$$
C_\rho:=\{x\in\overline\Omega:d(x,\partial\Omega)<\rho\},
\qquad
\tau_\rho:=\inf\{t:d(B_t,\partial\Omega)\ge\rho\}.
$$

\begin{lemma}[Expected local time in a Lipschitz collar]
There are $C_{\mathrm{Lip}},\rho_0>0$, depending only on $m$ and the Lipschitz character of $\Omega$, such that, for $0<\rho\le\rho_0$,
\begin{equation}\label{eq:collar-lt-lip}
\sup_{x\in C_\rho}\mathbb E_x[L_{\tau_\rho}]
\le C_{\mathrm{Lip}}\rho.
\end{equation}
Consequently, if $0\le\theta C_{\mathrm{Lip}}\rho<1$, then
\begin{equation}\label{eq:collar-khas-lip}
\sup_{x\in C_\rho}\mathbb E_x[e^{\theta L_{\tau_\rho}}]
\le(1-\theta C_{\mathrm{Lip}}\rho)^{-1}.
\end{equation}
Increasing $C_{\mathrm{Lip}}$ if necessary, assume $C_{\mathrm{Lip}}\rho_0\ge1$.
\end{lemma}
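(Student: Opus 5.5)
The plan is to prove \eqref{eq:collar-lt-lip} through a potential-theoretic barrier, and then obtain \eqref{eq:collar-khas-lip} by Khasminskii's lemma. Fix $\rho\le\rho_0$. Since $L$ is the positive continuous additive functional with Revuz measure $\tfrac12\mathcal H^{m-1}|_{\partial\Omega}$, the function
\[
w_\rho(x):=\mathbb E_x[L_{\tau_\rho}]
\]
is the Green potential of $\tfrac12\mathcal H^{m-1}$ for reflecting Brownian motion killed on leaving $C_\rho$. Equivalently, $w_\rho$ is the weak solution in the mixed space $\{\phi\in H^1(C_\rho):\phi=0\text{ on }\{d=\rho\}\}$ of
\[
\tfrac12\int\nabla w\cdot\nabla\phi=\tfrac12\int_{\partial\Omega}\phi\,d\mathcal H^{m-1}.
\]
Formally this means $\Delta w=0$, $\partial_{\nu_{\mathrm{in}}}w=-1$ on $\partial\Omega$, and $w=0$ on the inner surface. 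It therefore suffices to bound the supremum of this mixed solution by $C\rho$.

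To do this, I would localise with the Lipschitz character. Cover $\partial\Omega$ by boundedly many cylinders in which $\Omega$ is the region above a Lipschitz graph $x_m>\gamma(x')$ with $\|\nabla\gamma\|_\infty\le M$. The test comparison function is $\psi(x)=A\bigl(\rho'-\delta(x)\bigr)$, where $\delta$ is a regularised distance (Lieberman's or Stein's) comparable to $d(\cdot,\partial\Omega)$, with $|\nabla\delta|\asymp1$ and $|\nabla^2\delta|\lesssim\delta^{-1}$, and where $\rho'\asymp\rho$. Since $\delta$ is not superharmonic in general, I would instead argue by energy and $L^\infty$ estimates after rescaling, which is cleaner.

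First, rescale $C_\rho$ by $\rho^{-1}$. The rescaled collar is a Lipschitz slab of unit width with uniform character, and $\tilde w(y)=\rho^{-1}w(\rho y)$ solves the same mixed problem with unit Neumann flux. This follows because the flux scales as $\rho^{m-1}$ while the Dirichlet energy scales as $\rho^{m-2}$, which produces one factor of $\rho$. It then remains to show $\sup\tilde w\le C$ uniformly. I would obtain this from De Giorgi–Nash–Moser local boundedness for mixed Neumann/Dirichlet problems on Lipschitz domains with $L^\infty$ Neumann data, as in Nittka's theory already cited. Applied on unit boxes of the rescaled slab, this gives
\[
\sup_{\text{box}}\tilde w\le C\bigl(\|\tilde w\|_{L^2(2\,\text{box})}+\|1\|_{L^\infty(\partial)}\bigr).
\]
The $L^2$ norm on each box is handled by a local energy estimate: test with $\tilde w\eta^2$, and use the trace inequality together with the Poincaré inequality from the zero Dirichlet side at unit distance. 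Both constants depend only on $m$ and $M$.

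The main obstacle is that the collar is unbounded in tangential extent after rescaling, and the inner surface $\{d=\rho\}$ is only Lipschitz-regular in a weak sense. If that surface proves awkward, I would replace $C_\rho$ by the comparable graph collar $\{\gamma<x_m<\gamma+\rho\}$ inside each cylinder and use domain monotonicity of $w$, since a larger killing region gives larger local time. To handle the gluing of local pieces, I would write the potential as a sum over charts via the Revuz measure restricted to each patch, each bounded by $C\rho$, with boundedly many overlaps.

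Finally, the Khasminskii step: the strong Markov property and the additivity of $L$ give
\[
\mathbb E_x[L_{\tau_\rho}^n]\le n!\,\bigl(\sup\mathbb E[L_{\tau_\rho}]\bigr)^n.
\]
Summing the exponential series then yields $\bigl(1-\theta C_{\mathrm{Lip}}\rho\bigr)^{-1}$, and the normalisation $C_{\mathrm{Lip}}\rho_0\ge1$ is simply a matter of enlarging the constant.
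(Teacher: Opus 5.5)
Your route is genuinely different from the paper's, and the identification of $w_\rho=\mathbb E_\cdot[L_{\tau_\rho}]$ as the mixed Neumann--Dirichlet potential, the rescaling by $\rho^{-1}$, and the Khasminskii step are all fine. The paper argues purely probabilistically, in three steps:
\begin{itemize}
\item the Revuz formula, the Neumann Gaussian upper bound and boundary-measure growth give $\sup_x\mathbb E_x\mathcal L_t\le C\sqrt t$;
\item the Neumann Gaussian lower bound gives a uniform probability $p>0$ of reaching $\{d\ge\rho\}$ within time $T\rho^2$;
\item the strong Markov property sums these into $\sum_k(1-p)^kC\sqrt{T}\rho\le C\rho$.
\end{itemize}
The gap in your version is the uniform $L^2$ bound on unit boxes, which is exactly where uniformity in the starting point has to come from. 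Testing with $\tilde w\eta^2$ for a compactly supported cutoff gives
\[
\int|\nabla(\tilde w\eta)|^2=\int\tilde w^2|\nabla\eta|^2+\int_{\partial}\tilde w\eta^2 .
\]
The first term is the $L^2$ mass of $\tilde w$ on the annulus where $\eta<1$. Poincar\'e and trace inequalities control $\tilde w\eta$, not $\tilde w$ there, so this is only a Caccioppoli inequality bounding one box by a larger one. The global identity $\int|\nabla\tilde w|^2=\int_\partial\tilde w$ yields only $\|\nabla\tilde w\|_2^2\lesssim\rho^{1-m}$ over the whole rescaled collar. That is an average over $\asymp\rho^{1-m}$ boxes, not a bound on each box. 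Your fallbacks do not remove this nonlocality. The process is not killed on leaving a chart cylinder, so the graph-collar comparison does not apply. Each chart piece of the Revuz measure is still a potential for the globally killed process, so splitting it changes nothing.

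What is missing is a decay mechanism: boundary mass at distance $R$ from $y_0$ must enter $\tilde w(y_0)$ only with weight $e^{-cR}$. One way to get it is to test with $\tilde w\zeta^2$, where $\zeta=e^{-\epsilon|y-y_0|}$, so that $|\nabla\zeta|\le\epsilon\zeta$. Then
\[
\int|\nabla(\tilde w\zeta)|^2\le\epsilon^2C_P\int|\nabla(\tilde w\zeta)|^2
+C_T^{1/2}\|\nabla(\tilde w\zeta)\|_2\Bigl(\int_\partial\zeta^2\Bigr)^{1/2},
\]
where $C_P$ and $C_T$ are the slab Poincar\'e and trace constants for functions vanishing on $\{d\ge1\}$. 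Choose $\epsilon^2C_P\le\tfrac12$ to absorb the first term. Upper Ahlfors regularity of $\mathcal H^{m-1}|_{\partial\tilde\Omega}$ at all scales gives $\int_\partial\zeta^2\le C_\epsilon$. Hence $\|\tilde w\|_{L^2(B(y_0,2))}\le C$ uniformly in $y_0$ and $\rho$, and your local-boundedness step then goes through. This exponential weight is the PDE counterpart of the paper's geometric tail.

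Two further points. First, De Giorgi--Nash--Moser gives an essential supremum, while Khasminskii needs the bound at every $x\in C_\rho$. To close this, note that $h(x)=\mathbb E_x[L_{\tau_\rho}]$ is excessive for the killed process, which has transition densities. Then $h(x)=\lim_{t\downarrow0}P_th(x)\le\|h\|_{L^\infty}$ for every $x$. Second, near the inner surface, extending $\tilde w\ge0$ by zero gives a subsolution, so the regularity of $\{d=\rho\}$ plays no role.
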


\begin{proof}
Put $\mathcal L=2L$. The Revuz formula, boundary measure growth, and the Neumann heat-kernel upper bound give, for small $t$,
$$
\sup_x\mathbb E_x\mathcal L_t
=\sup_x\int_0^t\int_{\partial\Omega}p_s(x,y)\,
d\mathcal H^{m-1}(y)\,ds\le C\sqrt t
$$
\cite[(6.1)--(6.3)]{Matsuura19}. For each $x\in C_\rho$, local Lipschitz geometry gives a ball $A=A_{x,\rho}\subset\{d(\cdot,\partial\Omega)\ge\rho\}$ of radius $c\rho$, within intrinsic distance $C\rho$ of $x$. For suitable $T>0$, put $\Delta:=T\rho^2$. The Neumann Gaussian lower bound \cite[Theorem~3.10]{GyryaSaloffCoste11} gives $p_\Delta(x,y)\gtrsim\rho^{-m}$ on $A$. Hence, for $0<\rho\le\rho_0$, $\mathbb P_x(\tau_\rho\le\Delta)\ge\int_A p_\Delta(x,y)\,dy\ge p>0$. The strong Markov property therefore gives a geometric tail on successive intervals of length $\Delta$, and the preceding estimate yields
$$
\sup_{x\in C_\rho}\mathbb E_x\mathcal L_{\tau_\rho}
\le\sum_{k\ge0}(1-p)^k\sup_y\mathbb E_y\mathcal L_\Delta
\le C\rho.
$$
This proves \eqref{eq:collar-lt-lip}; \eqref{eq:collar-khas-lip} is Khasminskii's lemma for the process killed at $\tau_\rho$.
\end{proof}

\begin{theorem}[Local-time martingale and boundary-to-collar sup-norm estimate]
\label{thm:steklov-boundary-collar-supremum}
Let $u$ be a weak Steklov eigenfunction with eigenvalue $\sigma$. Then
$$
M_t^\sigma:=e^{\sigma L_t}\widetilde u(B_t)
$$
is a continuous local martingale for quasi-every starting point in $\overline\Omega$. If $\sigma\ge1$, then, for every $c>1$, set
$$
\rho_c:=\frac1{c\sigma C_{\mathrm{Lip}}}.
$$
One then has
$$
\sup_{\partial\Omega}|u|
\le\frac{c}{c-1}\sup_{d(y,\partial\Omega)=\rho_c}|u(y)|.
$$
In particular, at $\rho_2=(2\sigma C_{\mathrm{Lip}})^{-1}$,
$$
\sup_{\partial\Omega}|u|
\le2\sup_{d(y,\partial\Omega)=\rho_2}|u(y)|.
$$
\end{theorem}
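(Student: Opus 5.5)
The argument has two parts: the local-martingale property of $M^\sigma$, and then optional stopping at the collar exit time $\tau_{\rho_c}$, using the Khasminskii bound \eqref{eq:collar-khas-lip}.

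\emph{Local martingale.} I would follow the proof of the stopping proposition for Robin boundary data, now in the Lipschitz setting. The local Fukushima decomposition of the reflecting process \cite[Theorems~5.5.1 and~5.5.2]{FOT11} applies to $u\in H^1(\Omega)$. The energy measure of $u$ is $|\nabla u|^2dx$. By \eqref{eq:weak-steklov}, the zero-energy part $N^u$ corresponds to the signed smooth measure $-\tfrac\sigma2 u\,d\mathcal H^{m-1}$; the factor $\tfrac12$ comes from the normalisation $\mathcal E=\tfrac12\int\nabla f\cdot\nabla g$. Since $L=\tfrac12\mathcal L$ and $\mathcal L$ has Revuz measure $d\mathcal H^{m-1}$, this gives
\[
N^u_t=-\sigma\int_0^t\widetilde u(B_s)\,dL_s .
\]
This holds as a PCAF identity, quasi-everywhere, because $\widetilde u$ is bounded and continuous, so $\widetilde u\,d\mathcal H^{m-1}$ is a smooth signed measure of finite energy. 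The integration-by-parts formula for the continuous finite-variation process $e^{\sigma L_t}$ then gives
\[
e^{\sigma L_t}\widetilde u(B_t)=\widetilde u(B_0)+\int_0^t e^{\sigma L_s}\,dM^u_s ,
\]
because the drift terms $\sigma e^{\sigma L_s}\widetilde u\,dL_s$ cancel. The right-hand side is a continuous local martingale. The Hölder continuity of $\widetilde u$ from \cite{Nittka2011} lets me identify the representative.

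\emph{Sup estimate.} Fix $c>1$ and set $\rho=\rho_c=(c\sigma C_{\mathrm{Lip}})^{-1}$. Since $\sigma\ge1$, $c>1$, and $C_{\mathrm{Lip}}\rho_0\ge1$, we get $\rho\le\rho_0$, so the preceding lemma applies. Taking $\theta=\sigma$ gives $\theta C_{\mathrm{Lip}}\rho=1/c<1$, hence
\[
\sup_{x\in C_\rho}\mathbb E_x\bigl[e^{\sigma L_{\tau_\rho}}\bigr]\le\frac{c}{c-1}.
\]
The stopped process satisfies $|M_{t\wedge\tau_\rho}|\le\|u\|_\infty e^{\sigma L_{\tau_\rho}}$, which is integrable, so it is a uniformly integrable martingale. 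Also $\tau_\rho<\infty$ almost surely, by the geometric tail argument in the proof of the lemma. Optional stopping then gives
\[
u(x)=\mathbb E_x\bigl[e^{\sigma L_{\tau_\rho}}u(B_{\tau_\rho})\bigr]
\]
for quasi-every $x\in C_\rho$. Path continuity puts $B_{\tau_\rho}$ on $\{d=\rho\}$ when the start has $d<\rho$. Therefore
\[
|u(x)|\le\frac{c}{c-1}\sup_{d=\rho}|u| .
\]

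\emph{Main obstacle.} The delicate point is passing from quasi-every $x$ to every boundary point. I would argue that the polar exceptional set has empty interior relative to $\overline\Omega$, since polar sets have zero capacity and hence zero $\mathcal H^{m-1}$-measure on the Lipschitz boundary. Quasi-every point of $\partial\Omega$ is then dense in $\partial\Omega$, and continuity of $\widetilde u$ on $\overline\Omega$ extends the bound to all of $\partial\Omega$. If a more careful argument is needed, I would apply the Markov property at a small time $s$, using the transition density, and let $s\downarrow0$, as in the proof of \cref{thm:elliptic-superlevel}. The case $c=2$ gives the stated constant $2$.
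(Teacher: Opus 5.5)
Your proposal is correct and follows essentially the same route as the paper. The steps coincide: the Fukushima decomposition with the Revuz identification $N^u_t=-\sigma\int_0^t\widetilde u(B_s)\,dL_s$, cancellation of the local-time terms in the product rule, the Khasminskii bound with $\theta=\sigma$ and $\rho_c\le\rho_0$, uniform integrability and optional stopping, and extension from quasi-every point to all of $\partial\Omega$ by density of the non-exceptional set and continuity of $\widetilde u$. The differences are cosmetic: you use the local rather than the global Fukushima decomposition (the global one already applies, since $u\in H^1(\Omega)$ lies in the form domain), and you justify density by noting that surface measure charges no set of zero capacity, whereas the paper appeals directly to the empty relative interior of zero-capacity sets.
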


\begin{proof}
The Fukushima decomposition \cite[Theorems~5.2.2 and~5.2.3]{FOT11} gives
$$
\widetilde u(B_t)-\widetilde u(B_0)=\widetilde M_t^u+N_t^u,
\qquad
\langle\widetilde M^u\rangle_t
=\int_0^t|\nabla u|^2(B_s)\,ds.
$$
By \eqref{eq:weak-steklov}, $\mathcal E(u,v)=\int\widetilde v\,d\mu$ with $\mu=(\sigma/2)(\operatorname{Tr}u)\,d\mathcal H^{m-1}$. Since $\mu$ is a signed smooth measure, the Revuz correspondence \cite[Corollary~5.4.1]{FOT11} gives
$$
N_t^u=-\sigma\int_0^t\widetilde u(B_s)\,dL_s.
$$
The product rule now yields
$$
dM_t^\sigma=e^{\sigma L_t}\,d\widetilde M_t^u,
$$
because the two local-time terms cancel.

Now assume $\sigma\ge1$. Set $\rho:=\rho_c$ and $\tau:=\tau_\rho$.  Since $C_{\mathrm{Lip}}\rho_0\ge1$, one has $\rho\le\rho_0$, and \eqref{eq:collar-khas-lip} with $\theta=\sigma$ gives
$$
\sup_{y\in C_\rho}\mathbb E_y[e^{\sigma L_\tau}]
\le\frac{c}{c-1}.
$$
Since $u$ is bounded, this estimate makes the stopped martingale uniformly integrable.  Optional stopping gives, for quasi-every $y\in C_\rho$,
$$
|u(y)|
\le\frac{c}{c-1}\sup_{d(z,\partial\Omega)=\rho}|u(z)|.
$$
The exceptional set has zero relative capacity and therefore empty relative interior.  Its complement is dense in $\overline\Omega$, so continuity extends the estimate to every point of $\partial\Omega$.
\end{proof}

\subsection*{Acknowledgements}
The author thanks IIT Bombay for providing ideal working conditions.

\bibliographystyle{amsalpha}
\bibliography{references}

\end{document}